\setlist[enumerate]{label=\rm{(\arabic*)}}
\setlist[enumerate,2]{label=\rm{(\roman*)}}
\setlist[itemize]{label=\raisebox{0.25ex}{\tiny$\bullet$}}
\theoremstyle{plain}
\newtheorem{theorem}{Theorem}[section]
\newtheorem{corollary}[theorem]{Corollary}
\newtheorem{proposition}[theorem]{Proposition}
\newtheorem{lemma}[theorem]{Lemma}
\newtheorem*{question}{Question}
\newtheorem*{acknowledgments}{Acknowledgments}
\theoremstyle{definition}
\newtheorem{definition}[theorem]{Definition}
\newtheorem{remark}[theorem]{Remark}
\theoremstyle{plain}
\newtheorem{theoremA}{Theorem}
\def\AA{\mathbb{A}}
\def\PP{\mathbb{P}}
\def\kk{\Bbbk}
\def\ZZ{\mathbb{Z}}
\def\NN{\mathbb{N}}
\def\GG{\mathbb{G}}
\def\E{\mathsf{E}}
\def\M{\mathsf{M}}
\def\G{\mathsf{G}}
\def\H{\mathsf{H}}
\def\V{\mathsf{V}}
\def\C{\mathsf{C}}
\def\h{\mathsf{h}}
\def\D{\mathsf{D}} 
\def\O{\mathcal{O}}
\def\A{\mathscr{A}}
\def\s{\sigma}
\def\F{\mathsf{F}}
\def\S{\mathsf{S}}
\def\T{\mathsf{T}}
\def\X{\mathsf{X}}
\def\Y{\mathsf{Y}}
\def\L{\mathsf{L}}
\def\EF{\mathcal{E}}
\def\g{\mathsf{g}}
\def\seg{\mathfrak{S}}
\def\div{\operatorname{div}}
\def\Bs{\operatorname{Bs}}
\def\bup{\mathsf{Bl}}
\def\Spec{\operatorname{Spec}}
\def\PGL{\mathsf{PGL}}
\def\Aut{\mathsf{Aut}}
\def\AutC{\mathsf{Aut_{\sf{C}}}}
\def\Autinfini{\mathsf{Aut}_{\infty}}
\def\Autzero{\mathsf{{Aut}^\circ}}
\def\Bir{\mathsf{Bir}}
\begin{document}
\title{Connected algebraic groups acting on algebraic surfaces}
\author{Pascal Fong}
\address{Universit\"at Basel, Departement Mathematik und Informatik, Spiegelgasse 1, CH--4051 Basel, Switzerland}
\email{pascal.fong@unibas.ch}
\thanks{The author acknowledges support by the Swiss National Science Foundation Grant “Curves in the space” 200021–169508.}

\maketitle

\begin{abstract}
	We classify the maximal connected algebraic subgroups of $\Bir(\X)$, when $\X$ is a surface.
\end{abstract}

\section{Introduction}
	
In this text, varieties are reduced and separated schemes of finite type over an algebraically closed field $\kk$. Unless otherwise stated, curves are also assumed to be smooth, irreducible and projective. If $\X$ is a variety, we denote by $\Bir(\X)$ the group of birational transformations of $\X$. A subgroup $\G\subset \Bir(\X)$ is \emph{algebraic} if there exists a structure of algebraic group (i.e.\ a smooth group scheme of finite type) on $\G$ such that the action $\G \times \X \dashrightarrow \X$ induced by the inclusion of $\G$ into $\Bir(\X)$ is a rational action (see Definition \ref{actions}). Moreover, $\G$ is a \emph{maximal algebraic subgroup} of $\Bir(\X)$ if there is no algebraic subgroup $\G'$ of $\Bir(\X)$ which strictly contains $\G$. If $\X$ is a projective variety, then the subgroup $\Aut(\X) \subset \Bir(\X)$ of automorphisms of $\X$ is a group scheme (by \cite{Matsumara_Oort}); and the connected component of the identity $\Autzero(\X)$ is an algebraic subgroup of $\Bir(\X)$. In this paper we answer in Theorem \ref{D} the following question when $\operatorname{char}(\kk)=0$.
	
\begin{question}
	What are the maximal connected algebraic subgroups of $\Bir(\X)$, when $\X$ is an algebraic surface (or equivalently, when $\X$ is a projective surface)?
\end{question}	

The maximal connected algebraic subgroups of $\Bir(\PP^2)$ have been studied by Enriques in \cite{Enriques}: the maximal connected algebraic subgroups of $\Bir(\PP^2)$ are conjugate to $\Autzero(\PP^2)$ or to $\Autzero(\mathbb{F}_n)$ for some $n\in \NN\setminus \{1\}$, where $\mathbb{F}_n$ denotes the $n$-th \emph{Hirzebruch surface} (i.e.\ the $\PP^1$-bundle over $\PP^1$ having a section of self-intersection $-n$). Furthermore, any connected algebraic subgroup of $\Bir(\PP^2)$ is contained in a maximal connected algebraic subgroup. We will show in Theorem \ref{A} that if $\S$ is a $\PP^1$-bundle over a curve $\C$ of genus $\g \geq 1$, it is not always true that $\Autzero(\S)$ is contained in a maximal connected algebraic subgroup of $\Bir(\S)$. Our approach to prove Theorem \ref{A} uses elementary tools like blowups and contractions, and a classification of automorphisms of ruled surfaces due to Maruyama in \cite{Maruyama}.

\begin{theoremA}\label{A}
	Let $\S$ be a non trivial $\PP^1$-bundle over a curve $\C$ of genus $\g$. We assume that $\g\geq 2$, or that $\g=1$ and $\S$ admits a section of negative self-intersection number. Then there exists a family $(\S_n)_{n\geq 1}$ of $\PP^1$-bundles over $\C$ with birational maps $\phi_n\colon\S\dashrightarrow \S_n$ such that:
	\[
	\Autzero(\S) \subset \phi_1^{-1} \Autzero(\S_1) \phi_1 \subset ... \subset \phi_n^{-1} \Autzero(\S_n) \phi_n \subset ...
	\]
	is not a stationary sequence. In particular, the connected algebraic subgroup $\Autzero(\S)$ of $\Bir(\S)$ is not maximal.
\end{theoremA}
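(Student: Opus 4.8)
\emph{Strategy and a local fact.} The plan is to build the $\S_n$ and $\phi_n$ by iterating elementary transformations of $\PP^1$-bundles over $\C$, each one centred at a point fixed by the relevant automorphism group and lying on a section of minimal self-intersection, and then to detect non-stationarity by showing $\dim\Autzero(\S_n)\to\infty$. For a $\PP^1$-bundle $\S\to\C$ write $e(\S)=-C_0^2$, where $C_0$ is a section of minimal self-intersection; recall $e(\S)\ge-\g$, that $e(\S)>0$ forces $\S$ non-trivial, and that $\C\times\PP^1$ has invariant $0$. Using Maruyama's classification (and, for $\g=1$, Atiyah's description of vector bundles on elliptic curves) I would first establish: if $\S\to\C$ is non-trivial and either $\g\ge2$, or $\g=1$ with $e(\S)\ge1$, then $\Autzero(\S)\subseteq\AutC(\S)$ acts trivially on $\C$, preserves some section $C_0$ of minimal self-intersection, and fixes it pointwise. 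For $\g\ge2$ this is a short case analysis on $e(\S)$: if $e(\S)<0$ then $\EF$ is stable and $\Autzero(\S)$ is trivial; if $e(\S)=0$ then either $\S=\PP(\O_\C\oplus L)$ with $\deg L=0$ and $L\neq\O_\C$, or $\S=\PP(\EF)$ with $\EF$ indecomposable semistable, and in both cases $\Autzero(\S)$ fixes a minimal section pointwise; if $e(\S)\ge1$ the minimal section is unique, hence preserved, and $\Autzero(\S)$ acts trivially on it because $\Aut(\C)$ is finite. For $\g=1$ with $e(\S)\ge1$, Atiyah's theorem forces $\EF$ decomposable, so after a twist $\S=\PP(\O_\C\oplus L)$ with $\deg L=-e(\S)<0$; an automorphism covering a translation $t_a$ of $\C$ would force $t_a^\ast L\cong L$, so $\Autzero(\S)$ acts trivially on $\C$, preserves the unique minimal section and fixes it pointwise.

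\emph{Construction and bookkeeping.} Set $\S_0=\S$. Assuming $\S_n$ satisfies the hypotheses of the local fact, choose a point $p_n$ on a minimal section of $\S_n$ fixed by $\Autzero(\S_n)$, let $\S_{n+1}$ be obtained from $\S_n$ by the elementary transformation centred at $p_n$ (blow up $p_n$, then contract the strict transform of the fibre through $p_n$), and let $\phi_n\colon\S\dashrightarrow\S_n$ be the composition of the first $n$ of these maps. A self-intersection computation on the blow-up gives $e(\S_{n+1})=e(\S_n)+1$, hence $e(\S_n)=e(\S)+n$. Moreover every $\S_n$ is non-trivial: if $e(\S_n)\neq0$ this is immediate, while if $e(\S_n)=0$ then $\S_n$ was obtained from $\S_{n-1}$ with $e(\S_{n-1})=-1$, whereas any elementary transformation of $\C\times\PP^1$ has invariant $1$, so $\S_n\neq\C\times\PP^1$. (For $\g=1$ the starting hypothesis keeps all $e(\S_n)\ge1$.) Thus each $\S_n$ again satisfies the local fact and the induction is legitimate.

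\emph{Conclusion.} Since $p_n$ is fixed by $\Autzero(\S_n)$ and the fibre through it is $\Autzero(\S_n)$-stable (the base action being trivial), every $g\in\Autzero(\S_n)$ lifts to the blow-up preserving the contracted $(-1)$-curve, hence descends to an automorphism of $\S_{n+1}$; by connectedness, conjugation by the $n$-th elementary transformation carries $\Autzero(\S_n)$ into $\Autzero(\S_{n+1})$. Transporting along $\phi_n$ yields
\[
\Autzero(\S)\subseteq\phi_1^{-1}\Autzero(\S_1)\phi_1\subseteq\cdots\subseteq\phi_n^{-1}\Autzero(\S_n)\phi_n\subseteq\cdots
\]
a chain of algebraic subgroups of $\Bir(\S)$ with $\dim\bigl(\phi_n^{-1}\Autzero(\S_n)\phi_n\bigr)=\dim\Autzero(\S_n)\ge\dim\AutC(\S_n)$. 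By Maruyama's classification (or a direct Riemann--Roch count) the last quantity is at least $h^0$ of a line bundle of degree $e(\S_n)=e(\S)+n$, hence at least $e(\S)+n+1-\g\to\infty$. Therefore the chain cannot be eventually constant, so it is non-stationary; in particular some term properly contains $\Autzero(\S)$, and $\Autzero(\S)$ is not a maximal algebraic subgroup of $\Bir(\S)$.

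\emph{Main obstacle.} The hard part will be the local fact of the first paragraph: extracting from Maruyama's (and Atiyah's) classification that $\Autzero(\S_n)$ always acts trivially on the base and fixes a point on a minimal section, in particular handling the small-invariant cases $e\le0$ when $\g\ge2$, since this is exactly what makes each elementary transformation compatible with the group action. The remaining ingredients (the invariant arithmetic, the dimension estimate, ruling out the trivial bundle, and the conjugation argument) should be routine.
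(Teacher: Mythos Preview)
Your proposal is correct and follows essentially the same strategy as the paper: reduce to the situation where $\Autzero(\S_n)$ acts trivially on $\C$ and fixes the minimal section pointwise, iterate elementary transformations centred on such fixed points so that the (negative of the) Segre invariant $e(\S_n)$ increases by one each step, and then use Riemann--Roch on the line bundle $\det(\V_n)^{-1}\otimes \L(\s_n)^{2}$ of degree $e(\S_n)$ to detect non-stationarity.

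The differences are cosmetic. First, the paper immediately reduces the $\g\ge 2$ case to $\seg(\S)<0$ (your $e(\S)\ge 1$) by one preliminary elementary transformation, whereas you carry the small-invariant cases $e\le 0$ through the induction and separately check $\S_n\neq \C\times\PP^1$ when $e(\S_n)=0$; both are fine. Second, for non-stationarity the paper exhibits, for $n$ large, an explicit automorphism $f_\gamma\in\Autzero(\S_{n+1})$ with $\gamma(z)\neq 0$ which fails to fix the base point $q_n$ of $\epsilon_{p_n}^{-1}$ and hence cannot lie in $\epsilon_{p_n}\Autzero(\S_n)\epsilon_{p_n}^{-1}$; you instead observe that $\dim\Autzero(\S_n)\ge h^0$ of that same line bundle tends to infinity. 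These are two readings of the same Riemann--Roch computation. Your ``local fact'' for $\g=1$, $e\ge 1$ (that $\Autzero(\S)$ acts trivially on $\C$ via $t_a^\ast L\cong L$) is exactly the content of Maruyama's Lemma~7, quoted in the paper as Lemma~\ref{autoellipticseg<0}.
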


Then we study the connected algebraic subgroups of $\Bir(\C\times \PP^1)$ when $\C$ is a curve of genus $1$. So assume in this paragraph that $\C$ is an elliptic curve. We denote the Atiyah's ruled surfaces by $\A_{0,\C}$ and $\A_{1,\C}$ (see Theorem \ref{Atiyah}) and if $z_1,z_2\in \C$ are distinct points, we denote by $\S_{z_1,z_2}$ the ruled surface $\PP(\O_\C(z_1)\oplus \O_\C(z_2))$. A geometrical description of these surfaces via sequences of blowups and contractions from $\C\times \PP^1$ is given in Section \ref{ruledelemtrans}. Then we show that their automorphism groups are maximal connected algebraic subgroups. With Theorem \ref{A}, this leads to Theorem \ref{B}:

\begin{theoremA}\label{B}
	Let $\C$ be a curve of genus $\g$ and $\G$ be a maximal connected algebraic subgroup of $\Bir(\C\times \PP^1)$. If $\g\geq 2$ then $\G$ is conjugate to the maximal algebraic subgroup $\Autzero(\C\times \PP^1)$, and if $\g= 1$ then $\G$ is conjugate to one of the following:
	\begin{enumerate}
		\item $\Autzero(\C\times \PP^1)$,
		\item $\Autzero(\S_{z_1,z_2})$ where $z_1$ and $z_2$ are distinct points in $\C$,
		\item $\Autzero(\A_{0,\C})$,
		\item $\Autzero(\A_{1,\C})$. 
	\end{enumerate}
	The algebraic subgroups in $(1), (2), (3),(4)$ are all maximal and are pairwise not conjugate. Moreover in case $(2)$, two algebraic subgroups $\Autzero(\S_{z_1,z_2})$ and $\Autzero(\S_{z_1',z_2'})$ are conjugate if and only if there exists $f\in \Aut(\C)$ such that $f(\{z_1,z_2\})=\{z_1',z_2'\}$. 
\end{theoremA}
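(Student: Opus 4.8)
The plan is to reduce the classification to $\PP^1$-bundles over $\C$ and then to combine Theorem~\ref{A}, the classification of ruled surfaces over an elliptic curve (Theorem~\ref{Atiyah}), and Maruyama's description of $\Autzero$ of a ruled surface \cite{Maruyama}. The first step is the reduction. If $\G$ is a connected algebraic subgroup of $\Bir(\C\times\PP^1)$, then by regularisation $\G$ acts regularly on a smooth projective model $\Y$ of $\C\times\PP^1$; since $\g\geq1$ the curve $\C$ is recovered from $\Y$ (for instance as the target of its Albanese morphism), so $\Y$ is a smooth projective ruled surface over $\C$, and a $\G$-equivariant minimal model program (contracting $\G$-invariant $(-1)$-curves) terminates at a $\PP^1$-bundle $\S\to\C$ with $\G$ conjugate in $\Bir(\C\times\PP^1)$ to a subgroup of $\Autzero(\S)$. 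Conversely $\Autzero(\S)$ is a connected algebraic subgroup of $\Bir(\C\times\PP^1)$ for every $\PP^1$-bundle $\S\to\C$. So every maximal connected algebraic subgroup is conjugate to $\Autzero(\S)$ for some $\PP^1$-bundle $\S\to\C$, and the task becomes to decide which of these $\Autzero(\S)$ are maximal and to sort the maximal ones into conjugacy classes.

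Assume first $\g\geq2$. Then Theorem~\ref{A} applies to every non-trivial $\PP^1$-bundle over $\C$, so $\Autzero(\S)$ can be maximal only for $\S\cong\C\times\PP^1$, and it remains to check that $\Autzero(\C\times\PP^1)$ is maximal. If $\G'\supsetneq\Autzero(\C\times\PP^1)$ were a larger connected algebraic subgroup, the reduction places it, after conjugation, inside $\Autzero(\S')$ for some $\PP^1$-bundle $\S'=\PP(\EF)\to\C$, so $\Autzero(\S')$ contains a conjugate of $\Autzero(\C\times\PP^1)$, hence of $\PGL_2$. But for $\g\geq2$ the group $\Autzero(\PP(\EF))$ acts fibrewise and has Lie algebra $\operatorname{End}_{\O_\C}(\EF)/\kk\cdot\mathrm{id}$, which is solvable unless $\EF$ is a direct sum of two isomorphic line bundles, i.e.\ unless $\S'\cong\C\times\PP^1$; as $\mathfrak{sl}_2$ embeds into no solvable Lie algebra, $\S'\cong\C\times\PP^1$, and then $\G'$ and $\Autzero(\C\times\PP^1)$ are connected algebraic groups of the same dimension with $\Autzero(\C\times\PP^1)\subseteq\G'$ and $\G'$ conjugate to a subgroup of $\Autzero(\C\times\PP^1)$, which forces $\G'=\Autzero(\C\times\PP^1)$. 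The same Lie-algebra argument, using now that $\PGL_2$, being semisimple, admits no nontrivial homomorphism to the abelian variety $\Autzero(\C)$, shows that $\Autzero(\C\times\PP^1)$ is maximal when $\g=1$ as well.

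Now assume $\g=1$. Atiyah's classification of rank-$2$ vector bundles on $\C$, together with the computation of the invariant $e$ (the minimal self-intersection of a section, up to sign), shows that every $\PP^1$-bundle over $\C$ is isomorphic to $\C\times\PP^1$, to some $\S_{z_1,z_2}$, to $\A_{0,\C}$, to $\A_{1,\C}$, or to $\PP(\O_\C\oplus L)$ with $\deg L<0$; in the last case the bundle is non-trivial and carries a section of negative self-intersection, so Theorem~\ref{A} shows $\Autzero(\PP(\O_\C\oplus L))$ is not maximal. Hence every maximal connected algebraic subgroup is conjugate to one of the four groups in $(1)$--$(4)$, and that all four are maximal is what the earlier sections establish (for $(1)$, by the previous paragraph). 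For the non-conjugacy statement, conjugation by a birational transformation carries an algebraic subgroup to an algebraic subgroup isomorphically as algebraic groups, so it is enough to see that the four are pairwise non-isomorphic as abstract algebraic groups; one computes $\Autzero(\C\times\PP^1)\cong\C\times\PGL_2$ (dimension $4$); $\Autzero(\S_{z_1,z_2})$ is a semi-abelian surface, a non-trivial extension of $\C$ by $\GG_m$; $\Autzero(\A_{0,\C})$ is the non-split extension of $\C$ by $\GG_a$; and $\Autzero(\A_{1,\C})\cong\C$ (dimension $1$: the relevant rank-$2$ bundle is simple, so the fibrewise automorphisms are finite and every translation of $\C$ lifts). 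These are pairwise non-isomorphic, the two two-dimensional ones being distinguished by whether their maximal torus is trivial.

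It remains to prove the conjugacy criterion in case $(2)$. One direction is elementary: if $f\in\Aut(\C)$ satisfies $f(\{z_1,z_2\})=\{z_1',z_2'\}$, then $f^*(\O_\C(z_1')\oplus\O_\C(z_2'))\cong\O_\C(z_1)\oplus\O_\C(z_2)$, so $f$ lifts to a biregular isomorphism $\S_{z_1,z_2}\xrightarrow{\sim}\S_{z_1',z_2'}$, which conjugates their automorphism groups. Conversely, a conjugacy yields an isomorphism of algebraic groups $\Autzero(\S_{z_1,z_2})\cong\Autzero(\S_{z_1',z_2'})$; realising both sides as extensions of $\C$ by $\GG_m$ (with extension class $[\O_\C(z_2-z_1)]$, resp.\ $[\O_\C(z_2'-z_1')]$, in $\Pic^0(\C)=\operatorname{Ext}^1(\C,\GG_m)$, well-defined up to sign), such an isomorphism restricts to an automorphism of $\GG_m$, descends to an automorphism of the abelian quotient $\C$ fixing the origin, and matches the extension classes up to sign, so $[\O_\C(z_2'-z_1')]$ lies in the orbit of $\pm[\O_\C(z_2-z_1)]$ under the finite group $\Aut(\C,0)$ of origin-fixing automorphisms of $\C$ acting on $\Pic^0(\C)$. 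Unwinding this through the isomorphism $\Pic^0(\C)\cong\C$ and the decomposition $\Aut(\C)=\C\rtimes\Aut(\C,0)$ (translations act trivially on $\Pic^0(\C)$ and fix the difference $z_2-z_1$, while an origin-fixing automorphism acts in the same way on both) produces the desired $f$. Since the conceptual backbone, regularisation and the $\G$-equivariant minimal model program, comes from results proved earlier, the main obstacle inside the proof of Theorem~\ref{B} itself is this last unwinding: one must keep track simultaneously of the translation ambiguity (the surface $\S_{z_1,z_2}$ depends only on $z_2-z_1$), the sign ambiguity coming from the freedom to exchange the two disjoint $0$-sections of $\S_{z_1,z_2}$, and the dictionary between a point of $\C$, the degree-$0$ line bundle $\O_\C(z_2-z_1)$, and the extension class of the associated semi-abelian surface, and then check that the isomorphism type of $\Autzero(\S_{z_1,z_2})$ remembers exactly the $\Aut(\C)$-orbit of the unordered pair $\{z_1,z_2\}$.
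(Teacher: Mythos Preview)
Your argument is correct, but it diverges from the paper's proof at three points, and the comparison is instructive.

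\textbf{Maximality of $\Autzero(\C\times\PP^1)$.} You argue by embedding $\mathfrak{sl}_2$ into the Lie algebra of $\Aut_\C(\S')$ and observing that $\operatorname{End}(\EF')/\kk$ is solvable unless $\EF'$ splits as $L\oplus L$. The paper instead invokes Lemma~\ref{maximal}~(3): since $\Autzero(\C\times\PP^1)$ acts with no fixed points on a minimal surface, every equivariant birational map out of it is an isomorphism, hence the group is maximal. The paper's route is shorter and purely geometric; yours has the merit of identifying exactly which $\S'$ can host a copy of $\PGL_2$, but it requires the Lie-algebra computation case by case.

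\textbf{Non-conjugacy of the four types.} You distinguish $(1)$--$(4)$ by showing the four groups are pairwise non-isomorphic as abstract algebraic groups (dimension, maximal torus, unipotent radical). The paper instead shows the four \emph{surfaces} are pairwise non-isomorphic (Lemma~\ref{remainingcases}) and then, again via Lemma~\ref{maximal}~(3), observes that a conjugacy between two maximal groups acting without fixed points forces a biregular isomorphism of the underlying surfaces. Your approach relies on Theorem~\ref{Maruyamaext}, which in the paper is only proved \emph{after} Theorem~\ref{B}; the paper's ordering avoids this dependence. (Your claim $\Autzero(\A_{1,\C})\cong\C$ is slightly stronger than what your parenthetical argument yields---simplicity of $\mathcal{F}_{1,\C}$ plus surjectivity onto $\Autzero(\C)$ gives only an elliptic curve isogenous to $\C$---but dimension $1$ is all you need for the non-conjugacy.)

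\textbf{Conjugacy criterion in case~(2).} For the converse you pass through the isomorphism type of the semi-abelian surface and the identification $\operatorname{Ext}^1(\C,\GG_m)\cong\Pic^0(\C)$, tracking the extension class $[\O_\C(z_2-z_1)]$ up to sign and $\Aut(\C,0)$. The paper's Lemma~\ref{decomposableconjugate} is entirely geometric: a conjugacy is an equivariant birational map, which by the no-fixed-point argument must be a biregular isomorphism $\S_{z_1,z_2}\xrightarrow{\sim}\S_{z_1',z_2'}$; this descends to an automorphism of $\C$, and Lemma~\ref{isomorphicdecomposable} (tracking where the sections of self-intersection~$2$ meet the two minimal sections) pins down the pair $\{z_1,z_2\}$ up to $\Aut(\C)$. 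Your argument is cleaner conceptually but imports the Weil--Barsotti identification and the functoriality of $\operatorname{Ext}^1$; the paper stays within elementary surface geometry, consistent with its declared aim of using only blowups, contractions, and Maruyama's classification.
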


We have $\Autzero(\C\times \PP^1) \simeq \Autzero(\C)\times \PGL_2$, which is isomorphic to $\C\times \PGL_2$ if $\g=1$, or isomorphic to $\PGL_2$ if $\g\geq 2$. Hence the structure of $\Autzero(\C\times \PP^1)$ as algebraic group is simple to understand. In Theorem \ref{Maruyamaext}, we describe the other maximal connected algebraic groups of Theorem \ref{B} as extensions of an elliptic curve by a linear group. The structures of $\Autzero(\A_{0,\C})$ and $\Autzero(\A_{1,\C})$ as extensions in Theorem \ref{Maruyamaext} are actually a direct consequence of Maruyama's theorem, and it has already been proven in a more general setting in \cite[Theorem 4.2, 2.(b) and 2.(c)]{Laurent}. However, our approach only uses elementary techniques of birational geometry to compute the kernel of the morphism $\Autzero(\S) \to \Autzero(\C)$ induced by Blanchard's Lemma (\ref{Blanchard}), when $\S$ is isomorphic to $\A_{0,\C}$ or $\S_{z_1,z_2}$ for some $z_1,z_2\in \C$. Moreover, we prove the extension of $\Autzero(\A_{1,\C})$ by giving an explicit construction of the surface $\A_{1,\C}$. 

Combining Theorems \ref{A} and \ref{B} with general arguments from the theory of algebraic groups, we show the following equivalence:

\begin{theoremA}\label{C}
	Let $\X$ be a surface. Then every connected algebraic subgroup of $\Bir(\X)$ is contained in a maximal one if and only if $\X$ is not birationally equivalent to $\C\times \PP^1$ for some curve  $\C$ of genus $\g\geq 1$.
\end{theoremA}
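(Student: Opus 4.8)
The plan is to prove the two implications separately, using the structure theory of connected algebraic groups together with the Minimal Model Program for surfaces.

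For the "only if" direction, suppose $\X$ is birationally equivalent to $\C \times \PP^1$ with $\g(\C) \geq 1$. I must exhibit a connected algebraic subgroup of $\Bir(\X) \simeq \Bir(\C \times \PP^1)$ not contained in any maximal one. If $\g \geq 2$, or if $\g = 1$ and we work on a suitable ruled surface admitting a section of negative self-intersection, Theorem \ref{A} directly provides a strictly increasing (non-stationary) chain $\Autzero(\S) \subset \phi_1^{-1}\Autzero(\S_1)\phi_1 \subset \cdots$ inside $\Bir(\S) \simeq \Bir(\X)$; any subgroup that contained all of them would have unbounded dimension, which is impossible for an algebraic group. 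So $\Autzero(\S)$ is not contained in a maximal connected algebraic subgroup. The only gap is the case $\g = 1$ with $\X$ birational to the \emph{trivial} bundle $\C \times \PP^1$ (equivalently, all ruled surfaces over $\C$ in its birational class with a section of non-negative self-intersection): here I would invoke Theorem \ref{B} to enumerate the candidate maximal subgroups (cases (1)--(4)) and check that $\Autzero(\C \times \PP^1)$ — or some explicitly chosen connected subgroup built from elementary transformations as in the proof of Theorem \ref{A} — fails to sit inside any of $\Autzero(\S_{z_1,z_2})$, $\Autzero(\A_{0,\C})$, $\Autzero(\A_{1,\C})$, again because one produces an infinite ascending chain. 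This reduction to Theorem \ref{A} and \ref{B} is the crux of this direction.

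For the "if" direction, assume $\X$ is not birational to $\C \times \PP^1$ with $\g(\C) \geq 1$, and let $\G \subset \Bir(\X)$ be a connected algebraic subgroup; I must find a maximal connected algebraic subgroup containing it. The standard strategy is regularization: by a theorem of Weil (and Brion's refinements), the rational $\G$-action can be regularized, so after replacing $\X$ by a birational model $\Y$ we may assume $\G \subset \Autzero(\Y)$. Now run an equivariant MMP on $\Y$: one reaches either (a) a minimal surface with $\G$ acting, or (b) a $\G$-equivariant Mori fibration $\Y \to B$ over a curve or a point. Because $\X$ is not birational to $\C \times \PP^1$ with $\g \geq 1$, the rational and genus-$\geq 1$-ruled cases where chains fail to stabilize are excluded; in every remaining case — $\X$ of Kodaira dimension $\geq 0$, or $\X$ rational (so $\Bir(\X) = \Bir(\PP^2)$ where Enriques' classification applies), or $\X$ ruled over a curve of genus $\geq 1$ but \emph{not} conic-bundle-birational to a product (so Theorem \ref{B}'s list of maximal groups is the full answer and each connected subgroup embeds in one of them) — one has either a unique minimal model with finite-dimensional bounded automorphism group, or the explicit finite list of maximal connected subgroups from Enriques/Theorem \ref{B}. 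In each case a dimension/boundedness argument (the dimensions of connected algebraic subgroups of $\Bir(\X)$ are bounded, or form a well-ordered set closed under the relevant operations) produces a maximal one above $\G$ by Zorn's lemma.

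The main obstacle is the "if" direction's case analysis: one has to verify, for surfaces birationally ruled over a curve $\C$ of genus $\geq 1$ that are \emph{not} birational to the product, that ascending chains of connected algebraic subgroups of $\Bir(\X)$ necessarily stabilize. The clean way is to show every such chain lands, after conjugation, inside $\Autzero$ of one of finitely many (up to isomorphism) minimal ruled surfaces $\S$ over $\C$ — using Maruyama's classification and the elementary blow-up/contraction analysis underlying Theorems \ref{A} and \ref{B} — and that $\Autzero(\S)$ is then genuinely maximal (no further elementary transformation strictly enlarges it), which is exactly the content established for the surfaces appearing in Theorem \ref{B}. I expect the argument to reduce, via regularization plus equivariant MMP, to precisely the finite catalogue of maximal connected algebraic subgroups already identified, so that "contained in a maximal one" becomes automatic once the pathological product case is set aside.
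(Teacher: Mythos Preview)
Your proposal contains a fundamental misunderstanding of the birational classification of surfaces that undermines both directions.

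The key error is your repeated assumption that there exist surfaces ``ruled over a curve $\C$ of genus $\geq 1$ but not birational to the product $\C\times\PP^1$''. No such surfaces exist: by Noether--Enriques (equivalently, Tsen's theorem), \emph{every} geometrically ruled surface over $\C$ is birational to $\C\times\PP^1$. Consequently your third case in the ``if'' direction is empty, and the entire ``main obstacle'' paragraph addresses a vacuous situation. The dichotomy is simply: either $\X$ is rational, or $\X$ is birational to $\C\times\PP^1$ with $\g(\C)\geq 1$ (which is excluded by hypothesis), or $\kappa(\X)\geq 0$. In the last case one uses the \emph{uniqueness} of the minimal model $\S$: every connected algebraic subgroup $\G\subset\Bir(\X)$ is conjugate into $\Autzero(\S)$ by Proposition~\ref{minimalsurfaces}, and $\Autzero(\S)$ is itself maximal because any birational map between minimal surfaces of non-negative Kodaira dimension is an isomorphism (so Lemma~\ref{maximal} applies). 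No Zorn's lemma or boundedness argument is required; this is what the paper does via Proposition~\ref{nonruledcase}.

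The same misunderstanding creates a spurious ``gap'' in your ``only if'' direction. For any curve $\C$ of genus $\geq 1$, there always exists a ruled surface $\S$ over $\C$ with $\seg(\S)<0$ (e.g.\ $\PP(\O_\C\oplus\O_\C(p))$, see Proposition~\ref{decomposable}), and this $\S$ is birational to $\C\times\PP^1$. Theorem~\ref{A} then applies directly and exhibits a connected algebraic subgroup of $\Bir(\C\times\PP^1)$ not contained in any maximal one. There is nothing further to check and no need to invoke Theorem~\ref{B}; your parenthetical ``equivalently, all ruled surfaces over $\C$ in its birational class have a section of non-negative self-intersection'' describes an empty set.
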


Finally, we answer the question when the characteristic of $\kk$  is $0$, by giving the classification of all maximal connected algebraic subgroups in dimension $2$ (Theorem \ref{D}). If the characteristic is positive, we have a partial classification, see Proposition \ref{partialclassification} and remark \ref{obstructionclassificationpositivecar}.

\begin{theoremA}\label{D}
Let $\X$ be a surface over a field $\kk$ of characteristic $0$. We denote by $E$ the set of  surfaces of the form $(\C\times \Y)/\F$ where $\C$ is an elliptic curve, $\Y$ is a smooth curve of general type, and $\F$ is a finite subgroup of $\Autzero(\C)$ acting diagonally on $\C\times \Y$. The pairs $(\X,\Autzero(\X))$ are classified as following:
\begin{table}[ht]
	\begin{center}
		\begin{tabular}{| c |  p{3.5cm} | p{10cm} |}
			\hline
			$\kappa(\X)$  &	Representative of the birational class of $\X$ & $\Autzero(\X)$ \\
			\hline
			\hline
			
			\multirow{2}{*}{}
			& Rational surface & Maximal if and only if $\X$ is isomorphic to $\PP^2$ or $\mathbb{F}_n$ with $n\neq 1$. Else $\Autzero(\X)$ is conjugate to an algebraic subgroup of a maximal one.\\ \cline{2-3}
			$-\infty$ & Ruled surface (over a curve $\C$ of positive genus)& Maximal if  and only if  $\X$ is isomorphic to $\C\times \PP^1$, or $\A_{0,\C}$, or $\A_{1,\C}$, or  $\S_{z_1,z_2}$ with $z_1,z_2\in \C$ (the three last cases happen only when $\C$ is an elliptic curve). Else $\Autzero(\X)$ is not maximal and fits into an infinite chain of strict inclusions.\\
			\hline
			
			\multirow{4}{*}{}
			 & Abelian surface & $\Autzero(\X)\simeq \X$ if and only if $\X$ is an abelian surface; and in this case $\Autzero(\X)$ is maximal. Else, $\Autzero(\X)$ is trivial and is not maximal.\\ \cline{2-3} & $K3$ surface & $\Autzero(\X)$ trivial and maximal.\\ \cline{2-2} &  Enriques surface  &  \\ \cline{2-3} 
			 $0$  &  Bielliptic surface &  $\Autzero(\X)\simeq \C$ is an elliptic curve if and only if $\X\simeq (\C\times \Y)/\F$ where $\C,\Y$ are elliptic curves and $\F$ is a finite group acting on $\C$ by translations, and acting also on $\Y$ not only by translations (equivalently, $\Y/\F\simeq \PP^1$ and $\X$ is a bielliptic surface). In this case, $\Autzero(\X)$ is maximal. Else, $\Autzero(\X)$ is trivial and is not maximal. \\ \hline

			$1$ & Properly elliptic surface & $\Autzero(\X)\simeq \C$ is an elliptic curve if and only if $\X\simeq (\C\times \Y)/\F$ where $\Y$ is a smooth curve of general type and $\F$ is a finite group acting on diagonally on $\C\times \Y$ and by translations on $\C$ (i.e.\ $\X\in E$). In this case, $\Autzero(\X)$ is maximal. 
			
			If $\X$ is birational to an element of $E$ but $\X\notin E$, then $\Autzero(\X)$ is trivial and not maximal. Else $\Autzero(\X)$ is trivial and maximal. \\ \hline
			
			$2$ & Surface of general type & $\Autzero(\X)$ trivial and maximal. \\ \hline		
		\end{tabular}
	\end{center}
\end{table}

\end{theoremA}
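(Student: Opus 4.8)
\emph{Strategy.} The plan is to go through the Enriques--Kodaira classification and treat each value of $\kappa(\X)$ in turn: for $\kappa(\X)=-\infty$ one combines Enriques's theorem \cite{Enriques} (rational case) with Theorems \ref{A} and \ref{B} (ruled case), and for $\kappa(\X)\geq 0$ one reduces everything to the unique minimal model $\X_{\min}$ of $\X$ and to known facts about its automorphism group. In each case the scheme is the same: identify $\Autzero(\X)$, compare it with a maximal connected algebraic subgroup of $\Bir(\X)$, and read off the statement.

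\emph{The case $\kappa(\X)\geq 0$.} Here $\X$ has a unique minimal model $\X_{\min}$ and, since birational maps between minimal surfaces of nonnegative Kodaira dimension are isomorphisms, $\Bir(\X)=\Aut(\X_{\min})$. Using regularization of rational actions of connected algebraic groups, together with equivariant resolution and a $\G$-equivariant minimal model program (valid in characteristic $0$, with no Mori fibre space occurring because $\X$ is not uniruled), every connected algebraic subgroup $\G$ of $\Bir(\X)$ is conjugate to a subgroup of $\Autzero(\X_{\min})$; hence $\Autzero(\X_{\min})$ is, up to conjugacy, the unique maximal connected algebraic subgroup of $\Bir(\X)$. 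The birational morphism $\X\to\X_{\min}$ identifies $\Autzero(\X)$ with a subgroup of $\Autzero(\X_{\min})$, so $\Autzero(\X)$ is maximal exactly when this inclusion is an equality. It then remains to compute $\Autzero(\X_{\min})$ and to control the non-minimal case. If $\X_{\min}$ is a $K3$ surface, an Enriques surface, or a surface of general type, then $\Autzero(\X_{\min})$ is trivial (absence of global vector fields, resp.\ finiteness of the automorphism group), so $\Autzero(\X)$ is trivial for every $\X$ in the class and is automatically maximal; if $\X_{\min}$ is abelian then $\Autzero(\X_{\min})\simeq \X_{\min}$ acts by translations, and if $\X_{\min}$ is bielliptic then $\Autzero(\X_{\min})$ is the Albanese elliptic curve. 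For the case $\kappa(\X)=1$ I would first show that $\Autzero(\X)$ is trivial or an elliptic curve: it is an abelian variety (no $\GG_a$ or $\GG_m$ can appear since $\X$ is not uniruled), it cannot be two-dimensional (that would make $\X$ abelian or bielliptic), and then uniqueness of the elliptic fibration $f\colon \X\to B$ forces any nontrivial $\Autzero(\X)\simeq\C$ to act trivially on $B$ and faithfully on the generic fibre, so $f$ is isotrivial with fibre $\C$ and, after the étale base change trivializing it, $\X\simeq (\C\times\Y)/\F$ with $\Y$ of general type and $\F$ diagonal; compatibility with the $\C$-action forces $\F$ to act on $\C$ by translations, i.e.\ $\X\in E$, and conversely every member of $E$ carries such a $\C$-action, hence has $\Autzero\simeq\C$. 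Finally, in every positive-dimensional case ($\X_{\min}$ abelian, bielliptic, or an element of $E$), $\Autzero(\X_{\min})$ surjects through the Albanese morphism onto an abelian variety on which it acts by translations, so it acts on $\X_{\min}$ without fixed points; therefore a connected subgroup preserving the exceptional locus of a nontrivial blow-up must be trivial, so $\Autzero(\X)$ is trivial --- hence not maximal --- for every non-minimal $\X$ in the class, as claimed in the table.

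\emph{The case $\kappa(\X)=-\infty$.} If $\X$ is rational, then $\Bir(\X)=\Bir(\PP^2)$, and by Enriques \cite{Enriques} every connected algebraic subgroup lies in a maximal one and the maximal ones are the conjugates of $\Autzero(\PP^2)$ and of $\Autzero(\mathbb{F}_n)$ with $n\neq 1$; running the $\Autzero(\X)$-equivariant minimal model program on $\X$ and noting that $\PGL_3=\Autzero(\PP^2)$ and the groups $\Autzero(\mathbb{F}_n)$, $n\neq 1$, act on their surfaces without fixed points, one gets that $\Autzero(\X)$ is maximal exactly when $\X\simeq\PP^2$ or $\X\simeq\mathbb{F}_n$ with $n\neq 1$, and is contained in a maximal one otherwise. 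If $\X$ is ruled over a curve $\C$ of genus $\g\geq 1$, then $\Bir(\X)=\Bir(\C\times\PP^1)$, Theorem \ref{B} lists the maximal connected algebraic subgroups, and comparing these with the automorphism groups of ruled surfaces over $\C$ --- using that the $\Autzero$'s of $\C\times\PP^1$, $\S_{z_1,z_2}$, $\A_{0,\C}$, $\A_{1,\C}$ act without fixed points on the fibres and so cannot be lifted to any blow-up --- shows that $\Autzero(\X)$ is maximal precisely when $\X$ is one of those surfaces (only $\C\times\PP^1$ if $\g\geq 2$). In every other ruled case $\Autzero(\X)$ is not maximal: for a nontrivial $\PP^1$-bundle satisfying the hypotheses of Theorem \ref{A}, that theorem supplies the infinite strictly increasing chain, while for a non-minimal ruled surface (or a sub-bundle situation inside $\C\times\PP^1$) one contracts equivariantly a $(-1)$-curve to obtain a nontrivial $\PP^1$-bundle with a negative section and applies Theorem \ref{A} once more. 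Lastly, the non-conjugacy assertions and the two ``if and only if'' statements follow from Theorem \ref{B} for the genus-$1$ ruled surfaces, from Enriques's classification for the rational ones, and, across different birational classes, from the fact that the corresponding surfaces are not birational and their groups of birational transformations are separated by standard invariants such as $\kappa$, the irregularity $q$, and the torsion of the canonical class.

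\emph{Main difficulty.} The delicate point is the properly elliptic case $\kappa(\X)=1$: pinning down $\Autzero(\X)$ and, when it is positive-dimensional, extracting the explicit description $\X\simeq(\C\times\Y)/\F$ with $\F$ acting on $\C$ by translations requires a genuine analysis of the $\Autzero(\X)$-action on the elliptic fibration rather than a citation. The reduction to the minimal model, the fixed-point argument for blow-ups, and the bookkeeping for non-minimal ruled surfaces are comparatively routine, although the last of these still needs the elementary-transformation construction behind Theorem \ref{A} to be invoked one final time.
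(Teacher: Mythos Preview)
Your overall strategy is correct and close to the paper's, but your treatment of the non-ruled case ($\kappa(\X)\geq 0$) follows a genuinely different path. The paper does \emph{not} argue type by type via the geometry of each class (elliptic fibration for $\kappa=1$, Albanese for bielliptic, vanishing of vector fields for K3/Enriques/general type). Instead it proves one structure result, Proposition~\ref{nonruledcase}: Chevalley's theorem forces $\Autzero(\X)$ to be an abelian variety whenever $\X$ is not birationally ruled, and then Brion's description of faithful abelian-variety actions yields the trichotomy (i) $\Autzero(\X)\simeq\X$ an abelian surface, (ii) $\Autzero(\X)$ an elliptic curve and $\X\simeq(\G\times\Y)/\F$ with $\F\subset\G$ acting diagonally, or (iii) $\Autzero(\X)$ trivial. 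That $\F$ acts on $\G$ by translations is automatic here, because $\F$ is a subgroup scheme of $\G$. The table is then filled in simply by recognising which birational classes fall into which branch; your isotriviality argument for $\kappa=1$ and your Albanese argument for bielliptic surfaces recover branch~(ii) by hand. Both routes work: the paper's is more uniform and avoids the analysis of multiple fibres in the elliptic fibration, while yours stays closer to the classical surface classification and makes the geometric origin of the quotient description visible.

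One small point in your ruled case deserves care. For a non-minimal ruled $\X$ you say one ``contracts equivariantly a $(-1)$-curve to obtain a nontrivial $\PP^1$-bundle with a negative section'' and then invokes Theorem~\ref{A}. The equivariant contraction may in fact land on one of the special surfaces $\C\times\PP^1$, $\A_{0,\C}$, $\A_{1,\C}$, $\S_{z_1,z_2}$; what saves you is that $\Autzero(\X)$ then fixes the image point $p$, so a further $\Autzero(\X)$-equivariant elementary transformation centred at $p$ (possibly after one more step) reaches a bundle with $\seg<0$, and Theorem~\ref{A} applies there. The paper is equally terse at this spot (``follows from Proposition~\ref{partialclassification} and Theorem~\ref{A}''), so this is a shared bit of hand-waving rather than an error specific to your approach.
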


\begin{acknowledgments}
	I am thankful to Jérémy Blanc, Michel Brion, Serge Cantat, Gabriel Dill, Bruno Laurent, Julia Schneider, Ronan Terpereau, Immanuel Van Santen, Egor Yasinsky, Sokratis Zikas, ShengYuan Zhao for interesting discussions and ideas, and to the referee for the careful lecture and many useful remarks.
\end{acknowledgments}

\section{Preliminaries}

\subsection{Equivariance and maximality}

In this section we reduce the question to the maximality of the automorphism groups of \emph{minimal surfaces}, i.e.\ smooth projective surfaces without $(-1)$-curves. The idea has already been used in the rational case to study algebraic subgroups of $\Bir(\PP^2)$ (e.g.\ see \cite{BlancCremona}, and \cite{RobayoZimmermann} when $\kk=\mathbb{R}$).

\begin{definition}\label{actions}
	Let $\G$ be an algebraic group, $\X$ be a variety and $\alpha\colon\G\to \Bir(\X)$ be a group homomorphism.
	\begin{enumerate}
		\item The map $\alpha$ is a \emph{rational action} of $\G$ on $\X$ if there exists an open $U$ of $\G\times \X$ such that:
		\begin{enumerate}
			\item The map $\G\times \X\dashrightarrow \X$, $(g,x) \longmapsto \alpha(g)(x)$ is regular on $U$,
			\item For all $g\in \G$, the open subset $U_g=\{x\in \X, (g,x)\in U\}$ is dense in $\X$ and $\alpha(g)$ is regular on $U_g$.
		\end{enumerate}
		\item The map $\alpha$ is a \emph{regular action} of $\G$ on $\X$ if the map $\G\times \X \rightarrow \X$, $(g,x) \mapsto \alpha(g)(x)$ is a morphism of varieties.
	\end{enumerate}
\end{definition}

If $\G\subset \Bir(\X)$ is an algebraic subgroup and $\phi\colon\X\dashrightarrow \Y$ is a birational map, there exists a unique rational action of $\G$ on $\Y$ which is induced by $\phi$ and such that the following diagram commutes:
\[
\begin{tikzcd}[column sep=small]
\G\times \X \arrow[rr,dashrightarrow] \arrow[d,dashrightarrow,"id\times \phi" left] &&  \X \arrow[d,dashrightarrow,"\phi"]\\
\G\times \Y \arrow[rr,dashrightarrow] && \Y.  
\end{tikzcd}
\] 
A powerful and classical result on rational actions of algebraic groups is the \emph{Regularization Theorem} due to Weil. A modern proof has been given in \cite{Zaitsev} (see also \cite{Kraft}).

\begin{theorem}\cite{Weil}\label{Weil}
	For every rational action of an algebraic group $\G$ on a variety $\X$, there exists a variety $\Y$ and a birational map $\X\dashrightarrow \Y$ such that the induced action of $\G$ on $\Y$ is regular. 
\end{theorem}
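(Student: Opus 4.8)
This is the classical \emph{Weil Regularization Theorem}; in the paper it is quoted from \cite{Weil}, with the modern accounts \cite{Zaitsev} and \cite{Kraft} also cited, so the plan is to invoke it rather than reprove it. For the record, here is the shape of the argument one would reconstruct. First I would reduce to the case where $\X$ is quasi-projective: $\X$ contains a dense quasi-projective open subset, and replacing $\X$ by such a subset only shrinks the open set $U$ of Definition~\ref{actions} without affecting the rational action.

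The starting point is the observation that the rational map $\theta\colon\G\times\X\dashrightarrow\G\times\X$, $(g,x)\mapsto(g,g\cdot x)$, is birational, with rational inverse $(g,y)\mapsto(g,g^{-1}\cdot y)$; this encodes the identity $\alpha(gh)=\alpha(g)\alpha(h)$, valid on a dense open subset. Combining this with a spreading-out argument (generic flatness of the action map, together with that identity) I would produce a dense open subset $\X_0\subseteq\X$ and a symmetric dense open neighbourhood $e\in\Omega=\Omega^{-1}\subseteq\G$ such that the action map is regular on $\Omega\times\X_0$, each $\alpha(g)$ with $g\in\Omega$ restricts to an open immersion $\X_0\hookrightarrow\X$, and the resulting transition maps are compatible on overlaps. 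Here one uses that $\G$ is connected, so that $\Omega\cdot\Omega=\G$: this is what will let translates of $\X_0$ build a \emph{full} $\G$-variety.

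Finally I would construct $\Y$ by gluing: take a copy $\X_g$ of $\X_0$ for each $g\in\G$, and glue $\X_g$ to $\X_h$ along the locus where $\alpha(h)^{-1}\alpha(g)$ is a well-defined isomorphism onto an open subset, via that map; the previous step shows these gluing data are open immersions satisfying the cocycle identity, so they produce a prescheme $\Y$. One checks that $\Y$ is of finite type (finitely many translates of $\X_0$ cover it, using $\Omega\cdot\Omega=\G$) and irreducible (each chart $\X_g$ meets $\X_e$ in a dense open subset, since $\alpha(g)$ carries a dense open subset of $\X_0$ to a dense subset of $\X$ that meets $\X_0$). The group $\G$ acts on $\Y$ by carrying the chart $\X_h$ isomorphically onto $\X_{gh}$; this action is regular by construction, and the chart $\X_e=\X_0\hookrightarrow\X$ yields a birational map $\X\dashrightarrow\Y$ equivariant for the original rational action. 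The step I expect to be the main obstacle is verifying that $\Y$ is \emph{separated}, so that it is a variety in the paper's sense and not merely a prevariety: this amounts to showing that the graph of each transition map $\alpha(h)^{-1}\alpha(g)$ is closed in $\X_0\times\X_0$, which is the genuine technical heart of \cite{Weil} and of its modern reworkings, and where one must exploit that the relevant open subsets of $\X_0$ are not arbitrary but arise from a group action — the delicate point being that $\Omega$ is not itself a subgroup of $\G$, so the cocycle and the closedness statements must be checked on honest overlaps rather than formally.
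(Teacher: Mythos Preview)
Your identification is correct: the paper does not prove this statement at all but merely quotes it as a black box from \cite{Weil}, pointing to \cite{Zaitsev} and \cite{Kraft} for modern treatments. There is therefore no ``paper's own proof'' to compare against; your opening sentence already matches the paper's approach exactly, and the remainder of your proposal is a bonus sketch that goes well beyond what the paper supplies.
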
	

We recall in Lemma \ref{Blanchard} the powerful Blanchard's Lemma.

\begin{definition}\label{equivariant}
	Let $\G$ be an algebraic group acting regularly on varieties $\X$ and $\Y$. A birational map $\phi\colon\X \dashrightarrow \Y$ is \emph{$\G$-equivariant} if the following diagram is commutative:
	\[
	\begin{tikzcd}[column sep=small]
	\G\times \X \arrow[rr] \arrow[d,dashrightarrow,"id\times \phi" left] &&  \X \arrow[d,dashrightarrow,"\phi"]\\
	\G\times \Y \arrow[rr] && \Y.  
	\end{tikzcd}
	\] 
\end{definition}	

\begin{comment}
\begin{lemma}\label{Blanchard}
	Let $\G$ be a connected algebraic group acting regularly on a projective variety $\X$, and let $\Y$ be a normal projective variety. If $\phi\colon\X \rightarrow \Y$ is a surjective morphism with connected fibers and $\kk(\X)/\kk(\Y)$ is a separable extension, then there exists a unique regular action of $\G$ on $\Y$ such that $\phi$ is $\G$-equivariant.
\end{lemma}

\begin{proof}
	We deduce our statement from the general version of Blanchard's Lemma. From Stein factorization theorem, the morphism $\phi$ factorizes through $\phi'\colon\X\rightarrow \sf{Z}$ and $\psi \colon\sf{Z}\rightarrow \Y$, with $\sf{Z} = \Spec \phi_*(\O_\X)$, where $\phi'$ is a morphism with connected fibers and $\psi$ is a finite morphism. The extension $\kk(\sf{Z})/\kk(\Y)$ is separable since we assume that $\kk(\X)/\kk(\Y)$ is separable. It is also finite and if $n$ denotes the degree of $\kk(\sf{Z})/\kk(\Y)$, then general fibers of $\psi$ consist in exactly $n$ distinct points. Because $\phi$ has connected fibers, it follows that $n=1$, i.e.\ $\kk(\sf{Z})=\kk(\Y)$. Hence $\psi$ is a birational morphism and
	from Zariski main theorem, $\psi$ is an isomorphism. Thus $\phi_* (\O_\X) = \psi_*(\phi'_*(\O_\X)) = \O_\Y$ and the general statement of Blanchard's Lemma \cite[Proposition 4.2.1]{BSU} can be applied.
\end{proof}	
\end{comment}

\begin{lemma}\label{Blanchard}\cite[Proposition 4.2.1]{BSU}
	Let $\X$ and $\Y$ be varieties and $\phi\colon \X\to \Y$ be a proper morphism such that $\phi_*(\O_\X)=\O_\Y$. Let $\G$ be a connected algebraic group acting regularly on $\X$. Then there exists a unique regular action of $\G$ on $\Y$ such that $\phi$ is $\G$-equivariant.
\end{lemma}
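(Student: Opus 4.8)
The plan is to settle uniqueness first (which is formal), and then to construct the action on $\Y$ by descending the action morphism $a\colon\G\times\X\to\X$ along $p:=\mathrm{id}_\G\times\phi\colon\G\times\X\to\G\times\Y$.

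First I would note two consequences of the hypotheses. Since $\phi$ is proper with $\phi_*(\O_\X)=\O_\Y$, it is its own Stein factorization, hence surjective with connected fibres, and it induces a bijection $\pi_0(\X)\xrightarrow{\sim}\pi_0(\Y)$; so we may assume $\Y$ (and therefore $\X$) connected, treating the connected components of $\Y$ separately. Uniqueness of the action is then immediate: if $\tilde a,\tilde a'\colon\G\times\Y\to\Y$ both make $\phi$ equivariant, then $\tilde a\circ(\mathrm{id}_\G\times\phi)=\phi\circ a=\tilde a'\circ(\mathrm{id}_\G\times\phi)$, and as $\mathrm{id}_\G\times\phi$ is surjective and $\Y$ is separated, $\tilde a=\tilde a'$.

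For existence, consider $q:=\phi\circ a\colon\G\times\X\to\Y$ and aim to factor $q$ through $p$. The morphism $p$ is proper, being a base change of $\phi$; and by flat base change along the flat projection $\G\times\Y\to\Y$ one gets $p_*(\O_{\G\times\X})=\O_{\G\times\Y}$, so $p$ has connected fibres, and $\G\times\Y$ is connected because $\G$ is. It therefore suffices to check that $q$ contracts the fibres of $p$: once this is known, $q$ factors uniquely as $\tilde a\circ p$ for some morphism $\tilde a\colon\G\times\Y\to\Y$, and the identity $\phi\circ a=\tilde a\circ(\mathrm{id}_\G\times\phi)$ is exactly the assertion that $\phi$ is $\G$-equivariant. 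For the fibre condition I would invoke the rigidity lemma: the fibre $p^{-1}(e,y)=\{e\}\times\phi^{-1}(y)$ is contracted by $q$, since $q(e,x)=\phi(a(e,x))=\phi(x)=y$ for every $x\in\phi^{-1}(y)$; and a morphism from $\G\times\X$ to a separated scheme that contracts one fibre of a proper morphism with $\O$-connected fibres over the connected base $\G\times\Y$ automatically contracts all of them. Finally, that $\tilde a$ is a group action is a formality: $\tilde a(e,\cdot)=\mathrm{id}_\Y$ because $\tilde a(e,\phi(x))=\phi(x)$ and $\phi$ is surjective, and associativity follows by precomposing the two candidate morphisms $\G\times\G\times\Y\to\Y$ with the surjection $\mathrm{id}_{\G\times\G}\times\phi$ and using associativity of $a$ together with separatedness of $\Y$.

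The crux is the fibre-contraction step, and this is precisely where connectedness of $\G$ enters in an essential way (the statement fails for disconnected $\G$); it relies on the rigidity lemma, whose own proof uses properness and the hypothesis $\phi_*(\O_\X)=\O_\Y$. The only other technical ingredient is the base-change identity $p_*(\O_{\G\times\X})=\O_{\G\times\Y}$, which is an instance of the flat base change theorem for the flat morphism $\G\times\Y\to\Y$.
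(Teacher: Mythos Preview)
The paper does not actually prove this lemma; it is stated with a citation to \cite[Proposition~4.2.1]{BSU} and used as a black box. Your argument is the standard proof of Blanchard's Lemma (essentially the one given in the cited reference): reduce to connected $\Y$, settle uniqueness by surjectivity of $\mathrm{id}_\G\times\phi$, then use flat base change along $\G\times\Y\to\Y$ to ensure $p_*\O_{\G\times\X}=\O_{\G\times\Y}$ and apply the rigidity lemma to the fibre over $(e,y)$ to factor $\phi\circ a$ through $p$. The axioms for a group action are then checked formally. This is correct and matches the approach of the source the paper invokes, so there is nothing to compare against in the paper itself.
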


In this text, we will use Blanchard's Lemma in the case where $\X$ and $\Y$ are smooth projective surfaces or smooth curves (and more precisely, $\phi$ will be the contraction of $(-1)$-curves or the structure morphism of a $\PP^1$-bundle). Then $\phi$ induces a morphism of algebraic groups $\phi_* : \Autzero(\X) \to \Autzero(\Y)$. The following proposition is a classical result (see also \cite[Proposition 3.11]{LonjouUrech} for a modern proof using actions on $\operatorname{CAT(0)}$ cubes complexes):

\begin{proposition}\label{minimalsurfaces}
	Let $\X$ be a surface and $\G$ be a connected algebraic subgroup of $\Bir(\X)$. Then $\G$ is conjugate to an algebraic subgroup of $\Autzero(\S)$, where $\S$ is a minimal surface i.e.\ a smooth projective surface without $(-1)$-curves.
\end{proposition}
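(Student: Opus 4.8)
The plan is to reduce an arbitrary connected algebraic subgroup $\G \subset \Bir(\X)$ to a regular action on a projective surface, and then run a $\G$-equivariant minimal model program. First I would invoke the Regularization Theorem (Theorem \ref{Weil}) to replace $\X$ by a variety $\Y$ on which $\G$ acts regularly; since everything is birational to a projective surface, I may further assume (after resolving singularities and taking a projective closure, all $\G$-equivariantly by Blanchard's Lemma applied to the resolution morphism) that $\Y$ is a smooth projective surface on which $\G$ acts regularly. Thus the real content is: a connected algebraic group acting regularly on a smooth projective surface $\Y$ can be conjugated by a birational map to a regular action on a minimal smooth projective surface $\S$.

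The key step is the observation that the set of $(-1)$-curves on $\Y$ is finite, and since $\G$ is connected, the induced action of $\G$ on the (finite) set of $(-1)$-curves is trivial — each $(-1)$-curve is $\G$-invariant. Hence the action of $\G$ on $\Y$ descends to an action on the contraction $\Y'$ of any single $(-1)$-curve: one applies Blanchard's Lemma (Lemma \ref{Blanchard}) with $\phi \colon \Y \to \Y'$ the contraction morphism, which is proper with $\phi_*(\O_\Y) = \O_{\Y'}$ and has $\Y'$ smooth projective (Castelnuovo), to obtain a unique regular action of $\G$ on $\Y'$ making $\phi$ equivariant. This contraction strictly decreases the Picard number $\rho(\Y)$, so after finitely many such steps the process terminates at a smooth projective surface $\S$ with no $(-1)$-curves, i.e.\ a minimal surface, on which $\G$ acts regularly. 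Composing all the intermediate birational maps gives the birational map conjugating $\G \subset \Bir(\X)$ into $\Autzero(\S)$; connectedness of $\G$ guarantees its image lands in the identity component $\Autzero(\S)$.

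The main obstacle, or at least the point requiring care, is the passage from the output of Weil regularization to a \emph{smooth projective} surface carrying a regular $\G$-action: the variety $\Y$ produced by Theorem \ref{Weil} need not be projective or smooth. I would handle this by first taking a projective closure and then a resolution of singularities (in characteristic $0$, or using that surfaces are always resolvable), each time transporting the $\G$-action along the relevant proper birational morphism via Blanchard's Lemma (and, where needed, Weil regularization again to make sure the action is regular rather than merely rational on the new model). One must also check that contracting a single $(-1)$-curve at a time is legitimate even when several disjoint $(-1)$-curves exist — this is fine since each is individually $\G$-invariant — and that the termination argument via $\rho$ is valid, which follows because $\rho(\Y') = \rho(\Y) - 1$ at each contraction and $\rho \geq 1$ always. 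None of these steps is deep; the proposition is essentially the classical statement that equivariant minimal model theory for surfaces works verbatim for connected group actions because connectedness forces invariance of the finitely many exceptional curves.
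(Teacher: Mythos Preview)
Your overall strategy matches the paper's: regularize via Weil, pass to a smooth projective model, then contract $(-1)$-curves equivariantly via Blanchard's Lemma. However, two steps contain errors.

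First, the claim that a smooth projective surface has only finitely many $(-1)$-curves is false: the blowup of $\PP^2$ at nine or more general points has infinitely many. The conclusion you want---that each $(-1)$-curve is $\G$-invariant---is nonetheless true, because a $(-1)$-curve is rigid (its normal bundle is $\O(-1)$), so a connected group cannot move it; equivalently, two distinct numerically equivalent $(-1)$-curves would intersect in $-1$, which is impossible. The paper sidesteps this entirely: it simply applies Blanchard's Lemma to the contraction morphism $\Y\to\Y'$, which produces a $\G$-action on the target without any prior check that the contracted curve is invariant.

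Second, and more seriously, Blanchard's Lemma does not run in the direction you invoke for the ``main obstacle''. Blanchard pushes a $\G$-action \emph{forward} along a proper morphism $\phi$ with $\phi_*\O_\X=\O_\Y$; it neither extends an action across an open immersion to a projective closure, nor lifts an action from $\overline{\Y}$ up to a resolution $\widetilde{\Y}\to\overline{\Y}$. Falling back on ``Weil regularization again'' is circular, since the output is once more a possibly non-projective, possibly singular variety. The paper handles this step by citing Brion's equivariant completion theorem to obtain a projective $\overline{\Y}$ already carrying a regular $\G$-action, and then observing that Lipman's resolution proceeds by blowups of singular loci and normalizations---both functorial constructions to which the $\G$-action lifts automatically. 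That is the substantive input you are missing.
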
	

\begin{proof}
	First we can apply the Regularization Theorem of Weil on $\X$  to get a surface $\Y$ birationally equivalent to $\X$ and equipped with a regular action of $\G$. Replace $\Y$ by its smooth locus and from \cite[Theorem 1]{Brion}, there exists a non empty open subset $U$ of $\Y$ which is $\G$-stable and quasi-projective. Then from \cite[Theorem 2]{Brion}, the open $U$ admits a $\G$-equivariant completion into a projective variety $\overline{\Y}$ which can be desingularized (\cite{Lipman}). From \cite[Remark B p.155]{Lipman}, there exists a birational morphism $\delta \colon \widetilde{\Y} \to \overline{\Y}$ such that $\widetilde{\Y}$ is a smooth projective variety and $\delta$ is obtained by successive blowups of singular points and normalizations. Hence the action of $\G$ over $\overline{\Y}$ lifts to $\widetilde{\Y}$ so that $\delta$ is $\G$-equivariant. The contraction of $(-1)$-curves of $\widetilde{\Y}$ is $\G$-equivariant from Blanchard's Lemma, so we conclude that $\G$ is conjugate to an algebraic subgroup of $\Autzero(\S)$, where $\S$ is a minimal surface.
\end{proof}	

Apply Proposition \ref{minimalsurfaces} to a surface $\X$ birationally equivalent to $\C\times \PP^1$ with $\C$ a curve. Then from \cite[Examples V.5.8.2, V.5.8.3 and Remark V.5.8.4]{Hartshorne}, the minimal surface $\S$ is either $\PP^2$ or a ruled surface over $\C$. The following lemma will be useful to check if $\Autzero(\S)$ is a maximal connected algebraic subgroup of $\Bir(\S)$.

\begin{lemma}\label{maximal}
	Let $\S$ be a projective surface and $\G$ be a connected algebraic subgroup of $\Aut(\S)$. Then the following hold:
	\begin{enumerate}
		\item The algebraic subgroup $\G$ is maximal if and only if for every projective surface $\T$ and $\G$-equivariant birational map $\phi\colon \S\dashrightarrow \T$, we have $\phi \G \phi^{-1} = \Autzero(\T)$.
		\item For every projective surface $\T$ and $\G$-equivariant birational map $\phi \colon \S \dashrightarrow \T$, there exist $\beta\colon\X \rightarrow \S$ and $\kappa\colon\X\rightarrow \T$ compositions of blowups of fixed points of the $\G$-action such that $\phi=\kappa\beta^{-1}$.
		\item Assume moreover that $\S$ is a minimal surface and $\Autzero(\S)$ acts on $\S$ without fixed points, then every $\Autzero(\S)$-equivariant birational map $\phi\colon \S\dashrightarrow \T$ with $\T$ projective, is an isomorphism. In particular $\Autzero(S)$ is maximal.
	\end{enumerate}
\end{lemma}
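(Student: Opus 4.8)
The plan is to prove the three statements essentially in the order listed, since (2) is the geometric engine that powers both (1) and (3). For part (1), the "only if" direction is almost immediate: if $\G$ is maximal and $\phi\colon\S\dashrightarrow\T$ is $\G$-equivariant with $\T$ projective, then $\phi\G\phi^{-1}$ is a connected algebraic subgroup of $\Bir(\T)$ that acts regularly on $\T$ (by equivariance), hence is contained in $\Autzero(\T)$, which is itself a connected algebraic subgroup of $\Bir(\S)$ after conjugating back by $\phi$; maximality of $\G$ forces equality. For the "if" direction, suppose $\G$ is not maximal, so $\G\subsetneq \G'$ for some connected algebraic $\G'\subset\Bir(\S)$. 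Applying the Regularization Theorem (Theorem \ref{Weil}) and the equivariant completion/resolution machinery as in the proof of Proposition \ref{minimalsurfaces} to the $\G'$-action, one obtains a $\G'$-equivariant (in particular $\G$-equivariant) birational map $\phi\colon\S\dashrightarrow\T$ to a projective surface on which $\G'$ acts regularly; then $\phi\G\phi^{-1}\subset\phi\G'\phi^{-1}\subset\Autzero(\T)$ with the first inclusion strict, contradicting the hypothesis.

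For part (2), the key input is that a $\G$-equivariant birational map between smooth projective surfaces factors through a common resolution by blowups, and that these blowups can be taken at points fixed by $\G$. Concretely, I would first reduce to the smooth case: resolve the indeterminacy of $\phi$ by a minimal sequence of blowups $\beta\colon\X\to\S$ so that $\kappa:=\phi\circ\beta\colon\X\to\T$ is a morphism. Since $\S$ is smooth, $\beta$ is a composition of blowups of points (on $\S$ and its successive blowups). By Blanchard's Lemma (Lemma \ref{Blanchard}) the $\G$-action lifts along each blowup provided the center is $\G$-stable; by connectedness of $\G$, a $\G$-stable point is a $\G$-fixed point, and the universal property of blowing up shows that the center of each blowup in the minimal resolution of $\phi$ must be $\G$-fixed (a non-fixed center would create a non-$\G$-stable exceptional locus, contradicting equivariance of $\beta$, or more directly: the indeterminacy locus of a $\G$-equivariant map is $\G$-stable, hence a union of $\G$-fixed points since it is finite). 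The same argument applied to $\kappa^{-1}$, together with the standard fact that two birational morphisms of smooth surfaces with the same source factor through each other's blowups, shows $\kappa$ is likewise a composition of blowups of $\G$-fixed points. The main obstacle here is bookkeeping the equivariance through the tower of blowups and arguing cleanly that each successive center is a fixed point of the lifted action rather than merely a stable subscheme — this requires the "connected group $\Rightarrow$ stable point is fixed point" observation at every stage.

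For part (3), combine (2) with the hypothesis that $\Autzero(\S)$ has no fixed point on $\S$: given an $\Autzero(\S)$-equivariant birational $\phi\colon\S\dashrightarrow\T$, part (2) produces $\beta\colon\X\to\S$ a composition of blowups of $\Autzero(\S)$-fixed points, but there are none, so $\beta$ is an isomorphism and hence $\phi=\kappa\beta^{-1}$ is a morphism. It remains to see $\phi$ is an isomorphism: $\kappa\colon\S\xrightarrow{\sim}\X\to\T$ is a birational morphism of smooth surfaces, so if it is not an isomorphism it contracts a $(-1)$-curve; but $\S$ is minimal, so it has no $(-1)$-curve, forcing $\kappa$ to be an isomorphism. (One should note $\T$ is then automatically smooth and minimal as well.) Finally, maximality of $\Autzero(\S)$ follows from part (1): every $\Autzero(\S)$-equivariant birational map to a projective surface $\T$ is an isomorphism, hence conjugates $\Autzero(\S)$ onto $\Autzero(\T)$, which is exactly the criterion in (1). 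I expect part (3) to be short once (1) and (2) are in hand; the real work is entirely in establishing (2).
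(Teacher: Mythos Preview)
Your proposal is correct and follows essentially the same approach as the paper: for (2), the key observation is that the (possibly infinitely near) base locus of a $\G$-equivariant map is $\G$-stable and finite, hence by connectedness of $\G$ consists of fixed points, and the paper invokes exactly this together with the universal property of blowups; parts (1) and (3) are then deduced as you describe. One small slip: Blanchard's Lemma \emph{descends} actions along contractions rather than lifting them along blowups, but since you immediately give the correct mechanism (universal property of the blowup at a $\G$-fixed center) this is only a misattribution, not a gap.
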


\begin{proof} \
	\begin{enumerate}[wide] 
		\item If $\phi\colon\S\dashrightarrow \T$ is a $\G$-equivariant birational map, we have $\phi \G\phi^{-1} \subset \Autzero(\T)$ (see Definition \ref{equivariant}). Since $\G$ is maximal, the inclusion is an equality. Conversely assume by contraposition that $\G$ is not maximal, then it is strictly contained in a connected algebraic subgroup $\H$ of $\Bir(\S)$. From Proposition \ref{minimalsurfaces} there exists a minimal surface $\T$ and a birational map $\phi:\S\dashrightarrow \T$ such that $\H$ is conjugate to a connected algebraic subgroup of $\Autzero(\T)$, i.e.\ $\phi \G \phi^{-1} \varsubsetneq \Autzero(\T)$.
		\item Every birational map $\phi \colon\S \dashrightarrow \T$ can be decomposed as $\phi = \kappa \beta^{-1}$ with $\beta\colon\X \rightarrow \S$ and $\kappa\colon\X\rightarrow \T$ compositions of blowups of smooth points, and we can assume that $\kappa$ and $\beta$ do not contract the same $(-1)$-curves in $\X$. Then for all $g\in \G$:
		\[
		\Bs(\phi)=\Bs(g^{-1}\phi g)=\Bs(\phi g) = g^{-1}(\Bs(\phi)).
		\]
		Therefore, the base points of $\phi$ are fixed points for the $\G$-action, so $\beta$ consists in the blowup of fixed points of the $\G$-action, which is $\G$-equivariant by the universal property of the blowup. Similarly, the morphism $\kappa$ is also $\G$-equivariant.
		\item Because $\S$ is minimal, there is no contraction and since $\Autzero(\S)$ has no fixed point, there is no $\Autzero(\S)$-equivariant blowup. Therefore, every $\Autzero(\S)$-equivariant map from $\S$ to a projective surface $\T$ is an isomorphism from $(2)$. From $(1)$, $\Autzero(\S)$ is maximal .
	\end{enumerate}
\end{proof}

\subsection{Generalities on ruled surfaces}

First we want to classify algebraic subgroups of $\Bir(\C\times \PP^1)$ as stated in Theorem \ref{B}. Then Proposition \ref{minimalsurfaces} suggests studying the maximality of $\Autzero(\S)$ when $\S$ is a minimal surface birationally equivalent to $\C\times \PP^1$, i.e.\ a geometrically ruled surface. Since this object will play an important role, we recall in this subsection the definition and some basic properties.

\begin{definition}\label{defruled}
	A \emph{geometrically ruled surface}, or simply \emph{ruled surface}, is a surface $\S$ equipped with a morphism $\pi \colon\S\to \C$ where $C$ is a smooth curve, and such that all fibers of $\pi$ are isomorphic to $\PP^1$. A \emph{section} of $\S$ is a morphism $\s\colon\C\to \S$ such that $\pi\s=id$. We will also call a \emph{section} the image of $\s$, that is a closed curve $D=\s(\C)$ such that $\pi_{|D}$ is an isomorphism.
\end{definition}

Notice that Definition \ref{defruled} is equivalent to the definition of geometrically ruled surface given in \cite[Section V.2]{Hartshorne}, since Hartshorne mentions that the existence of a section is provided by Tsen's theorem.

\begin{definition} \label{p1bundle} 
	A \emph{$\PP^1$-bundle} $\S$ over a curve $\C$ is a morphism $\pi \colon\S\to \C$ endowed with an open cover $(U_i)_i$ of $\C$ with isomorphisms $g_i\colon\pi^{-1}(U_i)\rightarrow U_i\times \PP^1$, such that for all $i$ the following diagram commutes:
	\[
	\begin{tikzcd}[column sep=small]
	\pi^{-1}(U_i) \arrow[rr,"g_i"] \arrow[dr,swap,"\pi"] &&  U_i\times \PP^1 \arrow[dl,"p_1"]\\
	& U_i, 
	\end{tikzcd}
	\]
	where $p_1$ denotes the projection on the first factor. The morphism $\pi$ is called the \emph{structure morphism} and the open cover $(U_i,g_i)_i$ is called a \emph{trivializing open cover} of $\C$. We denote by $U_{ij}$ the open subset $U_i \cap U_j$ and the \emph{transition maps} are $\tau_{ij}\in \PGL_2(\O_{\C}(U_{ij} ))$ so that $g_i g_j^{-1}$ is equal to:
	\begin{align*}
		U_{ij} \times \PP^1 & \to U_{ij} \times \PP^1 \\
		(x,[u:v]) & \mapsto \left(x,\tau_{ij} (x) \cdot [u:v] \right).
	\end{align*}
	Let $\pi_1 \colon \S_1\to \C$ and $\pi_2 \colon \S_2\to \C$ be $\PP^1$-bundles over $\C$. A \emph{$\C$-isomorphism} (or an isomorphism of $\PP^1$-bundles) $f\colon \S_1\to \S_2$ is an isomorphism of varieties such that $\pi_1 = \pi_2 f$. If moreover $\S_1=\S_2$ then $f$ is called a \emph{$\C$-automorphism} of $\S$. We denote by $\AutC(\S)\subset \Aut(\S)$ the subgroup of $\C$-automorphisms of $\S$.
\end{definition}

From Definition \ref{p1bundle}, we see that a $\PP^1$-bundle over $\C$ is also a ruled surface over $\C$. Conversely, ruled surfaces $\pi\colon \S\to \C$ are also $\PP^1$-bundles over $\C$ (see e.g.\ \cite[Proposition V.2.2]{Hartshorne}). If $\V$ is a vector bundle of rank 2 over $\C$, we denoted by $\PP(\V)$ the $\PP^1$-bundle over $\C$ obtained by projectivization of $\V$ and moreover, all $\PP^1$-bundles over $\C$ are obtained by projectivization of a vector bundle of rank 2 over $\C$ (see e.g.\ \cite[II. Exercise 7.10]{Hartshorne}).

\begin{definition}
	Let $\V$ be a vector bundle of rank $2$. We say that $\V$ is \emph{decomposable} if $\V\simeq \L_1 \oplus \L_2$, for some $\L_1$ and $\L_2$ line subbundles of $\V$. If $\V$ is not decomposable, we say that $\V$ is \emph{indecomposable}. We also say that $\PP(\V)$ is \emph{decomposable} (resp. \emph{indecomposable}) if $\V$ is decomposable (resp. indecomposable).
\end{definition} 

\begin{lemma}\label{automorphismssections}
	Let $\S$ be a $\PP^1$-bundle over a curve $\C$ and let $\s_1$, $\s_2$, $\s_3$ be sections of $\S$. The following hold:
	\begin{enumerate}
		\item There exists a trivialization of $\S$ such that $\s_1$ is the infinite section: i.e.\ for all $U_i$ trivializing open subset of $\C$ we have ${\s_1}_{|U_i}(x) = (x,[1:0])$ and the transition maps of $\S$ are upper triangular matrices:
		\begin{align}
		U_{ij} & \rightarrow  \PGL_2(\O_\C(U_{ij}))\nonumber\\ 
		x & \mapsto
		\begin{bmatrix}
		a_{ij}(x) & c_{ij}(x) \\
		0 & b_{ij}(x)
		\end{bmatrix}. \nonumber
		\end{align}
		\item If $\s_1$ and $\s_2$ are disjoint then there exists a trivialization of $\S$ such that $\s_1$ is the infinite section and $\s_2$ is the zero section, i.e.\ for all $U_i$ trivializing open subset of $\C$ we have ${\s_2}_{|U_i}(x) = (x,[0:1])$. Moreover, the transition maps of $\S$ are diagonal matrices:
		\begin{align}
		U_{ij} & \rightarrow  \PGL_2(\O_\C(U_{ij}))\nonumber\\
		x & \mapsto
		\begin{bmatrix}
		a_{ij}(x) & 0 \\
		0 & b_{ij}(x)
		\end{bmatrix}. \nonumber
		\end{align}
		\item If $\s_1$, $\s_2$, $\s_3$ are pairwise disjoint then there exists a trivialization of $\S$ such that: $\s_1$ is the infinite section, $\s_2$ is the zero section, and $\s_3$ is the section defined on all $U_i$ trivializing open subset of $\C$ as ${\s_3}_{|U_i}(x) = (x,[1:1])$. Moreover, $\S$ is isomorphic to $\C\times \PP^1$.
	\end{enumerate}
\end{lemma}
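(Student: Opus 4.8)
The plan is to prove the three assertions in succession, bootstrapping each from the previous one by the same elementary mechanism. If $(U_i,g_i)_i$ is a trivializing open cover of $\S$ with transition maps $\tau_{ij}\in\PGL_2(\O_\C(U_{ij}))$, and we replace each $g_i$ by $M_i\circ g_i$ for some $M_i\in\GL_2(\O_\C(U_i))$ (with $M_i$ acting on $U_i\times\PP^1$ by $(x,[u:v])\mapsto(x,M_i(x)\cdot[u:v])$), then the new transition maps are $M_i\tau_{ij}M_j^{-1}$ and the local expression $h_i\colon U_i\to\PP^1$ of any section $\s$ (i.e.\ $g_i\circ\s|_{U_i}=(\mathrm{id},h_i)$) becomes $x\mapsto M_i(x)\cdot h_i(x)$. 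So the whole point is to choose the $M_i$ so as to normalise $\s_1$, then $\s_1$ and $\s_2$, then $\s_1,\s_2,\s_3$ simultaneously.

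For (1), I would first refine the trivializing cover so that each $U_i$ is affine and, read through $g_i$, the morphism $\s_1|_{U_i}\colon U_i\to\PP^1$ is presented as $[p_i:q_i]$ with $p_i,q_i\in\O_\C(U_i)$ having no common zero; such a refinement exists because the line bundle $\s_1|_{U_i}^*\O_{\PP^1}(1)$ becomes trivial after removing from $U_i$ the finitely many points in the support of a divisor representing its class. Then $(p_i,q_i)$ is unimodular, so $a_ip_i+b_iq_i=1$ for some $a_i,b_i\in\O_\C(U_i)$, and $N_i=\left(\begin{smallmatrix}p_i&-b_i\\ q_i&a_i\end{smallmatrix}\right)$ is invertible of determinant $1$ with $N_i^{-1}\cdot[p_i:q_i]=[1:0]$; replacing $g_i$ by $N_i^{-1}\circ g_i$ makes $\s_1$ the infinite section $x\mapsto(x,[1:0])$ over every $U_i$. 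The new transition maps send the section $[1:0]$ to itself over $U_{ij}$, hence (after clearing the scalar ambiguity) are represented by upper triangular matrices, which is (1). An equivalent, more structural route: write $\S=\PP(\V)$ for a rank-$2$ bundle $\V$ (remarks after Definition \ref{p1bundle}), observe that $\s_1$ corresponds to a short exact sequence $0\to\mathcal L\to\V\to\mathcal M\to0$ of bundles on $\C$ with $\mathcal L,\mathcal M$ invertible, refine the cover so that $\mathcal L$ and $\mathcal M$ are trivial over each affine $U_i$, split the sequence over $U_i$ (possible since $U_i$ is affine, so $H^1(U_i,\mathcal M^\vee\otimes\mathcal L)=0$), and projectivise the resulting upper triangular cocycle for $\V$.

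Granting (1), parts (2) and (3) are bookkeeping with triangular and diagonal $2\times2$ matrices. For (2): disjointness of $\s_2$ from the infinite section $\s_1$ means $\s_2$ lies in $\S\setminus\s_1$, which over each $U_i$ is $U_i\times(\PP^1\setminus\{[1:0]\})=U_i\times\AA^1$, so $\s_2|_{U_i}(x)=(x,[\alpha_i(x):1])$ with $\alpha_i\in\O_\C(U_i)$; applying $M_i=\left(\begin{smallmatrix}1&-\alpha_i\\0&1\end{smallmatrix}\right)$ fixes $[1:0]$, sends $[\alpha_i:1]$ to $[0:1]$, keeps the transition maps upper triangular, and now they fix both $[1:0]$ and $[0:1]$, hence are diagonal. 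For (3): after (2) write $\tau_{ij}$ as the class of $\mathrm{diag}(a_{ij},b_{ij})$ and set $\lambda_{ij}=a_{ij}b_{ij}^{-1}\in\O_\C(U_{ij})^\times$, a \v{C}ech $1$-cocycle; disjointness of $\s_3$ from $\s_1$ and $\s_2$ forces $\s_3|_{U_i}(x)=(x,[\gamma_i(x):1])$ with $\gamma_i\in\O_\C(U_i)^\times$, and the compatibility $[\gamma_i:1]=\tau_{ij}\cdot[\gamma_j:1]$ on $U_{ij}$ reads $\gamma_i=\lambda_{ij}\gamma_j$, so $(\lambda_{ij})$ is the coboundary of $(\gamma_i)$. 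Applying $M_i=\mathrm{diag}(\gamma_i^{-1},1)$ fixes $[1:0]$ and $[0:1]$, sends $[\gamma_i:1]$ to $[1:1]$, and turns every transition map into $\mathrm{diag}(\gamma_i^{-1}\lambda_{ij}\gamma_j,1)=\mathrm{id}$, so $\S\simeq\C\times\PP^1$; this proves (3).

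The only genuinely delicate step is the reduction at the start of (1): one must pass to a refinement of the trivializing cover over which $\s_1$ becomes ``linear'' (the infinite section). The subtlety is precisely that an affine curve may have nontrivial Picard group, so one cannot assume the relevant line bundle — equivalently, a global presentation $[p_i:q_i]$ of $\s_1$, or triviality of $\mathcal L$ and $\mathcal M$ — on the original $U_i$; shrinking each $U_i$ by the support of a divisor representing the obstruction removes it. Everything afterwards (completing a unimodular $2$-vector, or splitting a short exact sequence over an affine scheme, followed by the triangular/diagonal matrix computations in (2) and (3), including the coboundary observation in (3)) is routine.
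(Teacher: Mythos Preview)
Your proof is correct and, for parts (2) and (3), essentially identical to the paper's. For (1) the paper takes a slightly lighter route: instead of trivializing $\s_1^*\O_{\PP^1}(1)$ over each $U_i$ and completing the resulting unimodular row to an $\mathrm{SL}_2$ matrix, it simply refines the cover so that on each $U_i$ one of the two homogeneous coordinates of $\s_1$ is already a unit (equivalently, $\s_1|_{U_i}$ lands in a standard affine chart of $\PP^1$), and then applies an explicit matrix of the form $\left(\begin{smallmatrix}1&0\\ *&*\end{smallmatrix}\right)$ or $\left(\begin{smallmatrix}0&1\\ *&*\end{smallmatrix}\right)$, bypassing the unimodular-completion step and the Picard-group discussion altogether.
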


\begin{proof}\
	\begin{enumerate}[wide] 
		\item 
		Let $(U_i)_i$ be a trivializing open cover of $\C$. For all $i$, we have ${\s_1}_{|U_i}(x)= (x,[u_{1i}(x):v_{1i}(x)])$ with $u_{1i},v_{1i}\in \O_\C(U_i)$. If $u_{1i}$ and $v_{1i}$ both vanish at $z\in U_i$ with respectively multiplicities $m_u$ and $m_v$, $m=\min (m_u,m_v)$, and $f$ is a local parameter at $z$, then ${\s_1}_{U_i}(x)=(x,[u_{1i}(x)/f(x)^m:v_{1i}(x)/f(x)^m])$. We can assume that $u_{1i}$ and $v_{1i}$ do not vanish simultaneously and by refining the open cover $(U_i)_i$, we can also assume that either $u_{1i}\in {\O_\C}^*(U_i)$ or $v_{1i}\in {\O_\C}^*(U_i)$. Then one can compose ${\s_1}_{|U_i}$ and the charts on the left by the automorphisms of $U_i\times \PP^1$:	
		\[
		(x,[u:v]) \mapsto  \left\{
		\begin{array}{ll}
		\left(x,
		\begin{bmatrix}
		1 & 0 \\
		-v_{1i}(x) & u_{1i}(x)
		\end{bmatrix}
		\cdot \begin{bmatrix}
		u\\v
		\end{bmatrix}
		\right)
		\mbox{if } u_{1i}(x)\neq 0\mbox{ on } U_i, \vspace{0.05cm}\\
		\left(x,
		\begin{bmatrix}
		0 & 1 \\
		-v_{1i}(x) & u_{1i}(x)
		\end{bmatrix}
		\cdot \begin{bmatrix}
		u\\v
		\end{bmatrix}
		\right)
		\mbox{if } v_{1i}(x)\neq 0\mbox{ on } U_i.
		\end{array}
		\right.
		\]
		Under this trivialization of $\S$, the section $\s_1$ is the infinite section and $[1:0]$ is preserved by the transition maps, which have to be upper triangular matrices.
		\item First apply (1) so that $\s_1$ is the infinite section. If ${\s_2}_{|U_i}(x)= (x,[u_{2i}(x):v_{2i}(x)])$, we also can assume $v_{2i}\in \O^*_\C(U_i)$ as in $(1)$ if needed. Then we compose by the following automorphisms of $U_i\times \PP^1$:
		\[
		(x,[u:v]) \mapsto  
		\left(x,
		\begin{bmatrix}
		v_{2i}(x) & -u_{2i}(x) \\
		0 & 1
		\end{bmatrix}
		\cdot \begin{bmatrix}
		u\\v
		\end{bmatrix}
		\right).
		\]
		Under this trivialization of $\S$, the section $\s_1$ remains the infinite section and $\s_2$ is the zero section. Moreover, $[1:0]$ and $[0:1]$ are preserved by the transition maps, which have to be diagonal matrices.
		\item First apply (2) so that $\s_1$ is the infinite section and $\s_2$ is the zero section. On the trivializing open subset $U_i$, we can write $\s_3(x)= (x,[u_{3i}(x):v_{3i}(x)])$ with $u_{3i}, v_{3i}\in \O^*_\C(U_i)$ as in $(1)$. Then we compose by the following automorphisms of $U_i\times \PP^1$:
		\[
		(x,[u:v]) \mapsto  
		\left(x,
		\begin{bmatrix}
		1/u_{3i}(x) & 0 \\
		0 & 1/v_{3i}(x)
		\end{bmatrix}
		\cdot \begin{bmatrix}
		u\\v
		\end{bmatrix}
		\right).
		\]
		Under this trivialization of $\S$, the sections $\s_1$ and $\s_2$ remain respectively the infinite section and the zero section; and $\s_3$ is the constant section $x\mapsto (x,[1:1])$ on every trivializing open subset of $\C$. It implies that the transition maps of $\S$ are the identity matrices i.e.\ $\S$ is trivial.
	\end{enumerate}
\end{proof}

\begin{lemma}\label{sub<->section}
	Let $\PP(\V)$ be a $\PP^1$-bundle over a curve $\C$, and $\s$ be a section of $\PP(\V)$ given locally by:
	\begin{align*}
		\s_i \colon U_i & \to U_i \times \PP^1 \\
		x & \mapsto (x,[u_i(x):v_i(x)]).
	\end{align*}
	For all $i$, we define $\L_i = \{ \left(x,(\lambda u_i(x),\lambda v_i(x) )\right) \in U_i \times \AA^2, \lambda \in \kk\}\simeq U_i \times \AA^1$ and the line subbundle $\pi\colon \L(\s) \to \C$ of $\V$ such that $\pi^{-1} (U_i) = \L_i$. Then the following hold:
	\begin{enumerate}
		\item The map $\s \mapsto \L(\s)$ is a bijection between the set of sections of $\PP(\V)$ and the set of line subbundles of $\V$ over $\C$.
		\item Two sections $\s_1$ and $\s_2$ are disjoint if and only if $\PP(\V)$ is $\C$-isomorphic to $\PP(\L(\s_1)\oplus \L(\s_2))$.
	\end{enumerate} 
\end{lemma}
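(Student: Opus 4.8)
The plan is to exploit the classical correspondence between sections of a projective bundle and rank-one quotients (equivalently sub-line-bundles after dualizing or, more directly here, sub-line-bundles of $\V$), and then to translate the disjointness condition into a splitting statement. For part (1), I would first check that $\L(\s)$ is well-defined as a line subbundle: on an overlap $U_{ij}$ the local generators $(u_i,v_i)$ and $(u_j,v_j)$ of the fibers of $\L(\s)$ represent the same point of $\PP^1$ after applying the transition map $\tau_{ij}$, so they differ by multiplication by a nowhere-vanishing function on $U_{ij}$; these functions satisfy the cocycle condition and hence glue the local pieces $\L_i\simeq U_i\times\AA^1$ into a genuine line subbundle of $\V$. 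The inclusion $\L(\s)\hookrightarrow\V$ is the key structural datum. To see this assignment is a bijection, I would construct the inverse: given a line subbundle $\L\hookrightarrow\V$, projectivizing the inclusion gives a closed embedding $\PP(\L)\hookrightarrow\PP(\V)$ over $\C$, and since $\PP(\L)\to\C$ is an isomorphism (a $\PP^0$-bundle), this is a section. The two constructions are visibly mutually inverse on the level of local data, so (1) follows.

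For part (2), suppose first that $\PP(\V)$ is $\C$-isomorphic to $\PP(\L(\s_1)\oplus\L(\s_2))$. Under this identification, $\s_1$ and $\s_2$ correspond to the sub-line-bundles $\L(\s_1)\oplus 0$ and $0\oplus\L(\s_2)$; fiberwise these are distinct lines in the $2$-dimensional vector space, so the sections never meet, i.e.\ they are disjoint. Conversely, assume $\s_1$ and $\s_2$ are disjoint. Then for every $x\in\C$ the lines $\L(\s_1)_x$ and $\L(\s_2)_x$ inside $\V_x$ are distinct, hence $\V_x=\L(\s_1)_x\oplus\L(\s_2)_x$; therefore the natural bundle map $\L(\s_1)\oplus\L(\s_2)\to\V$, which is an injection that is an isomorphism on every fiber, is an isomorphism of vector bundles. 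Applying $\PP(-)$ gives the desired $\C$-isomorphism $\PP(\L(\s_1)\oplus\L(\s_2))\simeq\PP(\V)$ carrying the standard sections to $\s_1$ and $\s_2$.

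The only mildly delicate point — and the one I would write out carefully — is the fiberwise-isomorphism-implies-isomorphism step for the map $\L(\s_1)\oplus\L(\s_2)\to\V$: one checks it is a map of locally free sheaves of the same rank whose induced map on each fiber $\otimes\,\kappa(x)$ is bijective, so its cokernel is a coherent sheaf with empty support, hence zero, and then injectivity of a surjection between locally free sheaves of equal rank forces it to be an isomorphism. Alternatively, and perhaps more in the elementary spirit of the paper, I would invoke Lemma \ref{automorphismssections}(2): disjointness of $\s_1,\s_2$ lets one choose a trivialization in which $\s_1$ is the infinite section, $\s_2$ the zero section, and the transition maps are diagonal, $\mathrm{diag}(a_{ij},b_{ij})$; these diagonal cocycles are exactly the ones defining $\PP(\L(\s_1)\oplus\L(\s_2))$ with $\L(\s_1)$ glued by $(a_{ij})$ and $\L(\s_2)$ by $(b_{ij})$, giving the $\C$-isomorphism directly. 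I expect no real obstacle here; the lemma is essentially a bookkeeping exercise once the section–subbundle dictionary is set up, and the main care needed is consistency of local data across overlaps.
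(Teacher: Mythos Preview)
Your proposal is correct and matches the paper's approach: for (1) the paper constructs the same inverse (projectivize $\L\hookrightarrow\V$ locally and glue), and for (2) the paper uses precisely your second alternative via Lemma~\ref{automorphismssections}(2), reading off $\L(\s_1)$ and $\L(\s_2)$ from the diagonal cocycles $a_{ij},b_{ij}$. Your first argument for (2) (the fiberwise isomorphism of $\L(\s_1)\oplus\L(\s_2)\to\V$) is a sound and slightly more conceptual variant that the paper does not spell out, but since you also give the trivialization argument, there is no substantive divergence.
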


\begin{proof}\
	\begin{enumerate}[wide]
		\item 
		 If $\L$ is a line subbundle of $\V$, we have for all $i$ an embedding $\L_{|U_i}\hookrightarrow \V_{|U_i}$ which induces by projectivisation an embedding $\s_i\colon U_i \to U_i \times \PP^1$. Since the family $(\L_{|U_i})_i$ glues into $\L$, the morphism $(\s_i)_i$ glue into a section $\s$ of $\PP(\V)$. This construction is the inverse of the map $\s\mapsto \L(\s)$.
		\item Let $\s_1$ and $\s_2$ be disjoint sections of $\PP(\V)$. From Lemma \ref{automorphismssections} (2), we can assume that $\s_1$ is the infinite section and $\s_2$ is the zero section, and the transition maps of $\PP(\V)$ are:
		\begin{align}
		U_{ij} & \rightarrow  \PGL_2(\O_\C(U_{ij}))\nonumber\\
		x & \mapsto
		\begin{bmatrix}
		a_{ij}(x) & 0 \\
		0 & b_{ij}(x)
		\end{bmatrix}. \nonumber
		\end{align}
		The coefficients $a_{ij}$ and $b_{ij}$ don't vanish on $U_{ij}$, and we can choose $x\mapsto a_{ij}(x)$ as the transition maps of $\L(\s_1)$ and $x\mapsto b_{ij}(x)$ as the transition maps of $\L(\s_2)$, i.e.\ $\PP(\V)\simeq \PP(\L(\s_1)\oplus \L(\s_2))$. Conversely if we have $\PP(\V)\simeq \PP(\L(\s_1)\oplus \L(\s_2))$, one can choose a trivializing open cover so that the transition maps are given by diagonal matrices. Under this choice, the section $\s_1$ is the zero section and $\s_2$ is the infinite section, thus they are disjoint.
	\end{enumerate}
\end{proof}

\subsection{Segre invariant}

In this subsection we recall the Segre invariant and its properties. This invariant has already been used by Maruyama in his classification of ruled surfaces \cite{Maruyama2}. One can also check that the Segre invariant corresponds to $-e$, where $e$ is the invariant defined in \cite[V. Proposition 2.8]{Hartshorne}. If $c$ and $c'$ are curves in a smooth projective surface $\S$, we denote by $c\cdot c'$ their intersection number and if $c=c'$, we denote it by $c^2$.

\begin{definition}\label{defsegre}
	Let $\S\to \C$ be a ruled surface. The \emph{Segre invariant} $\seg(\S)$ of $\S$ is defined as the quantity:
	\[
	\min \{\s^2, \s\text{ section of } \S\}.
	\]
	A \emph{minimal section} of $\S$ is a section $\s$ of $\S$ such that $\s^2 = \seg(\S)$.
\end{definition}

The Segre invariant is well-defined since all ruled surfaces are obtained from $\C\times \PP^1$ by finitely many elementary transformations and $\seg(\C\times \PP^1)=0$ (see Lemma \ref{sectiontrivialbundle}).

\begin{definition}
	A line subbundle $\M$ of a vector bundle $\V$ is \emph{maximal} if its degree is maximal among all line subbundles of $\V$.
\end{definition}

One can use Riemann-Roch theorem to show that the degree of line subbundles is bounded above, but it follows also from Proposition \ref{selfintersectiondegree} and the fact that the Segre invariant is well-defined.

In explicit computations, we will often use that the group of divisors up to numerical equivalence of a ruled surface $S$ is generated by the class of a section $\sigma$ and a fibre $f$, and they satisfy $f^2=0$ and $\sigma\cdot f =1$ (see \cite[Proposition V.2.3]{Hartshorne}). The next lemma is partially contained in \cite[Exercise V.2.4]{Hartshorne}. 
\begin{lemma}\label{sectiontrivialbundle}
	Let $\C$ be a curve of genus $\g$ and $\s$ be a section of $\C\times \PP^1$ defined as $\C \overset{\s}{\rightarrow} \C\times \PP^1$, $x \mapsto (x,g_\s(x))$ where $g_\s:\C\rightarrow \PP^1$ is a morphism. Then:
	\[
	\s^2 = 2\deg(g_\s),
	\]
	and in particular, each section of $\C\times \PP^1$ has an even and non negative self-intersection number. In particular, $\seg(\C\times \PP^1)=0$. Moreover, if $\g>0$ then there is no section of self-intersection $2$, and if $\g=1$ then there exist sections of self-intersection $4$.
\end{lemma}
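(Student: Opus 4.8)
The plan is to compute $\s^2$ directly using the fact that $\C\times\PP^1$ carries the obvious two rulings, which gives a preferred basis for the numerical group. Write $f$ for the class of a fibre of $\pi\colon \C\times\PP^1\to\C$ and let $\s_0$ denote the constant section $x\mapsto(x,[0:1])$, so that $\s_0^2 = 0$ (two distinct constant sections are disjoint, hence $\s_0\cdot\s_0' = 0$, and numerically $\s_0\equiv\s_0'$). By \cite[Proposition V.2.3]{Hartshorne} the classes $\s_0$ and $f$ generate the numerical group with $f^2 = 0$, $\s_0\cdot f = 1$. First I would write $\s \equiv \s_0 + k f$ for some integer $k$, and identify $k$: intersecting with $f$ gives no information ($\s\cdot f = 1 = \s_0\cdot f$), so instead intersect with $\s_0$. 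We have $\s\cdot\s_0 = k$, and this intersection number counts (with multiplicity) the points where the graph of $g_\s$ meets the constant section at $[0:1]$, i.e.\ the preimage $g_\s^{-1}([0:1])$ counted with multiplicity, which is exactly $\deg(g_\s)$. Hence $k = \deg(g_\s)$ and $\s^2 = (\s_0 + kf)^2 = 2k = 2\deg(g_\s)$. The evenness and non-negativity are then immediate since $\deg(g_\s)\geq 0$, and taking $\s = \s_0$ (the case $g_\s$ constant, $\deg = 0$) shows $\seg(\C\times\PP^1) = 0$ once one notes no section can have negative self-intersection here.

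For the last two assertions I would argue via the classification of morphisms $g_\s\colon\C\to\PP^1$. If $\g > 0$, a section of self-intersection $2$ would force $\deg(g_\s) = 1$, i.e.\ $g_\s$ an isomorphism $\C\xrightarrow{\sim}\PP^1$, which is impossible since $\g > 0$; so no section of self-intersection $2$ exists. If $\g = 1$, I need to exhibit a section of self-intersection $4$, i.e.\ a degree-$2$ morphism $\C\to\PP^1$: any elliptic curve admits one, for instance the quotient by the hyperelliptic involution $x\mapsto -x$ (the map associated to the linear system $|2\cdot O|$), which has degree $2$. This yields $\s^2 = 4$.

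The only genuinely delicate point is the identification $\s\cdot\s_0 = \deg(g_\s)$: one must check that the scheme-theoretic intersection of the graph $\Gamma_{g_\s} = \{(x, g_\s(x))\}$ with $\C\times\{[0:1]\}$ inside $\C\times\PP^1$ has length equal to $\deg(g_\s)$. This follows because the intersection is isomorphic (via $\pi$) to the fibre $g_\s^{-1}([0:1])$ as a subscheme of $\C$, and the degree of that fibre is $\deg(g_\s)$ by flatness of $g_\s$ (a non-constant morphism of smooth curves is finite flat). If one wants to avoid invoking flatness, an alternative is to pick a point $[a:1]\in\PP^1$ not in the branch locus of $g_\s$; then $g_\s^{-1}([a:1])$ consists of $\deg(g_\s)$ reduced points, the corresponding constant section is numerically equivalent to $\s_0$, and transversality gives $\s\cdot\s_0 = \deg(g_\s)$ on the nose. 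Everything else is routine bilinear-form bookkeeping in the rank-$2$ numerical group.
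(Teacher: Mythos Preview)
Your proof is correct and follows essentially the same route as the paper: write $\s$ in the numerical basis $\{\s_c, f\}$ given by a constant section and a fibre, identify the fibre coefficient as $\deg(g_\s)$ via the intersection $\s\cdot\s_c$, and then read off $\s^2 = 2\deg(g_\s)$; the claims for $\g>0$ and $\g=1$ are handled identically by the (non)existence of degree-$1$ and degree-$2$ maps $\C\to\PP^1$. Your discussion of the scheme-theoretic intersection and the ``general constant section'' alternative is exactly what the paper invokes in one line (``for a general constant section the quantity $\s_c\cdot \s$ corresponds to $\deg(g_\s)$''), so you have simply made that step more explicit.
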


\begin{proof}
	Sections of $\C\times \PP^1$ are of the form $\C \overset{\s}{\rightarrow} \C\times \PP^1$, $x \mapsto (x,g_\s(x))$ where $g_\s:\C\rightarrow \PP^1$ is a morphism. Moreover, the group of $\operatorname{Num}(\C\times \PP^1)$ is generated by the class of a constant section $\s_c$ and the class of a fiber $f$ (\cite[Proposition V.2.3]{Hartshorne}). Therefore, any section $\s$ of $\C\times \PP^1$ is numerically equivalent to $a\s_c + bf$ for some integers $a$, $b$. Intersecting $\s$ with $f$ and with $\s_c$, one finds respectively that $a=1$ and $b=\s_c\cdot \s$. Since all constant sections are linearly equivalent, and for a general constant section the quantity $\s_c\cdot \s$ corresponds to $\deg(g_\s)$, we get that $\s\equiv \s_c + \deg(g_\s)f$. Consequently, we have $\s^2=2\deg(g_\s)\geq 0$. Because constant sections have self-intersection $0$, it follows that $\seg(\C\times \PP^1)=0$. If moreover $\g>0$ then there does not exist a morphism $\C\rightarrow \PP^1$ of degree $1$ and it implies that there is no section of self-intersection $2$ in $\C\times \PP^1$. But if $\g=1$ then there exist morphisms $\C \to \PP^1$ of degree $2$, hence there exist sections of self-intersection $4$ in $\C\times \PP^1$.
\end{proof}

  In \cite[Lemma 1.15]{Maruyama2} has been stated Corollary \ref{Segre} which provides an alternative way to compute the Segre invariant of a ruled surface. However, it is a consequence of the more general and useful statement given in Proposition \ref{selfintersectiondegree}, which also follows from Maruyama's proof. We give a simple proof of Proposition \ref{selfintersectiondegree} based on a direct computation in local charts.

\begin{proposition}\label{selfintersectiondegree}
	Let $\pi \colon\PP(\V)\rightarrow \C$ be a $\PP^1$-bundle and $\s$ be a section. Then the following equality holds:
	\[
	\s^2 = \deg(\V) - 2\deg(\L(\s)),
	\]
	where $\deg(\V)$ is the degree of the determinant line bundle of $\V$.
\end{proposition}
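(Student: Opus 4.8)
The plan is to prove the formula $\s^2 = \deg(\V) - 2\deg(\L(\s))$ by a direct computation in local trivializing charts, exactly in the spirit announced just before the statement. First I would fix a trivializing open cover $(U_i, g_i)_i$ of $\C$ for $\PP(\V)$, with transition maps $\tau_{ij} \in \PGL_2(\O_\C(U_{ij}))$, and lift these to transition matrices in $\GL_2(\O_\C(U_{ij}))$ for $\V$ itself; write $d_{ij} \in \O_\C^*(U_{ij})$ for their determinants, so that $\deg(\V)$ is computed as the degree of the line bundle with transition functions $d_{ij}$ (i.e.\ $\deg(\V) = \sum \operatorname{ord}$ of a rational section of $\det\V$, or equivalently via the cocycle $(d_{ij})$). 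Using Lemma \ref{automorphismssections}(1), I may choose the trivialization so that $\s$ is the infinite section, i.e.\ $\s_{|U_i}(x) = (x,[1:0])$ and the transition maps are upper triangular, $\tau_{ij} = \begin{bmatrix} a_{ij} & c_{ij} \\ 0 & b_{ij}\end{bmatrix}$; then by Lemma \ref{sub<->section} the subbundle $\L(\s)$ has transition functions $x \mapsto a_{ij}(x)$, so $\deg(\L(\s))$ is the degree of the line bundle given by the cocycle $(a_{ij})$, and $d_{ij} = a_{ij} b_{ij}$ up to the determinant convention, whence $\deg(\V) - 2\deg(\L(\s))$ is the degree of the cocycle $(b_{ij}/a_{ij})$.

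The core of the argument is then to compute $\s^2$ as the degree of the normal bundle of the curve $D = \s(\C)$ in $\PP(\V)$. In the chart $U_i \times \PP^1$ with affine coordinate $t = v/u$ on the $\PP^1$-factor, the section $D$ is the locus $\{t = 0\}$, and a local equation for $D$ is $t$ itself; the normal bundle $N_{D/\PP(\V)}$ is the line bundle on $C$ whose fibre over $x \in U_i$ is spanned by $\partial/\partial t$, and $\s^2 = \deg D = \deg N_{D/\PP(\V)}$. The only thing to check is how the coordinate $t$ transforms under the transition maps: if on $U_{ij}$ we have $[u_i : v_i] = \tau_{ij}\cdot[u_j:v_j]$ with $\tau_{ij}$ upper triangular, then $t_i = v_i/u_i = (b_{ij} v_j)/(a_{ij} u_j + c_{ij} v_j)$, and restricting to $D$ (where $v_j = 0$, hence $t_j = 0$ and the denominator is $a_{ij} u_j$) and differentiating, $\partial t_i / \partial t_j\big|_{t_j = 0} = b_{ij}/a_{ij}$. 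Hence $N_{D/\PP(\V)}$ has transition cocycle $(b_{ij}/a_{ij})$, so $\s^2 = \deg N_{D/\PP(\V)}$ equals the degree of that cocycle, which is precisely $\deg(\V) - 2\deg(\L(\s))$ by the bookkeeping of the previous paragraph. I would finish by noting that the left side is independent of the chosen trivialization, so the equality holds in general.

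The step I expect to require the most care is the bookkeeping of degrees and conventions: being consistent about whether one lifts $\tau_{ij}$ to $\GL_2$ with $\det = d_{ij}$ or normalizes $\det = 1$, about the sign convention relating $\seg(\S)$ to Hartshorne's $e$ (already flagged in the paper), and about the identification $\deg(\text{line bundle}) = \deg(\text{its cocycle})$ with the correct sign. A clean way to sidestep ambiguity is to pick a rational section of $\V$ compatible with $\L(\s)$ and literally count zeros and poles: take a rational section $s$ of $\L(\s) \subset \V$, extend to a rational frame $(s, s')$ of $\V$, so that $\div(s \wedge s')$ represents $\det\V$ and $\div(s)$ (inside $\L(\s)$) represents $\L(\s)$; then the formula $\s^2 = \deg\det\V - 2\deg\L(\s)$ becomes a divisor identity that can be verified point by point on $\C$. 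Either route is elementary; the substance is entirely in the transformation rule $\partial t_i/\partial t_j = b_{ij}/a_{ij}$ on $D$, and everything else is cocycle arithmetic, so no genuine obstacle arises, only the need to keep signs straight. One should also remark, after the proof, that Corollary \ref{Segre} follows by taking $\s$ to be a maximal line subbundle, since then $-2\deg\L(\s)$ is minimized and $\s^2$ attains $\seg(\S)$.
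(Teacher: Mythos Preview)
Your argument is correct. The key identity $\s^2 = \deg N_{D/\PP(\V)}$ for a smooth curve $D$ on a smooth surface is standard (it is the isomorphism $\O_{\PP(\V)}(D)|_D \simeq N_{D/\PP(\V)}$), and your cocycle computation $\partial t_i/\partial t_j|_{t_j=0} = b_{ij}/a_{ij}$ is right; this exhibits $N_{D/\PP(\V)} \simeq \L(\s)^{-1}\otimes (\V/\L(\s))$, whose degree is $\deg(\V)-2\deg(\L(\s))$ by additivity on $0\to\L(\s)\to\V\to\V/\L(\s)\to 0$. The remark about Corollary~\ref{Segre} at the end is also correct.

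The paper reaches the same formula by a different local computation. After putting $\s$ at infinity with upper triangular transition maps as you do, instead of invoking the normal bundle it introduces the auxiliary zero section $\s_0$ on a trivializing open $U_0$ and the rational function $f=u/v$ on $\PP(\V)$, writes $\div(f)=\s_0-\s+\sum_{z\notin U_0}\nu_{\pi^{-1}(z)}(f)\,\pi^{-1}(z)$, and intersects with $\s$ to get $\s^2=\s\cdot\s_0+\sum_z\nu_{\pi^{-1}(z)}(f)$. Each term is then evaluated explicitly in terms of the orders of $a_{0z},b_{0z},c_{0z}$ at the finitely many points $z\in\C\setminus U_0$, and the sum telescopes to $\deg(\V/\L(\s))-\deg(\L(\s))$. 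Your route is shorter and more conceptual, but it imports the fact $D^2=\deg N_D$; the paper's route is more self-contained, working directly with principal divisors and valuations and never naming the normal bundle. Both are ``direct computations in local charts'' in the spirit announced, and both identify the cocycles $(a_{ij})$ and $(b_{ij})$ with $\L(\s)$ and $\V/\L(\s)$ respectively; the substantive difference is only in how $\s^2$ is accessed.
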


\begin{proof}
	From Lemma \ref{automorphismssections} (1), we can assume that $\s$ is the infinite section and the transition maps of $\PP(\V)$ are upper triangular matrices. Let $(U_0,g_0)$ be a trivializing open subset of $\V$, and let $\s_0$ and $f_0$ be defined as below:
	\begin{align*}
		\s_0\colon U_0 & \longrightarrow U_0 \times \PP^1 & f_0\colon U_0  \times \PP^1 & \dashrightarrow \kk \\
		x & \longmapsto (x,[0:1]) ,&  (x,[u:v])  & \longmapsto \frac{u}{v}.
	\end{align*}
	 The morphism $\s_0$ extends to a section defined over $\C$ which is disjoint from $\s$ on $U_0$, and $f_0$ extends to a rational function $f$ over $\PP(\V)$. We have:
	\[
	 \div(f)  = \s_0 - \s + \sum_{z \in \C\setminus U_0} \nu_{\pi^{-1}(z)}(f)\cdot \pi^{-1}(z),
	\]
	where $\nu_{\pi^{-1}(z)}$ denotes the valuation of the fiber $\pi^{-1}(z)$. In consequence:
	\begin{equation}
	\s^2 = \s\cdot \s_0 + \sum_{z \in \C\setminus U_0} \nu_{\pi^{-1}(z)}(f). \label{eqs^2}
	\end{equation}
    The subset $\C\setminus U_0$ has finitely many points and for each $z\in \C\setminus U_0$, we can choose a trivializing open neighborhood $(U_{z},g_{z})$ of $z$ and we denote by $\tau_{0z}$ the transition map defining $g_0g_z^{-1}$:
    \begin{align*}
    \tau_{0z}\colon U_{0z} & \rightarrow  \PGL_2(\O_\C(U_{0z}))\\
    x & \mapsto
    \begin{bmatrix}
    a_{0z}(x) & c_{0z}(x) \\
    0 & b_{0z}(x)
    \end{bmatrix}.
    \end{align*}
    The coordinates of the section $\s_0$ above $U_{0z}\subset U_z$ are solutions of the equation $\tau_{0z}(x)\cdot [u(x):v(x)] = [0:1]$, i.e.\ $g_zg_0^{-1}\sigma_0(x) = (x,[-c_{0z}(x):a_{0z}(x)])$. The rational map $x\mapsto [-c_{0z}(x):a_{0z}(x)]$ extends to $U_z$ and if we denote by $\nu_z$ the valuation in $\O_{\C,z}$, then the two sections $\s$ and $\s_0$ intersect above $z$ if and only if $\nu_z(c_{0z}) < \nu_z(a_{0z})$. When they intersect, the intersection number equals $\nu_z(a_{0z}) - \nu_z(c_{0z})$: this quantity is independent of the choice of the trivializing open subset $U_z$ and of the choice of $a_{0z},b_{0z},c_{0z}$. It implies that: 
    \begin{equation}
     	\s\cdot \s_0= \sum_{z \in \C\setminus U_0} \max(\nu_z(a_{0z})-\nu_z(c_{0z}),0). \label{ss0}
    \end{equation}
   Moreover, we have $f_{|U_z} (x,[u:v])= f_0 \left(x,\tau_{0z}(x)\cdot[u:v]\right) = \frac{a_{0z}(x)u + c_{0z}(x)v}{b_{0z}(x)v}$ since the following diagram is commutative:
   \[
   \begin{tikzcd}[column sep=small]
   {U_{0z}}\times \PP^1 \arrow[rr,"g_0 g_z^{-1}"] \arrow[dr,dashed,"f_{|U_z}" swap] && {U_{0z}}\times \PP^1 \arrow[dl,dashed,"f_0"]  \\ 
   &  \kk.
   \end{tikzcd}
   \]
   Let $r=\min( \nu_z(a_{0z}) , \nu_z(c_{0z}) )$ and $s=\nu_z(b_{0z})$. If $t$ is a generator of the maximal ideal $\mathfrak{m}_{\C,z}\subset \O_{\C,z}$ then there exist $\widetilde{a}_{0z},\widetilde{b}_{0z},\widetilde{c}_{0z}\in \O_{\C}(U_{0z})$ with $\widetilde{b}_{0z} \in \O_{\C,z}^*$ and $\widetilde{a}_{0z}$ or $\widetilde{c}_{0z}$ in $\O_{\C,z}^*$, such that $a_{0z} = t^r \widetilde{a}_{0z}$, $c_{0z} = t^r \widetilde{c}_{0z}$ and $b_{0z}(x) = t^s \widetilde{b}_{0z}$. We obtain:
   \[
   \nu_{\pi^{-1}(z)}(f)  =  \nu_{\pi^{-1}(z)} \left( \frac{a_{0z}(x)u + c_{0z}(x)v}{b_{0z}(x)v} \right)  = (r-s)+\nu_{\pi^{-1}(z)}\left(\frac{\widetilde{a}_{0z}(x)u+\widetilde{c}_{0z}(x)v}{\widetilde{b}_{0z}(x)v} \right), 
   \] 
   and $\nu_p \left( \frac{\widetilde{a}_{0z}(x)u+\widetilde{c}_{0z}(x)v}{\widetilde{b}_{0z}(x)v}\right)=0$ for a general point $p\in \pi^{-1}(z)$. Therefore:
   \begin{equation}
		   \nu_{\pi^{-1}(z)}(f) = r-s=\min(\nu_z(a_{0z}),\nu_z(c_{0z}) ) - \nu_z(b_{0z} ), \label{valuationfiber}
   \end{equation}
	which is also independent of the choice of the trivializing open $U_z$ and of the choice of $a_{0z},b_{0z},c_{0z}$. Then by substituting (\ref{ss0}) and (\ref{valuationfiber}) in (\ref{eqs^2}), we get:
	\[
	\s^2  = \sum_{z \in \C\setminus U_0} \nu_z(a_{0z})-\nu_z(b_{0z}).
	\]
	Since $\s$ is the infinite section, the line subbundle $\L(\s)$ of $\V$ is defined by $\{(x,(\lambda,0))\in U\times \AA^2, \lambda \in \kk\}$ on every trivializing open subset $U$. Hence we can choose the transition map of $\L(\s)$ on $U_{0z}$ as $x \mapsto a_{0z}(x)$ and the transition maps of $\V/\L(\s)$ on $U_{0z}$ as $x\mapsto b_{0z}(x)$. Let $a\colon \C \to \L(\s) \subset \V$ and $b\colon \C \to \V/\L(\s)$ be the rational sections defined by: 
	\begin{align*}
		a\colon U_0 & \to \L(\s)_{|U_0} &b\colon U_0 & \to \left(\V/\L(\s)\right)_{|U_0} \\
		x & \mapsto (x,1) , & x&\mapsto (x,1).
	\end{align*}
	Up to a multiple, we have that $a_{0z}^{-1}$ and $b_{0z}^{-1}$ are respectively the coordinates of the sections $a$ and $b$ on $U_z$. Finally we have that $\s^2 =\sum_{z \in \C} \nu_z(b)-\nu_z(a)=\deg\left(\V/\L(\s)\right)-\deg(\L(\s))  $. 
	Using the additivity of the degree on the short exact sequence $0\to \L(\sigma) \to \V \to \V/\L(\sigma) \to 0$, we deduce that 
	$$\sigma^2= \deg(\V) - 2\deg(\L(\s)).$$
\end{proof}
	
Propositions \ref{sub<->section} $(1)$ and \ref{selfintersectiondegree} imply that $\s$ is a minimal section of $\PP(\V)$ if and only if $\L(\s)$ is a maximal line subbundle of $\V$. In particular, we have the following corollary that can also be found in \cite[Proposition V.2.9]{Hartshorne}:

\begin{corollary}\label{Segre}
	Let $\S=\PP(\V)$ be a $\PP^1$-bundle over a curve $\C$ and $\M$ be a maximal line subbundle of $\V$. Then the following equality holds:
	\[
	\seg(\S) = \deg(\V) - 2\deg(\M),
	\]
	where $\deg(\V)$ is the degree of the determinant line bundle of $\V$.
\end{corollary}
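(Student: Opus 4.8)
The plan is to deduce this directly from Proposition \ref{selfintersectiondegree} together with the bijection of Lemma \ref{sub<->section}(1), so that essentially no new geometric input is needed. First I would recall that, by Proposition \ref{selfintersectiondegree}, for \emph{every} section $\s$ of $\PP(\V)$ one has $\s^2 = \deg(\V) - 2\deg(\L(\s))$. By definition, $\seg(\S) = \min\{\s^2 : \s \text{ section of }\S\}$, so substituting the formula gives
\[
\seg(\S) = \deg(\V) - 2\max\{\deg(\L(\s)) : \s \text{ section of }\S\}.
\]

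Next I would invoke Lemma \ref{sub<->section}(1): the assignment $\s \mapsto \L(\s)$ is a bijection between the set of sections of $\PP(\V)$ and the set of line subbundles of $\V$. Hence the maximum in the displayed formula, taken over sections $\s$, coincides with the maximum of $\deg(\L)$ taken over all line subbundles $\L$ of $\V$, which is by definition $\deg(\M)$ for a maximal line subbundle $\M$. Therefore $\seg(\S) = \deg(\V) - 2\deg(\M)$, as claimed. Along the way this also records that minimal sections of $\S$ correspond exactly to maximal line subbundles of $\V$.

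The only point requiring a word of care is that the maximum over line subbundles is actually attained (equivalently, that $\deg(\L)$ is bounded above as $\L$ ranges over line subbundles of $\V$); but this is precisely the content of the well-definedness of the Segre invariant already established after Lemma \ref{sectiontrivialbundle} (alternatively, one can cite the Riemann--Roch bound, as remarked before Proposition \ref{selfintersectiondegree}). So I do not expect any genuine obstacle here: the statement is a formal corollary, and the bulk of the work was already carried out in the local-chart computation proving Proposition \ref{selfintersectiondegree}.
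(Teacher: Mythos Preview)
Your proposal is correct and follows exactly the paper's approach: the paper does not give a separate proof environment for this corollary, but states just before it that Lemma \ref{sub<->section}(1) and Proposition \ref{selfintersectiondegree} together imply that minimal sections correspond to maximal line subbundles, from which the formula is immediate. Your write-up simply spells this out in slightly more detail.
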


The main use of the Segre invariant is given in Proposition \ref{numberminimalsection}, which is partially stated in \cite[Corollary 1.17]{Maruyama2} without proof, and partially proven in \cite[Theorem V.2.12]{Hartshorne}.

\begin{lemma}\label{sectionnumeq}
	Let $\S$ be a $\PP^1$-bundle over a curve $\C$. Two sections of $\S$ having the same self-intersection number are numerically equivalent.
\end{lemma}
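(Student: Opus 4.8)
The plan is to read off everything from the description of $\operatorname{Num}(\S)$ recalled just before the lemma (see \cite[Proposition V.2.3]{Hartshorne}): it is freely generated by the class of a section and the class $f$ of a fibre, with $f^2=0$ and (section)$\cdot f = 1$. So let $\s_1,\s_2$ be the two sections, with $\s_1^2=\s_2^2$. Since $\operatorname{Num}(\S)=\ZZ\s_1\oplus\ZZ f$, I would write $\s_2\equiv a\s_1+bf$ for integers $a,b$ and determine $a$ and $b$ by intersecting with $f$ and by comparing self-intersections.

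Intersecting with $f$: because $\pi$ restricted to the curve $\s_2(\C)$ is an isomorphism onto $\C$, the section $\s_2$ meets every fibre in exactly one point, so $\s_2\cdot f=1$; on the other hand $(a\s_1+bf)\cdot f=a(\s_1\cdot f)+bf^2=a$. Hence $a=1$ and $\s_2\equiv \s_1+bf$. Now computing self-intersections, $\s_2^2=(\s_1+bf)^2=\s_1^2+2b(\s_1\cdot f)+b^2f^2=\s_1^2+2b$, and the hypothesis $\s_1^2=\s_2^2$ forces $2b=0$, i.e.\ $b=0$. Therefore $\s_2\equiv\s_1$, which is the claim.

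There is essentially no obstacle here: the only point worth a word is that a section has intersection number $1$ with a fibre, which is immediate from $\pi$ being an isomorphism on the section, and the lemma is just a bookkeeping consequence of the rank-two description of $\operatorname{Num}(\S)$ together with $f^2=0$ and $\s\cdot f=1$.
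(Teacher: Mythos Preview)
Your proof is correct. It is also essentially the argument the paper already uses elsewhere (in the proof of Lemma~\ref{sectiontrivialbundle}), but it is \emph{not} the route taken in the paper's own proof of this lemma. There the authors use $\mathrm{K}_\S$ and $f$ as a $\mathbb{Q}$-basis of $\operatorname{Num}(\S)\otimes_\ZZ\mathbb{Q}$ and appeal to the arithmetic genus (adjunction) formula
\[
\tfrac{1}{2}(\mathrm{K}_\S+\s_i)\cdot\s_i+1=\g(\C)
\]
to deduce $\mathrm{K}_\S\cdot\s_1=\mathrm{K}_\S\cdot\s_2$; combined with $\s_1\cdot f=\s_2\cdot f=1$ this forces $\s_1\equiv\s_2$. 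Your argument avoids the canonical class and adjunction entirely: you take $\{\s_1,f\}$ as an integral basis of $\operatorname{Num}(\S)$, write $\s_2\equiv a\s_1+bf$, and solve $a=1$, $b=0$ by two intersection computations. This is more elementary and slightly more direct; the only implicit step is that one may replace the fixed generating section of \cite[Proposition~V.2.3]{Hartshorne} by $\s_1$, which is immediate since any section differs from it by a multiple of $f$ in $\operatorname{Num}(\S)$.
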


\begin{proof}
	Let $\s_1$ and $\s_2$ be sections of $\S$ having the same self-intersection number. Let $\operatorname{Num}(\S)$ be the group of divisors classes up to numerical equivalence. Then $\operatorname{Num}(\S)\otimes_\mathbb{Z} \mathbb{Q}$ is generated by $\mathrm{K}_\S$ and a fiber $f$ (see \cite[Proposition V.2.3]{Hartshorne}), and from the arithmetic genus formula: 
	\[
	\frac{1}{2}(\mathrm{K}_\S + \s_1 )\cdot \s_1 +1 = g(\C) = \frac{1}{2}(\mathrm{K}_\S + \s_2 )\cdot \s_2 +1, 
	\]
	it implies that $\mathrm{K}_\S\cdot \s_1 = \mathrm{K}_\S \cdot \s_2$. In particular, the sections $\s_1$ and $\s_2$ are numerically equivalent.
\end{proof}

\begin{proposition}\label{numberminimalsection}
	Let $\S=\PP(\V)$ be a $\PP^1$-bundle over a curve $\C$. The following assertions hold:
	\begin{enumerate}
		\item if $\seg(\S)>0$ then $\S$ is indecomposable.
		\item if $\seg(\S)<0$ then $\S$ admits a unique minimal section. 
		\item if $\seg(\S)=0$ then: 
		\begin{enumerate}
			\item any two distinct minimal sections of $\S$ are disjoint, 
			\item $\S$ is indecomposable if and only if $\S$ has a unique minimal section, 
			\item $\S$ is decomposable and not trivial if and only if $\S$ has exactly two minimal sections,
			\item $\S$ is trivial if and only if $\S$ has at least three minimal sections.
		\end{enumerate}
	\end{enumerate}
\end{proposition}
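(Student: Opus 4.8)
The plan is to analyze each case via the formula $\seg(\S)=\deg(\V)-2\deg(\M)$ from Corollary \ref{Segre}, together with Lemma \ref{selfintersectiondegree} and the dictionary between sections and line subbundles of Lemma \ref{sub<->section}. For (1), if $\S$ were decomposable, say $\S\simeq \PP(\L_1\oplus\L_2)$, then Lemma \ref{sub<->section}(2) gives two disjoint sections $\s_1,\s_2$; their self-intersections are $\deg(\V)-2\deg(\L_1)$ and $\deg(\V)-2\deg(\L_2)$, which sum to $0$ since $\deg(\V)=\deg(\L_1)+\deg(\L_2)$. Hence one of them is $\leq 0$, contradicting $\seg(\S)>0$. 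For (2), suppose $\seg(\S)<0$ and let $\s_1,\s_2$ be minimal sections. First I would argue they cannot be disjoint: disjointness would force $\s_1^2+\s_2^2=0$ by the above, impossible when both equal $\seg(\S)<0$. Then $\s_1\cdot\s_2\geq 1$. Now use $\s_1\equiv\s_2$ in $\operatorname{Num}(\S)$ (Lemma \ref{sectionnumeq}) so $\s_1\cdot\s_2=\s_1^2=\seg(\S)<0$, a contradiction; hence $\s_1=\s_2$.

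For (3), assume $\seg(\S)=0$, so all minimal sections have self-intersection $0$ and are numerically equivalent to each other (Lemma \ref{sectionnumeq}). For (a), two distinct minimal sections $\s_1\ne\s_2$ satisfy $\s_1\cdot\s_2=\s_1^2=0$, so they are disjoint. For the remaining equivalences, note that by Lemma \ref{sub<->section}(2), a pair of disjoint sections $\s_1,\s_2$ yields $\S\simeq\PP(\L(\s_1)\oplus\L(\s_2))$; when both are minimal, $\deg\L(\s_1)=\deg\L(\s_2)=\tfrac12\deg\V$, so the decomposition is into two line bundles of equal degree. Conversely $\S$ decomposable means $\S\simeq\PP(\L_1\oplus\L_2)$, and up to twisting by a line bundle one may assume $\L_1=\O_\C$; then the zero and infinite sections are disjoint sections, and they are minimal precisely when $\seg(\S)=0$ forces $\deg\L_2=\deg\L_1$, which can be arranged by the maximality of one of the two summands as a line subbundle (Corollary \ref{Segre}: a maximal line subbundle of $\L_1\oplus\L_2$ has degree $\max(\deg\L_1,\deg\L_2)$, and $\seg=0$ forces $\deg\L_1=\deg\L_2$). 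This proves (b): $\S$ indecomposable $\iff$ no two disjoint minimal sections exist $\iff$ (by (a)) there is a unique minimal section. For (c), if $\S$ is decomposable and nontrivial, it has at least two minimal sections (zero and infinite, after normalizing both summands to have degree $0$); it cannot have three or more, for three pairwise disjoint minimal sections would force triviality by Lemma \ref{automorphismssections}(3). For (d), if $\S$ is trivial, then for every point $[\lambda:\mu]\in\PP^1$ the constant section $x\mapsto(x,[\lambda:\mu])$ is a section of self-intersection $0=\seg(\S)$ by Lemma \ref{sectiontrivialbundle}, giving infinitely many (in particular at least three) minimal sections; conversely three or more minimal sections are pairwise disjoint by (a), so Lemma \ref{automorphismssections}(3) forces $\S\simeq\C\times\PP^1$.

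The main obstacle I anticipate is the bookkeeping in part (3)(c): showing that a nontrivial decomposable $\S$ has \emph{exactly} two minimal sections, not more. The "at most two" direction relies on Lemma \ref{automorphismssections}(3) (three pairwise disjoint sections force triviality) combined with part (a), so the real content is checking that in the decomposable nontrivial case one cannot produce a third minimal section even though there are two. Here I would use the uniqueness of maximal line subbundles: if $\L_1,\L_2$ have equal degree and $\S\simeq\PP(\L_1\oplus\L_2)$ is nontrivial, then $\L_1\not\simeq\L_2$, and any line subbundle $\M$ of maximal degree $\deg\L_1$ must be one of $\L_1$ or $\L_2$ (a nonzero map $\M\to\L_i$ of line bundles of equal degree is an isomorphism, and $\M$ cannot map nontrivially to both or it would split off a third summand, contradicting nontriviality via Lemma \ref{automorphismssections}(3)). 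Everything else is a direct substitution into the Segre formula plus the numerical-equivalence lemma.
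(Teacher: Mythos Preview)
Your proof is correct and follows essentially the same approach as the paper: the Segre formula and Lemma \ref{sectionnumeq} for (1)--(2), and the section/line-subbundle dictionary together with Lemma \ref{automorphismssections}(3) for (3). Two minor simplifications: in (2) the disjointness detour is unnecessary, since numerical equivalence alone gives $\s_1\cdot\s_2=\s_1^2=\seg(\S)<0$, contradicting $\s_1\cdot\s_2\geq 0$ for distinct effective curves; and your final-paragraph worry about (3)(c) is unfounded, as the argument you already gave (three minimal sections are pairwise disjoint by (a), hence $\S$ is trivial by Lemma \ref{automorphismssections}(3)) is complete and is precisely what the paper uses, so the alternative line-subbundle argument is not needed (and its phrasing ``split off a third summand'' is imprecise---the actual conclusion is that both projections $\M\to\L_i$ being nonzero forces $\L_1\simeq\L_2$, whence $\S$ is trivial).
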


\begin{proof}\
	\begin{enumerate}[wide]
		\item  Assume that $\V\simeq \L_1\oplus \L_2$ is decomposable and let $\M$ be a maximal line subbundle of $\V$. Then $\deg(\V)=\deg(\L_1)+\deg(\L_2)\leq 2\deg(\M)$ and from Corollary $\ref{Segre}$: $\seg(\S)=\deg(\V)-2\deg(\M )> 0$ which is a contradiction.
		\item We assume that $\S$ admits two distinct minimal sections $\s_1$ and $\s_2$. From Lemma \ref{sectionnumeq}, the sections $\s_1$ and $\s_2$ are numerically equivalent and therefore $\seg(\S) = \s_2\cdot \s_1 \geq 0$ and it is a contradiction.
		\item We assume $\seg(\S)=0$:
		\begin{enumerate}[wide]
			\item Let $\s_1$ and $\s_2$ be distinct minimal sections. Since they are numerically equivalent from Lemma \ref{sectionnumeq}: $0=\seg(\S)= \s_1^2 = \s_1 \cdot \s_2$ i.e.\ $\s_1$ and $\s_2$ are disjoint sections.
			\item Assume by contraposition that $\S$ has two minimal sections: since they are disjoint from (i), it implies that $\S$ is decomposable. To prove the converse we assume by contraposition that $\S$ is decomposable, i.e.\ $\C$-isomorphic to $\PP(\L(\s_1)\oplus \L(\s_2))$ for some sections $\s_1$ and $\s_2$, and then we have $0=\seg(\S)=\deg(\L(\s_1))+\deg(\L(\s_2))-2\deg(\M)$. It implies that $\L(\s_1)$ and $\L(\s_2)$ are maximal line subbundles i.e.\ $\s_1$ and $\s_2$ are disjoint minimal sections of $\S$. By Lemma \ref{sub<->section}, $\S$ is decomposable.
			\item If $\S$ is decomposable then $\S$ admits at least two minimal sections from (ii). Assume that $\S$ has three distinct minimal sections then they are pairwise disjoints from (i) and it implies that $\S$ is trivial from Lemma \ref{automorphismssections} (3). So if $\S$ is decomposable and non trivial then $\S$ has exactly two minimal sections. Conversely, assume that $\S$ has exactly two minimal sections $\s_1$ and $\s_2$: it follows again from (i) and Lemma \ref{sub<->section} (2) that $\S$ is $\C$-isomorphic to $\PP(\L(\s_1)\oplus \L(\s_2))$ which is decomposable. Since the trivial bundle has infinitely many minimal sections, $\S$ cannot be the trivial bundle.
			\item It follows from the equivalences of (ii) and (iii).
		\end{enumerate}
	\end{enumerate}		
\end{proof}

\subsection{Construction of some ruled surfaces with elementary transformations}\label{ruledelemtrans}

Let $\C$ be a curve and $z_1,z_2\in \C$. We denote by $\S_{z_1,z_2}$ the ruled surface $\PP(\O_\C(z_1)\oplus \O_\C(z_2))$. If $\C$ is an elliptic curve, we denote by $\A_{0,\C}$ and $\A_{1,\C}$ the Atiyah $\PP^1$-bundles defined in Theorem \ref{Atiyah}. The classification of vector bundles over an elliptic curve given in \cite{Atiyah} is much more general than the statement we need for ruled surfaces, and the reader can also find it in \cite[Example V.2.11.6 and Theorem V.2.15]{Hartshorne}:

\begin{theorem}\cite[Theorems 5, 6, 7, 10 and 11]{Atiyah} \label{Atiyah}
	Let $\C$ be an elliptic curve and $p\in\C$. There exist two indecomposable vector bundles $\mathcal{F}_{0,\C}$ and $\mathcal{F}_{1,\C}$ of rank $2$ and respectively of degree $0$ and $1$, which fit into the exact sequences:
	\begin{align*}
		0 & \to \O_\C \to \mathcal{F}_{0,\C} \to \O_\C \to 0, \\
		0 & \to \O_\C  \to \mathcal{F}_{1,\C} \to \O_\C(p) \to 0.
	\end{align*}
	Moreover, the isomorphism class of $\PP(\mathcal{F}_{1,\C})$ does not depend on the choice of $p$, and the Atiyah $\PP^1$-bundles $\A_{0,\C}:=\PP(\mathcal{F}_{0,\C})$ and $\A_{1,\C}:=\PP(\mathcal{F}_{1,\C})$ are exactly the two $\C$-isomorphism classes of indecomposable $\PP^1$-bundles over $\C$. 
\end{theorem}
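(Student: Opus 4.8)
The final statement is Atiyah's classification of vector bundles over an elliptic curve, specialised to rank~$2$, so the plan is to reprove it using the Segre-invariant machinery set up above together with standard cohomology on an elliptic curve. Since $\g=1$ we have $K_\C\cong\O_\C$, hence $\operatorname{Ext}^1(\O_\C,\O_\C)=H^1(\C,\O_\C)\cong\kk$ and, by Serre duality, $\operatorname{Ext}^1(\O_\C(p),\O_\C)=H^1(\C,\O_\C(-p))\cong H^0(\C,\O_\C(p))^\vee\cong\kk$ (because $\deg\O_\C(p)=1$). In both cases the group is one-dimensional, so up to isomorphism of the middle term there is exactly one non-split extension; this is how I would define $\mathcal{F}_{0,\C}$ (of degree~$0$) and $\mathcal{F}_{1,\C}=\mathcal{F}_{1,\C}^{(p)}$ (of degree~$1$), fitting into the two exact sequences of the statement. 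To prove indecomposability I would compare degrees against the surjection $q$ of the defining sequence: any line subbundle of the middle term either maps isomorphically onto the right-hand line bundle---which would split $q$---or is contained in $\ker q=\O_\C$; feeding both summands of a hypothetical splitting $\mathcal{F}_{0,\C}\cong\L_1\oplus\L_2$ (where $\deg\L_1+\deg\L_2=0$) through this dichotomy produces a contradiction, and the same works for $\mathcal{F}_{1,\C}$, now also using that a nonzero morphism between two line bundles of degree~$1$ on $\C$ is an isomorphism. The same comparison shows every line subbundle of $\mathcal{F}_{0,\C}$, respectively of $\mathcal{F}_{1,\C}$, has degree $\le 0$, and since $\O_\C=\ker q$ is a line subbundle of degree~$0$ in each, Corollary~\ref{Segre} yields $\seg(\A_{0,\C})=\deg\mathcal{F}_{0,\C}=0$ and $\seg(\A_{1,\C})=\deg\mathcal{F}_{1,\C}=1$.

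Next, let $\S=\PP(\V)$ be an arbitrary indecomposable $\PP^1$-bundle over $\C$ and $\M$ a maximal line subbundle of $\V$, so $\seg(\S)=\deg\V-2\deg\M$ by Corollary~\ref{Segre}. If $\seg(\S)<0$ then $\deg(\V/\M)<\deg\M$, hence by Serre duality $\operatorname{Ext}^1(\V/\M,\M)\cong H^0(\C,(\V/\M)\otimes\M^{-1})^\vee=0$, so $0\to\M\to\V\to\V/\M\to 0$ splits and $\V$ is decomposable---a contradiction. Thus $\seg(\S)\ge 0$, and together with the classical bound $\seg(\S)\le\g=1$ (Nagata's theorem on maximal line subbundles of a rank-$2$ bundle) we get $\seg(\S)\in\{0,1\}$. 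Now pick a minimal section $\sigma$; by Proposition~\ref{sub<->section}(1) and Proposition~\ref{selfintersectiondegree}, $\L(\sigma)$ is a maximal line subbundle and $\sigma^2=\deg\V-2\deg\L(\sigma)=\seg(\S)$, and after replacing $\V$ by $\V\otimes\L(\sigma)^{-1}$---which changes neither $\PP(\V)$ nor $\sigma$---I may assume $\L(\sigma)=\O_\C$, so that $\mathcal{N}:=\V/\O_\C$ has degree $\seg(\S)$. If $\seg(\S)=0$ then, a nontrivial line bundle of degree~$0$ having no sections, $\operatorname{Ext}^1(\mathcal{N},\O_\C)\cong H^0(\C,\mathcal{N})^\vee$ vanishes whenever $\mathcal{N}\not\cong\O_\C$---which would make $\V=\O_\C\oplus\mathcal{N}$ decomposable---so $\mathcal{N}\cong\O_\C$, the sequence $0\to\O_\C\to\V\to\O_\C\to 0$ is non-split (else $\V$ decomposes), and $\V\cong\mathcal{F}_{0,\C}$, i.e.\ $\S$ is $\C$-isomorphic to $\A_{0,\C}$. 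If $\seg(\S)=1$ then $\mathcal{N}$ has degree~$1$, so $\mathcal{N}\cong\O_\C(p)$ for a unique $p\in\C$, the sequence $0\to\O_\C\to\V\to\O_\C(p)\to 0$ is non-split, and $\V\cong\mathcal{F}_{1,\C}^{(p)}$, i.e.\ $\S$ is $\C$-isomorphic to $\A_{1,\C}$.

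It remains to see that the isomorphism class of $\PP(\mathcal{F}_{1,\C})$ is independent of $p$: two points are swapped by a translation $t_a\in\Aut(\C)$, and $t_a^{*}\mathcal{F}_{1,\C}^{(p)}$ is again a non-split extension of a degree-$1$ line bundle by $\O_\C$, hence is isomorphic to $\mathcal{F}_{1,\C}^{(p')}$ with $\O_\C(p')\cong t_a^{*}\O_\C(p)$; thus $\PP(\mathcal{F}_{1,\C}^{(p')})\cong t_a^{*}\PP(\mathcal{F}_{1,\C}^{(p)})$, which is isomorphic to $\PP(\mathcal{F}_{1,\C}^{(p)})$ as a variety. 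To upgrade this to a $\C$-isomorphism (needed for ``exactly two $\C$-isomorphism classes'') one checks that $\mathcal{F}_{1,\C}^{(p)}$ has a sub-line-bundle in every class of $\Pic^0(\C)$---the obstruction to lifting along $q$ the inclusion of such a sub-line-bundle of $\O_\C(p)$ lies in $H^1(\C,\O_\C(q-p))=0$---which, together with the surjectivity of multiplication by~$2$ on $\Pic^0(\C)$, shows that all the $\PP(\mathcal{F}_{1,\C}^{(p)})$ are mutually $\C$-isomorphic. Since $\seg$ separates $\A_{0,\C}$ from $\A_{1,\C}$, the second paragraph then shows that $\A_{0,\C}$ and $\A_{1,\C}$ are exactly the two $\C$-isomorphism classes of indecomposable $\PP^1$-bundles over $\C$. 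I expect the genuinely non-formal input to be the bound $\seg(\S)\le\g$ (Nagata's theorem): everything else reduces to one-dimensionality of the relevant $\operatorname{Ext}^1$-groups over the elliptic curve and to the section/sub-line-bundle dictionary recorded in the previous subsections, with the $\C$-isomorphism refinement of the independence on~$p$ being the next most technical point.
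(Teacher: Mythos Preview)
The paper does not prove this theorem at all: it is stated as a citation of Atiyah's results (and the parallel reference to Hartshorne), and is used throughout as a black box. So there is no ``paper's own proof'' to compare your attempt against.

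That said, your argument is essentially correct and is the standard modern route to this special case of Atiyah's classification. A few comments:

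\begin{itemize}
\item Your dichotomy ``either maps isomorphically onto the right-hand line bundle or is contained in $\ker q$'' is not literally true for an arbitrary line subbundle: a subbundle $L$ with $q|_L\neq 0$ only gives an injection $L\hookrightarrow \O_\C$ (resp.\ $\O_\C(p)$), not an isomorphism. The argument is rescued once you feed in the degree constraint coming from a hypothetical splitting $\L_1\oplus\L_2$, which forces the relevant injection to be an isomorphism; but the write-up should make that order of reasoning explicit.
\item In the last paragraph you use $q$ both for the surjection $\V\to\O_\C(p)$ and for a point of $\C$; rename one of them. With that fixed, your obstruction computation $H^1(\C,\O_\C(r-p))=0$ for $r\neq p$ is exactly right, and you should also note that the resulting inclusion $L\hookrightarrow \mathcal{F}_{1,\C}^{(p)}$ is automatically saturated because every line subbundle of $\mathcal{F}_{1,\C}^{(p)}$ has degree $\le 0$ (which you already established). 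Then the determinant computation $\det\mathcal{F}_{1,\C}^{(p)}=\O_\C(p)$ and surjectivity of multiplication by $2$ on $\Pic^0(\C)$ finish the $\C$-isomorphism statement cleanly.
\item The one genuinely external input is Nagata's bound $\seg(\S)\le \g$; everything else is the Ext/Serre-duality bookkeeping on an elliptic curve. This is a perfectly reasonable trade: the paper instead imports the full Atiyah classification and then, in Proposition~\ref{segAtiyah}, recovers the Segre invariants of $\A_{0,\C}$ and $\A_{1,\C}$ by explicit elementary transformations from $\C\times\PP^1$. Your approach is more self-contained on the classification side; the paper's approach gives, in exchange, the concrete birational models of $\A_{0,\C}$ and $\A_{1,\C}$ that are used later.
\end{itemize}
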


A ruled surface $\S\to \C$ is birationally equivalent to $\C\times \PP^1$. In this subsection, we give explicitly birational maps $\C\times \PP^1\dashrightarrow \S$ when $\S$ is isomorphic to $\A_{0,\C}$, $\A_{1,\C}$ or $\S_{z_1,z_2}$ for some $z_1,z_2\in \C$. We also deduce the Segre invariant of the Atiyah $\PP^1$-bundles and of $\S_{z_1,z_2}$ for all $z_1,z_2\in \C$. Let $\S$ be a $\PP^1$-bundle over a curve $\C$, let $p\in \S$ and $f_p$ the fiber containing $p$. We denote by $\beta_p:\bup_p(\S)\rightarrow \S$ the blowup of $\S$ at $p$ and by $\E_p$ the exceptional divisor. The strict transform of $f_p$ under the birational map $\beta_{p}^{-1}$ is a $(-1)$-curve and we denote by $\kappa_p : \bup_p(\S) \to \T$ its contraction. The \emph{elementary transformation} of $\S$ centered on $p$ is the birational map $\epsilon_p=\kappa_p \beta_p^{-1}$. If $p_1,p_2\in \C\times \PP^1$ are distinct points such that $p_1$ and $p_2$ are not on the same fiber, we denote by $\epsilon_{p_2,p_1}$ the blowups of $p_1$ and $p_2$ followed by the contractions of their respective fibers. 

\begin{proposition}\label{decomposable}
	Let $\C$ be an elliptic curve and $\pi \colon\C\times \PP^1 \to \C$ be the projection on the first factor. Let $p_1,p_2\in \C\times \PP^1$, we denote by $z_1=\pi(p_1)$ and $z_2=\pi(p_2)$. The following hold:
	\begin{enumerate}
		\item The surface $\epsilon_{p_1}(\C\times \PP^1)$ is isomorphic to $\PP(\O_\C(z_1)\oplus \O_\C)$ and $\seg(\PP(\O_\C(z_1)\oplus \O_\C))=-1$. Moreover, the base point $q_1$ of $\epsilon_{p_1}^{-1}$ is the unique point where all the sections of self-intersection $1$ meet.
		\item Assume moreover that $p_1$ and $p_2$ are not in the same fiber and not in the same constant section. Then the surface $\epsilon_{p_2,p_1} (\C\times \PP^1)$ is isomorphic to $\PP(\O_\C(z_1)\oplus \O_\C(z_2))$ and it has exactly two disjoint sections of self-intersection $0$. We have $\seg(\PP(\O_\C(z_1)\oplus \O_\C(z_2)))=0$ and if $q_1,q_2$ are the base points of $\epsilon_{p_2,p_1}^{-1}$, then every section of self-intersection $2$ passing through $q_1$ also passes through $q_2$.
	\end{enumerate}
\end{proposition}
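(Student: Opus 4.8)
The plan is to realize both birational maps as explicit blowups followed by contractions in a trivializing atlas, read off the resulting $\PP^1$-bundles, obtain their Segre invariants from Corollary~\ref{Segre}, and identify the base points of the inverse maps by combining Lemma~\ref{sectiontrivialbundle} with the behaviour of the self-intersection of a section under an elementary transformation.

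First I would normalize. Since $\PGL_2$ acts on $\C\times\PP^1$ fibrewise by $\C$-automorphisms and is $2$-transitive on $\PP^1$, after such a conjugation I may assume $p_1=(z_1,[0:1])$ in case (1), and $p_1=(z_1,[0:1])$, $p_2=(z_2,[1:0])$ in case (2); the hypothesis that $p_1$ and $p_2$ lie on distinct constant sections is precisely what makes the second normalization possible. Use the trivializing cover consisting of $U_0=\C\setminus\{z_1\}$ (resp.\ $\C\setminus\{z_1,z_2\}$) and small neighbourhoods of $z_1$ (and $z_2$) with local parameters $t$ at $z_1$ and $s$ at $z_2$. In the fibrewise affine coordinate the transformation $\epsilon_{p_1}$ is $w\mapsto w/t$; since its centre lies on a coordinate section, the new bundle is glued by the identity over $U_0$ and by a diagonal matrix involving a single factor $t$ across $z_1$, so it equals $\PP(\O_\C(-z_1)\oplus\O_\C)$, which is $\C$-isomorphic to $\PP(\O_\C(z_1)\oplus\O_\C)$ after twisting the vector bundle by $\O_\C(z_1)$. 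In case (2) the two transformations are supported over the disjoint fibres $f_{p_1}$ and $f_{p_2}$, hence can be carried out independently; the second one contributes a factor $s$ across $z_2$, and the resulting bundle is $\PP(\O_\C(-z_1)\oplus\O_\C(-z_2))\simeq\PP(\O_\C(z_1)\oplus\O_\C(z_2))$ (twist by $\O_\C(z_1+z_2)$). Equivalently one may argue via the elementary modification description of $\V$, which twists $\det\V$ by $\O_\C(-z)$, and obtain the same bundles.

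For the Segre invariants I would use Corollary~\ref{Segre}. In $\V=\O_\C(z_1)\oplus\O_\C$ the subbundle $\O_\C(z_1)$ has degree $1$ and is maximal — any line subbundle of degree $\geq2$ would give a nonzero morphism from a line bundle of degree $\geq2$ into $\O_\C(z_1)\oplus\O_\C$, impossible for degree reasons — so $\seg=\deg\V-2\deg\O_\C(z_1)=-1$. Likewise in $\O_\C(z_1)\oplus\O_\C(z_2)$ both summands are maximal of degree $1$, giving $\seg=0$. That $\PP(\O_\C(z_1)\oplus\O_\C(z_2))$ has exactly two disjoint sections of self-intersection $0$ now follows from Proposition~\ref{numberminimalsection}: it is decomposable, and non-trivial because $z_1\neq z_2$ forces $\O_\C(z_1)\not\simeq\O_\C(z_2)$, so part $(3)(c)$ gives exactly two minimal sections, which are disjoint by $(3)(a)$ and have self-intersection $\seg=0$.

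Finally I would locate the base points. Directly from the construction, $q_i$ is the image of the fibre $f_{p_i}$, hence lies over $z_i$. The key fact is that an elementary transformation centred over $z$ changes the self-intersection of any section by $\pm1$: it decreases it by $1$ if the section passes through the centre of the blown-up point on the fibre over $z$, and increases it by $1$ otherwise (apply the blowup formula and the contraction formula $(\kappa_*D)^2=D^2+(D\cdot E)^2$ to the section, which meets each fibre transversally in one point). Thus if $\s$ were a section of $\epsilon_{p_1}(\C\times\PP^1)$ with $\s^2=1$ and $q_1\notin\s$, then $\epsilon_{p_1}^{-1}(\s)$ would be a section of $\C\times\PP^1$ of self-intersection $2$, contradicting Lemma~\ref{sectiontrivialbundle} since $\g=1$; hence every section of self-intersection $1$ passes through $q_1$. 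As any two such sections are numerically equivalent (Lemma~\ref{sectionnumeq}) they meet in exactly one point, necessarily $q_1$, and there are infinitely many of them (for instance the images under $\epsilon_{p_1}$ of the constant sections of $\C\times\PP^1$ not through $p_1$), so $q_1$ is characterized as stated. The same mechanism handles case (2): if $\s$ is a section of $\epsilon_{p_2,p_1}(\C\times\PP^1)$ with $\s^2=2$, $q_1\in\s$ and $q_2\notin\s$, then $\epsilon_{p_2,p_1}^{-1}(\s)$ is a section of $\C\times\PP^1$ of self-intersection $2-1+1=2$, again impossible, so $q_2\in\s$. The only genuinely delicate step is the bookkeeping in the chart computation of case (2), namely checking that the distinct-constant-sections hypothesis yields $\O_\C(z_1)\oplus\O_\C(z_2)$ and not $\O_\C(z_1+z_2)\oplus\O_\C$; everything else is a short assembly of facts already in hand.
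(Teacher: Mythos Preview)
Your argument is correct and follows essentially the same route as the paper: normalize via a fibrewise $\PGL_2$-automorphism, realize $\epsilon_{p_1}$ and $\epsilon_{p_2,p_1}$ explicitly in local charts to identify the target bundles, and then exploit Lemma~\ref{sectiontrivialbundle} (no sections of self-intersection $2$ on $\C\times\PP^1$ when $\g=1$) to control the base points $q_i$.

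The only differences are organizational. For the Segre invariants and the ``exactly two minimal sections'' claim, the paper argues by directly tracking the strict transforms of the constant sections through the elementary transformations (the infinite section becomes the unique $(-1)$-section in part~(1); the zero and infinity sections become the only $0$-sections in part~(2)), whereas you instead invoke Corollary~\ref{Segre} on the explicit vector bundle and then Proposition~\ref{numberminimalsection}~(3)(c). For the base-point statements, the paper identifies the sections of self-intersection $1$ (resp.\ $2$ through $q_1$) concretely as strict transforms of constant sections, while you use the abstract $\pm1$ rule for self-intersections under an elementary transformation and argue by contradiction. Both packagings are equally short; yours is slightly more modular, the paper's slightly more hands-on.
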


\begin{center}
	\begin{tikzpicture}[scale=0.5]
	\draw (-1,0) node[scale=1]{$\text{(1)}$};
	\draw (0,1) -- (4,1); \draw (4,1) node[scale=0.5,right]{0};
	\draw (0,0) -- (4,0); \draw (4,0) node[scale=0.5,right]{0};
	\draw (0,-1) -- (4,-1); \draw (4,-1) node[scale=0.5,right]{0};
	\draw[thick, blue] (2,-2) -- (2,2) ;
	\draw[red] (2,-1) node[below right,scale=0.7]{$p_1$} node{$\bullet$};
	\draw (2,-3) node[scale=1]{$\C\times \PP^1$};

	\draw [dashed,->] (5,0) -- (7,0);
	\draw (6,-1) node[scale=1]{$\epsilon_{p_1}$};

	\draw (8,-1) -- (12,-1);	\draw (12,-1) node[scale=0.5,right]{-1}; 
	\draw[thick, red] (10,-2) -- (10,2);
	\draw (8,0) -- (12,1); \draw (12,1) node[scale=0.5,right]{1};
	\draw (8,1) -- (12,0); \draw (12,0) node[scale=0.5,right]{1};
	\draw[blue] (10,0.5) node[below right,scale=0.7]{$q_1$} node{$\bullet$};
	\draw (10,-3) node[scale=1]{$\PP(\O_\C(z_1)\oplus \O_\C)$};

	\draw (14,0) node[scale=1]{$\text{(2)}$};	
	\draw (15,-1) -- (19,-1); \draw (19,-1) node[scale=0.5,right]{0};
	\draw (15,1) -- (19,1); \draw (19,1) node[scale=0.5,right]{0};
	\draw[thick, purple] (18,-2) -- (18,2);
	\draw[thick, blue] (16,-2) -- (16,2); 
	\draw[red] (16,-1) node[below right,scale=0.7]{$p_1$} node{$\bullet$};
	\draw[green] (18,1) node[below right,scale=0.7]{$p_2$} node{$\bullet$};
	\draw (17,-3) node[scale=1]{$\C\times \PP^1$};

	\draw [dashed,->] (20,0) -- (22,0);
	\draw (21,-1) node[scale=1]{$\epsilon_{p_2,p_1}$};
	
	\draw (23,-1) -- (27,-1); \draw (27,-1) node[scale=0.5,right]{0};
	\draw (23,1) -- (27,1); \draw (27,1) node[scale=0.5,right]{0};
	\draw[thick, red] (24,-2) -- (24,2);
	\draw[thick, green] (26,-2) -- (26,2); 
	\draw[blue] (24,1) node[below right,scale=0.7]{$q_1$} node{$\bullet$};
	\draw[purple] (26,-1) node[below right,scale=0.7]{$q_2$} node{$\bullet$};
	\draw (25,-3) node[scale=1]{$\PP(\O_\C(z_1)\oplus \O_\C(z_ 2))$};
	\end{tikzpicture}
\end{center}	

\begin{proof}\
	\begin{enumerate}[wide]
		\item Up to a $\C$-automorphism of $\C\times \PP^1$ we can assume that $p_1=(z_1,[1:0])$ and let $U_1$ be an open neighborhood of $z_1$. Let $f\in \kk(\C)^*$ which has a zero of order $1$ at $z_1$, we can also assume that $U_1$ does not contain any zeros and poles of $f$ except at $z_1$. Let $U_0=\C\setminus z_1$, we define:
		\begin{align*}
			\phi_0:U_0\times \PP^1 & \longrightarrow U_0\times \PP^1 & \phi_1 :U_1 \times \PP^1 & \dashrightarrow U_1 \times \PP^1 \\
			(x,[u:v]) & \longrightarrow (x,[u:v]) & (x,[u:v]) & \longrightarrow (x,[f(x)u:v]).
		\end{align*}
		The domains of $\phi_0,\phi_1$ glue into $\C\times \PP^1$ and the codomains glue into a $\PP^1$-bundle over $\C$ through the transition map:
		\begin{align*}
			U_0 \cap U_1 & \rightarrow  \PGL_2(\O_\C(U_0\cap U_1))\nonumber\\
			x & \mapsto
			\begin{bmatrix}
				f(x) & 0 \\
				0 & 1
			\end{bmatrix}.
		\end{align*}
		So $\phi_0$ and $\phi_1$ glue onto a birational map $\phi\colon \C\times \PP^1 \dashrightarrow \PP(\O_\C(z_1)\oplus \O_\C)$. Since $\phi$ has $p_1$ as unique base point of order $1$, one can check by describing the blowups in local charts that $\phi$ is the elementary transformation $\epsilon_{p_1}$. Moreover, the strict transform by $\epsilon_{p_1}$ of the infinite section is the unique section of self-intersection number $-1$ and all the other sections in $\PP(\O_\C(z_1)\oplus \O_\C)$ have self-intersection number at least $1$, so $\seg(\PP(\O_\C(z_1)\oplus \O_\C))=-1$. Because $\C$ is an elliptic curve, there is no section of self-intersection $2$ in $\C\times \PP^1$ from Lemma \ref{sectiontrivialbundle} and the sections of self-intersection $1$ in $\PP(\O_\C(z_1)\oplus \O_\C)$ are exactly the strict transform by $\epsilon_{p_1}$ of the constant sections of $\C\times \PP^1$, so they all pass through $q_1$ and it is their unique common intersection.
		
		\item Similarly we can assume $p_1=(z_1,[1:0])$ and $p_2=(z_2,[0:1])$. Let $U_1$ and $U_2$ be open trivializing neighborhoods of $z_1$ and $z_2$. Let $f,g\in \kk(\C)^*$ having a zero of order one respectively at $z_1$ and at $z_2$. We can assume that $U_1$ does not contain $z_2$ and any zeros or poles of $f$ except $z_1$, and similarly we can assume that $U_2$ does not contain $z_1$ and any zeros or poles of $g$ except $z_2$. Let $U_0=\C\setminus \{z_1,z_2\}$ then we take $\phi_0$ and $\phi_1$ as in (1) and we define:
		\begin{align*}
			\phi_2:U_2\times \PP^1 & \dashrightarrow U_2\times \PP^1 \\
			(x,[u:v])  & \longmapsto (x,[u: g(x)v]).
		\end{align*}
		The maps $\phi_0,\phi_1,\phi_2$ glue into a birational map $\phi : \C\times \PP^1 \dashrightarrow \PP(\O_\C(z_1)\oplus \O_\C(z_2))$. Since $\phi$ has exactly two base points $p_1$ and $p_2$ of order $1$, one can check by local equations of blowups that $\phi$ equals $\epsilon_{p_2,p_1}$. From Lemma \ref{sectiontrivialbundle} there is no section of self-intersection $2$ in $\C\times \PP^1$, so the strict transform by $\epsilon_{p_2,p_1}$ of the infinite section and of the zero section of $\C\times \PP^1$ are the only sections of self-intersection number $0$ in $\PP(\O_\C(z_1)\oplus \O_\C(z_2))$, and all the other sections have self-intersection number at least $2$. Therefore $\seg(\PP(\O_\C(z_1)\oplus \O_\C(z_2)))=0$. Finally, a section $\s$ of self-intersection $2$ passing through $q_1$ is the strict transform of a constant section in $\C\times \PP^1$ which also intersects the fiber of $p_2$. Thus $\s$ also passes through $q_2$.
	\end{enumerate}
\end{proof}

\begin{proposition}\label{segAtiyah}
	Let $\C$ be an elliptic curve. Let $p_1\in \C\times \PP^1$ such that $\pi(p_1)=z_1$ and let $\epsilon_{p_1}:\C\times \PP^1 \dashrightarrow \PP(\O_\C(z_1)\oplus \O_\C)$. We denote by $q_1$ be the unique base point of $\epsilon_{p_1}^{-1}$. Then the following hold:
	\begin{enumerate}
		\item For all $p_2\in \PP(\O_\C(z_1)\oplus \O_\C)$ in the same fiber as $q_1$, such that $p_2\neq q_1$ and $p_2$ does not belong to the unique $(-1)$-section of $\PP(\O_\C(z_1)\oplus \O_\C)$, the surface $\epsilon_{p_2}\epsilon_{p_1}(\C\times \PP^1)$ is $\C$-isomorphic to $\A_{0,\C}$. Moreover, $\A_{0,\C}$ has a unique section $\sigma_{0}$ of self-intersection number $0$ and all the other sections have self-intersection number at least $2$. In particular $\seg(\A_{0,\C})=0$.
		\item For all $p_3 \in \A_{0,\C}\setminus \sigma_0$, the surface $\epsilon_{p_3}(\A_{0,\C})$ is $\C$-isomorphic to $\A_{1,\C}$ and $\seg(\A_{1,\C})=1$.
	\end{enumerate} 
\end{proposition}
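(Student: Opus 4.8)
The plan is to derive both statements from the bookkeeping of the Segre invariant under elementary transformations, together with the classification results already available: Proposition~\ref{numberminimalsection} on minimal sections, Corollary~\ref{Segre}, and Atiyah's Theorem~\ref{Atiyah}. The one preliminary I set up is the effect of an elementary transformation on sections. Write $\epsilon_p=\kappa_p\beta_p^{-1}\colon\S\dashrightarrow\S^\flat$ and let $\widetilde{f_p}$ be the strict transform of $f_p$ in $\bup_p(\S)$, contracted by $\kappa_p$ to a point $q=\kappa_p(\widetilde{f_p})\in\S^\flat$. Then $\epsilon_p^{-1}$ is again an elementary transformation, centered at $q$; the map $s\mapsto\epsilon_p(s)$ is a bijection from the sections of $\S$ onto the sections of $\S^\flat$; and, since a section meets a fiber transversally at a single point, the projection formula for the contraction of a $(-1)$-curve gives $(\epsilon_p(s))^2=s^2-1$ with $q\notin\epsilon_p(s)$ when $p\in s$, and $(\epsilon_p(s))^2=s^2+1$ with $q\in\epsilon_p(s)$ when $p\notin s$. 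These are exactly the kind of local computations already carried out in the proof of Proposition~\ref{decomposable}.

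For assertion (1), set $\S'=\epsilon_{p_2}\epsilon_{p_1}(\C\times\PP^1)$; it is a $\PP^1$-bundle over $\C$, being obtained from $\C\times\PP^1$ by elementary transformations. By Proposition~\ref{decomposable}~(1), the intermediate surface $\PP(\O_\C(z_1)\oplus\O_\C)$ has $\seg=-1$, hence (Proposition~\ref{numberminimalsection}~(2)) a unique minimal section $s_{-1}$ with $s_{-1}^2=-1$; moreover, by Lemma~\ref{sectionnumeq} together with $\operatorname{Num}$ being generated by $s_{-1}$ and a fiber, every section there has self-intersection in $\{-1,1,3,5,\dots\}$, and $q_1$ lies on every section of self-intersection $1$. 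Now apply the section-bijection to $\epsilon_{p_2}$. Since $p_2\notin s_{-1}$, the image of $s_{-1}$ is a section of $\S'$ of self-intersection $0$ passing through the base point $q_2$ of $\epsilon_{p_2}^{-1}$. Conversely, a section of $\S'$ of self-intersection $\leq 0$ pulls back to a section of $\PP(\O_\C(z_1)\oplus\O_\C)$ of self-intersection in $\{-1,0,1\}$; the value $0$ does not occur, and the value $1$ is excluded since every section of self-intersection $1$ meets the fiber of $q_1$ only at $q_1\neq p_2$, so $p_2$ would not lie on it and its image would then pass through $q_2$, which is the alternative already accounted for by $s_{-1}$. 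Hence $\seg(\S')=0$ and $\S'$ has a unique minimal section, so $\S'$ is indecomposable by Proposition~\ref{numberminimalsection}~(3)(ii) and, by Theorem~\ref{Atiyah}, is $\C$-isomorphic to $\A_{0,\C}$ or $\A_{1,\C}$. Finally $\seg(\S')=0$ forces, via Corollary~\ref{Segre}, the degree of any rank-$2$ bundle $\V'$ with $\S'=\PP(\V')$ to be even; since this parity is intrinsic to the $\PP^1$-bundle and equals $0$ for $\A_{0,\C}$ and $1$ for $\A_{1,\C}$, we get $\S'\simeq\A_{0,\C}$. The section description then transports: $\A_{0,\C}$ has the unique minimal section $\sigma_0$ with $\sigma_0^2=0$, every section is numerically $\sigma_0+kf$ with self-intersection $2k$, so the remaining sections have self-intersection $\geq 2$, and in particular $\seg(\A_{0,\C})=0$.

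For assertion (2), set $\S''=\epsilon_{p_3}(\A_{0,\C})$, again a $\PP^1$-bundle over $\C$, and recall from (1) that $\A_{0,\C}$ has $\seg=0$, a unique section $\sigma_0$ of self-intersection $0$, and all other sections of self-intersection $\geq 2$. Apply the section-bijection to $\epsilon_{p_3}$: as $p_3\notin\sigma_0$, the image of $\sigma_0$ has self-intersection $1$ and passes through the base point $q_3$ of $\epsilon_{p_3}^{-1}$; and a section of $\S''$ of self-intersection $\leq 0$ would pull back to a section of $\A_{0,\C}$ of self-intersection $-1$ or $0$, both impossible — in the borderline case the pull-back would be $\sigma_0$, whose image passes through $q_3$, which excludes that alternative. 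Hence $\seg(\S'')=1>0$, so $\S''$ is indecomposable by Proposition~\ref{numberminimalsection}~(1) and, by Theorem~\ref{Atiyah}, is $\C$-isomorphic to $\A_{0,\C}$ or $\A_{1,\C}$; since $\seg(\A_{0,\C})=0\neq 1$, we conclude $\S''\simeq\A_{1,\C}$ and $\seg(\A_{1,\C})=1$.

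The delicate step, and the main obstacle, is the bookkeeping in the two converse arguments: eliminating the spurious sections of small self-intersection uses at once the self-intersection shift of the section-bijection, the refinement that $q\in\epsilon_p(s)$ if and only if $p\notin s$, the hypotheses $p_2\neq q_1$, $p_2\notin s_{-1}$ and $p_3\notin\sigma_0$, and the absence of sections of the intermediate self-intersections on the source surfaces. A more computational alternative, parallel to the proof of Proposition~\ref{decomposable}, would instead write down explicit transition matrices for $\S'$ and $\S''$ on a two-chart cover adapted to $z_1$ and recognise the associated rank-$2$ bundles from Atiyah's defining extensions; I expect the conceptual route above to be the shorter one.
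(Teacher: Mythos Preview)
Your proof is correct and follows essentially the same route as the paper: track how sections and their self-intersections change under $\epsilon_{p_2}$ and $\epsilon_{p_3}$, deduce the Segre invariant and uniqueness of the minimal section, and invoke Proposition~\ref{numberminimalsection} together with Theorem~\ref{Atiyah}. The one noteworthy variation is in pinning down which Atiyah bundle is which: you use that $\seg(\S')=\deg(\V')-2\deg(\M)$ forces $\deg(\V')$ to be even and that this parity is a $\C$-isomorphism invariant of the $\PP^1$-bundle, whereas the paper instead reads off $\seg(\A_{0,\C})\le 0$ directly from the defining extension $0\to\O_\C\to\mathcal{F}_{0,\C}\to\O_\C\to 0$ via Corollary~\ref{Segre}; both arguments are short and equally effective.
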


\begin{center}
	\begin{tikzpicture}[scale=0.5]
	\draw (0,1.5) -- (4,1.5);	\draw (4,1.5) node[scale=0.5,right]{-1};
	\draw[thick, red] (2,-0.5) -- (2,3.5);
	\draw[densely dotted] (0,2.5) -- (4,3.5);\draw (4,3.5) node[scale=0.5,right]{1};
	\draw (0,3.5) -- (4,2.5);\draw (4,2.5) node[scale=0.5,right]{1};
	\draw[blue] (2,3) node[below right,scale=0.7]{$q_1$} node{$\bullet$};
	\draw[green] (2,2) node[right,scale=0.7]{$p_2$} node{$\bullet$};
	\draw (2,-4) node[scale=1]{$\PP(\O_\C(z_1)\oplus \O_\C)$};
	
	\draw [dashed,->] (5,1.5) -- (7,1.5);
	\draw (6,0.5) node[scale=1]{$\epsilon_{p_2}$};
	
	\draw (8,1.5) -- (12,1.5);	\draw (12,1.5) node[scale=0.5,right]{0};
	\draw[thick, green] (10,-0.5) -- (10,3.5);
	\draw[cyan] (9,2.5) node[left,scale=0.7]{$p_3$} node{$\bullet$};
	\draw (8,0.7) .. controls (9,0.9) and (9.5,1.2) ..(10,1.5) .. controls (10.5,1.8) and (11,2.2) .. (12,3.5); \draw (12,3.5) node[scale=0.5,right]{2};
	\draw[densely dotted] (8,-0.5) .. controls (9,0.7) and (9.5,1.1) ..(10,1.5) .. controls (10.5,1.7) and (11,2) .. (12,2.5);\draw (12,2.5) node[scale=0.5,right]{2};
	\draw (10,-4) node[scale=1]{$\A_{0,\C}$};
	\draw[orange] (10,2.5) node[left,scale=0.7]{$p'_3$} node{$\bullet$};
	
	\draw [dashed,->] (13,1) -- (15,0);
	\draw (14,-0.5) node[scale=1]{$\epsilon_{p_3}$};
	\draw [dashed,->] (13,2) -- (15,3);
	\draw (14,3.5) node[scale=1]{$\epsilon_{p'_3}$};
	
	\draw[thick, cyan] (17,-3) -- (17,1);
	\draw[thick, green] (18,-3) -- (18,1);
	\draw (16,-1) -- (20,-1);	\draw (20,-1) node[scale=0.5,right]{1};
	\draw (16,1) .. controls (16.3,1) and (16.7,-0.2) .. (17,-1) .. controls (17.3,-2.1) and (17.7,-1.9) .. (18,-1) .. controls (18.5,0) and (19,0.5) .. (20,1);\draw (20,1) node[scale=0.5,right]{3};
	\draw[densely dotted] (16,0.7) .. controls (16.3,0.3) and (16.7,-0.6) ..(17,-1) .. controls (17.3,-1.7) and (17.7,-1.7) .. (18,-1) .. controls (18.5,-0.6) and (19,0) .. (20,0.5);\draw (20,0.5) node[scale=0.5,right]{3};
	\draw (18,-4) node[scale=1]{$\A_{1,\C}$};
	
	\draw[thick, orange] (18,2) -- (18,6);
	\draw (16,4) -- (20,4);	\draw (20,4) node[scale=0.5,right]{1};
	\draw (16.5,6) .. controls (17,4.4) and (17.5,4.1) .. (18,4) .. controls (18.5,4.1) and (19,4.4) .. (19.5,6);\draw (19.5,6) node[scale=0.5,right]{3};
	\draw[densely dotted]  (16,6) .. controls (17,4.2) and (17.5,4.1) .. (18,4) .. controls (18.5,4) and (19,4.3) .. (20,5);\draw (20,5) node[scale=0.5,right]{3};

	\end{tikzpicture}
\end{center}	
	
\begin{proof}
	We prove $(1)$ and $(2)$ simultaneously. Let $p_2\in \PP(\O_\C(z_1)\oplus \O_\C)$ be as in the statement of the proposition. Because $\PP(\O_\C(z_1)\oplus \O_\C)$ has a unique $(-1)$-section which does not contain $p_2$ and all sections of self-intersection number $1$ pass through $q_1$, the strict transform under $\epsilon_{p_2}$ of the unique $(-1)$-section is the unique section $\sigma_0$ of self-intersection number $0$ and all the other sections have self-intersection at least $2$. In particular, $\seg(\epsilon_{p_2}\epsilon_{p_1}(\C\times \PP^1))=0$ and $\epsilon_{p_2}\epsilon_{p_1}(\C\times \PP^1)$ is indecomposable from Proposition \ref{numberminimalsection} (3)(ii). Then for all $p_3\in \epsilon_{p_2}\epsilon_{p_1}(\C\times \PP^1) \setminus \sigma_0$, we have $\seg(\epsilon_{p_3}\epsilon_{p_2}\epsilon_{p_1}(\C\times \PP^1))=1$ so $\epsilon_{p_3}\epsilon_{p_2}\epsilon_{p_1}(\C\times \PP^1)$ is indecomposable from Proposition \ref{numberminimalsection} (1). At this stage, we know that $\epsilon_{p_2}\epsilon_{p_1}(\C\times \PP^1)$ and $\epsilon_{p_3}\epsilon_{p_2}\epsilon_{p_1}(\C\times \PP^1)$ are the Atiyah $\PP^1$-bundles defined in Theorem \ref{Atiyah}. Moreover, we know that $\A_{0,\C}=\PP(\mathcal{F}_{0,\C})$ and the indecomposable vector bundle $\mathcal{F}_{0,\C}$ of rank $2$ and degree $0$ satisfies:
	\[
	0 \to \O_\C\to \mathcal{F}_{0,\C} \to \O_\C \to 0.
	\]	
	From Corollary \ref{Segre}, we have that $\seg(\A_{0,\C}) = \deg(\mathcal{F}_{0,\C}) - 2\deg(\M)=-2\deg(\M)$ where $\M$ is a maximal line subbundle of $\mathcal{F}_{0,\C}$. Since $\O_\C$ is a line subbundle of $\mathcal{F}_{0,\C}$, it follows that $\deg(\M) \geq 0$ and $\seg(\A_{0,\C}) \leq 0$. Therefore $\epsilon_{p_2}\epsilon_{p_1}(\C\times \PP^1)\simeq \A_{0,\C}$ and $\epsilon_{p_3}\epsilon_{p_2}\epsilon_{p_1}(\C\times \PP^1) \simeq \A_{1,\C}$.
\end{proof}

\section{The classification}

\subsection{Infinite inclusion chains of automorphism groups}

In this subsection, we prove Theorem \ref{A}. Let $\S$ be a $\PP^1$-bundle over a curve $\C$ of genus $\g$. If $\g\geq 2$, it is known that $\Autzero(\C)$ is trivial (see e.g.\ \cite[Exercise IV.2.5]{Hartshorne}) and it implies that $\Autzero(\S)$ is a subgroup of $\Aut_\C(\S)$. When $\g=1$ and $\seg(\S)<0$, it is still true that $\Autzero(\S)$ is a subgroup of $\Aut_\C(\S)$ by the following result: 

\begin{lemma}\cite[Lemma 7]{Maruyama} \label{autoellipticseg<0}
	If $\S$ is a $\PP^1$-bundle over an elliptic curve $\C$ with $\seg(\S)<0$ then the image of $\Aut(\S) \to \Aut(\C)$ is a finite group.
\end{lemma}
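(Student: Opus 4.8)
The plan is to combine the uniqueness of the minimal section of $\S$ with the classical fact that on an elliptic curve a line bundle of nonzero degree is not invariant under all translations.

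First I would record that $\Autzero(\S)$ preserves the minimal section. Applying Blanchard's Lemma \ref{Blanchard} to the structure morphism $\pi\colon\S\to\C$ (which satisfies $\pi_*\O_\S=\O_\C$), the group $\G:=\Autzero(\S)$ acts regularly on $\C$ so that $\pi$ is $\G$-equivariant; in particular $\G$ carries sections to sections and preserves self-intersection numbers. Since $\seg(\S)<0$, Proposition \ref{numberminimalsection}(2) provides a unique minimal section $\s_0$, with $\s_0^2=\seg(\S)<0$, and by the previous sentence $\s_0$ is $\G$-stable. Under the isomorphism $\pi|_{\s_0}\colon\s_0\xrightarrow{\sim}\C$, the $\G$-action on $\s_0$ is identified with the composite $\rho\colon\G\to\Aut(\S)\to\Aut(\C)$.

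The key step is then a dichotomy. Because $\G$ stabilizes the smooth curve $\s_0$, it acts on the normal bundle $\mathcal N:=N_{\s_0/\S}$ on $\s_0\cong\C$, which has $\deg\mathcal N=\s_0^2=\seg(\S)\neq 0$, and this action is compatible with the $\G$-action on $\s_0$, i.e.\ with $\rho$. The image $\rho(\G)$ is a connected subgroup of $\Aut(\C)$, hence is contained in $\Autzero(\C)=\C$ (the translations), so $\rho(\G)=\{0\}$ or $\rho(\G)=\C$. Suppose $\rho(\G)=\C$. Then for each $a\in\C$ there is $g\in\G$ with $\rho(g)=t_a$ the translation by $a$, and the action of $g$ on $\mathcal N$ lying over $t_a$ yields an isomorphism $\mathcal N\cong t_a^{*}\mathcal N$; hence $\mathcal N$ is fixed by every translation of $\C$. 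But for a line bundle of degree $d\neq 0$ on an elliptic curve the assignment $a\mapsto t_a^{*}\mathcal N\otimes\mathcal N^{-1}$ is, up to sign, multiplication by $d$ on $\C\cong\operatorname{Pic}^{0}(\C)$, so $t_a^{*}\mathcal N\cong\mathcal N$ only for the finitely many $d$-torsion points $a$ — a contradiction. Therefore $\rho(\G)=\{0\}$, i.e.\ $\Autzero(\S)$ maps trivially into $\Aut(\C)$.

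It remains to pass from $\Autzero(\S)$ to $\Aut(\S)$: the image $Q$ of $\Aut(\S)\to\Aut(\C)$ contains the image of $\Autzero(\S)$ as an open subgroup (its identity component), so $Q$ is $0$-dimensional; since $\Aut(\C)$ is an extension of a finite group by $\C$, and a $0$-dimensional subgroup scheme of $\C$ is finite, $Q$ is finite. The main obstacle is the middle step: one must set up cleanly the equivariant action of $\G$ on the normal bundle of $\s_0$ over the base action $\rho$ (so that a translation on $\C$ really does force $\mathcal N$ to be translation-invariant), after which the non-invariance of a nonzero-degree line bundle on an elliptic curve delivers the contradiction; the remaining ingredients (equivariance of the ruling via Blanchard, uniqueness of $\s_0$) are already available in the text.
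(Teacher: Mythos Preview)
The paper does not supply a proof of this lemma; it is simply quoted from \cite[Lemma 7]{Maruyama}. So there is no in-paper argument to compare against, and your proposal stands on its own.

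Your core argument is correct and is the standard one: the unique minimal section $\s_0$ (Proposition~\ref{numberminimalsection}(2)) is preserved by automorphisms, its normal bundle $\mathcal{N}$ on $\s_0\cong\C$ has degree $\seg(\S)\neq 0$, and an automorphism of $\S$ inducing $\phi\in\Aut(\C)$ forces $\phi^*\mathcal{N}\cong\mathcal{N}$. For a translation $t_a$ this happens only when $a$ is $|\seg(\S)|$-torsion, which rules out $\rho(\Autzero(\S))=\C$ and gives $\rho(\Autzero(\S))=\{0\}$.

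One remark on the final step. Your passage from $\Autzero(\S)$ to $\Aut(\S)$ asserts that $\pi_*(\Autzero(\S))$ is open in $Q=\pi_*(\Aut(\S))$; this implicitly uses that $\Aut(\S)$ has only finitely many connected components. That is true---it is Proposition~\ref{Briontrick} in the paper---but that proposition comes later in the text. A cleaner route avoids this detour entirely: observe that \emph{every} element of $\Aut(\S)$, not just $\Autzero(\S)$, preserves the unique minimal section $\s_0$ (uniqueness plus preservation of intersection numbers holds for the full automorphism group). Hence the entire image $Q$ already lies in the subgroup $\{\phi\in\Aut(\C):\phi^*\mathcal{N}\cong\mathcal{N}\}$. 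Its intersection with $\Autzero(\C)$ is contained in the $|\seg(\S)|$-torsion of $\C$, and $\Aut(\C)/\Autzero(\C)$ is finite; so $Q$ is finite directly, with no need for the connectedness dichotomy or any control on $\pi_0(\Aut(\S))$.
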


In \cite[Theorem 2]{Maruyama}, the $\C$-automorphism groups of ruled surfaces over $\C$ are classified. We will not need the entire classification but we will use:

\begin{lemma}\cite[Theorem 2 (1) and case (b) p.92]{Maruyama} \label{automaruyama}
	Let $\S=\PP(\V)$ be a $\PP^1$-bundle over a curve $\C$, let $\s$ be a section of $\S$ and $\L(\s)$ be the line subbundle of $\V$ associated to $\s$ (see Lemma \ref{sub<->section}). We choose trivializations of $S$ such that $\sigma$ is the infinite section (Lemma \ref{automorphismssections} (1)). The following holds true:
	\begin{enumerate}
		\item If $\seg(\S)>0$, then $\Aut_\C(\S)$ is finite. 
		\item If $\seg(\S)< 0$ and $\gamma\in \Gamma(\C,\det(\V)^{-1} \otimes \L(\s)^2)$, then the local isomorphisms:
		\begin{align*}
		U_i \times \PP^1 & \to U_i\times \PP^1 \\
		(x,[u:v]) & \mapsto
		\left(x,
		\begin{bmatrix}
		1 & \gamma_{|U_i} \\
		0 & 1
		\end{bmatrix}
		\cdot 
		\begin{bmatrix}
			u \\ v
		\end{bmatrix}
		\right)
		\end{align*}
		glue into a $\C$-automorphism $f_\gamma$ of $\S$. 
	\end{enumerate}
\end{lemma}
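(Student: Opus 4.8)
The plan is to prove the two assertions separately: statement~(2) by a direct computation in trivializing charts, and statement~(1) by reading $\seg(\S)>0$ as stability of $\V$. For~(2), I would fix a trivializing cover $(U_i,g_i)_i$ as in Lemma~\ref{automorphismssections}(1), so that $\s$ is the infinite section and the transition maps are $\tau_{ij}=\begin{bmatrix}a_{ij}&c_{ij}\\0&b_{ij}\end{bmatrix}$ with $a_{ij},b_{ij}\in\O_\C^*(U_{ij})$. As recorded in the proof of Proposition~\ref{selfintersectiondegree}, in such a trivialization the line subbundle $\L(\s)\subset\V$ has transition functions $(a_{ij})$ and the quotient $\V/\L(\s)$ has transition functions $(b_{ij})$; hence $\det(\V)$ has transition functions $(a_{ij}b_{ij})$ and $\det(\V)^{-1}\otimes\L(\s)^2$ has transition functions $(a_{ij}b_{ij}^{-1})$, so a global section of $\det(\V)^{-1}\otimes\L(\s)^2$ is precisely a family $(\gamma_i)_i$ with $\gamma_i\in\O_\C(U_i)$ and $\gamma_i=a_{ij}b_{ij}^{-1}\gamma_j$ on $U_{ij}$. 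The self-maps of $U_i\times\PP^1$ given fibrewise by the matrix $\begin{bmatrix}1&\gamma_i\\0&1\end{bmatrix}$ are automorphisms over $U_i$, and by the usual patching criterion for $\PP^1$-bundles they glue to a $\C$-automorphism of $\S$ precisely when $\begin{bmatrix}1&\gamma_i\\0&1\end{bmatrix}\tau_{ij}=\tau_{ij}\begin{bmatrix}1&\gamma_j\\0&1\end{bmatrix}$ in $\PGL_2(\O_\C(U_{ij}))$ for all $i,j$. The one-line multiplication
\[
\begin{bmatrix}1&\gamma_i\\0&1\end{bmatrix}\begin{bmatrix}a_{ij}&c_{ij}\\0&b_{ij}\end{bmatrix}=\begin{bmatrix}a_{ij}&c_{ij}+\gamma_i b_{ij}\\0&b_{ij}\end{bmatrix},\qquad \begin{bmatrix}a_{ij}&c_{ij}\\0&b_{ij}\end{bmatrix}\begin{bmatrix}1&\gamma_j\\0&1\end{bmatrix}=\begin{bmatrix}a_{ij}&a_{ij}\gamma_j+c_{ij}\\0&b_{ij}\end{bmatrix}
\]
shows, since the other three entries already agree, that this cocycle identity amounts to $\gamma_i b_{ij}=a_{ij}\gamma_j$, i.e.\ to $(\gamma_i)_i$ defining a section of $\det(\V)^{-1}\otimes\L(\s)^2$ --- which is exactly the hypothesis. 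So $f_\gamma$ exists, and in passing $\gamma\mapsto f_\gamma$ is a homomorphism into $\AutC(\S)$, the matrices being additive in $\gamma$. (The assumption $\seg(\S)<0$ is not used in the gluing; it only fixes the relevant regime, since for $\s$ a minimal section the bundle $\det(\V)^{-1}\otimes\L(\s)^2$ has degree $-\seg(\S)>0$ by Proposition~\ref{selfintersectiondegree}.)

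For~(1), I would argue via vector bundles. By Corollary~\ref{Segre}, $\seg(\S)=\deg(\V)-2\deg(\M)$ for a maximal line subbundle $\M\subset\V$, so $\seg(\S)>0$ says precisely that every line subbundle of $\V$ has degree $<\tfrac12\deg(\V)$: the bundle $\V$ is stable, hence simple, so $\operatorname{End}(\V)=\kk$ and $\Aut(\V)=\kk^*$. Next, any $\C$-automorphism $\phi$ of $\PP(\V)$ pulls $\O_{\PP(\V)}(1)$ back to a line bundle of degree $1$ on each fibre, hence of the form $\O_{\PP(\V)}(1)\otimes\pi^*L$ with $L\in\Pic(\C)$; pushing forward by $\pi$ turns $\phi$ into an isomorphism $\V\xrightarrow{\,\sim\,}\V\otimes L$, well defined up to a scalar in $H^0(\PP(\V),\O^*)=\kk^*$, and conversely any such isomorphism induces a $\C$-automorphism of $\PP(\V)$. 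Existence of such an $L$ forces $\V\simeq\V\otimes L$, hence (taking determinants) $L^{\otimes 2}\simeq\O_\C$, so $L$ lies in the finite group $\Pic(\C)[2]$; and for each such $L$ the isomorphisms $\V\to\V\otimes L$ form a torsor under $\Aut(\V)=\kk^*$, so contribute one element of $\AutC(\PP(\V))$. Hence $\AutC(\S)$ is finite (of order at most $\#\Pic(\C)[2]$).

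The step I expect to be the real obstacle is~(1), not~(2): the chart computation in~(2) is entirely self-contained, whereas~(1) needs input outside the elementary toolkit the paper prefers --- the dictionary between $\C$-automorphisms of $\PP(\V)$ and twisted automorphisms of $\V$, the simplicity of stable bundles, and the passage from ``$\AutC(\S)^{\circ}$ trivial'' to finiteness of all of $\AutC(\S)$ --- which is why quoting Maruyama's classification is the cleanest route. If one only wants $\AutC(\S)^{\circ}$ trivial, a proof in the style of~(2) works: $\AutC(\S)^{\circ}$ is affine (an elliptic curve cannot act nontrivially over $\C$ on a $\PP^1$-bundle, as it would act nontrivially on a general fibre $\cong\PP^1$), so if positive-dimensional it contains a copy of $\GG_a$ or $\GG_m$; a $\GG_a\subset\AutC(\S)$ fixes a section $\s$ and, exactly as in~(2), yields a nonzero section of $\det(\V)^{-1}\otimes\L(\s)^2$, forcing $\s^2\le 0$ against $\s^2\ge\seg(\S)>0$; and a $\GG_m\subset\AutC(\S)$ has two disjoint sections as its fixed locus, making $\S$ decomposable, contradicting Proposition~\ref{numberminimalsection}(1).
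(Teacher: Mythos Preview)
Your treatment of~(2) is exactly the computation the paper records in the remark immediately following the lemma: write the transition maps as upper-triangular $\begin{bmatrix}a_{ij}&c_{ij}\\0&b_{ij}\end{bmatrix}$, observe that $(a_{ij})$ and $(b_{ij})$ are transition functions for $\L(\s)$ and $\V/\L(\s)$, and check that the cocycle condition $f_{\gamma_i}\tau_{ij}=\tau_{ij}f_{\gamma_j}$ reduces to $\gamma_i=a_{ij}b_{ij}^{-1}\gamma_j$. Nothing to add there.

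For~(1) the paper gives no argument at all --- it simply cites Maruyama. Your proof via stability is correct and is the standard route: $\seg(\S)>0$ is exactly the statement that $\V$ is stable (Corollary~\ref{Segre}), hence simple, so $\Aut(\V)=\kk^*$; any $\C$-automorphism of $\PP(\V)$ comes from an isomorphism $\V\simeq\V\otimes L$ for some $L\in\Pic(\C)$, determinants force $L\in\Pic(\C)[2]$, and for each such $L$ the set of isomorphisms is a $\kk^*$-torsor contributing a single element of $\AutC(\S)$. Your secondary ``elementary'' argument that $\AutC(\S)^\circ$ is trivial is also fine, and together with Proposition~\ref{Briontrick} (finiteness of components of $\Aut(\S)$) would give another proof of finiteness --- though note that in the paper Proposition~\ref{Briontrick} appears \emph{after} this lemma, so logically the stable-bundle argument is the cleaner self-contained one. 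In short, you have supplied a proof where the paper opted for a citation.
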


\begin{remark}
	The automorphism $f_\gamma$ plays a crucial role in the proof of Theorem \ref{A}, hence we recall Maruyama's construction. First we write the transition maps of $S$ as $s_{ij}\colon U_j\times \PP^1 \dashrightarrow U_i\times \PP^1$, $(x,[u:v])\mapsto (x,[a_{ij}u+c_{ij}v:b_{ij}v])$ where $a_{ij}$ are the transition maps of the line bundle $\L(\sigma)$. Then $b_{ij}$ are the transition maps of the line bundle $\det(V)\otimes \L(\sigma)^{-1}$. The local isomorphisms $f_{\gamma_i}\colon U_i\times \PP^1 \to U_i\times \PP^1$, $(x,[u:v])\mapsto (x,[u+\gamma_iv:v])$, where $\gamma_i\in \O_\C(U_i)$, glue into a $\C$-automorphism of $S$ if and only if $s_{ij} f_{\gamma_j} = f_{\gamma_i} s_{ij}$, and a direct computation shows that it is equivalent to the condition $a_{ij}b_{ij}^{-1}\gamma_j=\gamma_i$. In particular, $(\gamma_i)_i$ defines a section of the line bundle $\det(\V)^{-1}\otimes \L(\sigma)^2$. 
\end{remark}

\begin{proof}[\bf{Proof of Theorem A}]
	Assume first that $\g\geq 2$ and $\seg(\S) > 0$, then it follows from Lemma \ref{automaruyama} (1) that $\Autzero(\S)$ is trivial. If $\g\geq 2$ and $\seg(\S)=0$ then from Lemma \ref{numberminimalsection} (3)(ii) and (iii), we know that $\S$ has at most two minimal sections because $\S$ is not trivial. Let $p$ be a point on a minimal section, then every automorphism of $\Autzero(\S)$ has to fix $p$ because $\Autzero(\S)$ is connected and $\Autzero(\C)$ is trivial. Therefore the elementary transformation $\epsilon_p:\S\dashrightarrow \T$ is $\Autzero(\S)$-equivariant, i.e.\ $\epsilon_p \Autzero(\S)\epsilon_p^{-1} \subset \Autzero(\T)$, and we have $\seg(\T)=-1$. So when $\g\geq 2$, it suffices to prove the theorem when $\seg(\S)<0$. In the statement of Theorem \ref{A}, we suppose that $\g\geq 2$, or $\g=1$ and $\seg(\S)<0$. From now on we assume that $\seg(\S)<0$ and $\g\geq 1$. Then from Lemma \ref{numberminimalsection} (2), the ruled surface $\S$ has a unique minimal section $\s$ and from Lemma \ref{autoellipticseg<0}, the algebraic group $\Autzero(\S)$ is a subgroup of $\AutC(\S)$. So any point $p_0$ of $\s$ is fixed by the action of $\Autzero(\S)$ and it implies that $\epsilon_{p_0}:\S \dashrightarrow \S_1$ is $\Autzero(\S)$-equivariant. By induction, there exist $\PP^1$-bundles $\S_n$ having a unique minimal section $\s_n$ and $p_n$ on $\s_n$ such that $\epsilon_{p_n}:\S_n \dashrightarrow \S_{n+1}$ is $\Autzero(\S_n)$-equivariant. By denoting $\phi_n=\epsilon_{p_{n-1}} ... \epsilon_{p_1}\epsilon_{p_0}$, we get a sequence:
	\[
	\Autzero(\S) \subseteq \phi_1^{-1}\Autzero(\S_1)\phi_1 \subseteq ... \subseteq \phi_n^{-1}\Autzero(\S_n)\phi_n \subseteq ...
	\]
	and it remains to prove that the obtained sequence is not stationary. Suppose $\S_{n+1} = \PP(\V_{n+1})$, we define $\L_{n+1}= \det(\V_{n+1})^{-1}\otimes \L(\s_{n+1})^{2}$. Let $q_n$ be the unique base point of $\epsilon_{p_n}^{-1}$ and we can assume that $q_n = (z,[0:1])$ over a trivializing open subset. If $n$ is large enough then $\deg(\L_{n+1})=-\seg(\S_{n+1})$ is large enough, and it implies that $\h^1(\C,\L_{n+1})=\h^1(\C,\L_{n+1} - z)=0$ by Serre duality. From Riemann-Roch theorem: $\h^0(\C, \L_{n+1}-z) = \deg(\L_{n+1})-g < \deg( \L_{n+1})-g+1 = \h^0(\C, \L_{n+1})$, and therefore $z$ is not a base point of the complete linear system $\vert \L_{n+1} \vert$.
	Then there exists $\gamma \in \Gamma(\C,\L_{n+1})$ such that $\gamma(z)\neq 0$, i.e.\ $f_\gamma$
	defines an automorphism of $\S_{n+1}$ which does not fix $q_n$. In consequence, the automorphism $f_\gamma$ defined in Lemma \ref{automaruyama} $(2)$ does not belong to $\epsilon_n \Autzero(\S_n) \epsilon_n^{-1}$ and $\epsilon_n\Autzero(\S_n)\epsilon_n^{-1}\neq \Autzero(\S_{n+1})$ when $n$ is taken large enough.
\end{proof}

\subsection{Maximal connected algebraic subgroups of $\Bir(\C\times \PP^1)$}

In this subsection, we prove Theorem \ref{B}. From Theorem \ref{A}, we know that a maximal connected algebraic subgroup of $\Bir(\C\times \PP^1)$ is conjugate to $\Autzero(\C\times \PP^1)\simeq \PGL_2$ if $\g\geq 2$ (and it is maximal from Lemma \ref{maximal} (3)), or $\Autzero(\S)$ for some ruled surface $\S$ with $\seg(\S)\geq 0$ if $\g=1$. The following lemma determines the remaining cases when $\g=1$:

\begin{lemma}\label{remainingcases}
	Let $\S$ be a ruled surface over an elliptic curve $\C$. If $\seg(\S)\geq 0$ then $\S$ is isomorphic to one of the following:
	\begin{enumerate}
		\item $\C\times \PP^1$,
		\item $\A_{0,\C}$,
		\item $\A_{1,\C}$,
		\item $\S_{z_1,z_2}$ for some distinct points $z_1,z_2\in \C$. 
	\end{enumerate} 
	Let $z_1,z_2\in \C$ be distinct points, then the surfaces $\C\times\PP^1$, $\A_{0,\C}$, $\A_{1,\C}$ and $\S_{z_1,z_2}$ are pairwise non-isomorphic.
\end{lemma}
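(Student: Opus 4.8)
The plan is to run through the trichotomy of Proposition \ref{numberminimalsection}, using the Segre invariant as the main bookkeeping device, and to identify each resulting bundle by means of Atiyah's classification (Theorem \ref{Atiyah}) together with the computations of Proposition \ref{segAtiyah}. First I would treat the case $\seg(\S)>0$: by the contrapositive of Proposition \ref{numberminimalsection}(1), such an $\S$ is indecomposable, hence $\C$-isomorphic to $\A_{0,\C}$ or $\A_{1,\C}$ by Theorem \ref{Atiyah}; since $\seg(\A_{0,\C})=0$ and $\seg(\A_{1,\C})=1$ by Proposition \ref{segAtiyah}, only $\S\simeq\A_{1,\C}$ can occur, which is case $(3)$. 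Next, assuming $\seg(\S)=0$, I would split according to the number of minimal sections using Proposition \ref{numberminimalsection}(3): if $\S$ has at least three minimal sections it is trivial, so $\S\simeq\C\times\PP^1$ (case $(1)$); if $\S$ has a unique minimal section it is indecomposable, so again $\S$ is $\C$-isomorphic to $\A_{0,\C}$ or $\A_{1,\C}$, and $\seg(\S)=0$ forces $\S\simeq\A_{0,\C}$ (case $(2)$).

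The remaining case is $\seg(\S)=0$ with exactly two minimal sections, i.e.\ $\S=\PP(\V)$ decomposable but not trivial. Here I would write $\V\simeq\L_1\oplus\L_2$, twist by $\L_1^{-1}$ (which does not change $\PP(\V)$), and reduce to $\S\simeq\PP(\O_\C\oplus\M)$ with $\M=\L_2\otimes\L_1^{-1}\not\simeq\O_\C$ by non-triviality. Up to exchanging the two summands we may assume $\deg\M\ge 0$; then a maximal line subbundle of $\O_\C\oplus\M$ has degree $\deg\M$ (a line subbundle either projects non-trivially to the first summand, so has degree $\le 0$, or is contained in the second, so has degree $\le\deg\M$), and Corollary \ref{Segre} gives $0=\seg(\S)=\deg(\O_\C\oplus\M)-2\deg\M=-\deg\M$, so $\deg\M=0$. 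Since $\C$ is an elliptic curve, $\Pic^0(\C)$ is parametrised by $\C$ after a choice of origin, hence $\M\simeq\O_\C(z_1-z_2)$ for some $z_1,z_2\in\C$, with $z_1\neq z_2$ because $\M$ is non-trivial. Twisting $\PP(\O_\C\oplus\O_\C(z_1-z_2))$ by $\O_\C(z_2)$ then gives $\S\simeq\PP(\O_\C(z_1)\oplus\O_\C(z_2))=\S_{z_1,z_2}$, which is case $(4)$.

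For the last assertion I would first observe that, since $\g\ge 1$, the ruling $\pi\colon\S\to\C$ is (up to a translation on $\C$) the Albanese morphism of $\S$, and is therefore preserved by any isomorphism between two such surfaces; consequently such an isomorphism carries sections to sections and preserves their self-intersection numbers, so both $\seg(\S)$ and the number of minimal sections of $\S$ are invariants of $\S$ as an abstract surface. Now $\seg(\A_{1,\C})=1$ whereas $\seg(\C\times\PP^1)=\seg(\A_{0,\C})=\seg(\S_{z_1,z_2})=0$, so $\A_{1,\C}$ is isomorphic to none of the other three; and by Proposition \ref{numberminimalsection} the number of minimal sections is infinite for $\C\times\PP^1$ (trivial), equal to $1$ for $\A_{0,\C}$ (indecomposable) and equal to $2$ for $\S_{z_1,z_2}$ (decomposable and non-trivial), so these three are pairwise non-isomorphic as well.

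Almost everything above is a direct reading-off from Propositions \ref{numberminimalsection} and \ref{segAtiyah}, Corollary \ref{Segre} and Theorem \ref{Atiyah}; the only genuinely non-formal inputs are the degree computation $\deg\M=0$ in the decomposable case and the parametrisation of $\Pic^0(\C)$ by $\C$. The step I expect to require the most care is the non-isomorphism statement, specifically checking that the number of minimal sections is an invariant of the abstract surface — which is exactly the point where one must invoke that the ruling is canonical when $\g\ge 1$.
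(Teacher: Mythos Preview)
Your argument is correct and follows essentially the same route as the paper's proof: both split into the indecomposable case (handled by Theorem \ref{Atiyah} and Proposition \ref{segAtiyah}) and the decomposable case (handled by twisting and identifying the summands via $\Pic(\C)$), and both distinguish the four surfaces by the Segre invariant together with the number of minimal sections. The only cosmetic differences are that the paper organises the case split by decomposable versus indecomposable rather than by the value of $\seg(\S)$, and that in the decomposable case the paper twists so that both summands have degree $1$ (using that every degree-$1$ line bundle on an elliptic curve is $\O_\C(z)$ for a unique $z$) whereas you twist to degree $0$ and then pass through $\Pic^0(\C)\simeq\C$; these are equivalent manoeuvres. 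Your explicit remark that the ruling is canonical for $\g\geq 1$ is a useful clarification---in the paper this is supplied by Lemma \ref{autodescend}.
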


\begin{proof}
	If $\S$ is indecomposable, it follows from Theorem \ref{Atiyah} that $\S$ is isomorphic to $\A_{0,\C}$ or $\A_{1,\C}$, and their Segre invariant is non-negative (Proposition \ref{segAtiyah}). If $\S$ is decomposable and $\seg(\S)\geq 0$, it follows from Proposition \ref{numberminimalsection} that $\seg(\S)=0$ and $\S$ is $\C$-isomorphic to $\PP(\L(\s_1)\oplus \L(\s_2))$ where $\s_1$ and $\s_2$ are disjoint minimal sections. Tensoring $\L(\s_1)\oplus \L(\s_2)$ by a line bundle with degree $(-\deg(\L(\s_1)) +1)$, it follows that $\S$ is $\C$-isomorphic to $\PP(\L_1\oplus \L_2)$ where $\L_1$ and $\L_2$ are line bundles of degree $1$. Since $\C$ is an elliptic curve, the line bundles $\L_1$ and $\L_2$ are respectively isomorphic to $\O_\C(z_1)$ and $\O_\C(z_2)$ for some $z_1,z_2\in \C$. Indeed, a line bundle of degree $1$ over $\C$ corresponds to a divisor of degree $1$ on $\C$, and its complete linear system is a unique point by Riemann-Roch formula. If $z_1=z_2$ then $\S$ is isomorphic to $\C\times \PP^1$, otherwise $\S$ is isomorphic to $\S_{z_1,z_2}$. Finally, the Atiyah ruled surfaces are not isomorphic to each other from Theorem \ref{Atiyah}, and they cannot be isomorphic to a decomposable $\PP^1$-bundle. Since the surface $\S_{z_1,z_2}$ has exactly two sections of self-intersection $0$ from Proposition \ref{decomposable} (2), it cannot be isomorphic to $\C\times \PP^1$. 
\end{proof}

If $\S$ is isomorphic to $\C\times \PP^1$ with $\C$ an elliptic curve, then $\Autzero(\S)\simeq \C\times \PGL_2$ is a maximal algebraic subgroup of $\Bir(\C\times \PP^1)$. From Proposition \ref{minimalsurfaces}, Lemma \ref{remainingcases} and Theorem \ref{A}, we are left with studying the maximality of $\Autzero(\S)$ when $\S$ is $\A_{0,\C}$, $\A_{1,\C}$ and $\S_{z_1,z_2}$ for $z_1,z_2\in \C$.

\begin{lemma}\label{autodescend}
	Let $\C$ be a curve of genus $\g\geq 1$, let $\pi\colon \S\to \C$ and $\pi'\colon \S'\to \C$ be ruled surfaces. Then an isomorphism from $\S$ to $\S'$ induces an automorphism of $\C$. If $\S=\S'$, we have a morphism of group schemes: 
	\[
	\pi_* \colon  \Aut(\S) \to \Aut(\C).
	\]
	The restriction of $\pi_*$ to the connected components of identity coincides with the morphism of algebraic groups $\Autzero(\S)\to \Autzero(\C)$ induced by Blanchard's Lemma.
\end{lemma}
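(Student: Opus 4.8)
The plan is to verify each of the three assertions in order, using the fact that a ruled surface over a positive-genus curve has a canonical fibration structure. First I would prove that any isomorphism $f\colon\S\to\S'$ descends to an automorphism of $\C$. Since $\g\geq 1$, the only morphisms $\PP^1\to\C$ are constant, so $\pi'\circ f$ is constant on each fiber of $\pi$ (each fiber being isomorphic to $\PP^1$); therefore $\pi'\circ f$ factors through $\pi$, giving a morphism $\bar f\colon\C\to\C$ with $\pi'\circ f=\bar f\circ\pi$. Applying the same argument to $f^{-1}$ yields a morphism $\overline{f^{-1}}$ which is inverse to $\bar f$, so $\bar f\in\Aut(\C)$. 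Taking $\S=\S'$ and letting $f$ range over $\Aut(\S)$, the assignment $f\mapsto\pi_*(f):=\bar f$ is visibly a group homomorphism, and it is a morphism of group schemes because it is induced by composition with the fixed morphism $\pi$ (functorially in test schemes, the relative version of the above argument applies fiberwise, using that $\pi$ has geometrically connected fibers so $\pi_*\O_\S=\O_\C$ and $\bar f$ is the unique map making the square commute).

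Then I would identify $\pi_*$ with the Blanchard morphism on identity components. By Lemma \ref{Blanchard} applied to the proper morphism $\pi\colon\S\to\C$ with $\pi_*\O_\S=\O_\C$ (true because the fibers are $\PP^1$, hence geometrically connected and reduced), the $\Autzero(\S)$-action on $\S$ descends to a regular action on $\C$, i.e.\ a morphism of algebraic groups $\rho\colon\Autzero(\S)\to\Autzero(\C)$, characterized by the uniqueness clause of Blanchard's Lemma: $\rho(g)$ is the unique automorphism of $\C$ with $\pi\circ g=\rho(g)\circ\pi$. But for $g\in\Autzero(\S)\subset\Aut(\S)$ the first part of the lemma already exhibits such an automorphism, namely $\pi_*(g)$; by uniqueness $\rho(g)=\pi_*(g)$. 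Hence $\pi_*|_{\Autzero(\S)}=\rho$, which is exactly the Blanchard-induced morphism.

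The only genuine subtlety — and the step I expect to require the most care — is the scheme-theoretic statement that $\pi_*\colon\Aut(\S)\to\Aut(\C)$ is a morphism of group schemes, rather than merely an abstract group homomorphism on $\kk$-points. For this I would work with $T$-points: an element of $\Aut(\S)(T)$ is a $T$-automorphism of $\S\times T$, and since $\pi\times\mathrm{id}_T\colon\S\times T\to\C\times T$ is proper with $(\pi\times\mathrm{id})_*\O=\O_{\C\times T}$ (flat base change, as $\pi_*\O_\S=\O_\C$ commutes with base change for a $\PP^1$-bundle), the relative version of the ``constant on fibers'' argument — or directly the functoriality of Stein factorization — produces a unique $T$-automorphism of $\C\times T$ through which it factors; this is natural in $T$, hence defines the desired morphism of functors, and by Yoneda a morphism of group schemes. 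Everything else is a formal consequence of the characterizing uniqueness properties already available, so no further computation is needed.
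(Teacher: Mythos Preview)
Your proposal is correct and follows essentially the same line as the paper: fibers map to fibers because there are no nonconstant maps $\PP^1\to\C$ when $\g\geq 1$, and the coincidence with Blanchard's morphism follows from the uniqueness clause in Lemma~\ref{Blanchard}.

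The one place where the paper is more economical is exactly the step you flag as the ``genuine subtlety''. Rather than working with $T$-points and Stein factorization, the paper fixes a section $\sigma\colon\C\to\S$ (which exists by Tsen's theorem, cf.\ Definition~\ref{defruled}) and simply writes $\pi_*(g)=\pi\circ g\circ\sigma$. This formula makes it immediate that $\pi_*(g)$ is a morphism of varieties, and since it is given by composition with fixed morphisms it visibly works in families, giving the morphism of group schemes without any appeal to flat base change or functoriality of Stein factorization. Your argument is not wrong, but the section trick is both shorter and more elementary.
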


\begin{proof}
	If $g\colon \S\to \S'$ is an isomorphism and $f$ is a fiber in $\S$, then $\pi' g_{|f}$ is a morphism from $f\simeq \PP^1$ to $\C$. Hence it is constant and the image of $f$ by $g$ is a fiber $f'$ in $\S'$. Then the isomorphism $g$ induces an automorphism of $\C$. If $\S=\S'$, we get a morphism $\pi_*\colon \Aut(\S)\to \sf{Bij}(\C)$, where $\sf{Bij}(\C)$ denotes the set of bijections of $\C$. Let $\sigma$ be a section of $\pi$ and $g\in \Aut(S)$. Then $\pi_*(g) = \pi g \sigma$, and in particular $\pi_*(g)$ is a morphism. Since $g$ is an automorphism, it follows that $\pi_*(g)$ is also an automorphism and the image of $\pi_*$ is contained in $\Aut(\C)$. The restriction of $\pi_*$ induces a morphism of algebraic group $\Autzero(\S)\to \Autzero(\C)$. Then $\pi$ is $\Autzero(\S)$-equivariant, with $\Autzero(\S)$ acting on $\C$ by $(g,x)\mapsto \pi_*(g)(x)$. In particular, $\pi_*$ coincides with the morphism induced by Blanchard's Lemma by the unicity part of Lemma \ref{Blanchard}.
\end{proof}

The following proposition will be useful.

\begin{proposition}\label{Briontrick}
	Let $\C$ be a curve and $\pi\colon \S\to \C$ be a $\PP^1$-bundle. Then $\Aut(\S)$ is an algebraic group.
\end{proposition}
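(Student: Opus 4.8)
The plan is to realize $\Aut(\S)$ as a closed subgroup scheme of a group that we already know to be algebraic. The natural candidate is $\Aut(\S\times_\C\S)$ or, better, the automorphism group of a suitable projective embedding, but the cleanest route goes through the section morphism $\pi_*$ studied in Lemma \ref{autodescend}. First I would invoke the fact that $\Aut(\C)$ is an algebraic group (for $\g\ge 1$ it is even finite when $\g\ge 2$ and an extension of a finite group by the elliptic curve when $\g=1$; for $\g=0$ it is $\PGL_2$). The morphism of abstract groups $\pi_*\colon\Aut(\S)\to\Aut(\C)$ from Lemma \ref{autodescend} has kernel $\AutC(\S)$, the group of $\C$-automorphisms. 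So it suffices to show two things: (a) $\AutC(\S)$ is an algebraic group, and (b) $\Aut(\S)$ is an extension of an algebraic (sub)group of $\Aut(\C)$ by $\AutC(\S)$ in a way compatible with the scheme structures, i.e.\ the whole group $\Aut(\S)$ carries a structure of group scheme of finite type.

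For step (a): since $\S=\PP(\V)\to\C$ is a $\PP^1$-bundle, a $\C$-automorphism is precisely a section of the group bundle $\underline{\mathsf{PGL}}(\V)\to\C$ whose fiber over $x$ is $\PGL_2$. Thus $\AutC(\S)=\Gamma\big(\C,\underline{\mathsf{PGL}}(\V)\big)$, which is the group of $\C$-points of the Weil restriction-type functor represented by an affine scheme of finite type over $\kk$ — concretely, $\AutC(\S)$ is a closed subgroup of $\mathsf{PGL}\big(\Gamma(\C,\V\otimes\V^{\vee})\big)$-type data, but more simply it is a linear algebraic group because $\C$ is projective so global sections of coherent sheaves are finite-dimensional. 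One makes this precise: $\AutC(\S)$ is the automorphism group of $\V$ up to twist, i.e.\ a quotient of the (algebraic) group $\mathsf{Aut}(\V)/\GG_m$ when $\V$ is not too degenerate, and in general one writes it as the group scheme $\mathsf{Isom}_\C(\V,\V)/\GG_m$; since $\mathsf{Isom}_\C(\V,\V)\subset\mathsf{End}_\C(\V)=\mathsf{Hom}(\O_\C,\V\otimes\V^{\vee})$ and $\mathsf{End}_\C(\V)$ is a finite-dimensional $\kk$-algebra ($\C$ projective), $\AutC(\S)$ is affine of finite type, hence algebraic.

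For step (b): I would use that the action of $\Aut(\S)$ on $\S$ is "rational'' in the required sense and apply the Regularization Theorem \ref{Weil} together with \cite{Matsumara_Oort} — but the more economical argument is: $\Aut(\S)$ acts on the (finite) set of numerical classes, fixing the fiber class $f$, so it preserves the ruling $\pi$ up to the induced action on $\C$; the image of $\pi_*$ is then a subgroup of $\Aut(\C)$ that is the automorphisms of $\C$ preserving the isomorphism class of $\V$ up to twist, which is a closed subgroup $\G_0\subset\Aut(\C)$, hence algebraic. Now $\Aut(\S)$ is an extension $1\to\AutC(\S)\to\Aut(\S)\to\G_0\to 1$ of an algebraic group by an algebraic group; such an extension, once we know the action of $\G_0$ on $\AutC(\S)$ is algebraic and a set-theoretic section exists locally, is itself an algebraic group (this is standard; cf.\ the theory of group scheme extensions, or simply note $\Aut(\S)\hookrightarrow\Aut(\PP(\V\oplus\mathcal{O}_\C))$ — a closed subgroup of the automorphism group of a projective variety, which is a group scheme by \cite{Matsumara_Oort}). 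The main obstacle I anticipate is making the last identification watertight: one must check that the abstract group $\Aut(\S)$ really is a closed \emph{subgroup scheme} of $\Aut(Z)$ for a suitable projective $Z$ — e.g.\ embedding $\S$ equivariantly via a very ample line bundle whose class is canonical (built from $-K_\S$ and $f$), so that every automorphism of $\S$ extends to a projective linear automorphism of the ambient space. Once $\S$ is so embedded, $\Aut(\S)$ is the stabilizer of $[\S]$ in the Hilbert scheme action of $\PGL_N$, hence a closed subscheme, hence algebraic; and one checks it is reduced (smooth) in characteristic $0$, or invokes \cite{Matsumara_Oort} directly which already gives $\Aut(\S)$ the structure of a group scheme and the content of the proposition is then just that it is of finite type, which follows since $\S$ is projective and its Hilbert polynomial is fixed.
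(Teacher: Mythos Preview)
Your proposal circles around the right idea but never lands on it, and the final fallback claim is false. You write that ``the content of the proposition is then just that it is of finite type, which follows since $\S$ is projective and its Hilbert polynomial is fixed.'' This is not true: projectivity alone does not force $\Aut(\S)$ to have finitely many components. For instance, if $E$ is an elliptic curve then $\Aut(E\times E)$ contains $\GL_2(\ZZ)$ acting by group automorphisms, hence has infinitely many components; many K3 surfaces likewise have infinite discrete automorphism groups. Matsumura--Oort gives you a locally-of-finite-type group scheme, but the passage to finite type requires something extra: an $\Aut(\S)$-invariant ample numerical class.

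This is exactly what the paper does, in three lines. By adjunction $-\mathrm{K}_\S\cdot f=2$, so $-\mathrm{K}_\S$ is $\pi$-ample; hence $D=-\mathrm{K}_\S+m\pi^*(A)$ is ample for $A$ ample on $\C$ and $m\gg 0$. The numerical class of $D$ is $\Aut(\S)$-invariant because both $\mathrm{K}_\S$ and the fibre class are. Then one cites \cite[Theorem 2.10]{Brionnotes}: if a projective variety carries an ample class fixed by $\Aut$, then $\Aut$ has finitely many components, hence is an algebraic group. You actually gesture at this (``a very ample line bundle whose class is canonical, built from $-K_\S$ and $f$'') but abandon it for the extension argument instead of recognising it as the whole proof.

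Your extension route $1\to\AutC(\S)\to\Aut(\S)\to\G_0\to 1$ can in principle be made to work, but as written it has gaps: Lemma \ref{autodescend} is stated only for $\g\ge 1$, so the construction of $\pi_*$ needs a separate argument when $\C=\PP^1$; the claim that $\G_0\subset\Aut(\C)$ is closed is asserted without proof; and ``an extension of algebraic groups is algebraic'' requires knowing a priori that the middle term carries a compatible scheme structure, which is precisely what is at issue. All of this is bypassed by the invariant-ample-class argument.
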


\begin{proof}
	Since $\S$ is a ruled surface, the adjunction formula gives $-\mathrm{K}_\S \cdot f = 2$ for all fibers $f$. In particular, $-\mathrm{K}_\S$ is $\pi$-ample and if $A$ denotes an ample divisor on $\C$, then the divisor $D= -\mathrm{K}_\S + m\pi^*(A)$ is ample for $m$ positive and large enough (see e.g.\ \cite[\href{https://stacks.math.columbia.edu/tag/0892}{Lemma 0892 (1)}]{stacks-project}, but it is also a consequence of Nakai ampleness criterion). Moreover, the numerical class of $D$ is fixed by $\Aut(\S)$ since $\mathrm{K}_\S$ and $\pi^*(A)$ are fixed. From \cite[Theorem 2.10]{Brionnotes}, the group scheme $\Aut(\S)$ has finitely many connected components and thus it is an algebraic group.
\end{proof} 

\begin{comment}
\begin{proposition}
Let $\C$ be an elliptic curve and $\pi\colon \S\to \C$ be isomorphic to $\A_{0,\C}$, or $\A_{1,\C}$, or $\S_{z_1,z_2}$ for some $z_1,z_2\in\C$. Then $\Aut(\S)$ is an algebraic group.
\end{proposition}

\begin{proof}
Let $A$ be an ample line bundle over $\C$ and $D= -\mathrm{K}_\S + m\pi^*(A)$ with $m\in \NN$. Let $\s$ be a minimal section of $\S$ and $f$ be a fiber. If $m$ is large enough, we have $D\cdot \s=-\mathrm{K}_\S \cdot \s + m \pi^*(A)\cdot \s>0$ and $D\cdot f= -\mathrm{K}_\S\cdot f = 2$ from adjunction formula. Every curve $c$ in $\S$ is numerically equivalent to $a\s + bf$ with $a,b$ integers. Intersecting $c$ with $\s$ and $f$, one gets that $a,b\geq 0$ if $\seg(\S)=0$, or that $a\geq 0$ and $b\geq -a$ if $\seg(\S)=1$. Hence for $m$ large enough, we have $c\cdot D >0$ for all curves $c$. Similarly, we have $D^2 =\mathrm{K}_\S^2-2m\mathrm{K}_\S\cdot \pi^*(A)>0$ if $m$ is taken large enough. In particular, the line bundle $D$ is ample from Nakai-Moishezon criterion. Moreover, the numerical class of $D$ is fixed by $\Aut(\S)$ since $\mathrm{K}_\S$ and $\pi^*(A)$ are fixed. From \cite[Theorem 2.10]{Brionnotes}, the group scheme $\Aut(\S)$ has finitely many connected components and thus it is an algebraic group.
\end{proof} 
\end{comment}

We will use the following proposition to show that the automorphism groups of the Atiyah's ruled surfaces are maximal.  
  
\begin{proposition}\label{Atiyahmax}
	Let $\C$ be an elliptic curve and $\pi\colon \A_{i,\C} \to \C$ be the structure morphism. For $i\in \{0,1\}$, the morphism of algebraic groups induced by Blanchard's Lemma (or by Lemma \ref{autodescend}):
	\[
	\pi_*\colon \Autzero(\A_{i,\C}) \rightarrow \Autzero(\C)
	\]
	is surjective.
\end{proposition}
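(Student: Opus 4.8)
The strategy is to realize $\A_{i,\C}$ as the quotient of a suitable homogeneous space, so that translations of $\C$ lift. The key point is that $\A_{0,\C}$ and $\A_{1,\C}$ are \emph{homogeneous} $\PP^1$-bundles: they arise from vector bundles of the form $\mathcal{F}_{i,\C}$ whose isomorphism class is invariant under all translations $t_a\colon \C\to \C$, $x\mapsto x+a$. Concretely, for each $a\in \C$ one has $t_a^*\mathcal{F}_{i,\C}\simeq \mathcal{F}_{i,\C}\otimes \L_a$ for some line bundle $\L_a$ of degree $0$ (degree is a translation invariant, and by Atiyah's classification there is a unique indecomposable bundle of each degree up to twist by $\Pic^0$). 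Projectivizing, $t_a^*\A_{i,\C}\simeq \A_{i,\C}$ as $\PP^1$-bundles, so each translation of the base lifts to an isomorphism of $\A_{i,\C}$. This shows $\pi_*\colon \Aut(\A_{i,\C})\to\Aut(\C)$ has image containing $\Autzero(\C)=\C$ acting by translations.

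The remaining work is to promote this set-theoretic surjectivity onto $\Autzero(\C)$ to a statement about $\Autzero(\A_{i,\C})$, i.e.\ to check that the lifts can be chosen inside the identity component. First I would observe that $\Aut(\A_{i,\C})$ is an algebraic group by Proposition \ref{Briontrick}, and that by Lemma \ref{autodescend} the restriction of $\pi_*$ to identity components is the Blanchard morphism $\Autzero(\A_{i,\C})\to\Autzero(\C)$. The preimage $\pi_*^{-1}(\Autzero(\C))$ is a closed subgroup of $\Aut(\A_{i,\C})$ containing $\AutC(\A_{i,\C})$ (the kernel, which contains the connected group of automorphisms $f_\gamma$ from Lemma \ref{automaruyama}(2) when $\seg<0$, or at least $\GG_m$ for $\A_{0,\C}$, $\A_{1,\C}$), and surjecting onto the connected group $\Autzero(\C)$. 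Since the source maps onto a connected group with connected kernel—or more cheaply, since $\Autzero(\A_{i,\C})$ surjects onto some nontrivial connected subgroup of the $1$-dimensional group $\Autzero(\C)$, which must then be all of it—we conclude $\pi_*(\Autzero(\A_{i,\C}))=\Autzero(\C)$. The cleanest version of this last step: the image $\pi_*(\Autzero(\A_{i,\C}))$ is a connected subgroup of $\Autzero(\C)\simeq\C$; it is nontrivial because some translation lift lies in the identity component (deform $a$ continuously from $0$, using that the construction of lifts can be done in families over $\C$, or invoke that the map $\Aut(\A_{i,\C})\to\Aut(\C)$ is a morphism of algebraic groups whose image is a subgroup of positive dimension once we know it is not finite); a positive-dimensional connected subgroup of $\C$ is all of $\C$.

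The main obstacle is the family argument making the translation lifts move continuously, i.e.\ showing $\pi_*(\Autzero(\A_{i,\C}))$ is not the trivial group. I expect the slickest route is to exhibit an explicit one-parameter subgroup: for $\A_{1,\C}$ one can use the geometric construction $\A_{1,\C}=\epsilon_{p_3}(\A_{0,\C})$ and the universality of the Atiyah bundle, while for $\A_{0,\C}$ one can use the Heisenberg-type action or simply the family $\{\mathcal{F}_{0,\C}\otimes\L_a\}_{a\in\Pic^0(\C)}$, which being all isomorphic produces a morphism $\Pic^0(\C)\simeq\C\to\Aut(\A_{0,\C})$ lifting the identity-to-translation map up to the kernel; composing with $\pi_*$ gives a nontrivial—hence surjective—morphism $\C\to\Autzero(\C)$. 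Once nontriviality of the image is in hand, the dimension count finishes the proof immediately.
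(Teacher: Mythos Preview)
Your overall strategy matches the paper's: use translation-invariance of the Atiyah bundles (Theorem \ref{Atiyah}) to lift every $g\in\Aut(\C)$ to $\tilde g\in\Aut(\A_{i,\C})$, then pass to identity components. You also correctly flag the main obstacle, namely showing $\pi_*(\Autzero(\A_{i,\C}))$ is all of $\Autzero(\C)$ rather than just a point.

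Where your argument has a gap is precisely at that obstacle. Your first option, ``source maps onto a connected group with connected kernel,'' does not go through for $\A_{1,\C}$: since $\seg(\A_{1,\C})=1>0$, Lemma \ref{automaruyama}(1) gives that $\AutC(\A_{1,\C})=\ker(\pi_*)$ is \emph{finite}, so you cannot assume it is connected (and your aside about ``at least $\GG_m$'' is incorrect for both bundles). Your second option, the family/deformation argument, is morally right but not rigorous as stated: choosing an isomorphism $t_a^*\A_{i,\C}\simeq\A_{i,\C}$ for each $a$ does not automatically produce a \emph{morphism} $\C\to\Aut(\A_{i,\C})$, because those isomorphisms are only defined up to the (possibly disconnected) group $\AutC(\A_{i,\C})$.

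The paper closes this gap with a short component-counting argument that you already have the ingredients for. You invoke Proposition \ref{Briontrick}, so $\Aut(\A_{i,\C})$ has only finitely many connected components; hence $H:=\pi_*^{-1}(\Autzero(\C))=\bigsqcup_{j\in J}H_j$ with $J$ finite and $H_0=\Autzero(\A_{i,\C})$. If $\pi_*(H_0)$ were trivial, then each coset $H_j=h_jH_0$ would map to the single point $\pi_*(h_j)$, so $\pi_*(H)$ would be finite; but $\pi_*(H)=\Autzero(\C)$ is infinite. Therefore $\pi_*(H_0)$ is a nontrivial connected closed subgroup of the elliptic curve $\Autzero(\C)$, hence equals $\Autzero(\C)$. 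Inserting this coset argument in place of your ``connected kernel'' or ``family'' sketches makes your proof complete and essentially identical to the paper's.
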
	

\begin{proof}
	Let $g\in \Aut(\C)$. Then the pullback $\pi^*\colon g^*(\A_{i,\C})\to \C$ is an indecomposable $\PP^1$-bundle over $\C$. Since the Atiyah bundles are unique up to $\C$-isomorphism from Theorem \ref{Atiyah}, it follows that $g^*(\A_{i,\C})$ is $\C$-isomorphic to $\A_{i,\C}$, so the following diagram is commutative:
	\[
	\begin{tikzcd} [row sep = 3em]
	\A_{i,\C} \arrow[r,"\sim"] \arrow[rr, bend left,"\tilde{g}"]\arrow[rd,"\pi" swap]&  g^*(\A_{i,\C})  \arrow[r]\arrow[d,"\pi^*" left] & \A_{i,\C} \arrow[d,"\pi"] \\
	&  \C  \arrow[r,"g" below]&  \C.
	\end{tikzcd}
	\] 
	In particular, there exists $\tilde{g}\in \Aut(\A_{i,\C})$ such that $\pi_*(\tilde{g})=g$, i.e.\ the morphism $\pi_*\colon \Aut(\A_{i,\C})\to \Aut(\C)$ from Lemma \ref{autodescend} is surjective. Let $H={(\pi_*)}^{-1}(\Autzero(\C))$, then we can write $H = \bigsqcup_{j\in J} H_j$ where $H_j$ are the connected components of $H$ and $J$ is finite from Proposition \ref{Briontrick}. Because $H$ contains $\Autzero(\A_{i,\C})$, we can assume that $H_0= \Autzero(\A_{i,\C})$. Then $\pi_*(H_0)$ is a connected algebraic subgroup of $\Autzero(\C)\simeq \C$ and hence it has to be $\Autzero(\C)$ or a point. If $\pi_*(H_0)$ is a point then $\pi_*(H_j)$ is also a point because $h_j\cdot H_0 = H_j$ for all $h_j\in H_j$. Then $\pi_*(H)$ is finite and it is a contradiction because $\Autzero(\C)$ is infinite. In consequence $\pi_*(H_0)= \Autzero(\C)$ i.e.\ $\pi_*$ induces a surjective morphism of algebraic groups $\Autzero(\A_{i,\C}) \to \Autzero(\C)$.
\end{proof}	  
  
We will use Proposition \ref{decomposablemax} to show that for distinct points $z_1,z_2\in \C$, the algebraic subgroup $\Autzero(\S_{z_1,z_2})$ is maximal. To prove Proposition \ref{decomposablemax}, we first prove the following lemma: 
  
\begin{lemma}\label{explicitelem}
	Let $\C$ be an elliptic curve and $f\in \kk(\C)^{*}$ such that $\div(f)=y_1 + z_1 -y_2 - z_2$ with $y_1,y_2,z_1,z_2$ distinct points of $\C$. We define:
	\begin{align*}
		\phi_f \colon \C\times \PP^1 & \dashrightarrow \C\times \PP^1 \\
		(x,[u:v]) &\longmapsto (x,[f(x)u:v]).
	\end{align*}
	Then $\phi_f$ is the birational map consisting in the blowup of $\C\times \PP^1$ at $p_1=(y_1,[1:0]),q_1=(z_1,[1:0]),p_2=(y_2,[0:1]),q_2=(z_2,[0:1])$; followed by the contraction of the strict transform of their fibers.
\end{lemma}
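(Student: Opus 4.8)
The plan is to verify the claim by a direct local computation of the two blowups and two contractions, checking at each stage which curve is a $(-1)$-curve and contracting it, and then matching the result with the given formula for $\phi_f$.

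First I would record the setup: let $\pi\colon \C\times \PP^1\to\C$ be the first projection, and note that the map $\phi_f(x,[u:v])=(x,[f(x)u:v])$ is a birational self-map of $\C\times\PP^1$ that is an isomorphism away from the four fibers over $y_1,z_1,y_2,z_2$, since $f$ is a unit at every other point of $\C$. Its inverse is $\phi_{f^{-1}}=\phi_{1/f}$, which has the same indeterminacy fibers. The section $\s_\infty\colon x\mapsto(x,[1:0])$ and the section $\s_0\colon x\mapsto(x,[0:1])$ are the two constant sections on which the base points of $\phi_f$ (resp. $\phi_f^{-1}$) lie. I would then localize at one of the four points, say $p_1=(y_1,[1:0])$: pick a local parameter $t$ at $y_1$ with $f=t\cdot w$, $w$ a unit at $y_1$ (this uses $\div(f)=y_1+z_1-y_2-z_2$, so $f$ vanishes to order exactly $1$ at $y_1$). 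In the affine chart $\AA^1_t\times\AA^1_s$ with $s=u/v$, the map is $(t,s)\mapsto(t,tws)$. Blowing up $(t,s)=(0,0)$ and using the chart $(t,s)=(t,ts')$ one computes the exceptional divisor, the strict transform of the fiber $\{t=0\}$, and sees that the strict transform of the fiber becomes a $(-1)$-curve, while the exceptional divisor is a section-like curve; contracting the strict transform of the fiber realizes exactly the elementary transformation $\epsilon_{p_1}$ as in Section \ref{ruledelemtrans}, and a short computation shows $\phi_f$ restricted near this fiber agrees with $\epsilon_{p_1}$ — the factor $f/t=w$ being a unit contributes only an automorphism of the target fiber. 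The same computation applies verbatim at $q_1=(z_1,[1:0])$, and the symmetric computation (swapping $[u:v]$, i.e. using $g=1/f$ which vanishes to order $1$ at $y_2$ and $z_2$) applies at $p_2=(y_2,[0:1])$ and $q_2=(z_2,[0:1])$. Since the four base points lie on four distinct fibers, the four blowups and the four contractions of strict transforms of fibers are mutually independent and can be performed in any order, so their composite is well-defined and equals $\kappa\beta^{-1}$ where $\beta$ is the blowup of $p_1,q_1,p_2,q_2$ and $\kappa$ contracts the four strict-transformed fibers.

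To conclude I would check that this composite of blowups and contractions is literally $\phi_f$ and not merely a map differing from it by a $\C$-automorphism: since $\phi_f$ and the composite agree on the dense open set $\C\times\PP^1$ minus the four fibers (both being the identity there after the obvious identifications, by the chart computation above), and a birational map between surfaces is determined by its restriction to a dense open set, the two coincide. The main obstacle is purely bookkeeping: one must be careful that the blowup chart computation produces the strict transform of the fiber as the $(-1)$-curve to be contracted (rather than the exceptional divisor), and that after contraction the remaining exceptional divisor maps isomorphically to a fiber of the target $\C\times\PP^1$ — equivalently, that the two elementary transformations at $p_1$ and at $p_2$ on opposite constant sections cancel the degree shift, so that the target is again the \emph{trivial} bundle $\C\times\PP^1$ rather than some $\S_{y_1,z_1}$ or Atiyah bundle. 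This last point is where the hypothesis $\div(f)=y_1+z_1-y_2-z_2$ (two zeros and two poles, total degree zero) is essential: it is exactly the condition guaranteeing that the four transition-function modifications glue to an automorphism-type twist of the trivial bundle, consistent with Proposition \ref{decomposable} where a single pair of elementary transformations on opposite sections over distinct points produced $\S_{z_1,z_2}$ and here the second pair brings it back.
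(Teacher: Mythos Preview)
Your approach is the same as the paper's: identify the base points of $\phi_f$, note each has order $1$, and verify by a local blowup computation that resolving each base point is an elementary transformation. The paper's own proof is a one-sentence sketch along exactly these lines, and yours is a more detailed execution of the same idea, including the useful remarks that the four fibers are distinct (so the elementary transformations commute) and that the degree-zero condition on $\div(f)$ is what guarantees the target is again the trivial bundle.

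There is, however, a slip in your chart. With $s=u/v$ the origin $(t,s)=(0,0)$ is the point $(y_1,[0:1])$, \emph{not} the base point $p_1=(y_1,[1:0])$; indeed your formula $(t,s)\mapsto(t,tws)$ is already regular at $(0,0)$ and has nothing to resolve there (it contracts the line $t=0$ to the origin, which is the correct behaviour of $\phi_f$ near $(y_1,[0:1])$, but that is not where the indeterminacy lives). To put $p_1$ at the origin you want the chart $s=v/u$, in which the map reads $(t,s)\mapsto(t,s/(tw))$; blowing up the origin via $s=ts'$ then gives $(t,s')\mapsto(t,s'/w)$, which is regular since $w$ is a unit, the exceptional divisor maps isomorphically onto the target fiber, and the strict transform of the source fiber is the $(-1)$-curve that gets contracted. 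With this correction your argument goes through and matches the paper.
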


\begin{proof}
	First $\phi_f$ is birational because $\phi_f^{-1} = \phi_{1/f}$. The base points of $\phi_f$ are exactly $p_1$, $q_1$, $p_2$ and $q_2$ and have all order $1$, so one can check by blowups in local charts that $\phi_f$ corresponds to the blowups at $p_1$, $p_2$, $q_1$, $q_2$ followed by the contraction of the strict transform of their fibers.
\end{proof}

\begin{proposition}\label{decomposablemax}
	Let $\C$ be an elliptic curve, let $z_1,z_2\in \C$ be distinct points and $t$ be a translation of $\C$. Then $\S_{z_1,z_2}$ is $\C$-isomorphic to $\S_{t(z_1),t(z_2)}$ and moreover, the morphism of algebraic groups induced by Blanchard's Lemma (or by Lemma \ref{autodescend}):
	\[
	\pi_*\colon \Autzero(\S_{z_1,z_2}) \to \Autzero(\C)
	\]
	is surjective.
\end{proposition}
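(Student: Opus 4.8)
The plan is to prove the two assertions of Proposition \ref{decomposablemax} in turn, using the explicit elementary transformations from Lemma \ref{explicitelem} to produce the needed automorphisms and $\C$-isomorphisms. For the first claim, that $\S_{z_1,z_2}$ is $\C$-isomorphic to $\S_{t(z_1),t(z_2)}$, I would observe that translation is irrelevant at the level of the bundle: pulling back $\O_\C(z_1)\oplus\O_\C(z_2)$ along $t^{-1}\in\Aut(\C)$ gives $\O_\C(t(z_1))\oplus\O_\C(t(z_2))$, so $t$ itself lifts to an isomorphism of varieties $\S_{z_1,z_2}\to\S_{t(z_1),t(z_2)}$ covering $t$. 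To get a genuine $\C$-\emph{isomorphism} (i.e.\ one covering the identity), the key input is that on an elliptic curve, for any point $w$, the line bundles $\O_\C(z_1)\otimes\O_\C(w)$ and $\O_\C(z_1+w)$ agree and, more importantly, that $\O_\C(t(z_i))\simeq\O_\C(z_i)\otimes\O_\C(t(z_i)-z_i)$ where $t(z_i)-z_i$ is a fixed degree-$0$ divisor class independent of $i$ (this is exactly the statement that translation by a fixed element acts the same way on every point of $\mathrm{Pic}^1$ under the identification $\mathrm{Pic}^1\simeq\C$). Hence $\O_\C(t(z_1))\oplus\O_\C(t(z_2))\simeq\bigl(\O_\C(z_1)\oplus\O_\C(z_2)\bigr)\otimes\O_\C(t(z_1)-z_1)$, and tensoring a rank-$2$ bundle by a line bundle does not change its projectivization, giving the $\C$-isomorphism $\S_{z_1,z_2}\simeq\S_{t(z_1),t(z_2)}$.

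For the surjectivity of $\pi_*\colon\Autzero(\S_{z_1,z_2})\to\Autzero(\C)$, I would proceed exactly as in the proof of Proposition \ref{Atiyahmax}: given $g\in\Autzero(\C)$, I need some $\tilde g\in\Aut(\S_{z_1,z_2})$ with $\pi_*(\tilde g)=g$. Since $\Autzero(\C)$ is the group of translations, $g=t$ is a translation, and the first part produces exactly such a lift $\tilde t$ covering $t$. Thus $\pi_*\colon\Aut(\S_{z_1,z_2})\to\Aut(\C)$ has image containing all translations, i.e.\ containing $\Autzero(\C)$. Then, invoking Proposition \ref{Briontrick}, $\Aut(\S_{z_1,z_2})$ is an algebraic group with finitely many components; set $H=(\pi_*)^{-1}(\Autzero(\C))$ and run the identical component-counting argument as in Proposition \ref{Atiyahmax}: $\pi_*(\Autzero(\S_{z_1,z_2}))$ is a connected subgroup of $\Autzero(\C)\simeq\C$, hence either all of $\Autzero(\C)$ or a point; if it were a point then $\pi_*(H)$ would be finite, contradicting that its image contains the infinite group of translations. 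Therefore $\pi_*(\Autzero(\S_{z_1,z_2}))=\Autzero(\C)$.

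An alternative, more hands-on route to the lift (and the one I suspect the author takes, given Lemma \ref{explicitelem} is stated immediately before) is via elementary transformations from $\C\times\PP^1$. Writing $\S_{z_1,z_2}=\epsilon_{p_2,p_1}(\C\times\PP^1)$ as in Proposition \ref{decomposable}(2), a translation $t$ of $\C$ induces the $\C$-automorphism $t\times\mathrm{id}$ of $\C\times\PP^1$, which does \emph{not} descend to an automorphism of $\S_{z_1,z_2}$ because it moves the blown-up points $p_i$ to points over $t(z_i)$. One then corrects by a fibrewise Möbius transformation: choosing $f\in\kk(\C)^*$ with $\div(f)$ supported on $\{z_1,t(z_1),z_2,t(z_2)\}$ (possible since these four points, suitably signed, sum to $0$ in the group law precisely when one matches $z_i$ against $t(z_i)$ — here using $t$ is a translation), the map $\phi_f$ of Lemma \ref{explicitelem} realizes a birational self-map of $\C\times\PP^1$ whose base points are exactly these four points; composing $t\times\mathrm{id}$ with an appropriate such $\phi_f$ conjugates the configuration $\{p_1,p_2\}$ back to itself and so descends to the desired $\tilde t\in\Aut(\S_{z_1,z_2})$ with $\pi_*(\tilde t)=t$.

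The main obstacle is the bookkeeping in this last correction step: one must verify that the divisor $\div(f)$ can be chosen with exactly the four prescribed simple zeros/poles (which forces a relation $z_1+z_2\sim t(z_1)+t(z_2)$, valid because $t(z_1)-z_1\sim t(z_2)-z_2$ for a translation $t$ on an elliptic curve), and then check in local charts that the composite $\epsilon_{p_2,p_1}\circ(\text{correction})\circ(t\times\mathrm{id})\circ\epsilon_{p_2,p_1}^{-1}$ really is a \emph{regular} automorphism of $\S_{z_1,z_2}$ rather than merely birational — i.e.\ that all indeterminacies cancel. The cleaner bundle-theoretic argument of the first two paragraphs sidesteps this, so in the write-up I would lead with that and only mention the elementary-transformation picture as the geometric explanation consistent with Lemma \ref{explicitelem}.
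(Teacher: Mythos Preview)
Your primary bundle-theoretic argument is correct and is cleaner than the paper's. The key observation that $\O_\C(t(z_i))\simeq\O_\C(z_i)\otimes\O_\C(t(z_1)-z_1)$ for both $i$ (since $t(z_i)-z_i\sim w-o$ in $\mathrm{Pic}^0$ for a translation by $w$) immediately gives the $\C$-isomorphism, and composing with the obvious isomorphism covering $t$ lifts every translation; the component-counting endgame via Proposition~\ref{Briontrick} is then identical to the paper's. The paper instead takes the elementary-transformation route you anticipated: it chooses $f\in\kk(\C)^*$ with $\div(f)=z_1+t(z_2)-t(z_1)-z_2$, writes $\phi_f=\kappa\beta^{-1}$ as in Lemma~\ref{explicitelem}, and observes that a third contraction $\xi$ from the common blowup $\X$ exhibits a single surface $\S$ that is simultaneously $\C$-isomorphic to $\S_{z_1,z_2}$ (via $\xi\beta^{-1}=\epsilon_{p_1,p_2}$) and to $\S_{t(z_1),t(z_2)}$ (via $\xi\kappa^{-1}=\epsilon_{\tilde q_1,\tilde q_2}$). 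Your approach buys conceptual clarity and avoids the diagram chase; the paper's approach keeps everything within the explicit birational framework of \S\ref{ruledelemtrans}.

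One correction to your alternative sketch: the linear equivalence you need is $z_1-z_2\sim t(z_1)-t(z_2)$ (equivalently $z_1+t(z_2)\sim t(z_1)+z_2$), not $z_1+z_2\sim t(z_1)+t(z_2)$. The latter would require $2w\sim 0$ and fails for a generic translation; the former is exactly the rearrangement of your correct statement $t(z_1)-z_1\sim t(z_2)-z_2$. Also, the paper does not compose $t\times\mathrm{id}$ with $\phi_f$; it uses $\phi_f$ alone and reads off the $\C$-isomorphism from the two factorizations through $\S$.
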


\begin{proof}
	As $z_1-z_2$ is linearly equivalent to $t(z_1)-t(z_2)$, there exists $f\in \kk(\C)^{*}$ such that $\div(f) = z_1+t(z_2) - t(z_1)-z_2 $. We define $\phi_f\colon \C\times \PP^1 \dashrightarrow \C\times \PP^1$ as in Lemma \ref{explicitelem}, and we know that $\phi_f=\kappa \beta^{-1}$, where $\beta$ is the blowup $\beta\colon \X\to \C\times \PP^1$ at $p_1=(z_1,[1:0]),q_1=(t(z_1),[1:0]),p_2=(z_2,[0:1]),q_2=(t(z_2),[0:1])$ and $\kappa\colon \X\to \C\times \PP^1$ is the contraction of the strict transform of their fibers. Let $\E_{q_1}$ and $\E_{q_2}$ be the exceptional divisors from respectively the blowups of $q_1$ and $q_2$, and let $\tilde{f}_{p_1}$ and $\tilde{f}_{p_2}$ be strict transforms under $\beta^{-1}$ of the fibers ${f}_{p_1}$ and ${f}_{p_2}$ containing respectively $p_1$ and $p_2$. We denote by $\xi\colon \X\to \S$ the contraction of $\E_{q_1}$, $\E_{q_2}$, $\tilde{f}_{p_1}$ and $\tilde{f}_{p_2}$, i.e.\ $\epsilon_{p_1,p_2} = \xi \beta^{-1}$. Denote by $\tilde{p}_1,\tilde{p}_2,\tilde{q}_1,\tilde{q}_2$ the base points of $\phi_f^{-1}$, respectively from the elementary transformations centered at $p_1,p_2,q_1,q_2$. Similarly, we have $\epsilon_{\tilde{q}_1,\tilde{q}_2}=\xi \kappa^{-1}$, and the following diagram is commutative:
	\begin{center}
		\begin{tikzpicture}[xscale=0.35,yscale=0.3]
			\draw (0,0) -- (5,0);
			\draw (0,2) -- (5,2);
			\draw[red,thick] (1,-1) -- (1,3);\draw[blue] (1,2) node[above right, scale=0.7]{$q_1$} node{$\bullet$}; 
			\draw[red,thick] (2,-1) -- (2,3);\draw[blue] (2,2) node[above right, scale=0.7]{$p_1$} node{$\bullet$}; 
			\draw[green,thick] (3,-1) -- (3,3);\draw[purple] (3,0) node[below right, scale=0.7]{$q_2$} node{$\bullet$};
			\draw[green,thick] (4,-1) -- (4,3);\draw[purple] (4,0) node[below right, scale=0.7]{$p_2$} node{$\bullet$};
			\draw (2.5,-2) node[scale=1]{$\C\times \PP^1$};
			
			\draw [dashed,->] (2.5,4) to [bend left] (7,15);\draw (1,10) node[scale=1]{$\epsilon_{p_1,p_2}$};		
			\draw [<-] (5,3) -- (7,5);\draw (5.5,4.5) node[scale=1]{$\beta$};
			\draw [dashed,->] (5,1) -- (14,1);\draw (9.5,0) node[scale=1]{$\phi_f$};
			
			\draw (7,6) -- (12,6);
			\draw (7,8) -- (12,8);
			\draw[red,thick] (8,5) -- (7,7.3);
			\draw[red,thick] (9,5) -- (8,7.3);
			\draw[purple,thick] (10,5) -- (11,7.3);
			\draw[purple,thick] (11,5) -- (12,7.3);
			\draw[blue,thick] (9,9) -- (8,6.7);
			\draw[blue,thick] (8,9) -- (7,6.7);
			\draw[green,thick] (10,9) -- (11,6.7);
			\draw[green,thick] (11,9) -- (12,6.7);
			\draw (9.5,4) node[scale=1]{$\X$};
			
			\draw [->] (9.5,9.5) -- (9.5,12.5); \draw (10,11) node[scale=1]{$\xi$};
			\draw [->] (12,5) -- (14,3);\draw (13.5,4.5) node[scale=1]{$\kappa$};
			\draw [dashed,->] (16.5,4) to [bend right] (12,15);\draw (18,10) node[scale=1]{$\epsilon_{\tilde{q}_1,\tilde{q}_2}$};	
			
			\draw (14,0) -- (19,0);
			\draw (14,2) -- (19,2);
			\draw[blue,thick] (15,-1) -- (15,3);\draw[red] (15,0) node[below left, scale=0.7]{$\tilde{q}_1$}  node{$\bullet$};
			\draw[blue,thick] (16,-1) -- (16,3);\draw[red] (16,0) node[below left, scale=0.7]{$\tilde{p}_1$}  node{$\bullet$};
			\draw[purple,thick] (17,-1) -- (17,3);\draw[green] (17,2) node[above left, scale=0.7]{$\tilde{q}_2$}  node{$\bullet$};
			\draw[purple,thick] (18,-1) -- (18,3);\draw[green] (18,2)  node[above left, scale=0.7]{$\tilde{p}_2$}  node{$\bullet$};
			\draw (16.5,-2) node[scale=1]{$\C\times \PP^1$};
			
			\draw (7,14) -- (12,14);
			\draw (7,16) -- (12,16);
			\draw[red,thick] (8,13) -- (8,17);\draw[blue] (8,16) node{$\bullet$};
			\draw[blue,thick] (9,13) -- (9,17);\draw[red] (9,14) node{$\bullet$};			
			\draw[green,thick] (10,13) -- (10,17);\draw[purple] (10,14) node{$\bullet$};			
	 		\draw[purple,thick] (11,13) -- (11,17);\draw[green] (11,16) node{$\bullet$};
	 		\draw (9.5,18) node[scale=1]{$\S$};
		\end{tikzpicture}
	\end{center}
 Therefore the surfaces $\S$, $\S_{z_1,z_2}$ and $\S_{t(z_1),t(z_2)}$ are  $\C$-isomorphic. It implies in particular that every translation of $\C$ can be lifted to an automorphism of $\S_{z_1,z_2}$. Therefore, we have a morphism of algebraic groups $\pi_*\colon \Aut(\S_{z_1,z_2}) \to \Aut(\C)$ such that $\Autzero(\C)$ is contained in the image of $\pi_*$. The proof ends in the same way as the proof of Proposition \ref{Atiyahmax}. Let $H={(\pi_*)}^{-1}(\Autzero(\C))$, then we can write $H=\bigsqcup_{j\in J} H_j$ where $H_j$ are the connected components of $H$ and $J$ is finite from Proposition \ref{Briontrick}. The image of $\Autzero(\S_{z_1,z_2})$ by $\pi_*$ cannot be a point because it would implies that the image of $H$ is finite and it is a contradiction, therefore $\pi_*(\Autzero(\S_{z_1,z_2}))=\Autzero(\C)$.
\end{proof}

Unlike the ruled surfaces $\A_{0,\C}$ and $\A_{1,\C}$ which are unique up to $\C$-isomorphism, the surfaces $\S_{z_1,z_2}$ depend on the choice of the points $z_1,z_2\in \C$. In Lemma \ref{isomorphicdecomposable}, we determine the $\C$-isomorphism classes in the family $\{\S_{z_1,z_2}\}_{z_1,z_2\in \C}$. In Lemma \ref{decomposableconjugate}, we determine the isomorphism classes in the family $\{\S_{z_1,z_2}\}_{z_1,z_2\in \C}$ and the conjugacy classes in the family $\{\Autzero(\S_{z_1,z_2})\}_{z_1,z_2\in \C}$ as algebraic subgroups of $\Bir(\C\times \PP^1)$.

\begin{lemma}\label{isomorphicdecomposable}
	Let $\C$ be an elliptic curve and $z_1,z_1',z_2,z_2'\in \C$ such that $z_1\neq z_2$ and $z_1'\neq z_2'$. Let $\pi\colon \S_{z_1,z_2} \to \C$ and $\pi'\colon \S_{z_1',z_2'} \to \C$ be the structure morphisms. Let $\s_1$ and $\s_2$ be the two disjoint sections of self-intersection $0$ in $\S_{z_1,z_2}$; and let $\s_1'$ and $\s_2'$ be the two disjoint sections of self-intersection $0$ in $\S_{z_1',z_2'}$. The following hold:
	\begin{enumerate}
		\item Let $q_1\in \s_1$. Then every section of $\S_{z_1,z_2}$ of self-intersection $2$ passing through $q_1$ also passes through the unique point $q_2\in \s_2$ such that $\pi(q_2) -\pi(q_1) = z_2 - z_1$. 
		\item Let $q_1,q_2 \in \S_{z_1,z_2}$ as in $(1)$. If there exist $q_1'\in \s_1'$, $q_2'\in \s_2'$ such that $\pi'(q_2')-\pi'(q_1')=z_2-z_1$ and if there exists a section of $\S_{z_1',z_2'}$ of self-intersection $2$ passing through $q_1'$ and $q_2'$, then $\S_{z_1',z_2'}$ is $\C$-isomorphic to $\S_{z_1,z_2}$.
		\item The ruled surfaces $\S_{z_1,z_2}$ and $\S_{z_1',z_2'}$ are $\C$-isomorphic if and only if there exists $t\in \Autzero(\C)$ such that $t(\{z_1,z_2\})=\{z_1',z_2'\}$.
	\end{enumerate}
\end{lemma}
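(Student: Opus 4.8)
The plan is to establish~(1) by a computation with line subbundles on the elliptic curve $\C$, then deduce~(2) from it together with the explicit birational models of Section~\ref{ruledelemtrans} (Propositions~\ref{decomposable} and~\ref{decomposablemax}), and finally deduce~(3) formally from~(1). Throughout, identify $\S_{z_1,z_2}$ with $\PP(\V)$ for $\V=\O_\C(z_1)\oplus\O_\C(z_2)$, so that $\deg(\V)=2$; by Proposition~\ref{selfintersectiondegree} a section $\s$ satisfies $\s^2=2-2\deg(\L(\s))$. Hence the sections of self-intersection $0$ are exactly the images of the degree-$1$ line subbundles of $\V$, and as there are precisely two such sections (Proposition~\ref{decomposable}~(2), or Proposition~\ref{numberminimalsection}~(3)(iii)) they are the two coordinate sections; with the labelling fixed in Proposition~\ref{decomposable}, $\s_1$ is the image of $\O_\C(z_2)$ and $\s_2$ the image of $\O_\C(z_1)$. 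The sections of self-intersection $2$ are the images of the degree-$0$ line subbundles of $\V$.

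For~(1), fix a section $\s$ with $\s^2=2$, so $\deg(\L(\s))=0$. Since $\deg(\L(\s))+\deg(\L(\s_i))=0+1\neq 2=\deg(\V)$, Lemma~\ref{sub<->section}~(2) shows $\s$ is not disjoint from $\s_i$; as $\s\neq\s_i$ and $\s\cdot\s_i=1$ (using that $\operatorname{Num}(\S_{z_1,z_2})$ is generated by a section and a fibre), $\s$ meets $\s_i$ at a single point. Because $\s$ is not a coordinate section, both projections $p_j\colon \L(\s)\hookrightarrow\V\twoheadrightarrow\O_\C(z_j)$ are nonzero morphisms of line bundles on $\C$ (if $p_j\equiv 0$ then $\L(\s)$ would be the saturated line subbundle $\O_\C(z_{3-j})$, forcing $\s$ to be a coordinate section), each with cokernel of length $1$, hence each vanishing at exactly one point, say $w_j$; at $w_1$ one has $\L(\s)_{w_1}\subset\ker p_1=\O_\C(z_2)_{w_1}=\L(\s_1)_{w_1}$, so $\s$ meets $\s_1$ over $w_1$, and likewise $\s$ meets $\s_2$ over $w_2$. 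From $\O_\C(w_j)\otimes\L(\s)\cong\O_\C(z_j)$ one gets $\O_\C(w_1-w_2)\cong\O_\C(z_1-z_2)$, i.e.\ $w_1-w_2=z_1-z_2$ in the group law of $\C$ (a degree-$1$ line bundle on an elliptic curve has a unique effective divisor). Thus $q_1:=\s\cap\s_1$ lies over $w_1$ and $q_2:=\s\cap\s_2$ over $w_2$, so $\pi(q_2)-\pi(q_1)=w_2-w_1=z_2-z_1$ as claimed; moreover $q_1$ forces $\L(\s)\cong\O_\C(z_1-\pi(q_1))$, and therefore forces $q_2$, independently of the choice of $\s$. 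Conversely, for any $q_1\in\s_1$, taking $\L\cong\O_\C(z_1-\pi(q_1))$ with its two (nonzero) projections to $\O_\C(z_1)$ and $\O_\C(z_2)$, suitably rescaled, produces such an $\s$: the two projections vanish at distinct points because $z_1\neq z_2$, so the resulting map $\L\to\V$ is everywhere injective on fibres, hence a line subbundle.

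For~(2), perform in $\S_{z_1',z_2'}$ the elementary transformation $\epsilon$ given by the blow-ups of $q_1'$ and $q_2'$ followed by the contractions of the strict transforms of their fibres. Using that $\s'$ meets $\s_1'$ transversally at $q_1'$ and $\s_2'$ transversally at $q_2'$, and that $q_1'\notin\s_2'$, $q_2'\notin\s_1'$, a local analysis of these blow-ups and contractions shows that the strict transforms of $\s_1'$, $\s_2'$ and $\s'$ all become sections of self-intersection $0$ and remain pairwise disjoint; hence the target of $\epsilon$ carries three pairwise disjoint sections of self-intersection $0$, so by Lemma~\ref{automorphismssections}~(3) it is $\C$-isomorphic to $\C\times\PP^1$. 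Then $\epsilon^{-1}\colon\C\times\PP^1\dashrightarrow\S_{z_1',z_2'}$ has the form $\epsilon_{p_2',p_1'}$ with $\pi'(p_i')=\pi'(q_i')$, and $p_1',p_2'$ lie on distinct fibres (since $\pi'(q_2')-\pi'(q_1')=z_2-z_1\neq 0$) and on distinct constant sections (otherwise the strict transform of that common constant section would be a section of $\S_{z_1',z_2'}$ of self-intersection $-2$, contradicting $\seg(\S_{z_1',z_2'})=0$); so Proposition~\ref{decomposable}~(2) gives that $\S_{z_1',z_2'}$ is $\C$-isomorphic to $\S_{\pi'(q_1'),\pi'(q_2')}$. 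Finally, since $\pi'(q_2')-\pi'(q_1')=z_2-z_1$, the translation $t$ of $\C$ by $\pi'(q_1')-z_1$ satisfies $t(z_1)=\pi'(q_1')$ and $t(z_2)=\pi'(q_2')$, so Proposition~\ref{decomposablemax} gives that $\S_{z_1,z_2}$ is $\C$-isomorphic to $\S_{\pi'(q_1'),\pi'(q_2')}$, hence to $\S_{z_1',z_2'}$.

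For~(3), the implication $(\Leftarrow)$ follows from Proposition~\ref{decomposablemax} (recall that $\Autzero(\C)$ consists of translations) together with the obvious $\C$-isomorphism $\PP(\L_1\oplus\L_2)\simeq\PP(\L_2\oplus\L_1)$. For $(\Rightarrow)$, let $\psi\colon\S_{z_1,z_2}\to\S_{z_1',z_2'}$ be a $\C$-isomorphism; it preserves self-intersection numbers, so $\psi(\{\s_1,\s_2\})=\{\s_1',\s_2'\}$ and $\psi$ sends sections of self-intersection $2$ to such. Choosing $\s$ of self-intersection $2$ through $q_1\in\s_1$ and $q_2\in\s_2$ as in~(1), the section $\psi(\s)$ passes through $\psi(q_1)$ and $\psi(q_2)$, which lie on the two self-intersection-$0$ sections of $\S_{z_1',z_2'}$, and applying~(1) in $\S_{z_1',z_2'}$ gives $\pi'(\psi(q_2))-\pi'(\psi(q_1))=\pm(z_2'-z_1')$, with sign $+$ if $\psi(\s_i)=\s_i'$ and $-$ otherwise. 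Since $\pi'\circ\psi=\pi$, this reads $z_2-z_1=\pi(q_2)-\pi(q_1)=\pm(z_2'-z_1')$, and in the first case the translation by $z_1'-z_1$, in the second the translation by $z_2'-z_1$, carries $\{z_1,z_2\}$ onto $\{z_1',z_2'\}$. The technical core is part~(1), which hinges on keeping straight the dictionary between sections, their line subbundles, and the group law of $\C$ (in particular that a self-intersection-$2$ section meets each of $\s_1,\s_2$ in a single point whose position is pinned down by one degree-$0$ line bundle); part~(2) needs only a routine, if somewhat lengthy, local analysis of the elementary transformations, and part~(3) is then purely formal.
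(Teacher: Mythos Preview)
Your proof is correct, but it takes a different route from the paper's, and the trade-offs are instructive.

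For part~(1), the paper exploits Proposition~\ref{decomposablemax} (surjectivity of $\pi_*\colon\Autzero(\S_{z_1,z_2})\to\Autzero(\C)$): one lifts the translation sending $\pi(q_1)$ to $z_1$ to an automorphism $f$ of $\S_{z_1,z_2}$, observes that $f(q_1)$ is the base point $p_1$ of Proposition~\ref{decomposable}~(2), and then quotes that proposition to conclude $f(\s)$ passes through $p_2$, whence $\pi(q_2)-\pi(q_1)=z_2-z_1$. Your argument is instead a direct line-bundle computation: the projections of $\L(\s)$ to the two summands each vanish at a single point, and comparing the resulting isomorphisms $\L(\s)\cong\O_\C(z_j-w_j)$ pins down the difference $w_2-w_1$. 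Your route is more self-contained---it does not invoke the automorphism group at all---and it makes the uniqueness of $q_2$ and the existence of such a section $\s$ transparent.

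For part~(2), however, you work much harder than necessary. The paper simply applies your own part~(1) to $\S_{z_1',z_2'}$: the hypothesised section $\s'$ forces $\pi'(q_2')-\pi'(q_1')=z_2'-z_1'$, and combining this with the assumption $\pi'(q_2')-\pi'(q_1')=z_2-z_1$ gives $z_2'-z_1'=z_2-z_1$; then Proposition~\ref{decomposablemax} finishes in one line. Your elementary-transformation argument (blowing up $q_1',q_2'$, contracting fibres, producing three disjoint sections of self-intersection $0$, identifying the target as $\C\times\PP^1$, and then invoking Proposition~\ref{decomposable}~(2)) is correct but re-derives what part~(1) already gives.

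For part~(3), your argument is essentially the paper's, made slightly more explicit: both apply (1) on each side and read off $z_2-z_1=\pm(z_2'-z_1')$, depending on whether the $\C$-isomorphism preserves or swaps the pair of minimal sections.
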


\begin{proof}\
	\begin{enumerate}[wide]
		\item Let $p_1\in \s_1,p_2\in \s_2$ such that $\pi(p_1)=z_1$ and $\pi(p_2)=z_2$. Let $q_1\in \s_1,q_2\in \s_2$ and assume there exists a section $\s$ of $\S_{z_1,z_2}$ of self-intersection $2$ passing through $q_1$ and $q_2$. The translation of $\C$ sending $\pi(q_1)$ to $z_1$ lifts to $f\in \Autzero(\S_{z_1,z_2})$ (Proposition \ref{decomposablemax}), and since $\s_1$ is $\Autzero(\S_{z_1,z_2})$-invariant, $f$ sends $q_1$ to $p_1$. Then the section $f(\s)$ of self-intersection $2$ passes through $p_1$, and hence it also passes through $p_2$ from Proposition \ref{decomposable} $(2)$. Therefore, the automorphism $f$ sends $q_2$ to $p_2$ and $\pi(q_2)-\pi(q_1) = z_2-z_1$. In particular, all sections of self-intersection $2$ passing through $q_1$ also pass through $q_2$. 
		\item It follows from $(1)$ that $z_2'-z_1'=z_2-z_1$. Let $t$ be the translation of $\C$ by $z_1'-z_1$. Then $t(z_1)=z_1'$ and $t(z_2)=z_2'$. From Proposition \ref{decomposablemax}, the surfaces $\S_{z_1,z_2}$ and $\S_{z_1',z_2'}$ are $\C$-isomorphic.
		\item Assume that $\S_{z_1,z_2}$ is $\C$-isomorphic to $\S_{z_1',z_2'}$. From Proposition \ref{decomposablemax}, there exists $f\in \Autzero(\S_{z_1',z_2'})$ such that $\pi_*(f)$ is the translation of $\C$ sending $z_1'$ to $z_1$, and $\S_{z_1',z_2'}$ is $\C$-isomorphic to $\S_{z_1,z_2''}$ where $z_2''=\pi_*(f)(z_2')$. From $(1)$, we have that $z_2'' =z_2$ and this proves the direct implication. 
		Let $t\in \Autzero(\C)$ such that $t(\{z_1,z_2\})=\{z_1,z_2\}$. Without lost of generality, we can assume that $z_1'=t(z_1)$ and $z_2'=t(z_2)$. Then $\S_{z_1,z_2}$ is $\C$-isomorphic to $\S_{z_1',z_2'}$ from Proposition \ref{decomposablemax}.
	\end{enumerate}
\end{proof}

\begin{lemma}\label{decomposableconjugate}
	Let $\C$ be an elliptic curve. Denote by $\pi_1\colon \C\times \PP^1 \to \C$ the projection on the first factor and let $z_1,z_1',z_2,z_2'\in \C$ such that $z_1\neq z_2$, $z_1'\neq z_2'$. Let $p_1,p_1',p_2,p_2'\in \C\times \PP^1$ with $p_1,p_1'$ on the zero section, $p_2,p_2'$ on the infinite section and such that $\pi_1(p_1)=z_1$, $\pi_1(p_1')=z_1'$, $\pi_1(p_2)=z_2$, $\pi_1(p_2')=z_2'$. Then the following assertions are equivalent:
	\begin{enumerate}
		\item There exists $f\in \Aut(\C)$ such that $f(\{z_1,z_2\}) = \{z_1',z_2'\}$.
		\item The surfaces $\S_{z_1,z_2}$ and $\S_{z_1',z_2'}$ are isomorphic. 
		\item The algebraic subgroups $\Autzero(\S_{z_1,z_2})$ and $\Autzero(\S_{z_1',z_2'})$ are conjugate.
	\end{enumerate}
\end{lemma}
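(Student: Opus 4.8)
The plan is to prove the three-way equivalence by a cyclic chain of implications $(1)\Rightarrow(2)\Rightarrow(3)\Rightarrow(1)$, using the groundwork already laid down. For $(1)\Rightarrow(2)$: given $f\in\Aut(\C)$ with $f(\{z_1,z_2\})=\{z_1',z_2'\}$, I write $f=t\circ h$ where $h\in\Aut(\C)$ fixes the origin and $t$ is a translation. Since $h$ fixing the origin is a group automorphism of the elliptic curve, $h^*\O_\C(z_i)\simeq\O_\C(h^{-1}(z_i))$, so $h$ lifts to a $\C$-isomorphism $\S_{z_1,z_2}\to\S_{h^{-1}(z_1),h^{-1}(z_2)}$ (working over the base via $h$, i.e. pulling back the bundle), and then Proposition \ref{decomposablemax} handles the translation part, giving $\S_{h^{-1}(z_1),h^{-1}(z_2)}\simeq_\C \S_{t(h^{-1}(z_1)),t(h^{-1}(z_2))}=\S_{f(z_1),f(z_2)}=\S_{z_1',z_2'}$ (after relabelling $\{z_1',z_2'\}$). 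Composing, $\S_{z_1,z_2}$ and $\S_{z_1',z_2'}$ are isomorphic as abstract surfaces (not necessarily as $\C$-bundles, but that is all $(2)$ asks). Actually the cleanest route is to invoke Lemma \ref{isomorphicdecomposable}(3): if $f$ is a translation we are done directly, and if $f$ fixes the origin the pullback argument shows $\S_{z_1,z_2}\simeq_\C\S_{f(z_1),f(z_2)}$, and the general case is the composite.

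For $(2)\Rightarrow(3)$: suppose $g\colon\S_{z_1,z_2}\to\S_{z_1',z_2'}$ is an isomorphism of surfaces. Conjugation by $g$ carries $\Aut(\S_{z_1,z_2})$ isomorphically onto $\Aut(\S_{z_1',z_2'})$, hence carries the identity component $\Autzero(\S_{z_1,z_2})$ onto $\Autzero(\S_{z_1',z_2'})$. But to conclude that these are \emph{conjugate as algebraic subgroups of} $\Bir(\C\times\PP^1)$, I must exhibit $g$ (or an associated birational self-map) as a birational map $\C\times\PP^1\dashrightarrow\C\times\PP^1$. This is where the explicit birational constructions from Subsection \ref{ruledelemtrans} enter: Proposition \ref{decomposable}(2) gives explicit birational maps $\psi\colon\C\times\PP^1\dashrightarrow\S_{z_1,z_2}$ and $\psi'\colon\C\times\PP^1\dashrightarrow\S_{z_1',z_2'}$ of the form $\epsilon_{p_2,p_1}$. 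Then $\chi:=(\psi')^{-1}g\psi$ is a birational self-transformation of $\C\times\PP^1$, and conjugation by $\chi$ sends $\psi^{-1}\Autzero(\S_{z_1,z_2})\psi$ to $(\psi')^{-1}\Autzero(\S_{z_1',z_2'})\psi'$, which is exactly the assertion that the two subgroups of $\Bir(\C\times\PP^1)$ are conjugate.

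For $(3)\Rightarrow(1)$: suppose there is $\chi\in\Bir(\C\times\PP^1)$ with $\chi\,\big(\psi^{-1}\Autzero(\S_{z_1,z_2})\psi\big)\,\chi^{-1}=(\psi')^{-1}\Autzero(\S_{z_1',z_2'})\psi'$. Set $\rho:=\psi'\chi\psi^{-1}\colon\S_{z_1,z_2}\dashrightarrow\S_{z_1',z_2'}$, a birational map conjugating $\Autzero(\S_{z_1,z_2})$ to $\Autzero(\S_{z_1',z_2'})$; in particular $\rho$ is $\Autzero(\S_{z_1,z_2})$-equivariant once we let $\Autzero(\S_{z_1,z_2})$ act on $\S_{z_1',z_2'}$ through $\rho$, and the induced action is via $\Autzero(\S_{z_1',z_2'})$. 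Now I invoke Lemma \ref{maximal}(2): $\rho$ decomposes as $\kappa\beta^{-1}$ where $\beta$ and $\kappa$ are composites of blowups of fixed points of the respective $\Autzero$-actions. By Propositions \ref{decomposablemax} and (the surjectivity giving) that $\pi_*\colon\Autzero(\S_{z_1,z_2})\to\Autzero(\C)$ is onto with kernel acting on each fiber, the only $\Autzero$-fixed points are those on the two sections $\s_1,\s_2$ of self-intersection $0$ (the action on the fiber $\PP^1$ over a fixed base point fixes exactly the two points of $\s_1,\s_2$, since otherwise the kernel would be trivial and the group $\1$-dimensional — here I use that $\Autzero(\S_{z_1,z_2})$ has dimension $\ge 2$, being an extension of the $1$-dimensional $\Autzero(\C)$ by a nontrivial linear group). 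Blowing up such a point and contracting the fiber is an elementary transformation, and I track how it changes the base points $z_1,z_2$: an elementary transformation centered on $\s_1$ over a base point $w$ replaces the pair $(z_1,z_2)$ by a translate, as in Proposition \ref{decomposablemax} (the map $\phi_f$). Chasing the whole sequence $\beta,\kappa$ of such elementary transformations shows that $\{z_1',z_2'\}$ is obtained from $\{z_1,z_2\}$ by a composition of translations, hence by an element of $\Aut(\C)$ — actually by an element of $\Autzero(\C)$, which gives $(1)$ in the stronger form matching the statement of Theorem \ref{B}(2).

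The main obstacle is $(3)\Rightarrow(1)$, specifically the bookkeeping in the previous paragraph: I must argue that \emph{any} $\Autzero(\S_{z_1,z_2})$-equivariant birational map out of $\S_{z_1,z_2}$ is built from elementary transformations centered on the two invariant $0$-sections (and their images), and that the only ruled surface among the $\Autzero$-equivariant targets with Segre invariant $0$ back to which we can contract to land on some $\S_{z_1',z_2'}$ forces the base divisor to move only by translation. The subtlety is that intermediate surfaces in the chain $\beta,\kappa$ may have negative Segre invariant (after blowing up a single point on $\s_1$ we get Segre $-1$), so I need to follow the unique minimal section and check that when we eventually return to Segre $0$ we are forced onto a decomposable bundle $\S_{z_1',z_2'}$ with $\{z_1',z_2'\}$ a translate of $\{z_1,z_2\}$ — this uses Proposition \ref{numberminimalsection} to control minimal sections at each stage and Lemma \ref{isomorphicdecomposable} to pin down the final base points. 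I expect this to be the longest part of the argument; the other two implications are short given the explicit constructions already in hand.
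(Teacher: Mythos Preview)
Your implications $(1)\Rightarrow(2)$ and $(2)\Rightarrow(3)$ are fine and essentially match the paper. The problem is your plan for $(3)\Rightarrow(1)$: it rests on the claim that the $\Autzero(\S_{z_1,z_2})$-fixed points are exactly the points on the two minimal sections $\s_1,\s_2$, and this is false. Since $\pi_*\colon\Autzero(\S_{z_1,z_2})\to\Autzero(\C)$ is surjective (Proposition \ref{decomposablemax}) and $\Autzero(\C)$ acts transitively on $\C$, no point of $\S_{z_1,z_2}$ is fixed: given $p$ with $\pi(p)=x$, pick $g$ with $\pi_*(g)(x)\neq x$, then $g(p)\neq p$. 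The sections $\s_1,\s_2$ are invariant as curves, but their points are permuted. So the elaborate bookkeeping you anticipate (tracking elementary transformations through intermediate surfaces of negative Segre invariant) never gets off the ground, and in fact the absence of fixed points is exactly what makes the argument short.

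The paper does not close the cycle through $(3)\Rightarrow(1)$; it proves $(3)\Rightarrow(2)$ and $(2)\Rightarrow(1)$ separately. For $(3)\Rightarrow(2)$: take $\phi\colon\S_{z_1,z_2}\dashrightarrow\S_{z_1',z_2'}$ conjugating the two groups; then $\phi$ is $\Autzero(\S_{z_1,z_2})$-equivariant, and since $\S_{z_1,z_2}$ is minimal and has no $\Autzero$-fixed points, Lemma \ref{maximal}(3) forces $\phi$ to be an isomorphism. For $(2)\Rightarrow(1)$: first use Proposition \ref{decomposablemax} to translate and assume $z_1=z_1'$; an isomorphism $\S_{z_1,z_2}\to\S_{z_1,z_2'}$ induces an automorphism of $\C$ by Lemma \ref{autodescend}, and by the description of sections of self-intersection $2$ in Proposition \ref{decomposable}(2) this automorphism must fix $z_1$ and send $z_2$ to $z_2'$. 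Composing with the initial translation gives the required $f\in\Aut(\C)$. This replaces your entire last two paragraphs with two short steps.
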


\begin{proof}\
		\begin{enumerate}[wide]
			\item[$(1)\Rightarrow (2)$] Assume there exists $f\in \Aut(\C)$ such that $f(\{z_1,z_2\}) = \{z_1',z_2'\}$ and $\pi \colon \S_{z_1,z_2} \to \C$ is the structure morphism. Then the ruled surface $f\pi : \S_{z_1,z_2} \to \C$ has $0$ as Segre invariant and is decomposable, thus it has two disjoint minimal sections $\s_1$ and $\s_2$. Let $q_1\in \s_1,q_2\in \s_2$ such that $f\pi(q_1)=z_1'$ and $f\pi(q_2)=z_2'$, then there is a section of self-intersection $2$ passing through $q_1$ and $q_2$.  From Lemma \ref{isomorphicdecomposable} $(2)$, the ruled surfaces $f\pi\colon \S_{z_1,z_2} \to \C$ and $\pi' \colon \S_{z_1',z_2'} \to \C$ are $\C$-isomorphic, and the following diagram is commutative:
			\[
			\begin{tikzcd} [row sep = 3em]
			 \S_{z_1,z_2} \arrow[rd,"f\pi",swap] \arrow[r,"\sim"]\arrow[d,"\pi" left] & \S_{z_1',z_2'} \arrow[d,"\pi' "] \\
			  \C  \arrow[r,"f" below]&  \C.
			\end{tikzcd}
			\] 
			Hence the surfaces $\S_{z_1,z_2}$ and $\S_{z_1',z_2'}$ are isomorphic. 
			\item[$(2)\Rightarrow (1)$] From Proposition \ref{decomposablemax}, there exists $f\in \Autzero(\S_{z_1,z_2})$ such that $\pi_*(f)$ is the translation from $z_1$ to $z_1'$. We can then assume that $z_1=z_1'$. From Lemma \ref{autodescend}, an isomorphism from $\S_{z_1,z_2}$ to $\S_{z_1,z_2'}$ induces an automorphism of $\C$. From Proposition \ref{decomposable} (2), this automorphism of $\C$ sends $z_2$ to $z_2'$ and fixes $z_1$, i.e.\ a group map with $z_1$ taken as the neutral element of the elliptic curve. Therefore, an isomorphism from $\S_{z_1,z_2}$ to $\S_{z_1',z_2'}$ induces an automorphism of $\C$ sending $z_1$ to $z_1'$ and $z_2$ to $z_2'$. 
			\item[$(2) \Rightarrow (3)$] If $\phi:\S_{z_1,z_2} \to \S_{z_1',z_2'}$ is an isomorphism then $\phi \Autzero(\S_{z_1,z_2}) \phi^{-1} = \Autzero(\S_{z_1',z_2'})$. 
			\item[$(3)\Rightarrow (2)$] Let $\phi:\S_{z_1,z_2} \to \S_{z_1',z_2'}$ be a birational map such that $\phi \Autzero(\S_{z_1,z_2}) \phi^{-1} = \Autzero(\S_{z_1',z_2'})$. Because $\Autzero(\S_{z_1,z_2})$ induces a transitive action on the base curve $\C$ from Proposition \ref{decomposablemax}, the action of $\Autzero(\S_{z_1,z_2})$ on $\S_{z_1,z_2}$ has no fixed points and it follows from Lemma \ref{maximal} $(3)$ that $\phi$ is an isomorphism.
		\end{enumerate}
\end{proof}

\begin{proof}[\bf{Proof of Theorem \ref{B}}]
	Let $\S$ be a ruled surface over $\C$. If $\g(\C)\geq 2$ or $\g(\C)=1$ and $\seg(\S)<0$, Theorem \ref{A} implies that every maximal connected algebraic subgroup of $\Bir(\S)$ is conjugate to $\Autzero(\C\times \PP^1)$ which is isomorphic to $\PGL_2$ if $\g\geq 2$, and isomorphic to $\C\times \PGL_2$ is $\g=1$. Therefore, the algebraic subgroup $\Autzero(\C\times \PP^1)$ is maximal from Lemma \ref{maximal} (3). We have proved in Lemma \ref{remainingcases} that it remains to consider the case $\g=1$ and show the maximality of $\Autzero(\S)$ when $\S$ is isomorphic to $\A_{0,\C}$, or $\A_{1,\C}$, or $\S_{z_1,z_2}$ for distinct points $z_1,z_2\in \C$. Assume $\S$ is one of these surfaces, then from Propositions \ref{Atiyahmax} and \ref{decomposablemax}, there is a surjective morphism of algebraic groups $\Autzero(\S)\to \Autzero(\C)$. Lemma \ref{maximal} $(3)$ implies that $\Autzero(\S)$ is maximal. Moreover, for distinct points $z_1,z_2\in \C$, the surfaces $\C\times \PP^1$, $\A_{0,\C}$, $\A_{1,\C}$ and $\S_{z_1,z_2}$ are not isomorphic to each other from Lemma \ref{remainingcases}. Hence the algebraic groups $\Autzero(\C\times \PP^1)$, $\Autzero(\A_{0,\C})$, $\Autzero(\A_{1,\C})$ and $\Autzero(\S_{z_1,z_2})$ are not conjugate to each other from Lemma \ref{maximal} $(3)$. Finally, Lemma \ref{decomposableconjugate} tells us that $\Autzero(\S_{z_1,z_2})$ is conjugate to $\Autzero(\S_{z_1',z_2'})$ if and only if there exists $f\in \Aut(\C)$ such that $f(\{z_1,z_2\})=\{z_1',z_2'\}$. 
\end{proof}

\subsection{Description of the maximal connected algebraic subgroups of $\Bir(\C\times \PP^1)$ as extensions}\label{subsectionextensions}

In this subsection, the curve $\C$ always denotes an elliptic curve and we prove Theorem \ref{Maruyamaext}.

\subsubsection{The algebraic groups $\Autzero(\S_{z_1,z_2})$}

\begin{proposition}\label{extensiondecomposable}
	Let $\C$ be an elliptic curve and $z_1,z_2$ be distinct points in $\C$. The group homomorphism $\pi_*$ of Lemma \ref{decomposablemax} gives rise to an exact sequence of algebraic groups:
	\[
	0 \to   \GG_m \to \Autzero(\S_{z_1,z_2}) \overset{\pi_*}{\to} \Autzero(\C) \to 0.
	\]
\end{proposition}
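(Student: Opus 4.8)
The plan is to identify the kernel of $\pi_*\colon \Autzero(\S_{z_1,z_2})\to \Autzero(\C)$ with $\GG_m$ and to show $\pi_*$ is surjective, the latter being exactly the content of Proposition \ref{decomposablemax}. So the real work is to compute $\ker(\pi_*)$, which is the group of $\C$-automorphisms of $\S_{z_1,z_2}$ lying in $\Autzero(\S_{z_1,z_2})$, i.e.\ the connected component of $\AutC(\S_{z_1,z_2})$. First I would invoke Proposition \ref{decomposablemax} for surjectivity, and note that by definition $\ker(\pi_*)=\Autzero(\S_{z_1,z_2})\cap \AutC(\S_{z_1,z_2})$.

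Next I would compute $\AutC(\S_{z_1,z_2})$ explicitly using the trivialization from Lemma \ref{automorphismssections} (2). Since $\S_{z_1,z_2}=\PP(\O_\C(z_1)\oplus\O_\C(z_2))$ is decomposable with two disjoint minimal sections $\s_1,\s_2$ of self-intersection $0$ (Proposition \ref{decomposable} (2)), we may put these as the zero and infinite sections so that the transition maps are diagonal. A $\C$-automorphism is then given locally by $(x,[u:v])\mapsto (x,[\alpha_i(x)u + \gamma_i(x)v : \beta_i(x)u + \delta_i(x)v])$ subject to compatibility with the diagonal transition maps. A direct computation in local charts — of the same flavour as the Remark after Lemma \ref{automaruyama} — shows that the off-diagonal terms $\gamma_i,\beta_i$ would have to be global sections of $\O_\C(z_2-z_1)$ and $\O_\C(z_1-z_2)$ respectively; since $z_1\neq z_2$, both these line bundles have degree $0$ but are non-trivial (a non-trivial degree-$0$ line bundle on an elliptic curve has no nonzero sections), so $\gamma_i=\beta_i=0$. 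What survives is the diagonal part: $(x,[u:v])\mapsto(x,[\lambda u:\mu v])$ with $\lambda,\mu$ constants, i.e.\ the scalars $[\lambda:\mu]\in\PP^1$, so $\AutC(\S_{z_1,z_2})\simeq \GG_m$ (acting by rescaling the fiber coordinate, fixing $\s_1$ and $\s_2$). Alternatively, and perhaps more cleanly, one can argue that $\AutC(\PP(\V))=\PP(\Aut(\V))$ where $\Aut(\V)$ is the group of vector-bundle automorphisms of $\V=\O_\C(z_1)\oplus\O_\C(z_2)$; computing $\operatorname{End}(\V)$ via the four $\operatorname{Hom}$-groups $\operatorname{Hom}(\O_\C(z_i),\O_\C(z_j))$ gives a $2$-dimensional algebra ($\kk\times\kk$ on the diagonal, zero off-diagonal, since the off-diagonal Homs vanish), whence $\AutC(\S_{z_1,z_2})\simeq (\GG_m\times\GG_m)/\GG_m\simeq\GG_m$.

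Then since $\GG_m\simeq\AutC(\S_{z_1,z_2})$ is already connected, it equals its own connected component, hence $\AutC(\S_{z_1,z_2})\subset\Autzero(\S_{z_1,z_2})$, and therefore $\ker(\pi_*)=\AutC(\S_{z_1,z_2})\simeq\GG_m$. Combining this with surjectivity of $\pi_*$ from Proposition \ref{decomposablemax} yields the exact sequence
\[
0\to\GG_m\to\Autzero(\S_{z_1,z_2})\overset{\pi_*}{\to}\Autzero(\C)\to 0.
\]

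The main obstacle is the vanishing of the off-diagonal terms: one must be careful that the potential off-diagonal entries of a $\C$-automorphism are sections of $\O_\C(z_2-z_1)$ and $\O_\C(z_1-z_2)$, and that these vanish precisely because $z_1\neq z_2$ on an elliptic curve (if $z_1=z_2$ one would instead recover the larger group $\AutC(\C\times\PP^1)=\PGL_2$, consistent with Lemma \ref{automorphismssections} (3)). Everything else is either a bookkeeping computation in local charts or a direct appeal to the already-established Proposition \ref{decomposablemax}.
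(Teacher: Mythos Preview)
Your argument is correct and takes a genuinely different route from the paper's. The paper works through the explicit birational description of $\S_{z_1,z_2}$ as an elementary transformation $\epsilon_{p_1,p_2}\colon\C\times\PP^1\dashrightarrow\S_{z_1,z_2}$ from Proposition~\ref{decomposable}: the diagonal torus $\{\phi_\alpha:(x,[u:v])\mapsto(x,[\alpha u:v])\}\simeq\GG_m$ in $\AutC(\C\times\PP^1)$ fixes $p_1,p_2$, hence survives the transformation and lands in $\ker(\pi_*)$; conversely, any $f\in\ker(\pi_*)$ must fix the base points $q_1,q_2$ of $\epsilon_{p_1,p_2}^{-1}$ (they lie on the two minimal sections, which $\Autzero$ preserves individually by connectedness), so $\epsilon_{p_1,p_2}^{-1}f\epsilon_{p_1,p_2}$ is a $\C$-automorphism of $\C\times\PP^1$ fixing two points on distinct constant sections, hence equals some $\phi_\alpha$. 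Your approach instead computes $\AutC(\S_{z_1,z_2})$ intrinsically from the splitting $\V=\O_\C(z_1)\oplus\O_\C(z_2)$, using the vanishing of $H^0(\C,\O_\C(\pm(z_1-z_2)))$. The paper's method stays within its birational toolkit and reuses the geometry already set up; yours is cleaner conceptually and generalises immediately to other decomposable bundles.

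One small imprecision worth flagging: your claim that $\AutC(\S_{z_1,z_2})\simeq\GG_m$ is not quite literally true in all cases. Your $\operatorname{End}(\V)$ computation only yields $\Aut(\V)/\GG_m$, which is the image of vector-bundle automorphisms in $\AutC(\PP(\V))$; but $\AutC(\PP(\V))$ can also contain automorphisms coming from isomorphisms $\V\simeq\V\otimes L$ for nontrivial $L$. Here, when $z_1-z_2$ is $2$-torsion in $\C$, the twist $L=\O_\C(z_2-z_1)$ gives an involution swapping the two summands, so $\AutC(\S_{z_1,z_2})\simeq\GG_m\rtimes\ZZ/2\ZZ$. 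This does not affect your conclusion: since $\Autzero(\S_{z_1,z_2})$ is connected it preserves each of the two minimal sections, so any element of $\ker(\pi_*)=\Autzero\cap\AutC$ already lies in the diagonal $\GG_m$. The paper's argument sidesteps this by working inside $\Autzero$ throughout rather than computing the full $\AutC$.
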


\begin{proof}
	From Proposition \ref{decomposablemax}, it suffices to prove that $\ker(\pi_*)\simeq \GG_m$. Let $p_1, p_2\in \C\times \PP^1$ respectively on the fibers of $z_1$ and $z_2$ such that $p_1$ is on the zero section and $p_2$ is on the infinite section; and let $\epsilon_{p_1,p_2}\colon \C\times \PP^1 \dashrightarrow \S_{z_1,z_2}$ be the blowups of $p_1,p_2$ followed by the contractions of their fibers. We denote respectively by $q_1$ and $q_2$ the base points of $\epsilon_{p_1,p_2}^{-1}$ which belong to the fibers of $z_1$ and $z_2$. The automorphisms $\phi_\alpha$ with $\alpha\neq 0$ defined by:
	\begin{align*}
		\phi_{\alpha}\colon \C\times \PP^1& \rightarrow \C\times \PP^1 \\
		\left( x,[u:v] \right)& \mapsto \left(x,[\alpha u :v]\right),
	\end{align*}
	form a subgroup of $\Aut(\C\times \PP^1)$ which we denote by $\Aut_{0,\infty}$. Since $p_1$ and $p_2$ are fixed by $\Aut_{0,\infty}$, the birational map $\epsilon_{p_1,p_2}$ is $\Aut_{0,\infty}$-equivariant i.e.\ $\GG_m \subset \ker(\pi_*)$. Conversely, let $f\in \Autzero(\S_{z_1,z_2})$ be such that $\pi_*(f)=id$. Then $f$ fixes $q_1$ and $q_2$ because they belong to one of the two minimal sections. Hence $\epsilon_{p_1,p_2}^{-1} f \epsilon_{p_1,p_2}$ is a $\C$-automorphism of $\C\times \PP^1$ which fixes $p_1$ and $p_2$, so it equals $\phi_\alpha$ for some $\alpha\in \GG_m$. Therefore $\ker(\pi_*)\simeq \GG_m$.
\end{proof}

\subsubsection{The algebraic group $\Autzero(\A_{0,\C})$}

\begin{lemma}\label{sectiontrivialbundleoverellipticcurve}
	Let $\C$ be an elliptic curve, $\pi_1\colon\C\times \PP^1\to \C$ be the projection on the first factor and $\s_\infty$ be the infinite section in $\C\times \PP^1$. For all $p,p'\in \s_\infty$, there exists a section $\s$ such that $\s^2=4$, $\s\cap \s_\infty = \{p,p'\}$ and they intersect transversely at $p$ and $p'$. Suppose $\s$ is such a section, then a section $\s'$ satisfies the same properties if and only if there exist $\alpha\in \GG_m$ and $\gamma\in \GG_a$ such that $\s'$ is the image of $\s$ by the automorphism:
	\begin{align*}
		\phi_{\alpha,\gamma}\colon \C\times \PP^1& \rightarrow \C\times \PP^1 \\
		\left( x,[u:v] \right)& \mapsto \left(x,[\alpha u + \gamma v:v]\right).
	\end{align*}
	Moreover $\phi_{\alpha,\gamma}$ is the unique $\C$-automorphism of $\C\times \PP^1$ which sends $\s$ to $\s'$.
\end{lemma}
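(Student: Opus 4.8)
The plan is to rephrase everything in terms of rational functions on $\C$. Write $\s_\infty=\{(x,[1:0])\}$ and use the affine coordinate $z=u/v$ on $\PP^1$, so that $[1:0]$ becomes the point $\infty$. By Lemma~\ref{sectiontrivialbundle}, a section $\s$ of $\C\times\PP^1$ is the graph of a morphism $g_\s\colon\C\to\PP^1$, and $\s^2=2\deg(g_\s)$; in particular $\s\cdot\s_\infty=\deg(g_\s)$. Set $p_\C=\pi_1(p)$ and $p_\C'=\pi_1(p')$. One first records that the hypotheses force $p\neq p'$: if $p=p'$, then $\s\cap\s_\infty$ is a single point at which the local intersection multiplicity equals $\s\cdot\s_\infty=2$, contradicting transversality. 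Then a section $\s$ with $\s\cap\s_\infty=\{p,p'\}$ is the graph of a rational function $f\in\kk(\C)$ whose polar locus is contained in $\{p_\C,p_\C'\}$, and a short computation in a chart $(t,1/z)$ around $(p_\C,\infty)$ (with $t$ a local parameter at $p_\C$) shows that $\s$ and $\s_\infty$ meet transversely at $p$ if and only if $f$ has a \emph{simple} pole at $p_\C$ (an order-$k$ pole yields intersection multiplicity $k$). Thus the sections in the lemma are exactly the graphs of those $f\in H^0(\C,\O_\C(p_\C+p_\C'))$ with a pole at each of $p_\C,p_\C'$; for such $f$ one has $\deg(f)=2$, so $\s^2=4$ is automatic.

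For existence, I would compute $h^0(\C,\O_\C(p_\C+p_\C'))=2$ by Riemann--Roch (the divisor has degree $2>0$, so $h^1$ vanishes by Serre duality on the elliptic curve). Since the constant functions form a line in this $2$-dimensional space, there is a non-constant $f$ in it; and since no rational function on an elliptic curve has a single simple pole (equivalently $h^0(\O_\C(q))=1$ for a point $q$), any such non-constant $f$ automatically has simple poles at both $p_\C$ and $p_\C'$ and no others. Its graph is the desired section $\s$.

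For the equivalence, fix $\s$ as above with associated function $f$, which is non-constant since $\s^2=4\neq0$, so that $\{1,f\}$ is a basis of $H^0(\C,\O_\C(p_\C+p_\C'))$. If $\s'$ is another section with the same properties, then the transversality hypothesis puts its function $f'$ in $H^0(\C,\O_\C(p_\C+p_\C'))$, hence $f'=\alpha f+\gamma$ with $\alpha,\gamma\in\kk$, and $\alpha\neq0$ because $f'$ is non-constant. Since $\phi_{\alpha,\gamma}$ acts on the coordinate $z$ by $z\mapsto\alpha z+\gamma$, it carries the graph of $f$ onto the graph of $\alpha f+\gamma=f'$, i.e.\ $\s'=\phi_{\alpha,\gamma}(\s)$. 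Conversely, $\phi_{\alpha,\gamma}$ is a $\C$-automorphism of $\C\times\PP^1$ fixing $\s_\infty$ pointwise (it fixes $[1:0]$ in every fibre), so it preserves the self-intersection number, the set $\s\cap\s_\infty=\{p,p'\}$, and transversality; thus $\phi_{\alpha,\gamma}(\s)$ again satisfies all the required properties.

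For the uniqueness clause, I would use that a $\C$-automorphism of $\C\times\PP^1$ has the form $(x,[u:v])\mapsto(x,M(x)\cdot[u:v])$ for a morphism $M\colon\C\to\PGL_2$, which must be constant since $\C$ is complete and $\PGL_2$ is affine; hence $\AutC(\C\times\PP^1)=\PGL_2$ acting only on the second factor. If $\psi$ is a $\C$-automorphism with $\psi(\s)=\s'$, then $\phi_{\alpha,\gamma}^{-1}\psi$ is an element of $\PGL_2$ fixing $\s$ pointwise, i.e.\ fixing $g_\s(x)=f(x)$ for every $x\in\C$; since $f$ is a non-constant morphism $\C\to\PP^1$ it is surjective, so this element fixes all of $\PP^1$ and equals the identity, giving $\psi=\phi_{\alpha,\gamma}$. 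The only genuinely computational ingredient is the transversality$\leftrightarrow$simple-pole local check (together with the bookkeeping between the two charts on $\PP^1$); I do not expect any step to be a real obstacle, the main thing being to record at the outset that the hypotheses force $p\neq p'$.
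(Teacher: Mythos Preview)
Your proof is correct and follows essentially the same route as the paper: identify the relevant sections with non-constant elements of $\Gamma(\C,\O_\C(p_\C+p_\C'))$, use Riemann--Roch to get $h^0=2$ and the basis $\{1,f\}$, and read off $f'=\alpha f+\gamma$. Your treatment is in fact slightly more careful than the paper's in two places: you explicitly note that the transversality hypothesis forces $p\neq p'$, and for the uniqueness clause you argue via $\AutC(\C\times\PP^1)=\PGL_2$ and surjectivity of $f$, whereas the paper only checks uniqueness among $\C$-automorphisms fixing $\s_\infty$ (which, strictly speaking, leaves a small gap that your argument closes).
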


\begin{proof}
	Let $z=\pi_1(p),z'=\pi_1(p')$ and $\D=z+z'$. From Riemann-Roch theorem and Serre duality, we have $\dim(\Gamma(\C,\D) ) = 2$. Because $1\in \Gamma(\C,\D)$, there is a section $\s \in \Gamma(\C,\D)$ with exactly two poles of order $1$ at $z$ and $z'$, i.e.\ $\s$ intersect transversely $\s_\infty$ at $p$ and $p'$, and $\{1,\s\}$ is a basis for $\Gamma(\C,\D)$. Since $\s$ is given by a morphism $g_\sigma:\C\to \PP^1$, we know from Lemma \ref{sectiontrivialbundle} that $\s^2=2\deg(g_\s)=4$. Let $\phi_{\alpha,\gamma}$ be an automorphism of $\C\times \PP^1$ defined as in the statement, then the section $\phi_{\alpha,\gamma}(\s)$ intersects transversely $\s_\infty$ at exactly $p$ and $p'$, and $\phi_{\alpha,\gamma}(\s)^2=4$. Conversely if $\s'$ is a section which satisfies the same properties, then $\s'\in \Gamma(\C,\D)$.  In particular if $\s\colon x\mapsto (x,[u(x):v(x)])$, then there exist $\alpha,\gamma \in \kk$ such that $\s'(x)=(x,[\alpha u(x)+\gamma v(x):v(x)]) = \phi_{\alpha,\gamma} \left(\s(x)\right)$. Finally, the $\C$-automorphisms of $\C\times \PP^1$ fixing the infinite section are of the form $\phi_{\alpha,\gamma}$ for some $\alpha\in \GG_m$ and $\gamma \in \GG_a$, and the image of $\s$ uniquely determines $\alpha$ and $\gamma$. Therefore $\phi_{\alpha,\gamma}$ is the unique $\C$-automorphism of $\C\times \PP^1$ sending $\s$ to $\s'$. 
\end{proof}

The group of all automorphisms $\phi_{\alpha,\gamma}$ is denoted $\Autinfini$ and it is isomorphic to $\GG_a\rtimes \GG_m$. In particular, $\Autinfini$ is connected.

\begin{lemma}\label{sectionsSp}
	Let $\C$ be an elliptic curve, $\s_\infty$ be the infinite section in $\C\times \PP^1$ and $\beta_p \colon\bup_p(\C\times \PP^1)\rightarrow \C\times \PP^1$ be the blowup of $p=(z,[1:0])$. Let $\tilde{\s}_\infty$ and $\tilde{f}_p$ be the strict transforms under $\beta_p^{-1}$ of respectively $\s_\infty$ and the fiber $f_p$ containing $p$ in $\C\times \PP^1$. Then $\beta_p^{-1}$ is $\Autinfini$-equivariant and $\beta_p^{-1} \Autinfini \beta_p$ induces a simply transitive action of $\GG_m$ on $\E_p \setminus \{\tilde{\s}_\infty,\tilde{f}_p\}$. More precisely, the following hold for all $b\in \tilde{\s}_\infty \setminus \E_p$:
	\begin{enumerate}
		\item There exists $e\in \E_p\setminus \{\tilde{f}_p,\tilde{\s}_\infty\}$ and a section $\s\subset \bup_p(\C\times \PP^1)$ of self-intersection $3$ passing through $b$ and $e$.
		\item For all $e'\in \E_p\setminus \{\tilde{f}_p,\tilde{\s}_\infty\}$ there exists a unique $\alpha\in \GG_m$ such that the sections of self-intersection $3$ and passing through $b$ and $e'$ are the image of $\s$ by the automorphism $\beta_p^{-1} \phi_{\alpha,\gamma} \beta_p$ for some $\gamma\in \GG_a$.
	\end{enumerate}
\end{lemma}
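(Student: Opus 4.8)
The plan is to compute the action of $\Autinfini$ near $p$ and on the exceptional divisor $\E_p$, and then to convert the classification of sections of self-intersection $4$ through two prescribed points of $\s_\infty$ (Lemma \ref{sectiontrivialbundleoverellipticcurve}) into a classification of sections of self-intersection $3$ of $\bup_p(\C\times\PP^1)$. For the first assertion, every $\phi_{\alpha,\gamma}\in\Autinfini$ fixes $p=(z,[1:0])$, hence lifts uniquely to an automorphism $\tilde\phi_{\alpha,\gamma}$ of $\bup_p(\C\times\PP^1)$ by the universal property of the blowup, so $\beta_p$ and $\beta_p^{-1}$ are $\Autinfini$-equivariant. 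To read off the induced action on $\E_p=\PP(T_p(\C\times\PP^1))$, work in local coordinates $(t,w)$ at $p$, with $t$ a parameter at $z$ and $w=v/u$ near $[1:0]$, so that $\phi_{\alpha,\gamma}(t,w)=(t,\,w/(\alpha+\gamma w))$, whose differential at $p$ is $\mathrm{diag}(1,\alpha^{-1})$ in the basis $(\partial_t,\partial_w)$. The two eigendirections $[\partial_t]$ and $[\partial_w]$ are exactly $\tilde\s_\infty\cap\E_p$ and $\tilde f_p\cap\E_p$, and on $\E_p\setminus\{\tilde\s_\infty,\tilde f_p\}\cong\GG_m$ (the slope coordinate $c$) the action is $c\mapsto\alpha^{-1}c$: independent of $\gamma$ and simply transitive in $\alpha$, which is the first part of the statement.

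Next I would identify the sections of $\bup_p(\C\times\PP^1)$ of self-intersection $3$. Since $\operatorname{Num}(\C\times\PP^1)$ is spanned by a constant section $\s_c$ and a fibre $f$, and every section of self-intersection $4$ is numerically $\s_c+2f$ by Lemma \ref{sectiontrivialbundle}, such a section $\s_0$ satisfies $\s_0\cdot\s_\infty=2$. Blowing up $p\in\s_0$ and passing to the strict transform produces a section of $\bup_p(\C\times\PP^1)$ of self-intersection $3$ meeting $\E_p$ at the tangent direction of $\s_0$ at $p$. Conversely, a section $\s'$ of $\bup_p(\C\times\PP^1)$ meets the reducible fibre $\tilde f_p+\E_p$ with total multiplicity $1$; if this intersection lay on $\tilde f_p$ then $\beta_p(\s')$ would be a section of $\C\times\PP^1$ avoiding $p$, forcing $(\s')^2$ even (Lemma \ref{sectiontrivialbundle}), contrary to $(\s')^2=3$. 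Hence every section of self-intersection $3$ is the strict transform of a section $\s'_0$ of $\C\times\PP^1$ with $(\s'_0)^2=4$ through $p$, and it meets $\E_p$ at the tangent direction of $\s'_0$ at $p$, which avoids $\tilde f_p\cap\E_p$ because $\s'_0$ is a section.

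To prove (1) and (2), take $b\in\tilde\s_\infty\setminus\E_p$ and set $b_0=\beta_p(b)\in\s_\infty\setminus\{p\}$. Applying Lemma \ref{sectiontrivialbundleoverellipticcurve} to the pair $\{p,b_0\}$ gives a section $\s_0$ of $\C\times\PP^1$ with $\s_0^2=4$ and $\s_0\cap\s_\infty=\{p,b_0\}$, transverse at both points; its strict transform $\s$ then has self-intersection $3$, passes through $b$, and meets $\E_p$ at a point $e\notin\{\tilde\s_\infty,\tilde f_p\}$ (transversality at $p$ excludes $\tilde\s_\infty\cap\E_p$; the section property excludes $\tilde f_p\cap\E_p$), which is (1). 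For (2), fix $e'\in\E_p\setminus\{\tilde\s_\infty,\tilde f_p\}$ and let $\alpha\in\GG_m$ be the unique element with $\tilde\phi_{\alpha,\gamma}(e)=e'$ (simple transitivity from the first step, independent of $\gamma$). Since $\phi_{\alpha,\gamma}$ fixes $\s_\infty$ pointwise it fixes $b$, so every $\tilde\phi_{\alpha,\gamma}(\s)$ is a section of self-intersection $3$ through $b$ and $e'$. Conversely, if $\s'$ is such a section, the previous step writes $\s'=\widetilde{\s'_0}$ with $(\s'_0)^2=4$, $\s'_0$ through $p$ with tangent direction $e'$ and through $b_0$; as $\s'_0\cdot\s_\infty=2$ with a transverse intersection at $p$, necessarily $\s'_0\cap\s_\infty=\{p,b_0\}$ transversely, so Lemma \ref{sectiontrivialbundleoverellipticcurve} yields $\s'_0=\phi_{\alpha_1,\gamma_1}(\s_0)$; matching tangent directions at $p$ and simple transitivity force $\alpha_1=\alpha$, whence $\s'=\tilde\phi_{\alpha,\gamma_1}(\s)$. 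Uniqueness of $\alpha$ follows because $\tilde\phi_{\alpha^*,\gamma^*}(\s)$ meets $\E_p$ only at $\tilde\phi_{\alpha^*,\gamma^*}(e)$, which must then equal $e'$.

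The main obstacle is the local computation in the first step — writing down the normal form of $\phi_{\alpha,\gamma}$ at $p$ and correctly matching the two eigendirections of its differential with $\tilde\s_\infty\cap\E_p$ and $\tilde f_p\cap\E_p$ — together with the intersection-number bookkeeping in the last step ensuring that a section of self-intersection $4$ through $p$ and $b_0$ meets $\s_\infty$ transversely and only there, which is precisely what allows Lemma \ref{sectiontrivialbundleoverellipticcurve} to be invoked. Everything else reduces to formal manipulations of blowups, strict transforms, and the universal property of the blowup.
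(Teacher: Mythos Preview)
Your proof is correct and follows essentially the same route as the paper: a local computation of the induced $\Autinfini$-action on $\E_p$ (you phrase it via the differential at $p$, the paper writes out the blowup chart explicitly, but the content is identical), combined with Lemma~\ref{sectiontrivialbundleoverellipticcurve} to produce and classify the relevant sections. Your treatment of (2) is in fact more thorough than the paper's, which simply asserts the conclusion after establishing simple transitivity; you supply the missing step that any section of self-intersection $3$ through $b$ and $e'$ pushes down to a section of self-intersection $4$ meeting $\s_\infty$ transversely exactly at $p$ and $b_0$, so that Lemma~\ref{sectiontrivialbundleoverellipticcurve} applies and the tangent-direction matching pins down $\alpha$.
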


\begin{center}
	\begin{tikzpicture}[scale=0.5]
	\draw (0,1) -- (4,1); \draw (-0.5,1) node[scale=0.5,right]{0};
	\draw (0,0) -- (4,0); \draw (-0.5,0) node[scale=0.5,right]{0};
	\draw (0,-1) -- (4,-1); \draw (4,-1) node[scale=0.7,right]{$\s_\infty$};
	\draw[thick, blue] (3,-2) -- (3,2) ; \draw[blue] (2.8,2.4) node[scale=0.7,right]{${f}_p$};
	\draw[red] (3,-1) node[above right,scale=0.7]{$p$} node{$\bullet$};
	\draw[green] (1,-1) node[above left,scale=0.7]{$c$} node{$\bullet$};
	\draw (0.5,-2) .. controls (0.8,-1.5) .. (1,-1) .. controls (2,2) .. (3,-1) .. controls (3.2,-1.7)  .. (3.3,-2); \draw (0,-2.2) node[scale=0.5,right]{4};
	\draw[densely dotted] (0,-1.5) .. controls (0.8,-1.1) and (0.2,-1.7) .. (1,-1) .. controls (2,1.5) and (2.5,-0.5) .. (3,-1) .. controls (3.2,-1.3)  .. (4,-1.6); \draw (-0.5,-1.7) node[scale=0.5,right]{4};
	\draw (2,-3) node[scale=1]{$\C\times \PP^1$};
	
	\draw [<-] (5,0) -- (7,0); \draw (6,-1) node[scale=1]{$\beta_p$};
	
	\draw (8,-1) -- (12,-1); \draw (12,-1) node[scale=0.7,right]{$\tilde{\s}_\infty$};
	\draw[green] (9,-1) node[above left,scale=0.7]{$b$} node{$\bullet$};
	\draw[thick,red] (11.2,-2) -- (12,0.5);\draw[red] (11.3,-2) node[scale=0.7,right]{$\E_p$};
	\draw[thick,blue] (11,2.2) -- (12,0.1);\draw[blue] (11,2.4) node[scale=0.7,right]{$\tilde{f}_p$};
	\draw (8,1) -- (12,1);\draw (7.5,1) node[scale=0.5,right]{0};
	\draw (8,2) -- (12,2);\draw (7.5,2) node[scale=0.5,right]{0};
	\draw[densely dotted] (8,-1.7) .. controls (8.5,-1.5) .. (9,-1) .. controls (10,3.5) and (11,0)  .. (12,-0.8); \draw (7.5,-1.8) node[scale=0.5,right]{3};
	\draw (8.5,-2) .. controls (8.7,-1.8) .. (9,-1) .. controls (9.5,2) and (10,4)  .. (12,-0.4); \draw (8,-2.1) node[scale=0.5,right]{3};		
	\draw (10,-3) node[scale=1]{$\bup_p(\C\times \PP^1)$};
	
	\draw[purple] (11.85,-0.1) node[right,scale=0.7]{$e$} node{$\bullet$};
	\draw[cyan] (11.66,-0.5) node[left,scale=0.7]{$e'$} node{$\bullet$};
	\end{tikzpicture}
\end{center}	

\begin{proof}\
	\begin{enumerate}[wide]
		\item Let $c=\beta_{p}(b)$ and from Lemma \ref{sectiontrivialbundleoverellipticcurve}, there exists a section $s$ of self-intersection $4$ passing with multiplicity $1$ through $c$ and $p$. Then $\s$ be the strict transform of $s$ under $\beta_p^{-1}$, it is a section of self-intersection $3$ passing through $b$ and some point $e\in \E_p\setminus\{\tilde{f}_p,\tilde{\s}_\infty\}$.
		\item Let $U$ be an open neighborhood of  $z$ and $V = U \times (\PP^1 \setminus [0:1])\subset \C\times \PP^1$. For all $\alpha\in \GG_m$ and $\gamma\in \GG_a$, the automorphisms $\phi_{\alpha,\gamma}$ restricted on $V$ gives an isomorphism $V \to \phi_{\alpha,\gamma}(V) $, $\left( x,t \right) \mapsto \left(x,t/(\alpha + \gamma t)\right)$; which extends to an isomorphism $\tilde{\phi}_{\alpha,\gamma}=\beta_p^{-1} \phi_{\alpha,\gamma} \beta_p$ defined on $\bup_pV = \{(x,t),[u:v]\in V\times \PP^1:tu=vf(x)\}$, with $f$ a local parameter of $\O_{\C,z}$, as:
		\begin{align*}
			\tilde{\phi}_{\alpha,\gamma}\colon  \bup_pV & \rightarrow \bup_p(\phi_{\alpha,\gamma}(V)) \\
			\left( (x,t),[u:v] \right)& \mapsto \left( \left( x,\frac{t}{\alpha + \gamma t}\right),\left[u(\alpha+\gamma t):v\right]\right).
		\end{align*}
		In particular, the restriction of $\tilde{\phi}_{\alpha,\gamma}$ on $\E_p$ is obtained by substituting $(x,t)$ by $(z,0)$ and we get:
		\begin{align*}
			\E_p& \rightarrow \E_p \\
			[u:v]&\mapsto [u\alpha:v].
		\end{align*}
		The automorphisms $\phi_{\alpha,\gamma}$ induce an action of $\GG_m$ on the exceptional divisor $\E_p$, with only fixed points $[0:1]$ and $[1:0]$ which correspond to the intersection of $\E_p$ with $\tilde{f}_p$ and $\tilde{\s}_\infty$. So $\GG_m$ acts simply transitively on $\E_p\setminus\{\tilde{f}_p,\tilde{\s}_\infty\}$. In particular if $\s'$ is a section of self-intersection $3$ passing through $e'\in \E_p\setminus \{\tilde{f}_p,\tilde{\s}_\infty\}$ and $b$, then there exists a unique $\alpha\in \GG_m$ and there exists $\gamma \in \GG_a$ such that $\s'$ is the image of $\s$ by $\tilde{\phi}_{\alpha,\gamma}$.
	\end{enumerate}
\end{proof}

\begin{lemma}\label{sectionsa0}
	Under the same notations as in Lemma $\ref{sectionsSp}$, let $p_1\in \E_p\setminus \{\tilde{f}_p,\tilde{\s}_\infty\}$, $\beta_{p_1}\colon \bup_{p_1}(\bup_p(\C\times \PP^1))\to \bup_p(\C\times \PP^1)$ be the blowup of $\bup_p(\C\times \PP^1)$ at $p_1$ and $\beta = \beta_{p_1}\beta_{p}$. Let $\E_{p_1}$ be the exceptional divisor of $\beta_{p_1}$, let $\hat{\E}_p$ and $\hat{\s}_\infty$ be the strict transforms under $\beta_{p_1}^{-1}$ of respectively $\E_p$ and $\tilde{\s}_\infty$. Then we have a simply transitive action of $\GG_a$ on $\E_{p_1}\setminus \hat{\E}_p$ and more precisely for all $d\in \hat{\s}_\infty\setminus \hat{\E}_p$:
	\begin{enumerate}
		\item There exists $e\in \E_{p_1}\setminus \hat{\E}_p$ and a unique section $\sigma$ of self-intersection $2$ passing through $d$ and $e$.
		\item For all $e'\in  \E_{p_1}\setminus \hat{\E}_p$, there exists a unique $\gamma\in \GG_a$ such that $\beta^{-1} \phi_{1,\gamma} \beta(\s)$ is the unique section of self-intersection $2$ passing through $d$ and $e'$.
	\end{enumerate}
\end{lemma}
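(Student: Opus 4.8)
The plan is to push the analysis of Lemma~\ref{sectionsSp} through one further blowup. Since $\phi_{1,\gamma}$ fixes $\s_\infty$ pointwise and, by the computation in the proof of Lemma~\ref{sectionsSp}, acts trivially on $\E_p$, it fixes both $p$ and $p_1$; hence, by the universal property of the blowup, each $\phi_{1,\gamma}$ lifts to an automorphism $\hat{\phi}_{1,\gamma}=\beta^{-1}\phi_{1,\gamma}\beta$ of $\bup_{p_1}(\bup_p(\C\times \PP^1))$, so $\beta^{-1}$ is equivariant for the subgroup $\GG_a=\{\phi_{1,\gamma}\}_{\gamma\in\GG_a}$ of $\Autinfini$. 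To identify the induced action on $\E_{p_1}\simeq\PP(T_{p_1})$, I would extend the local computation of Lemma~\ref{sectionsSp}: in the chart of $\bup_p(\C\times \PP^1)$ with coordinates $(\xi,\eta)$ in which $\E_p=\{\xi=0\}$ and $p_1=(0,\eta_1)$ with $\eta_1\in\kk^{*}$ (away from $\E_p\cap\tilde{\s}_\infty$ and $\E_p\cap\tilde{f}_p$), the automorphism $\tilde{\phi}_{1,\gamma}$ of Lemma~\ref{sectionsSp} takes the form $(\xi,\eta)\mapsto(\xi,\eta/(1+\gamma\xi\eta))$; its differential at $p_1$ is unipotent and, for $\gamma\neq 0$, non-trivial, with unique eigendirection $\partial_\eta$, the tangent direction of $\E_p$, i.e.\ the point $\hat{\E}_p\cap\E_{p_1}$. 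Hence $\GG_a$ acts on $\E_{p_1}$ with $\hat{\E}_p\cap\E_{p_1}$ as its unique fixed point, and therefore simply transitively on $\E_{p_1}\setminus\hat{\E}_p$.

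Next I would classify the sections of self-intersection $2$ through $d$. Given $d\in\hat{\s}_\infty\setminus\hat{\E}_p$, put $c=\beta(d)\in\s_\infty$; since $d\notin\hat{\E}_p$ we have $c\neq p$ and $\beta$ is an isomorphism near $d$. If $\sigma$ is such a section, then $s:=\beta(\sigma)$ is a section of $\C\times \PP^1$ through $c$, and inspecting the multiplicities of $s$ at $p$ and of its strict transform at $p_1$, together with the fact from Lemma~\ref{sectiontrivialbundle} that a section of $\C\times \PP^1$ has even self-intersection (never $2$) and that one of self-intersection $4$ meets $\s_\infty$ in exactly two points, forces $s^2=4$, $s$ to pass through $p$ with multiplicity $1$, and $\beta_p^{-1}(s)$ to pass through $p_1$ with multiplicity $1$; in particular $\sigma\cdot\E_{p_1}=1$ and $\sigma\cdot\hat{\E}_p=0$, so $\sigma$ meets $\E_{p_1}$ in exactly one point, lying in $\E_{p_1}\setminus\hat{\E}_p$. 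By Lemma~\ref{sectiontrivialbundleoverellipticcurve}, the sections of $\C\times \PP^1$ of self-intersection $4$ meeting $\s_\infty$ transversally in $\{c,p\}$ are exactly the $\phi_{\alpha,\delta}(s_0)$, $(\alpha,\delta)\in\GG_m\times\GG_a$, for a fixed such $s_0$; and since $\phi_{\alpha,\delta}$ acts on $\E_p$ by $[u:v]\mapsto[\alpha u:v]$, the requirement that $\beta_p^{-1}$ of the section pass through $p_1$ pins down a single value $\alpha_0$ of $\alpha$. Consequently the sections of self-intersection $2$ through $d$ are precisely the $\sigma_\delta:=\beta^{-1}(\phi_{\alpha_0,\delta}(s_0))$, $\delta\in\GG_a$, the assignment $\delta\mapsto\sigma_\delta$ being injective by the uniqueness clause of Lemma~\ref{sectiontrivialbundleoverellipticcurve}; moreover $\hat{\phi}_{1,\gamma}(\sigma_\delta)=\sigma_{\gamma+\delta}$ (because $\phi_{1,\gamma}\phi_{\alpha_0,\delta}=\phi_{\alpha_0,\gamma+\delta}$), and $\hat{\phi}_{1,\gamma}$ fixes $d$ (because $\phi_{1,\gamma}$ fixes $c$).

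Finally $(1)$ and $(2)$ follow. By the previous paragraph the map $\delta\mapsto\sigma_\delta\cap\E_{p_1}$ is a well-defined map $\GG_a\to\E_{p_1}\setminus\hat{\E}_p$ (a single point), and the relation $\hat{\phi}_{1,\gamma}(\sigma_\delta)=\sigma_{\gamma+\delta}$ shows it intertwines translation on $\GG_a$ with the $\GG_a$-action on $\E_{p_1}\setminus\hat{\E}_p$; being a $\GG_a$-equivariant map between two simply transitive $\GG_a$-sets, it is a bijection. In particular, for every $e\in\E_{p_1}\setminus\hat{\E}_p$ there is a unique $\delta$ with $e\in\sigma_\delta$, and $\sigma_\delta$ is then the unique section of self-intersection $2$ through $d$ and $e$ (any such section is some $\sigma_{\delta'}$ meeting $\E_{p_1}$ only at $e$, which forces $\delta'=\delta$); this is $(1)$, say realized by $\sigma=\sigma_{\delta_0}$. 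For $(2)$, given $e'\in\E_{p_1}\setminus\hat{\E}_p$, let $\gamma\in\GG_a$ be the unique element with $\hat{\phi}_{1,\gamma}(\sigma\cap\E_{p_1})=e'$, which exists and is unique by simple transitivity; then $\beta^{-1}\phi_{1,\gamma}\beta(\sigma)=\hat{\phi}_{1,\gamma}(\sigma)=\sigma_{\gamma+\delta_0}$ passes through $d$ and $e'$ and hence is the unique section of self-intersection $2$ through $d$ and $e'$, and $\gamma$ is unique by simple transitivity. The main obstacle is the local-coordinate computation of the first paragraph — writing $\tilde{\phi}_{1,\gamma}$ explicitly in a chart of the first blowup and pinning down its differential at $p_1$, so as to recognize the fixed point on $\E_{p_1}$ as $\hat{\E}_p\cap\E_{p_1}$; once that is done, everything else assembles Lemmas~\ref{sectionsSp}, \ref{sectiontrivialbundle} and~\ref{sectiontrivialbundleoverellipticcurve} with an elementary multiplicity count.
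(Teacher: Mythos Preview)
Your argument is correct and follows essentially the same route as the paper: a local computation in the chart of the first blowup to identify the $\GG_a$-action on $\E_{p_1}$, combined with Lemma~\ref{sectionsSp} and Lemma~\ref{sectiontrivialbundleoverellipticcurve}. Your use of the differential of $\tilde{\phi}_{1,\gamma}$ at $p_1$ is just a repackaging of the paper's explicit second blowup computation (the paper writes out the lift to $\bup_{p_1}$ and reads off $[u_1:u_2]\mapsto[u_1:u_2-u_1\gamma]$, which is exactly the projectivization of your Jacobian). One small addition worth noting: your multiplicity count in the second paragraph, showing that any section of self-intersection $2$ through $d$ must come from a section of self-intersection $4$ in $\C\times\PP^1$ passing transversally through $c$ and $p$, actually establishes the \emph{uniqueness} of the section through $d$ and a given $e'$ more explicitly than the paper's proof does, so in that respect your argument is slightly more complete.
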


\begin{center}
	\begin{tikzpicture}[scale=0.5]
	\draw (8,-1) -- (12,-1); \draw (12,-1) node[scale=0.7,right]{$\tilde{\s}_\infty$};
	\draw[green] (9,-1) node[above left,scale=0.7]{$b$} node{$\bullet$};
	\draw[cyan] (11.45,0.75) node[above left,scale=0.7]{$p_1$} node{$\bullet$};
	\draw[red,thick] (10.75,-2) -- (11.75,2);\draw[red] (11.75,2) node[scale=0.7,right]{$\E_p$};
	\draw[blue,thick] (9,2) -- (12,1.5);\draw[blue] (8,2) node[scale=0.7,right]{$\tilde{f}_p$};
	\draw (8.5,-2) .. controls (8.7,-1.8) .. (9,-1) .. controls (10,1) and (11,1)  .. (12,0.6); \draw (12,0.5) node[scale=0.5,right]{3};		
	\draw[densely dashed] (8,-1.5) .. controls (8.7,-1.2) .. (9,-1) .. controls (10,-0.2) and (11,0.5)  .. (12,1.1); \draw (12,1.2) node[scale=0.5,right]{3};			
	\draw (10,-3) node[scale=1]{$\bup_p(\C\times \PP^1)$};
	
	\draw[<-] (13,0) -- (15,0);\draw (14,-1) node[scale=1]{$\beta_{p_1}$};
	
	\draw (16,-1) -- (20,-1); \draw (20,-1) node[scale=0.7,right]{$\hat{\s}_\infty$};
	\draw[red,thick] (18.75,-2) -- (19.75,2);\draw[red] (19.75,2) node[scale=0.7,right]{$\hat{\E}_p$};
	\draw[cyan,thick] (16,2) -- (20,0.6); \draw (20,0.5) node[scale=0.7,right,cyan]{$\E_{p_1}$};
	\draw[green] (17,-1) node[above left,scale=0.7]{$d$} node{$\bullet$};
	\draw (16.5,-2) .. controls (16.7,-1.7) .. (17,-1) .. controls (17.5,1) .. (18,2); \draw (18,2.2) node[scale=0.5,right]{2};		
	\draw[densely dashed] (16,-1.5) .. controls (16.7,-1.3) .. (17,-1) .. controls (17.5,-0.6) and (18,0.3) .. (19,2); \draw (19,2.2) node[scale=0.5,right]{2};			
	\draw (18,-3) node[scale=1]{$\bup_{p_1}(\bup_p(\C\times \PP^1))$};
	\end{tikzpicture}
\end{center}

\begin{proof}\
	\begin{enumerate}[wide]
		\item Let $b=\beta_{p_1}(d)$ and from Lemma \ref{sectionsSp} $(1)$, there exists a section $s$ of self-intersection $3$ passing through $b$ and $p_1$. Then the strict transform $\s$ of $s$ under $\beta_{p_1}^{-1}$  is a section of self-intersection $2$ passing through $d$ and a point $e\in \E_{p_1}\setminus \hat{\E}_p$.
		\item From Lemma \ref{sectionsSp}, we know $\GG_m$ acts transitively on $\E_p\setminus \{\tilde{f}_p,\tilde{\s}_\infty\}$ so we can assume $p_1$ has coordinates $((z,0),[1:1])$ in $\bup_p(V)$. We choose an open subset $W$ of $\bup_p(V)=\{(x,t),[u:v]\in V\times \PP^1:tu=vf(x)\}$ containing $p_1$ and such that $u\neq 0$. By the change of variable $v\mapsto v/u$, we have $t=vf(x)$ and the isomorphism $\tilde{\phi}_{1,\gamma}$ restricted on $W$ gives:
		 \begin{align}
		 	W &\rightarrow \tilde{\phi}_{1,\gamma}(W) \nonumber\\
		 	\left( (x,vf(x)),[1:v] \right)& \mapsto \left( \left( x,\frac{vf(x)}{1 + \gamma vf(x)}\right),\left[1:\frac{v}{1+\gamma vf(x)}\right]\right). \label{localiso}
		 \end{align}
	Since $W$ is isomorphic to an open subset of $\AA^2$ by the map $(x,v) \mapsto \left((x,(v+1)f(x)),[1:v+1]\right)$ which sends $(z,0)$ to $p_1$, we can rewrite (\ref{localiso}) as:
	\[
	(x,v)  \mapsto \left(x , \frac{v+1}{1+\gamma f(x)(v+1)}-1\right),
	\]
	which sends $(z,0)$ to $(z,0)$. The isomorphism $\tilde{\phi}_{1,\gamma}$ extends to $\bup_{(z,0)}(W)$ by: 
	\[
	\left((x,v),[u_1:u_2]\right) \longmapsto \left(\left(x,\frac{v-\gamma f(x)(v+1)}{1+\gamma f(x) (v+1)}\right),\left[u_1:\frac{u_2-u_1\gamma (v+1)}{1+\gamma f(x)(v+1)}\right] \right),
	\]
	and restricted on $\E_{(z,0)}$ one gets: $[u_1:u_2]\mapsto [u_1:u_2-u_1\gamma]$. In particular $\GG_a$ acts on the exceptional divisor $\E_{(z,0)}$ and the action has a unique fixed point $[0:1]$ which is the intersection of $\E_{p_1}$ with $\hat{\E}_p$. Therefore the action of $\GG_a$ on $\E_{p_1}\setminus \hat{\E}_p$ is simply transitive. In consequence, if $e'\in  \E_{p_1}\setminus \hat{\E}_p$ then there exists a unique $\gamma\in \GG_a$ such that $\beta^{-1} \phi_{1,\gamma} \beta(\s)$ is the unique section of self-intersection $2$ passing through $d$ and $e'$.
	\end{enumerate}

\end{proof}

\begin{lemma}\label{Gaa0}
	Let $\C$ be an elliptic curve and $\s_0$ be the unique minimal section of $\A_{0,\C}$. For all $a\in \s_0$, $b\in \A_{0,\C}\setminus \s_0$ with $a$ and $b$ not in the same fiber, there exists a unique section $\s$ passing through $a$ and $b$ such that $\s^2=2$. Moreover the subgroup $\{\phi_{1,\gamma}\}_{\gamma\in \GG_a}$ of $\Autinfini$ induces a simply transitive action of $\GG_a$ on $f\setminus \s_0$, where $f$ is any fiber of $\A_{0,\C}$.
\end{lemma}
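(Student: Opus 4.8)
The plan is to deduce the whole statement from the explicit chain of blow-ups set up in Lemmas~\ref{sectionsSp} and~\ref{sectionsa0}, together with the description of $\A_{0,\C}$ in Proposition~\ref{segAtiyah}. Fix $z\in\C$ and adopt the notation of Lemma~\ref{sectionsa0}: put $p=(z,[1:0])$, let $\beta=\beta_{p_1}\beta_p\colon Z:=\bup_{p_1}(\bup_p(\C\times\PP^1))\to\C\times\PP^1$, and let $\hat f_p$ be the strict transform under $\beta_{p_1}^{-1}$ of $\tilde f_p$. First I would show that the map $\epsilon_{p_2}\epsilon_p\colon\C\times\PP^1\dashrightarrow\A_{0,\C}$ of Proposition~\ref{segAtiyah} (for the point $p_2$ determined by $p_1$) resolves as $\xi\beta^{-1}$, where $\xi\colon Z\to\A_{0,\C}$ first contracts $\hat f_p$ and then contracts the image of $\hat\E_p$ (which has self-intersection $-1$ in the intermediate surface). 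This is a routine $(-1)$-curve computation: the fibre of $Z\to\C$ over $z$ consists of the three curves $\hat f_p,\hat\E_p,\E_{p_1}$ meeting in a chain, in this order, with self-intersections $-1,-2,-1$; hence $\xi$ collapses $\hat f_p\cup\hat\E_p$ to a single point $v$. Since $\hat\E_p$ meets both $\hat\s_\infty$ and $\E_{p_1}$ while $\hat\s_\infty\cap\E_{p_1}=\emptyset$, the images $\s_0:=\xi(\hat\s_\infty)$ (the unique minimal section of $\A_{0,\C}$, by Proposition~\ref{segAtiyah}) and $f:=\xi(\E_{p_1})$ (the fibre over $z$) meet exactly at $v$, and $\xi$ restricts to isomorphisms $\hat\s_\infty\setminus\hat\E_p\overset{\sim}{\to}\s_0\setminus\{v\}$ and $\E_{p_1}\setminus\hat\E_p\overset{\sim}{\to}f\setminus\{v\}=f\setminus\s_0$.

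Next I would transport the subgroup $\{\phi_{1,\gamma}\}_{\gamma\in\GG_a}$ of $\Autinfini$ to $\A_{0,\C}$. Each $\phi_{1,\gamma}$ fixes $\s_\infty$ pointwise, hence fixes $p$, and by the computation in the proof of Lemma~\ref{sectionsSp} (the case $\alpha=1$) it acts trivially on $\E_p$, hence fixes $p_1$; therefore $\beta^{-1}\phi_{1,\gamma}\beta$ is a regular automorphism of $Z$, and these assemble into the regular $\GG_a$-action on $Z$ already used in Lemma~\ref{sectionsa0}. Because $\phi_{1,\gamma}$ preserves every fibre of $\C\times\PP^1\to\C$ and fixes $p$, the automorphism $\beta^{-1}\phi_{1,\gamma}\beta$ preserves $\hat f_p$, $\hat\E_p$, $\E_{p_1}$ and $\hat\s_\infty$; in particular it preserves the two curves contracted by $\xi$, so $\xi$ is $\GG_a$-equivariant by Blanchard's Lemma~(\ref{Blanchard}), giving a regular $\GG_a$-action on $\A_{0,\C}$. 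Connectedness of $\GG_a$ gives $\GG_a\subset\Autzero(\A_{0,\C})$, and since $\xi\beta^{-1}$ is built from blow-ups and contractions of points it preserves the fibrations over $\C$, so $\GG_a\subset\AutC(\A_{0,\C})$.

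For the simple transitivity on an arbitrary fibre $f'$ of $\A_{0,\C}\to\C$: if $f'$ lies over $z$, the isomorphism $\E_{p_1}\setminus\hat\E_p\overset{\sim}{\to}f'\setminus\s_0$ from the first step is $\GG_a$-equivariant, and Lemma~\ref{sectionsa0} says exactly that $\GG_a$ acts simply transitively on $\E_{p_1}\setminus\hat\E_p$; if $f'$ lies over a point $z'\neq z$, then $\xi\beta^{-1}$ is an isomorphism near $f'$ carrying the fibre of $\C\times\PP^1$ over $z'$ to $f'$ and $\s_\infty$ to $\s_0$, and under this identification the $\GG_a$-action becomes $[u:v]\mapsto[u+\gamma v:v]$ on $f'\cong\PP^1$, which fixes $[1:0]=\s_0\cap f'$ and is simply transitive on its complement. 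Hence $\GG_a$ acts simply transitively on $f\setminus\s_0$ for every fibre $f$. (Proposition~\ref{Atiyahmax} offers an alternative handling of the second case but is not needed.)

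It remains to treat the sections of self-intersection $2$. Given $a\in\s_0$ and $b\in\A_{0,\C}\setminus\s_0$ not in the same fibre, I would run the construction above with $z=\pi(b)$ — legitimate since $\A_{0,\C}$ is the unique indecomposable $\PP^1$-bundle of degree $0$ (Theorem~\ref{Atiyah}) — so that $b=\xi(e')$ for a unique $e'\in\E_{p_1}\setminus\hat\E_p$, and (as $\pi(a)\neq z$) $a=\xi(d)$ for a unique $d\in\hat\s_\infty\setminus\hat\E_p$. By Lemma~\ref{sectionsa0} there is a unique section $\s\subset Z$ of self-intersection $2$ through $d$ and $e'$; being a section of $Z\to\C$, it meets the fibre $\hat f_p\cup\hat\E_p\cup\E_{p_1}$ over $z$ with total multiplicity $1$, necessarily at $e'\in\E_{p_1}$, so $\s$ is disjoint from the two curves contracted by $\xi$, and $\xi(\s)$ is a section of self-intersection $2$ through $a$ and $b$. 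Conversely, any section $\s'$ of self-intersection $2$ through $a$ and $b$ satisfies $\s'\cap f=\{b\}$, which does not contain $v=\s_0\cap f$; hence its strict transform $\widetilde{\s'}\subset Z$ is defined, passes through $d$ and $e'$, meets the fibre over $z$ only at $e'$ (so it is disjoint from the contracted curves and $\widetilde{\s'}^2=(\s')^2=2$), and therefore equals $\s$ by the uniqueness in $Z$; thus $\s'=\xi(\s)$. The step I expect to demand real care is the opening $(-1)$-curve bookkeeping that identifies $\xi$ and controls how the candidate sections meet the fibre over $z$; once it is in place, every assertion reduces mechanically to Lemmas~\ref{sectionsSp} and~\ref{sectionsa0}.
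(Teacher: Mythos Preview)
Your proposal is correct and follows essentially the same route as the paper: both arguments resolve the birational map $\C\times\PP^1\dashrightarrow\A_{0,\C}$ of Proposition~\ref{segAtiyah} through the double blow-up $Z=\bup_{p_1}(\bup_p(\C\times\PP^1))$, observe that the contraction $Z\to\A_{0,\C}$ is $\GG_a$-equivariant, and then read off both the section statement and the simple transitivity from Lemmas~\ref{sectionsSp} and~\ref{sectionsa0}. The paper phrases the construction in the reverse direction (starting from $\A_{0,\C}$ and performing $\psi=\epsilon_{q_1}\epsilon_q$ toward $\C\times\PP^1$), and is considerably terser about the $(-1)$-curve bookkeeping and about why the transported action is simply transitive on \emph{every} fibre; your write-up makes those points explicit, which is an improvement in clarity rather than a difference in method.
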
	

\begin{proof}
	Let $q=\s_0(\pi(b))$ and $\epsilon_q\colon\A_{0,\C} \dashrightarrow \PP(\O_\C(\pi(q))\oplus \O_\C)$ be an elementary transformation centered on $a$. From Proposition \ref{decomposable} (1), there is a unique point $q_1\in \PP(\O_\C(\pi(q))\oplus \O_\C)$ where all the sections of self-intersection $1$ meet and we have $\epsilon_{q_1}\colon \PP(\O_\C(\pi(q))\oplus \O_\C) \dashrightarrow \C\times \PP^1$. Let $\psi=\epsilon_{q_1}\epsilon_{q}$, then $p=\psi (b)$ belongs to the same constant section as $c=\psi (a)$. Up to an automorphism of $\C\times \PP^1$ we can assume that $c$ and $p$ lie on the infinite section and apply Lemmas \ref{sectionsSp} and \ref{sectionsa0}. Then using the notation of Lemma \ref{sectionsa0}, the contraction $\bup_{p_1}(\bup_p(\C\times \PP^1))\to \A_{0,\C}$ of $\hat{\E}_p$ and of the strict transform $\hat{f}_p$ of $f_p$ is $\GG_a$-equivariant, so there exists a unique section $\s$ of self-intersection $2$ passing through $a$ and $b$. Moreover for all $b'\in f_q\setminus \s_0$, it also follows from Lemma \ref{sectionsa0} that there exists a unique $\gamma\in \GG_a$ such that $\psi^{-1} \phi_{1,\gamma} \psi(\sigma)$ is the unique section of self-intersection $2$ passing through $a$ and $b'$. 
\end{proof}

\begin{proposition} \label{extensionA0}
	Let $\C$ be an elliptic curve. Then $\AutC(\A_{0,\C})$ is isomorphic to $\GG_a$ and the following sequence of algebraic groups is exact:
	\[
	0 \to \GG_a \to \Autzero(\A_{0,\C}) \to \Autzero(\C) \to 0.
	\]
\end{proposition}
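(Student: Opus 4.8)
The plan is to identify $\AutC(\A_{0,\C})$ with $\GG_a$ by combining Lemma \ref{Gaa0} with a rigidity argument, and then to read off the exact sequence from Proposition \ref{Atiyahmax} exactly as in the proof of Proposition \ref{extensiondecomposable}.

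By Lemma \ref{Gaa0}, the one-parameter subgroup $\{\phi_{1,\gamma}\}_{\gamma\in\GG_a}\cong\GG_a$ induces a subgroup $N\subseteq\AutC(\A_{0,\C})$ acting simply transitively on $f\setminus\s_0$ for every fibre $f$ of $\pi$. It remains to prove the reverse inclusion $\AutC(\A_{0,\C})\subseteq N$. Let $g\in\AutC(\A_{0,\C})$. Since $\pi_*(g)=\mathrm{id}_\C$, the automorphism $g$ maps each fibre to itself, and since $\s_0$ is the unique section of self-intersection $0$ (Proposition \ref{segAtiyah}$(1)$), we have $g(\s_0)=\s_0$; as $\s_0\cdot f=1$, this forces $g$ to fix $\s_0$ pointwise. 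Fix a fibre $f_0$ and a point $b\in f_0\setminus\s_0$; since $g(b)\in f_0\setminus\s_0$, transitivity of $N$ yields $n\in N$ with $n(b)=g(b)$, and I set $h=n^{-1}g$. Then $h\in\AutC(\A_{0,\C})$ fixes $\s_0$ pointwise, preserves every fibre, and fixes $b$, so it suffices to show $h=\mathrm{id}$, for then $g=n\in N$ and $\AutC(\A_{0,\C})=N\cong\GG_a$.

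The heart of the matter is to exhibit three sections in general position fixed pointwise by $h$. Pick $a\in\s_0$ with $\pi(a)\neq\pi(b)$ and let $\s$ be the unique section with $\s^2=2$ passing through $a$ and $b$ (Lemma \ref{Gaa0}). Since $h$ fixes $a$ and $b$ and $(h\s)^2=2$, uniqueness gives $h(\s)=\s$, and since $h$ preserves fibres and $\s\cdot f=1$, the section $\s$ is fixed pointwise by $h$. As $\s\equiv\s_0+f$ in $\operatorname{Num}(\A_{0,\C})$ we have $\s\cdot\s_0=1$, so $\s$ meets $\s_0$ only at $a$; now choose $b'\in\s$ with $\pi(b')\neq\pi(a)$ and $a'\in\s_0$ with $\pi(a')\notin\{\pi(a),\pi(b')\}$ (so $a'\notin\s$), and let $\s'$ be the unique section with $(\s')^2=2$ through $a'$ and $b'$. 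The same argument shows that $h$ fixes $\s'$ pointwise, and $\s'\neq\s$ because $a'\in\s'\setminus\s$. The three distinct sections $\s_0,\s,\s'$ pairwise meet in finitely many points, hence for all but finitely many fibres $f$ the points $\s_0\cap f$, $\s\cap f$ and $\s'\cap f$ are pairwise distinct; then $h|_f$ is an automorphism of $f\cong\PP^1$ with at least three fixed points, so $h|_f=\mathrm{id}$. Therefore $h$ is the identity on a dense open subset of $\A_{0,\C}$, hence everywhere.

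Finally, $\Aut(\A_{0,\C})$ is an algebraic group by Proposition \ref{Briontrick}, so the connected group $\AutC(\A_{0,\C})\cong\GG_a$ lies in $\Autzero(\A_{0,\C})$. By Lemma \ref{autodescend} the kernel of $\pi_*\colon\Autzero(\A_{0,\C})\to\Autzero(\C)$ consists of the automorphisms preserving every fibre, namely $\AutC(\A_{0,\C})=\GG_a$, and $\pi_*$ is surjective by Proposition \ref{Atiyahmax}; hence $0\to\GG_a\to\Autzero(\A_{0,\C})\overset{\pi_*}{\to}\Autzero(\C)\to 0$ is exact. The main obstacle is the third paragraph: ruling out a residual $\GG_m$ inside $\AutC(\A_{0,\C})$, i.e.\ showing that a fibre-preserving automorphism fixing $\s_0$ pointwise together with one further point must act trivially on every fibre; this is precisely what dictates the careful choice of the auxiliary sections $\s$ and $\s'$.
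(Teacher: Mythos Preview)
Your proof is correct and the conclusion of the exact sequence is derived exactly as in the paper, but the key step---showing $\AutC(\A_{0,\C})\subseteq N$---follows a genuinely different route. The paper takes an arbitrary $f\in\AutC(\A_{0,\C})$ and conjugates it back to $\C\times\PP^1$ via the birational map $\psi=\epsilon_{q_1}\epsilon_q$ from the proof of Lemma~\ref{Gaa0}; since $f$ fixes the base points of $\psi$ (they lie on $\s_0$ or are intrinsically determined), $\psi f\psi^{-1}$ is a $\C$-automorphism of $\C\times\PP^1$ sending one section of self-intersection $4$ through $p$ and $c$ to another, hence by Lemma~\ref{sectiontrivialbundleoverellipticcurve} equals some $\phi_{\alpha,\gamma}$, and compatibility with the blow-ups forces $\alpha=1$. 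Your argument instead stays on $\A_{0,\C}$: after dividing out by $N$ you produce a second and then a third section fixed pointwise by $h$, and conclude via the elementary fact that an automorphism of $\PP^1$ with three fixed points is the identity. Your route is more intrinsic and avoids tracking the birational map $\psi$ and the delicate question of why $\alpha=1$; the paper's route is shorter because it recycles the explicit local computations already done in Lemmas~\ref{sectiontrivialbundleoverellipticcurve}--\ref{sectionsa0}. Both hinge on the same uniqueness statement in Lemma~\ref{Gaa0}.
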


\begin{proof}
	From Lemma \ref{Gaa0}, we know that $\GG_a$ is one-to-one to a subgroup of $\AutC(\A_{0,\C})$. Conversely, let $a\in \s_0$, $b\in \A_{0,\C}\setminus \s_0$ with $a$ and $b$ not in the same fiber. Then an automorphism $f\in \AutC(\A_{0,\C})$ sends a section of self-intersection $2$ passing through $a$ and $b$ to a section of self-intersection $2$ passing through $a$ and a point $b'$ in the same fiber as $b$. Let $\psi$, $p$, and $c$ be as defined in the proof of Lemma \ref{Gaa0}, then the automorphism $\psi^{-1} f \psi$ sends a section in $\C\times \PP^1$ of self-intersection $4$ passing through $p$ and $c$ to another section of self-intersection $4$ passing through $p$ and $c$. From Lemma \ref{sectiontrivialbundleoverellipticcurve}, it follows that $\psi^{-1} f \psi=\phi_{1,\gamma}$ for some $\gamma$ and therefore $\AutC(\A_{0,\C})$ is isomorphic to $\GG_a$. Since $\GG_a$ is connected, we get the exact sequence given in the statement. 
\end{proof}

\subsubsection{The algebraic group $\Autzero(\A_{1,\C})$ when $\operatorname{char}(\kk)\neq 2$}\
\vspace{0.2cm}

In this paragraph we assume that the characteristic of $\kk$ is different than $2$. Let $\Delta=\{1,d_1,d_2,d_3\}$ be the subgroup of two torsion points of $\C$ which is isomorphic to $(\ZZ/2\ZZ)^2$. Then $\Delta$ acts on $\C$ by translation and on $\PP^1$ in the following way:
\begin{align*}
	1\colon [u:v] & \mapsto [u:v], \\
	d_1 \colon [u:v] &\mapsto [-u:v], \\
	d_2 \colon [u:v] &\mapsto [v:u], \\
	d_3 \colon [u:v] &\mapsto [-v:u].
\end{align*}
We denote by $\EF$ the quotient of $(\C\times \PP^1)/\Delta$ given by the diagonal action:
\begin{align*}
	\Delta \times(\C\times \PP^1) & \to \C\times \PP^1 \\
	(d_i,(x,[u:v])) & \mapsto (d_i+x,d_i\cdot [u:v]).
\end{align*}

\begin{lemma}\label{rationalfunctionlemma}
	Let $\Delta$ be a finite group acting on a irreducible variety $\X$. Then $\kk(\X)^\Delta$ is isomorphic to $\kk(\X/\Delta)$.
\end{lemma}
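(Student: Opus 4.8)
The plan is to reduce the statement to the affine case together with the classical description of the invariant subfield of a fraction field under a finite group. First I would recall the construction of the quotient variety $\X/\Delta$. Since $\Delta$ is finite and $\X$ is a variety (quasi-projective in all the situations where we apply this lemma, as $\X=\C\times\PP^1$ there), every $\Delta$-orbit is contained in an affine open subset $U_0$ of $\X$, and then $U=\bigcap_{g\in\Delta}g(U_0)$ is a $\Delta$-stable affine open neighbourhood of that orbit. Hence $\X$ is covered by $\Delta$-stable affine opens $U=\Spec A$, on each of which the quotient is $\Spec A^{\Delta}$; these glue into a variety $\X/\Delta$ equipped with a finite surjective quotient morphism $q\colon\X\to\X/\Delta$. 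Because $q$ is surjective and $\X$ is irreducible, $\X/\Delta$ is irreducible; and for any $\Delta$-stable affine open $U=\Spec A$ the induced immersion $\Spec A^{\Delta}\hookrightarrow\X/\Delta$ is a dense open, so $\kk(\X/\Delta)\simeq\operatorname{Frac}(A^{\Delta})$ while $\kk(\X)\simeq\operatorname{Frac}(A)$. Since $q$ is $\Delta$-invariant, the resulting field inclusion $\operatorname{Frac}(A^{\Delta})\hookrightarrow\operatorname{Frac}(A)$ has image contained in $\operatorname{Frac}(A)^{\Delta}=\kk(\X)^{\Delta}$.

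It then remains to prove the reverse inclusion $\operatorname{Frac}(A)^{\Delta}\subseteq\operatorname{Frac}(A^{\Delta})$, which is the standard clearing-denominators argument: given $x\in\operatorname{Frac}(A)^{\Delta}$, write $x=a/b$ with $a,b\in A$ and $b\neq0$, and set $b'=\prod_{g\in\Delta}g(b)$. Then $b'\in A^{\Delta}\setminus\{0\}$, invariant because $\Delta$ permutes the factors and nonzero because $A$ is a domain, and $x=a'/b'$ with $a'=a\prod_{g\neq 1}g(b)\in A$. As $x$ and $b'$ are $\Delta$-invariant, so is $a'=xb'$, hence $a'\in A^{\Delta}$ and $x\in\operatorname{Frac}(A^{\Delta})$. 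Combining the two inclusions yields $\kk(\X)^{\Delta}=\operatorname{Frac}(A)^{\Delta}=\operatorname{Frac}(A^{\Delta})=\kk(\X/\Delta)$.

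The only point requiring care is the existence of $\X/\Delta$ as a variety in the stated generality; for the applications in this paper $\X$ is projective, so it suffices to invoke the usual construction of quotients of quasi-projective varieties by finite groups (each orbit lies in an affine open). Everything else is the elementary computation above, and I do not expect any genuine obstacle.
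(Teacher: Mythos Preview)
Your proof is correct and follows essentially the same approach as the paper: reduce to a $\Delta$-stable affine open $U=\Spec A$, and then show $\operatorname{Frac}(A^{\Delta})=\operatorname{Frac}(A)^{\Delta}$ via the clearing-denominators trick $b'=\prod_{g\in\Delta}g(b)$. The paper is terser about the construction of $\X/\Delta$ (it simply asserts the existence of an affine $\Delta$-invariant open and invokes ``the definition of the quotient''), whereas you spell this out, but the substance is identical.
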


\begin{proof}
	Since $\G$ is finite, we can find an affine $\Delta$-invariant open subset $U\subset \X$. Because $\O_\X(U)^\Delta \subset \O_\X(U)$, we have an extension $\operatorname{Frac}(\O_\X(U)^\Delta) \to \operatorname{Frac}(\O_\X(U))^\Delta$. Let $f/g\in \operatorname{Frac}(\O_\X(U))^\Delta$, write $\Delta=\{d_0,d_1,...,d_n\}$ where $d_0$ is the neutral element and consider $g'= \prod_{i=1}^n d_i\cdot g$. Then $f/g = (fg')/(gg')$ and $gg'$ are $\Delta$-invariant, hence $fg'$ as well. Thus $\operatorname{Frac}(\O_\X(U)^\Delta) \simeq \operatorname{Frac}(\O_\X(U))^\Delta$, i.e.\ $\kk(\X/\Delta)$ is isomorphic to $\kk(\X)^\Delta$ by definition of the quotient.
\end{proof}

\begin{lemma}\label{SSfibréPP^1}
  Let $\C$ be an elliptic curve. Then the following hold:
		\begin{enumerate}
			\item The surface $\EF$ is a $\PP^1$-bundle over $\C/\Delta$ with the following structure morphism: 
			\begin{align*}
			\pi:\EF &\longrightarrow \C/\Delta\\
			(x,[u:v]) \bmod\Delta &\longmapsto x \bmod\Delta.
			\end{align*}
		\item If $i:\C\to \C/\Delta$ is the quotient map for the action of $\Delta$ on $\C$ by translations of order $2$, then the pullback bundle $i^{*}(\EF)$ is $\C$-isomorphic to $\C\times \PP^1$.
		\end{enumerate}
\end{lemma}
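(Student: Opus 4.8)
The plan is to exploit that the diagonal action of $\Delta$ on $\C\times\PP^1$ is \emph{free}, so that the quotient map $q\colon\C\times\PP^1\to\EF$ is a finite \'etale $\Delta$-torsor of degree $4$, and to descend the $\PP^1$-bundle structure of the first projection $p_1\colon\C\times\PP^1\to\C$ along it. First I would record the elementary facts: any nonzero translation of an elliptic curve has no fixed point, so $\Delta\setminus\{1\}$ acts without fixed points on the first factor, hence the diagonal $\Delta$-action on $\C\times\PP^1$ is free and likewise $\Delta$ acts freely on $\C$. Therefore the quotients $\EF=(\C\times\PP^1)/\Delta$ and $\C/\Delta$ exist, being respectively a smooth projective surface and a smooth projective curve (in fact an elliptic curve), and $q$ as well as $i\colon\C\to\C/\Delta$ are finite \'etale $\Delta$-torsors. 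Since $p_1$ is $\Delta$-equivariant, the composite $i\circ p_1$ is $\Delta$-invariant, hence factors uniquely through $q$, producing the morphism $\pi\colon\EF\to\C/\Delta$; unwinding the construction gives exactly the formula in the statement.

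For part $(1)$ it then remains to check that every fibre of $\pi$ is a $\PP^1$. Given $\bar x\in\C/\Delta$, the preimage $i^{-1}(\bar x)=\{x,d_1+x,d_2+x,d_3+x\}$ consists of four distinct points, so $q^{-1}(\pi^{-1}(\bar x))=\bigsqcup_{d\in\Delta}(\{d+x\}\times\PP^1)$, and $\Delta$ permutes these four copies of $\PP^1$ freely and transitively; consequently $q$ restricts to an isomorphism $\{x\}\times\PP^1\xrightarrow{\sim}\pi^{-1}(\bar x)$, and this fibre, reduced because $q$ is \'etale, is a $\PP^1$. Thus $\pi\colon\EF\to\C/\Delta$ is a geometrically ruled surface in the sense of Definition~\ref{defruled}, hence a $\PP^1$-bundle by the remark following Definition~\ref{p1bundle}.

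For part $(2)$ I would show that the commutative square with maps $q,\pi,p_1,i$ is Cartesian. The canonical morphism $\C\times\PP^1\to\EF\times_{\C/\Delta}\C=i^{*}(\EF)$ sending $(x,[u:v])$ to $(q(x,[u:v]),x)$ is $\Delta$-equivariant, where $\Delta$ acts on the target only through the $\C$-factor, and it is a morphism of \'etale $\Delta$-torsors over $\EF$ inducing the identity on $\EF$; any such morphism is an isomorphism (equivalently, one counts degrees over $\EF$ and uses that a finite birational morphism onto a normal variety is an isomorphism). This isomorphism intertwines $p_1$ with the second projection $i^{*}(\EF)\to\C$, so it is a $\C$-isomorphism, which gives $i^{*}(\EF)\cong_{\C}\C\times\PP^1$.

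The only genuine obstacle is the foundational bookkeeping rather than any geometric difficulty: one must be careful about the existence and smoothness of the quotient $\EF$, the \'etale-torsor statements for $q$ and $i$, and the identification of the scheme-theoretic fibres of $\pi$. Beyond the quotient formalism for free finite group actions, Definition~\ref{defruled}, and the classical fact that a geometrically ruled surface over a curve is a $\PP^1$-bundle, no further input is needed.
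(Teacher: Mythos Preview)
Your proof is correct and the overall architecture---construct $\pi$ from $\Delta$-equivariance of $p_1$, identify the fibres, then use the universal property of the fibre product---matches the paper's. The treatment of part~(2) differs in packaging: the paper invokes Lemma~\ref{rationalfunctionlemma} to see that $\kk(\EF)\simeq\kk(\C\times\PP^1)^{\Delta}$, so $[\kk(\C\times\PP^1):\kk(\EF)]=4$, and since $p_2\colon i^{*}(\EF)\to\EF$ also has degree $4$ the induced map $\alpha\colon\C\times\PP^1\to i^{*}(\EF)$ is birational, hence an isomorphism by bijectivity and Zariski's main theorem. You instead recognise both $q$ and the base-changed map $p_2$ as $\Delta$-torsors over $\EF$ and observe that any equivariant morphism between torsors over the same base is an isomorphism. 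Your route is cleaner and avoids the separate function-field lemma, at the cost of assuming the reader is comfortable with the torsor formalism; the paper's route is more self-contained within elementary birational language. Either way the core is a degree count, as you note parenthetically.
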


\begin{proof}\
		\begin{enumerate}[wide]
				\item First one can check that $\pi$ is well-defined. Let $d\colon \C\times \PP^1 \to \EF$ be the quotient map for the diagonal action of $\Delta$ on $\C\times \PP^1$, then the following diagram is commutative:
				\[
				\begin{tikzcd} [row sep = 3em]
				\C\times \PP^1 \arrow[r,"d"]\arrow[d,"\pi_1" left] & \EF \arrow[d,"\pi"] \\
				\C \arrow[r,"i" below]&  \C/\Delta.
				\end{tikzcd}
				\] 
				Every fiber of $\pi$ corresponds to the gluing of $4$ disjoint fibers of $\C\times \PP^1 \to \C/\Delta$. Since every fiber of $\pi$ is isomorphic to $\PP^1$, it follows that $\pi\colon \EF\to \C/\Delta$ is a ruled surface.
				\item Since the diagram in (1) is commutative, there exists $\alpha\colon \C\times \PP^1 \to i^*(\EF)$ such that the following diagram is commutative:
				\[
				\begin{tikzcd} [column sep=2.5em, row sep=3em]
				\C\times \PP^1 \arrow[dr,"\alpha"]\arrow[drr, "d" , bend left] \arrow[ddr, "\pi_1" , bend right,swap]&&\\
				& i^{*}(\EF)  \arrow[r,"p_2"]\arrow[d,"p_1" left] & \EF \arrow[d,"\pi"] \\
				& \C \arrow[r,"i" below]&  \C/\Delta.
				\end{tikzcd}
				\] 
				From Lemma \ref{rationalfunctionlemma}, we have that $\kk(\EF) \simeq \kk(\C\times \PP^1)^{\Delta}$ and hence $\Delta$ is the Galois group of the extension $d^*\colon \kk(\EF) \to \kk(\C\times \PP^1)$ (see e.g.\ \cite[Theorem 4.7]{Jacobson}). In particular, $[\kk(\C\times \PP^1): \kk(\EF)]= \#\Delta = 4$. Since $p_2$ is 4-to-1, it follows that $\alpha^{*}$ is a $\kk$-isomorphism i.e.\ $\alpha$ is a birational morphism. Because $\alpha$ is also bijective, it follows from Zariski's main theorem (see e.g.\ \cite[Corollary 18.12.13]{Grothendieck}) that $\alpha$ is an isomorphism.
		\end{enumerate}
\end{proof}

\begin{lemma}\label{SSAtiyah}
	The ruled surface $\EF$ is isomorphic to an Atiyah bundle over $\C$.
\end{lemma}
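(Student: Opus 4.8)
The plan is to show that $\EF$ is \emph{indecomposable} as a $\PP^1$-bundle over $\C/\Delta$. The curve $\C/\Delta$ is again an elliptic curve: $\Delta$ acts on $\C$ by translations of order $2$, which are fixed-point free, so $\C/\Delta$ inherits a group structure and is an abelian variety of dimension $1$. Once indecomposability is known, Theorem \ref{Atiyah} forces $\EF$ to be $\C/\Delta$-isomorphic to $\A_{0,\C/\Delta}$ or to $\A_{1,\C/\Delta}$, which gives the statement (using that $\C/\Delta$ is isomorphic to $\C$, e.g.\ via multiplication by $2$).

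First I would set up a dictionary between sections of $\EF$ and $\Delta$-equivariant morphisms $\C\to\PP^1$, where $\Delta$ acts on $\C$ by translations and on $\PP^1$ as prescribed before the lemma. By Lemma \ref{SSfibréPP^1}, $\EF$ is a $\PP^1$-bundle over $\C/\Delta$ and the pullback $i^{*}(\EF)$ along $i\colon\C\to\C/\Delta$ is $\C$-isomorphic to $\C\times\PP^1$. A short check of the construction of this isomorphism shows that it is $\Delta$-equivariant, where $\Delta$ acts diagonally on $\C\times\PP^1$ (as in the definition of $\EF$) and on $i^{*}(\EF)$ through the first factor $\C$. Consequently, pulling back a section $\s$ of $\EF$ along $i$ produces a $\Delta$-invariant section of $\C\times\PP^1$, that is, a morphism $g_\s\colon\C\to\PP^1$ with $g_\s(x+d)=d\cdot g_\s(x)$ for all $d\in\Delta$ and $x\in\C$; and two disjoint sections of $\EF$ pull back to disjoint sections of $\C\times\PP^1$, so the corresponding morphisms are nowhere equal on $\C$.

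Then I would argue by contradiction. If $\EF$ were decomposable, then by Lemma \ref{sub<->section} it would carry two disjoint sections, hence two $\Delta$-equivariant morphisms $g_1,g_2\colon\C\to\PP^1$ with $g_1(x)\neq g_2(x)$ for every $x\in\C$. Then $(g_1,g_2)$ is a morphism from the projective connected curve $\C$ to $(\PP^1\times\PP^1)\setminus\Delta_{\PP^1}$; since the diagonal $\Delta_{\PP^1}$ has bidegree $(1,1)$ and is thus ample on $\PP^1\times\PP^1$, its complement is affine, so $(g_1,g_2)$, and hence each of $g_1$ and $g_2$, is constant. But $\Delta$ has no common fixed point on $\PP^1$ when $\operatorname{char}(\kk)\neq2$ — already $d_1$ and $d_2$ have disjoint fixed loci $\{[1:0],[0:1]\}$ and $\{[1:1],[1:-1]\}$ — so no constant map $\C\to\PP^1$ is $\Delta$-equivariant, a contradiction. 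Hence $\EF$ has no two disjoint sections and is indecomposable, and Theorem \ref{Atiyah} finishes the proof. The step I expect to require the most care is verifying that the trivialization $i^{*}(\EF)\simeq\C\times\PP^1$ is $\Delta$-equivariant, since this is what transports the $\Delta$-action into the equivariance of the $g_\s$; the remaining ingredients — that the complement of an ample divisor is affine, and that a morphism from a complete variety to an affine one is constant — are standard.
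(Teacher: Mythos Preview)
Your proposal is correct and follows essentially the same strategy as the paper: argue by contradiction that $\EF$ cannot be decomposable by pulling back two would-be disjoint sections along $i\colon\C\to\C/\Delta$ to $\C\times\PP^1$, showing they must be constant, and then observing that constant sections are incompatible with the $\Delta$-action; indecomposability plus Theorem~\ref{Atiyah} and $\C/\Delta\simeq\C$ finish the proof.

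The one genuine difference is how you justify that two disjoint sections of $\C\times\PP^1$ are constant. The paper simply asserts this (implicitly relying on the numerical computation that if $\s_1\equiv\s_c+af$ and $\s_2\equiv\s_c+bf$ with $\s_1\cdot\s_2=0$ then $a+b=0$, whence $a=b=0$ since both are non-negative by Lemma~\ref{sectiontrivialbundle}). Your argument via the morphism $(g_1,g_2)\colon\C\to(\PP^1\times\PP^1)\setminus\Delta_{\PP^1}$ into an affine variety is a clean and self-contained alternative that avoids intersection theory entirely. Your phrasing of the final contradiction --- that $\Delta$ has no common fixed point on $\PP^1$ --- is exactly the content of the paper's observation that the map $x\bmod\Delta\mapsto(x,[u:v])\bmod\Delta$ is not well-defined. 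Your flagged concern about the $\Delta$-equivariance of the isomorphism $i^*(\EF)\simeq\C\times\PP^1$ is legitimate but routine: the map $\alpha$ in Lemma~\ref{SSfibréPP^1}(2) is built from $\pi_1$ and the quotient map $d$, and one checks directly that $\alpha(d_i\cdot(x,[u:v]))=(d_i+x,(x,[u:v])\bmod\Delta)=d_i\cdot\alpha(x,[u:v])$.
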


\begin{proof}
	Let $i:\C\to \C/\Delta$ and assuming that $\EF$ admits two disjoint sections $\s_1$ and $\s_2$, we will derive a contradiction. For $k\in \{1,2\}$, the pullback sections $i^*\s_k$ defined as:
	\begin{align*}
		\C&\to i^*(\EF) \\
		x & \to (x,\s_k(x\bmod\Delta))
	\end{align*}
	induce two disjoint sections $\alpha^{-1} (i^*\s_k)$ of $\C\times \PP^1$ since $\C\times \PP^1$ is $\C$-isomorphic to $i^*(\EF)$ by Lemma \ref{SSfibréPP^1} (2). Then it implies that $\alpha^{-1} (i^*\s_1)$ and $\alpha^{-1} (i^*\s_2)$ are constant sections. Then for $k\in \{1,2\}$, there exists a constant $[u:v]\in \PP^1$ such that $\alpha^{-1}(i^*\s_k)$ is defined as $\C\to \C\times \PP^1$, $x\mapsto (x,[u:v])$. It implies that $\s_k$ is given by $\C/\Delta \to \EF$, $x\bmod\Delta\mapsto (x,[u:v])\bmod\Delta$, which is not well-defined. Therefore, constant sections of $\C\times \PP^1$ are not obtained by pulling back sections of $\pi$. Thus, there are no disjoint sections of $\pi$ and $\EF$ is an indecomposable $\PP^1$-bundle over $\C/\Delta$.
	Finally $\Delta$ is the kernel of the multiplication by $2$ in $\C$, hence $\C$ is isomorphic to $\C/\Delta$, so $\EF$ is isomorphic to $\A_{0,\C}$ or $\A_{1,\C}$.
\end{proof}

\begin{proposition}\label{extensionA1}
	The following sequence is exact: \[ 0 \to \Delta \to \Autzero(\EF) \to \Autzero(\C/\Delta)\to 0.\] 
	In particular, the ruled surface $\EF$ is $\C$-isomorphic to $\A_{1,\C}$.
\end{proposition}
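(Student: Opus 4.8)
The plan is to study the morphism of algebraic groups $\pi_*\colon\Autzero(\EF)\to\Autzero(\C/\Delta)$ which Lemma \ref{autodescend} attaches to the $\PP^1$-bundle $\pi\colon\EF\to\C/\Delta$ of Lemma \ref{SSfibréPP^1}; its kernel is $\Autzero(\EF)\cap\Aut_{\C/\Delta}(\EF)$. The statement will follow once I (i) compute $\Aut_{\C/\Delta}(\EF)$ and show it is isomorphic to $\Delta$, (ii) check that this copy of $\Delta$ lies in $\Autzero(\EF)$, and (iii) show $\pi_*$ is surjective; the last assertion of the proposition then comes out along the way.

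For (i) I would use faithfully flat (Galois) descent along the quotient morphism $i\colon\C\to\C/\Delta$, which is finite étale with group $\Delta$ because $\Delta$ acts on $\C$ by translations freely: pulling back along $i$ and forming $\Delta$-quotients identifies $\Aut_{\C/\Delta}(\EF)$ with the group of $\Delta$-equivariant $\C$-automorphisms of $i^*(\EF)$, where $\Delta$ acts on $i^*(\EF)=\C\times_{\C/\Delta}\EF$ through its Galois action on the first factor. By Lemma \ref{SSfibréPP^1}(2) there is a $\C$-isomorphism $\alpha\colon\C\times\PP^1\xrightarrow{\sim}i^*(\EF)$, and chasing the commutative diagram defining $\alpha$ in the proof of that lemma shows that the Galois action of $\Delta$, transported through $\alpha$, is precisely the diagonal action $(x,[u:v])\mapsto(x+d_j,\,d_j\cdot[u:v])$ used to build $\EF$ — not the purely fibrewise one. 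Since $\C$ is a projective curve, $\Aut_\C(\C\times\PP^1)$ consists of the fibrewise automorphisms $(x,p)\mapsto(x,Mp)$ with $M\in\PGL_2$, and such a map commutes with the diagonal $\Delta$-action exactly when $M$ centralizes the image $\bar\Delta\subset\PGL_2$ of $\Delta$, i.e.\ the Klein four-group generated by $[u:v]\mapsto[-u:v]$ and $[u:v]\mapsto[v:u]$. As $\operatorname{char}(\kk)\neq 2$ this is a genuine Klein four-group, and a direct computation — a Möbius transformation commuting with $\bar\Delta$ must preserve each of the three pairs of fixed points of the nontrivial elements of $\bar\Delta$, and only four transformations do — shows $\bar\Delta$ is its own centralizer. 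Hence $\Aut_{\C/\Delta}(\EF)\cong\bar\Delta\cong\Delta$.

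For (ii) and (iii), the translation action of $\C$ on $\C\times\PP^1$ commutes with the diagonal $\Delta$-action, hence descends to a morphism of algebraic groups $\rho\colon\C\to\Aut(\EF)$ with $\rho(\C)\subseteq\Autzero(\EF)$ by connectedness of $\C$. Translation by $d_j$ equals the diagonal $d_j$-action followed by the fibrewise automorphism $[u:v]\mapsto d_j\cdot[u:v]$; the diagonal action is a deck transformation of $i$ and so descends to $\mathrm{id}_\EF$, whence $\rho(d_j)$ is the image in $\Aut_{\C/\Delta}(\EF)$ of that fibrewise automorphism, and as $j$ runs over $\{0,1,2,3\}$ these exhaust $\Aut_{\C/\Delta}(\EF)$ by (i). Thus $\Aut_{\C/\Delta}(\EF)=\rho(\Delta)\subseteq\Autzero(\EF)$, so $\ker(\pi_*|_{\Autzero(\EF)})=\Aut_{\C/\Delta}(\EF)\cong\Delta$. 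Moreover $\pi_*$ sends $\rho(a)$ to the translation by the class of $a$, so $\pi_*(\rho(\C))=\Autzero(\C/\Delta)$; as $\rho(\C)\subseteq\Autzero(\EF)$ and $\ker(\pi_*|_{\Autzero(\EF)})=\rho(\Delta)\subseteq\rho(\C)$, a diagram chase gives $\Autzero(\EF)=\rho(\C)$, in particular $\pi_*|_{\Autzero(\EF)}$ is surjective (alternatively, once $\EF\simeq\A_{1,\C/\Delta}$ is known, invoke Proposition \ref{Atiyahmax} over $\C/\Delta$). This proves the exactness of $0\to\Delta\to\Autzero(\EF)\xrightarrow{\pi_*}\Autzero(\C/\Delta)\to0$.

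Finally, $\EF$ is an Atiyah bundle by Lemma \ref{SSAtiyah}, and it cannot be $\A_{0,\C/\Delta}$ because $\Aut_{\C/\Delta}(\A_{0,\C/\Delta})\cong\GG_a$ is infinite by Proposition \ref{extensionA0} whereas $\Aut_{\C/\Delta}(\EF)\cong\Delta$ is finite; hence $\EF\simeq\A_{1,\C/\Delta}$. Since $\C\simeq\C/\Delta$ via multiplication by $2$ and the odd indecomposable $\PP^1$-bundle over an elliptic curve is unique up to $\C$-isomorphism (Theorem \ref{Atiyah}), $\EF$ is $\C$-isomorphic to $\A_{1,\C}$. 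The main obstacle is step (i): one must set up the descent correctly and, above all, identify the $\Delta$-action on $i^*(\EF)$ as the diagonal action rather than the fibrewise one, after which everything reduces to the elementary — but characteristic-sensitive — fact that a Klein four-group in $\PGL_2$ is self-centralizing when $\operatorname{char}(\kk)\neq2$.
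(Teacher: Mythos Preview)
Your proof is correct and follows essentially the same approach as the paper: both define the translation morphism $\rho=j\colon\Autzero(\C)\to\Autzero(\EF)$ to get surjectivity of $\pi_*$, and both compute $\ker(\pi_*)$ by pulling back along $i$ to $\C\times\PP^1$ and finding the four matrices in $\PGL_2$ compatible with the diagonal $\Delta$-action. The only differences are expository: you phrase the pullback step as Galois descent and the matrix computation as ``centralizer of the Klein four-group in $\PGL_2$'', whereas the paper writes out the four matrices by hand; and you distinguish $\EF$ from $\A_{0,\C/\Delta}$ by comparing the finite $\Aut_{\C/\Delta}(\EF)$ to the infinite $\GG_a$, while the paper compares $\dim\Autzero$ instead---same idea, different invariant.
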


\begin{proof}
	First we have an injective morphism of algebraic groups $j\colon \Autzero(\C)  \to \Autzero(\EF) $, $t  \mapsto ((x,[u:v])\bmod\Delta \mapsto (t(x),[u:v])\bmod\Delta)$ such that the following diagram commutes:
	\[
	\begin{tikzcd} [column sep=0.1cm]
	\Autzero(\C) \arrow[rd,twoheadrightarrow]\arrow[rr,hookrightarrow,"j"]   && \Autzero(\EF) \arrow[dl,"\pi_*"] \\
	 & \Autzero(\C/\Delta)   . 
	\end{tikzcd}
	\]
	In particular the morphism $\pi_*\colon \Autzero(\EF) \rightarrow \Autzero(\C/\Delta)$ is also surjective. Let $i\colon \C\to \C/\Delta$ then $\ker(\pi_*)$ is a subgroup of $\Aut(i^*(\EF)  )$ by the embedding $f \mapsto (id,f)$. Moreover $i^*(\EF)$ is isomorphic to $\C\times \PP^1$ from Lemma \ref{SSfibréPP^1} (2) and the automorphism $(id,f)$ of $\Autzero(i^*(\EF))$ corresponds to a $\C$-automorphism of $\C\times \PP^1$, i.e.\ of the form $(id,M) $ where $M\in \PGL_2$. For such automorphism to be compatible with the $\Delta$-action, it has to send an orbit to an orbit for the action of $\Delta$ and a direct computation shows that $M$ belongs to one the following matrices:
	\[
	\begin{bmatrix}
	1 & 0 \\
	0 & 1 
	\end{bmatrix},
	\begin{bmatrix}
	-1 & 0 \\
	0 & 1 
	\end{bmatrix},
	\begin{bmatrix}
	0 & 1 \\
	1 & 0 
	\end{bmatrix},
	\begin{bmatrix}
	0 & -1 \\
	1 & 0
	\end{bmatrix};
	\]
	and conversely they all define automorphisms of $\EF$. It follows that $\ker(\pi_*)$ is isomorphic to $\Delta$ and we get the exact sequence in the statement. Since $\Autzero(\EF)$ is a $1$-dimensional algebraic variety and $\EF$ is an Atiyah bundle (Lemma \ref{SSAtiyah}), and we know from Proposition \ref{extensionA0} that $\Autzero(\A_{0,\C})$ is $2$-dimensional algebraic group, it follows from Theorem \ref{Atiyah} that $\EF$ is $\C$-isomorphic to $\A_{1,\C}$.
\end{proof}

\subsubsection{Description of the maximal automorphism groups}
 
 We have reproved the following theorem of Maruyama:
 
\begin{theorem}[\cite{Maruyama} Theorem 3]\label{Maruyamaext}
	Let $\C$ be an elliptic curve. Then for all distinct points $z_1,z_2\in \C$, we have the following exact sequences of algebraic groups:	
	\begin{alignat*}{9}
		0 & \longrightarrow  &&\ \GG_m \ & \longrightarrow &&\ \Autzero(\S_{z_1,z_2}) & \longrightarrow &&\ \C &\ \longrightarrow &&\ 0, \\
		0 & \longrightarrow &&\ \GG_a & \longrightarrow && \Autzero(\A_{0,\C}) \ & \longrightarrow &&\ \C &\ \longrightarrow &&\ 0, \\
		\intertext{Moreover, the surfaces $\Autzero(\S_{z_1,z_2})$ and $\Autzero(\A_{0,\C})$ are commutative algebraic groups and $\Autzero(\A_{0,\C})$ is not isomorphic to a semidirect product $\GG_a \rtimes \Autzero(\C)$. Finally, if the characteristic of $\kk$ is different than $2$ and if $\Delta\simeq (\ZZ/2\ZZ)^2$ denotes the subgroup of $2$-torsion points of $\C$, then the following sequence of algebraic groups is exact:}
		0  & \longrightarrow  &&\ \ \Delta & \longrightarrow && \Autzero(\A_{1,\C}) \ & \longrightarrow &&\ \C &\ \longrightarrow &&\ 0.
	\end{alignat*}
\end{theorem}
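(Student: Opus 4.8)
The plan is to assemble the three exact sequences from results already proved, and then to establish the two remaining assertions — commutativity of $\Autzero(\S_{z_1,z_2})$ and $\Autzero(\A_{0,\C})$, and the non-splitting of the second sequence — by general facts about extensions of an abelian variety by $\GG_m$ or $\GG_a$. For the three sequences: the first and second are precisely Propositions~\ref{extensiondecomposable} and~\ref{extensionA0}, after identifying $\Autzero(\C)$ with $\C$. For the third I would invoke Proposition~\ref{extensionA1}, which furnishes both the exact sequence $0\to\Delta\to\Autzero(\EF)\to\Autzero(\C/\Delta)\to0$ and a $\C$-isomorphism $\EF\cong\A_{1,\C}$; since $\Delta=\ker\bigl([2]\colon\C\to\C\bigr)$ one has $\C/\Delta\cong\C$, and an isomorphism of surfaces induces an isomorphism of the identity components of their automorphism groups, so substitution yields the stated sequence for $\Autzero(\A_{1,\C})$ under the running hypothesis $\operatorname{char}(\kk)\neq2$.

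\textit{Commutativity.} Write $G=\Autzero(\S_{z_1,z_2})$ and $K=\GG_m$, or $G=\Autzero(\A_{0,\C})$ and $K=\GG_a$; in each case $K$ is a normal subgroup with $G/K\cong\C$. First I would check that $K$ is central. Conjugation defines a homomorphism of algebraic groups $\rho\colon G\to\Aut(K)$ into the automorphism group of $K$ as an algebraic group, which is $\ZZ/2\ZZ$ when $K=\GG_m$ and $\GG_m$ when $K=\GG_a$. If $K=\GG_m$ then $\rho$ is trivial, $G$ being connected and the target discrete; if $K=\GG_a$ then $\rho$ is trivial on $K$ (which is abelian), hence factors through $G/K\cong\C$, and any homomorphism $\C\to\GG_m$ is trivial, as $\C$ is complete and $\GG_m$ affine. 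Either way $K\subseteq Z(G)$. Then, fixing $h\in G$, the morphism $c_h\colon G\to G$, $g\mapsto ghg^{-1}h^{-1}$, takes values in $K$ (because $G/K\cong\C$ is commutative) and satisfies $c_h(gz)=c_h(g)$ for all $z\in K$ (because $K$ is central); hence $c_h$ factors through a morphism $\C\cong G/K\to K\hookrightarrow\AA^1$, which must be constant with value $c_h(e)=e$ (complete source, affine target). So $[g,h]=e$ for all $g,h\in G$, and $G$ is commutative.

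\textit{Non-splitting.} Any semidirect product $\GG_a\rtimes\Autzero(\C)$ has trivial action — a homomorphism $\Autzero(\C)\to\Aut(\GG_a)=\GG_m$ being trivial, $\Autzero(\C)$ complete and $\GG_m$ affine — hence equals $\GG_a\times\Autzero(\C)$; and since $\GG_a$ is the maximal connected affine normal subgroup of $\Autzero(\A_{0,\C})$ (it is normal with abelian-variety quotient $\C$, and $\Autzero(\A_{0,\C})$ is not itself affine, as it surjects onto $\C$), an abstract isomorphism $\Autzero(\A_{0,\C})\cong\GG_a\times\Autzero(\C)$ would, by canonicity of the Chevalley decomposition, be compatible with $\pi_*$ and the second projection, so $\pi_*$ would split. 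It thus suffices to show $\pi_*\colon\Autzero(\A_{0,\C})\to\Autzero(\C)$ admits no homomorphic section. Suppose $s$ were such a section. Then $s\bigl(\Autzero(\C)\bigr)\cong\C$ acts on $\A_{0,\C}$ lifting the simply transitive translation action of $\Autzero(\C)\cong\C$ on the base; fixing a point in the fibre $F_0=\pi^{-1}(0)$, I would check that the morphism $\Autzero(\C)\times F_0\to\A_{0,\C}$, $(g,x)\mapsto s(g)\cdot x$, is bijective — injectivity and surjectivity both following from simple transitivity on the base — hence an isomorphism, $\A_{0,\C}$ being smooth. Being compatible with the projections to $\C$, this isomorphism identifies $\A_{0,\C}$ with $\C\times\PP^1$ over $\C$, contradicting Lemma~\ref{remainingcases}. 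Hence the second sequence does not split.

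\textit{The main obstacle.} The only step that is not purely formal is the last one — turning a homomorphic section $s$ into the isomorphism $\A_{0,\C}\cong\C\times\PP^1$. One must confirm that the orbit map $\Autzero(\C)\times F_0\to\A_{0,\C}$ is an isomorphism and not merely a bijective morphism; this relies on the action on the base being free and transitive together with $\A_{0,\C}$ being normal, so that Zariski's main theorem applies. It is also the only place where the characteristic enters, harmlessly here since the surrounding statements hold in characteristic $0$ (respectively $\neq2$).
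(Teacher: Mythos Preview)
Your proof is correct and assembles the three exact sequences exactly as the paper does, by invoking Propositions~\ref{extensiondecomposable}, \ref{extensionA0}, and~\ref{extensionA1}. The differences lie in the two supplementary assertions.

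For commutativity, the paper simply cites \cite[Corollary~2, p.~433]{Rosenlicht}, which states that any extension of an abelian variety by a connected commutative linear group is commutative. Your argument is a direct, self-contained proof of that special case: you first show the kernel is central (via the conjugation map to $\Aut(K)$) and then that every commutator map $c_h$ factors through a morphism $\C\to K$, hence is constant. This is essentially Rosenlicht's argument specialised to the situation at hand, and buys independence from the reference at the cost of a few extra lines.

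For non-splitting, the paper's proof is extremely terse: it only records that there is no nontrivial morphism $\C\to\Aut(\GG_a)\simeq\kk^*$, so any semidirect product $\GG_a\rtimes\Autzero(\C)$ is the direct product, and then asserts the conclusion. Your argument goes further and actually supplies the missing step: you observe that, by canonicity of the Chevalley decomposition, an abstract isomorphism with $\GG_a\times\C$ would force $\pi_*$ itself to split, and then you derive a contradiction by showing that a homomorphic section $s$ trivialises the bundle. This is a genuinely different (and more complete) route than what the paper writes down; the paper presumably has the Remark following the theorem in mind, where the extension is identified with a nonzero class in $\H^1(\C,\O_\C)$, but that is not part of the proof proper.

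One small improvement for your final step: rather than appealing to Zariski's Main Theorem to upgrade the bijection $\Autzero(\C)\times F_0\to\A_{0,\C}$ to an isomorphism, note that it is a $\C$-morphism of $\PP^1$-bundles which is an isomorphism on every fibre (namely $x\mapsto s(g)\cdot x$); locally over a trivialising open $U\subset\C$ this is given by a morphism $U\to\PGL_2$, hence is an isomorphism. This avoids any separability concern and works uniformly in all characteristics, which matters since the second exact sequence and its non-splitting are asserted without restriction on $\operatorname{char}(\kk)$.
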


\begin{proof}
	The three exact sequences in the statement are proven in Propositions \ref{extensiondecomposable}, \ref{extensionA0}, \ref{extensionA1}. Moreover, the algebraic groups $\Autzero(\A_{0,\C})$ and $\Autzero(\S_{z_1,z_2})$ are commutative from \cite[Corollary 2 p.\ 433]{Rosenlicht}. Because there is no non-trivial morphism from $\C$ to $\Aut(\GG_a) \simeq \kk^*$ (or because $\Autzero(\A_{0,\C})$ is commutative), the algebraic group $\Autzero(\A_{0,\C})$ is not isomorphic to $\GG_a\rtimes \Autzero(\C)$.
\end{proof}

\begin{remark}
	If $\operatorname{char}(\kk)=2$, every elliptic curve is isomorphic to a curve defined by an equation $\C_\mu:=\{x^3 +\mu xyz + yz^2 + y^2z=0\}\subset \PP^2$, for some $\mu\in \kk$ \cite[Appendix A, Proposition 1.3]{Silverman}. Moreover, $\C:=\C_0$ is the unique supersingular elliptic curve and the kernel of $[2]$ is a finite subscheme $\C[2]$ of $\C$. By a tedious computation, one can check that the pullback of $\A_{1,\C}\to \C$ by $[2]$ is isomorphic to $\C\times \PP^1$ as in Lemma \ref{SSfibréPP^1} (2) and $\A_{1,\C}$ that is isomorphic to a quotient $(\C\times \PP^1)/\C[2]$. The kernel of the morphism $\Autzero(\A_{1,\C}) \to \Autzero(\C)$ is trivial as a group, and strictly contains $\C[2]$ as a subscheme.
	
\end{remark}

\begin{remark} \
	\begin{enumerate}[wide]
		\item From \cite[VII.16, Theorem 6]{SerreGA} (see also \cite[Example 1.1.2]{BSU}), the extensions of $\C$ by $\GG_m$ are classified by $\C$ itself. Let $z_1,z_2\in \C$ be distinct points and $\G$ be the $\GG_m$-bundle defined as the complement of the zero section in $\O_\C(z_1-z_2)$. Then we have a morphism $\pi\colon \G \to \C$ with kernel $\GG_m$, i.e.\ an exact sequence:
		\[
		0\to \GG_m \to \G \overset{\pi}{\to} \C \to 0.
		\]
		Let $\S$ be the quotient of $(\G\times \PP^1)$ by $\GG_m$, given by the following action of $\GG_m$ on $\G\times \PP^1$: $t\cdot (g,[u:v]) \mapsto (g\cdot t^{-1}, [g\cdot u,v])$. Then this gives a morphism $\S\to \G/\GG_m\simeq \C$ which endows $\S$ with a structure $\PP^1$-bundle over $\C$. One can check by computing in local charts that $\S\to \C$ is $\C$-isomorphic to $\S_{z_1,z_2}\to \C$, hence the extension $0\to \GG_m \to \Autzero(\S_{z_1,z_2}) \to \C\to 0$ corresponds to the point $z_1-z_2\in \C$.
		\item From \cite[VII. 17, Theorem 7]{SerreGA} (see also \cite[Example 1.1.2]{BSU}), the extensions of $\C$ by $\GG_a$ are classified by $\H^1(\C,\O_\C)\simeq \kk$. Since the extension $0\to \GG_a \to \Autzero(\A_{0,\C}) \to \C \to 0$ does not split, it corresponds to a non zero element of $\kk$.
		\item  Serre shows in \cite[VII. 15, Theorem 5]{SerreGA} that the algebraic groups $\Autzero(\A_{0,\C})$ and $\Autzero(\S_{z_1,z_2})$ are respectively endowed with a canonical structure of $\GG_a$-principal bundle and $\GG_m$-principal bundle over $\C$. From \cite[Theorem 3.(3)]{Maruyama}, the reduced component of $\Autzero(\A_{0,\C})$ is  isomorphic to $\A_{0,\C}\setminus \s_0$, where $\s_0$ is the unique minimal section of $\A_{0,\C}$. From \cite[Theorem 3.(2)]{Maruyama}, the reduced component of $\Autzero(\S_{z_1,z_2})$ is isomorphic to $\S_{z_1,z_2} \setminus \{\s_1,\s_2\}$, where $\s_1,\s_2$ are the two minimal sections of $\S_{z_1,z_2}$. A natural problem is to describe geometrically their group laws, and a formula has been computed explicitly for the group law of $\A_{0,\C}\setminus \s_0$ when $\kk=\mathbb{C}$ in \cite[$\mathsection$ 3.3 p.251]{LorayMarin}. 
	\end{enumerate}
\end{remark}

\subsection{Proof of Theorems \ref{C} and \ref{D}}

\begin{proposition}\label{nonruledcase}\footnote[2]{The idea of the proof is due to Michel Brion.}
	Let $\X$ be a surface and $\G=\Autzero(\X)$. If $\X$ is not birationally equivalent to $\C\times \PP^1$, for some curve $\C$, then $\G$ is an abelian variety and exactly one of the following cases holds:
	\begin{enumerate}
		\item $\G$ is an abelian surface and $\G\simeq \X$.
		\item $\G$ is isomorphic to an elliptic curve and moreover, there exist a not necessarily reduced curve $\Y$ which is connected, a finite subgroup scheme $\F$ of $\G$ and a $\G$-equivariant isomorphism:
		\[\X\simeq (\G\times \Y)/\F.\]
		The quotient $(\G\times \Y)/\F$ is given by a diagonal action of $\F$ over $\G\times \Y$, $f\cdot (g,y) \mapsto (g\cdot f^{-1},f\cdot y)$. 
		\item $\G$ is trivial.
	\end{enumerate}
	In case $(2)$, if the characteristic of $\kk$ is zero then $\sf{F}$ is reduced and $\Y$ is smooth.
\end{proposition}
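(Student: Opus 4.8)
The plan is to combine Chevalley's structure theorem for $\G:=\Autzero(\X)$ with the classification of surfaces, working throughout with $\X$ smooth projective. Write $1\to\L\to\G\to B\to 1$ with $\L$ connected linear and $B$ an abelian variety. The first step is to show $\L$ is trivial: otherwise $\L$ contains a closed subgroup $H\cong\GG_a$ or $\GG_m$ acting faithfully and regularly on $\X$; a general $H$-orbit is one-dimensional (were it zero-dimensional, $H$ would fix a dense open set and hence act trivially) with rational closure, so the dominant rational map from $\X$ to the smooth projective curve $\Y_1$ with $\kk(\Y_1)=\kk(\X)^H$ has rational generic fibre carrying a rational point (the orbit), whence $\kk(\X)=\kk(\Y_1)(t)$ and $\X$ is birational to $\C\times\PP^1$ for a curve $\C$ --- a contradiction, and a classical fact about surfaces admitting a faithful $\GG_a$- or $\GG_m$-action. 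Thus $\G=B$ is an abelian variety. Next I would check $\dim\G\le 2$: for general $x\in\X$ the stabilizer $\G_x$ is finite, since its identity component is an abelian subvariety fixing $x$, hence acting trivially on $T_x\X$ (an abelian variety has no nonconstant morphism to a linear group) and therefore trivially on $\X$ by faithfulness; so $\dim\G=\dim(\G\cdot x)\le 2$.

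If $\dim\G=0$, then $\G$ is trivial: case $(3)$. If $\dim\G=2$, then the general orbit is dense open, so $\X$ is birational to the abelian surface $A':=\G/\G_x$; since $A'$ is minimal and $\X$ is not ruled, the contraction to the minimal model gives a birational morphism $\beta\colon\X\to A'$, which is $\G$-equivariant by Blanchard's Lemma~\ref{Blanchard} for the induced regular action of $\G$ on $A'$. That action lands in $\Autzero(A')=A'$ (translations) and is nontrivial (as $\beta$ is birational), so $\G$ acts on $A'$ by translations without fixed point; since $\beta$ is equivariant the points it blows up form a $\G$-stable finite subset of $A'$, necessarily empty, so $\beta$ is an isomorphism and $\G=\Autzero(\X)=\X$ is an abelian surface: case $(1)$.

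The main case, which I expect to be the hardest, is $\dim\G=1$, so $\G$ is an elliptic curve. All orbits are closed (images of the complete group $\G$) and one-dimensional. The Rosenlicht quotient $\X\dashrightarrow\Y_1$ ($\Y_1$ the smooth projective model of $\kk(\X)^\G$) has $\G$-stable finite, hence empty, indeterminacy locus, so it is a morphism $f\colon\X\to\Y_1$; after Stein factorization $f$ has connected fibres, and a connected fibre that is a disjoint union of closed orbits is a single orbit. Hence $f$ is proper and smooth with fibres the $\G$-orbits, carrying a fibrewise $\G$-action; its generic stabilizer, a finite subgroup scheme of $\G$ defined over $\kk$, acts trivially on a dense open set of $\X$ and so is trivial, so the generic fibre $\X_\eta$ is a torsor under $\G_{\kk(\Y_1)}$. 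Since $\G_{\kk(\Y_1)}\hookrightarrow\Autzero(\X_\eta/\kk(\Y_1))=\operatorname{Jac}(\X_\eta)$ forces $\operatorname{Jac}(\X_\eta)=\G_{\kk(\Y_1)}$, the fibration has constant $j$-invariant, and --- invoking the rigidity of smooth proper families of abelian varieties, or Lang--N\'eron to see the torsor class is torsion --- $f$ is isotrivial. An isotrivial $\G$-torsor over the affine curve $\Y_1^\circ$ (the complement of the finitely many points with nontrivial stabilizer) comes, via the Kummer sequence in fppf cohomology and the divisibility of $\G(\kk)$, from a finite subgroup scheme $\F\le\G$ and an $\F$-torsor $\Y^\circ\to\Y_1^\circ$, with $\F$ acting on $\G$ by the inclusion and on $\Y^\circ$ as deck group, giving a $\G$-equivariant isomorphism $f^{-1}(\Y_1^\circ)\cong(\G\times\Y^\circ)/\F$. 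Completing $\Y^\circ$ to a connected (possibly non-reduced, if $\F$ is infinitesimal) curve $\Y$ with $\F$-action and $\Y/\F=\Y_1$, the surface $(\G\times\Y)/\F$ is smooth (since $\F$ acts freely on $\G$) and proper over $\Y_1$ and agrees with $\X$ over $\Y_1^\circ$, so $\X\cong(\G\times\Y)/\F$ $\G$-equivariantly by Zariski's main theorem, and the stabilizers $\F_y$ are recovered by direct computation on $(\G\times\Y)/\F$: this is case $(2)$.

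Finally, in characteristic $0$ every finite subgroup scheme of the elliptic curve $\G$ is \'etale, so $\F$ is reduced, $\Y^\circ\to\Y_1^\circ$ is finite \'etale, and the smooth projective curve $\Y$ completing $\Y^\circ$ carries the $\F$-action with $\Y/\F=\Y_1$ and is smooth. I expect the real work to be in the case $\dim\G=1$, in two places: first, justifying that $f$ is isotrivial --- equivalently, that no positive-dimensional connected group acts by fibrewise automorphisms of a non-isotrivial genus-one fibration, where a Mordell--Weil/Lang--N\'eron type finiteness is the essential input --- and second, upgrading the standard-form description of the $\G$-torsor over the open base $\Y_1^\circ$ to the global isomorphism $\X\cong(\G\times\Y)/\F$, i.e.\ handling the finitely many fibres with nontrivial stabilizer.
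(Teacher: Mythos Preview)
Your overall architecture---Chevalley, then case split on $\dim\G\in\{0,1,2\}$---matches the paper's. For $\dim\G=2$ the paper is more direct: the open orbit $O\simeq\G/\operatorname{Stab}(x)$ has trivial stabiliser (as $\G$ is commutative and acts faithfully on the dense open $O$), and $O$ is also closed, being the image of the proper morphism $\G\to\X$, $g\mapsto g\cdot x$; hence $\X=O\simeq\G$ directly, with no minimal-model detour needed.

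The substantive difference is at $\dim\G=1$. The paper does not build the fibration from a Rosenlicht quotient, nor argue isotriviality or torsor-triviality by hand; it invokes a structure theorem of Brion--Samuel--Uma \cite[Theorem 2.2.2]{BSU} which gives a $\G$-equivariant isomorphism $\X\simeq(\G\times\tilde\Y)/\G_n$ for some $n$ outright, then Stein-factorises the induced map $\X\to\G/\G_n$ through a curve $\mathsf{Z}\simeq\G/\F$ and takes $\Y$ to be the fibre of $\X\to\mathsf{Z}$ over the basepoint. Your hands-on approach is morally the content of that black box, but two of your steps need repair. First, $f\colon\X\to\Y_1$ is \emph{not} smooth in general: multiple fibres appear exactly over the images of points where $\F$ acts on $\Y$ with nontrivial stabiliser (in the eventual description $\X\simeq(\G\times\Y)/\F$), so you should only claim smoothness over $\Y_1^\circ$. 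Second, your final gluing ``by Zariski's main theorem'' is insufficient: two smooth proper surfaces over $\Y_1$ agreeing over a dense open are merely birational, not isomorphic. The fix is to use the $\G$-action: the birational map $\X\dashrightarrow(\G\times\Y)/\F$ is $\G$-equivariant, its indeterminacy locus is finite and $\G$-stable, and the elliptic curve $\G$, acting with one-dimensional orbits on both sides, has no finite nonempty invariant subset---so the map and its inverse are everywhere defined. The isotriviality step you flag is indeed the hard core; if you do not want to quote the BSU theorem, you need a genuine argument there, and the Lang--N\'eron direction you indicate is the right one but nontrivial to execute.
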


\begin{proof}
	From Chevalley's structure theorem (see e.g.\ \cite[Theorem 1.1.1]{BSU}), there exists an exact sequence $0\to \L \to \G \to \sf{A}\to 0$ where $\L$ is a linear algebraic group and $\sf{A}$ is an abelian variety. If $\L$ is not trivial, it contains a $\GG_a$ or a $\GG_m$ and this implies that $\X$ is birationally equivalent to $\C\times \PP^1$ for some curve $\C$ (see \cite[Proposition 2.5.4]{BFT}). Thus $\L$ is trivial, i.e.\ $\G$ is isomorphic to $\sf{A}$.
	
	First suppose that $\G$ has an open orbit $O$ in $\X$ which is isomorphic to $\G/\operatorname{Stab}(x)$ for some $x\in O$. Since $\G$ is commutative and acts faithfully on $O$, it follows that $\operatorname{Stab}(x)$ is trivial and hence $O\simeq \G$. Because $O$ is also the image of the projective morphism $\G\to \X$, $g \mapsto g\cdot x$, then $O$ is closed in $\X$. Therefore, we have $\G\simeq O=\X$ and $\G$ is an abelian surface acting on itself by translation.
	
	Otherwise suppose that $\G$ has an orbit $O$ of dimension $1$ and then for all $x\in O$, the subgroup $\operatorname{Stab}(x)$ is finite (see \cite[Proposition 2.2.1]{BSU}). Therefore $\G$ is an elliptic curve. From \cite[Theorem 2.2.2 and the paragraph following the theorem]{BSU}, there exist a positive integer $n$ and a $\G$-equivariant isomorphism $\h\colon \X\to (\G\times \tilde{\Y})/\G_n$, where $\G_n$ denotes the finite subgroup scheme of $n$-torsion points of $\G$ and $\tilde{\Y}$ is a closed subscheme of $\X$ of dimension $1$. The projection to the first factor $\G\times \tilde{\Y}\to \G$ induces a morphism $g\colon (\G\times \tilde{\Y})/\G_n \to \G/\G_n$ which is $\G$-equivariant. The Stein factorization of $f= gh$ gives morphisms $u\colon \X\to \sf{Z}$ and $v\colon \sf{Z} \to \G/\G_n$, such that $u$ has connected fibers, $v$ is finite and $f=vu$. From Blanchard's Lemma, there exists an action of $\G$ on $\sf{Z}$ such that $u$ is $\G$-equivariant. Since $u$ is also surjective and $f$ is $\G$-equivariant, it follows that $v$ is $\G$-equivariant. Moreover, $\sf{Z}$ is a curve because $v$ is finite, hence it is an orbit of the $\G$-action and the stabilizer $\sf{F}$ of a point is finite. Therefore $\sf{Z}\simeq \G/\sf{F}$ and $u\colon \X\to \sf{Z}\simeq \G/\sf{F}$ is a $\G$-equivariant morphism. From \cite[Section 2.5, paragraph following Lemma 2.10]{Brion} (see also \cite[Paragraph following Example 6.1.2]{BSU}), $\X$ is isomorphic to the $\sf{F}$-torsor $(\G\times \Y)/\sf{F}$ where $\Y=u^{-1}(\sf{F}/\sf{F})$ is connected and the quotient is given by the diagonal action $f\cdot (g,y) \mapsto (g\cdot f^{-1},f\cdot y)$ for all $g\in\G,f\in \sf{F},y\in \Y$. Since $\X$ is a surface, it implies that $\Y$ is a not necessarily reduced curve in positive characteristic. In characteristic zero, $\G_n$ is reduced because the multiplication by $n$ is an étale endomorphism of $\G$, and it follows that the finite subgroup scheme $\F$ of $\G_n$ is also reduced. Hence $\G\times \Y \to (\G\times \Y)/\sf{F}\simeq \X$ is an étale finite morphism from \cite[Theorem in Section 2.7, p.63]{Mumford}. Because $\X$ is smooth, it follows that $\Y$ is also smooth. 
	
	Finally if the orbits of $\G$ have dimension $0$ then $\G$ is trivial.
\end{proof}

\begin{proof}[\bf{Proof of Theorem \ref{C}}]
Let $\C$ be a curve of genus $\g\geq 1$. From Theorem \ref{A}, not every connected algebraic subgroup of $\Bir(\C\times \PP^1)$ is contained in a maximal one. If $\X$ is rational, then every connected algebraic subgroup of $\Bir(\X)$ is contained in $\Autzero(\PP^2)$ or some $\Autzero(\mathbb{F}_n)$ for $n\neq 1$. If $\X$ is not a ruled surface and is not rational, then Proposition \ref{nonruledcase} implies that $\Autzero(\X)$ is contained in a maximal connected algebraic subgroup of $\Bir(\X)$.
\end{proof}	

\begin{proposition}\label{partialclassification}
	Let $\X$ be a surface over $\kk$ and $\G$ be a maximal connected algebraic subgroup of $\Bir(\X)$. If $\X$ is birationally equivalent to $\C\times \PP^1$ with $\C$ a curve of genus $\g$, then $\G$ is conjugate to one of the following:
	\begin{enumerate}
		\item $\Aut(\PP^2)$ or $\Autzero(\mathbb{F}_n)$ with $n\neq 1$, if $\g=0$.
		\item $\Autzero(\C\times \PP^1)$, or $\Autzero(\A_{0,\C})$, or $\Autzero(\A_{1,\C})$, or $\Autzero(\S_{z_1,z_2})$ for some $z_1,z_2\in \C$, if $\g=1$.
		\item $\Autzero(\C\times \PP^1)$, if $\g\geq 2$.
	\end{enumerate}
	If $\X$ is not birationally equivalent to $\C\times \PP^1$ then up to conjugation we have $\G=\Autzero(\X)$ and one of the following holds:
	\begin{enumerate}
		\item[$(4)$] $\G$ is isomorphic to $\X$, which is an abelian surface.
		\item[$(5)$] $\G$ is isomorphic to an elliptic curve and moreover, there exist a not necessarily reduced curve $\Y$ which is connected, a finite subgroup scheme $\F$ and a $\G$-equivariant isomorphism:
		\[\X\simeq (\G\times \Y)/\F.\]
		The quotient $(\G\times \Y)/\F$ is given by a diagonal action of $\F$ over $\G\times \Y$, $f\cdot (g,y) \mapsto (g\cdot f^{-1},f\cdot y)$. 
		\item[$(6)$] $\G$ is trivial.
	\end{enumerate}
	In case $(5)$, if the characteristic of $\kk$ is zero then $\sf{F}$ is reduced and $\Y$ is smooth.
\end{proposition}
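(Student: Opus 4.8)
The plan is to split the argument according to whether or not $\X$ is birationally equivalent to $\C\times\PP^1$ for some curve $\C$, and in each case to reduce the assertion to results already established: the classification of maximal connected algebraic subgroups of $\Bir(\PP^2)$ recalled in the introduction, Theorem \ref{B}, Proposition \ref{minimalsurfaces}, and Proposition \ref{nonruledcase}. No genuinely new computation should be needed; the work is one of assembly together with some care about the meaning of ``up to conjugation''.

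Suppose first that $\X$ is birationally equivalent to $\C\times\PP^1$ and let $\g$ be the genus of $\C$. Then $\Bir(\X)=\Bir(\C\times\PP^1)$, so it suffices to locate $\G$ among the maximal connected algebraic subgroups of $\Bir(\C\times\PP^1)$. If $\g=0$ then $\X$ is rational, and I invoke the classification recalled in the introduction: every maximal connected algebraic subgroup of $\Bir(\PP^2)$ is conjugate to $\Autzero(\PP^2)=\Aut(\PP^2)$ or to $\Autzero(\mathbb{F}_n)$ with $n\neq 1$, which gives case $(1)$. If $\g=1$, Theorem \ref{B} gives directly that $\G$ is conjugate to $\Autzero(\C\times\PP^1)$, $\Autzero(\A_{0,\C})$, $\Autzero(\A_{1,\C})$, or $\Autzero(\S_{z_1,z_2})$ for some distinct $z_1,z_2\in\C$, which is case $(2)$. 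If $\g\geq 2$, Theorem \ref{B} gives that $\G$ is conjugate to $\Autzero(\C\times\PP^1)$, which is case $(3)$. Only the ``one of'' direction of Theorem \ref{B} is needed here, so no separate maximality check is required.

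Suppose now that $\X$ is not birationally equivalent to $\C\times\PP^1$ for any curve $\C$. By Proposition \ref{minimalsurfaces}, $\G$ is conjugate to an algebraic subgroup $\H$ of $\Autzero(\S)$ for some minimal surface $\S$ birational to $\X$; since $\PP^2$ and ruled surfaces are birational to some $\C\times\PP^1$, the surface $\S$ is again not birational to any $\C\times\PP^1$, so Proposition \ref{nonruledcase} applies to it. As $\S$ is projective, $\Autzero(\S)$ is an algebraic group, hence a connected algebraic subgroup of $\Bir(\S)=\Bir(\X)$ containing the conjugate $\H$ of $\G$; by maximality of $\G$ this forces $\G$ to be conjugate to $\Autzero(\S)$. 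Replacing $\X$ by the birational model $\S$ — which changes neither $\Bir(\X)$ nor the form of the conclusion — I may assume $\G=\Autzero(\X)$ with $\X$ minimal, and then Proposition \ref{nonruledcase} says exactly that $\G$ is an abelian variety and that precisely one of cases $(4)$, $(5)$, $(6)$ holds, with the additional statement that in characteristic zero the finite subgroup scheme $\F$ of case $(5)$ is reduced and $\Y$ is smooth.

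The step I expect to be the most delicate is the bookkeeping in the last paragraph rather than any analytic or geometric difficulty. The point is that for a non-minimal surface $\X$ the group $\Autzero(\X)$ need not be maximal — blowing up a point of an abelian surface, for instance, makes $\Autzero$ trivial — so one cannot apply Proposition \ref{nonruledcase} to $\X$ directly; one must first descend to a minimal model via Proposition \ref{minimalsurfaces} and use maximality of $\G$ to identify it with $\Autzero$ of that model, after which the phrase ``$\G=\Autzero(\X)$ up to conjugation'' is to be understood with $\X$ taken to be this model. It is also worth stating explicitly that Theorem \ref{B} and the classification over $\Bir(\PP^2)$ are being used in exactly the generality in which they are stated, namely as descriptions of the maximal connected algebraic subgroups of $\Bir(\C\times\PP^1)$ and of $\Bir(\PP^2)$, once $\Bir(\X)$ has been identified with one of these groups through a birational equivalence.
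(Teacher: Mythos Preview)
Your proof is correct and follows essentially the same route as the paper's: reduce to a minimal model via Proposition~\ref{minimalsurfaces}, use maximality of $\G$ to identify it with $\Autzero(\S)$, and then invoke the Enriques classification, Theorem~\ref{B}, or Proposition~\ref{nonruledcase} according to the birational type. The only cosmetic difference is that for $\g=0$ the paper re-derives the rational case by listing the minimal rational surfaces and checking maximality of $\Autzero(\mathbb{F}_n)$ via Lemma~\ref{maximal}(3), whereas you cite the Enriques result from the introduction directly; and you are more explicit than the paper about why maximality upgrades ``conjugate to a subgroup of $\Autzero(\S)$'' to ``conjugate to $\Autzero(\S)$'', which the paper leaves implicit.
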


\begin{proof}
	Let $\X$ be a surface and $\G=\Autzero(\X)$ a maximal algebraic subgroup of $\Bir(\X)$. From Proposition \ref{minimalsurfaces}, $\G$ is conjugate to $\Autzero(\S)$ with $\S$ a minimal surface birationally equivalent to $\X$. 
	
	If $\X$ is birationally equivalent to $\C\times \PP^1$ with $\C$ a curve, it follows that $\S$ is $\PP^2$ or a ruled surface over $\C$ from \cite[Examples V.5.8.2, V.5.8.3 and Remark V.5.8.4]{Hartshorne}. If $\X$ is rational then $\G$ is conjugate to $\Autzero(\PP^2)\simeq \PGL_3$ which is maximal from Lemma \ref{maximal} (3), or to $\Autzero(\mathbb{F}_n)$ for some integer $n\neq 1$. From \cite[$\mathsection 4.2$]{BlancCremona} there exists a surjective group homomorphism $\Autzero(\mathbb{F}_n) \to \PGL_2$, and hence $\Autzero(\mathbb{F}_n)$ is also maximal from Lemma \ref{maximal} (3). If $\X$ is not rational, the statement follows from Theorem \ref{B}. 
	
	Otherwise $\X$ is not birationally equivalent to $\C\times \PP^1$ and the statement follows from Proposition \ref{nonruledcase}.
\end{proof}

\begin{proof}[\bf{Proof of Theorem \ref{D}}]
		Assume that $\kk$ is a field of characteristic $0$, the first two columns of the table are given by the classification of algebraic surfaces. For the last column, the case $\kappa(\X)=-\infty$ follows from Proposition \ref{partialclassification} and Theorem \ref{A}. 
		\begin{comment}
		Assume that $\X$ is birationally equivalent to an abelian surface. If $\X$ is not an abelian surface, we can contract the $(-1)$-curves and it follows that $\Autzero(\X)$ is an algebraic subgroup of $\Autzero(\sf{A})$ where $\sf{A}$ is an abelian surface. Then $\Autzero(\X)$ is the subgroup of $\Autzero(\sf{A})$ which fix the contracted points. The algebraic group $\Autzero(\sf{A}) \simeq \sf{A}$ acts on $\sf{A}$ by translation, hence $\Autzero(\X)$ is trivial.  
		\end{comment}
		
		Assume that $\X$ is a surface isomorphic to $(\C\times \Y)/\F$, where $\C$ is a elliptic curve, $\Y$ is a smooth curve and $\F$ is a finite subgroup of $\Autzero(\C)$ acting diagonally on $\C\times \Y$ (in particular, $\F$ acts on $\C$ by translations). First notice that we have a morphism $\X \to \C/\F$ with fibre $\Y$, because the pullback of $\X\to \C/\F$ by the quotient morphism $\C\to \C/\F$ is $\C\times \Y$. Moreover, the curve $\C/\F$ is an elliptic curve because $\F$ is a finite subgroup of $\Autzero(\C)$. If $\Y\simeq \PP^1$, then $\X$ is a ruled surface over $\C/\F$. If $\Y$ is an elliptic curve, it follows that $\X$ is a quotient of an abelian surface by a finite group. If $\Y/\F \simeq \PP^1$, then $\X$ is a bielliptic surface \cite[Definition VI.19]{Beauville}. Else $\F$ acts on $\Y$ only by translations, then $\F$ is an abelian subgroup of $(\C\times \Y)$ and $\X$ is again an abelian surface.
		If $\Y$ is a smooth curve of general type, then $\kappa(\X) \geq \kappa(\Y)+\kappa(\C/\F)=1$ \cite[Theorem 6.1.1]{Fujino}. Because $\Autzero(\C)$ acts on $\X$ on the left factor, it is an algebraic subgroup of $\Autzero(\X)$ and $\X$ cannot be a surface of general type. Thus $\kappa(\X) = 1$. 
		
		We denote by $E$ the set of all surfaces of the form $(\C\times \Y)/\F$, where $\C$ is a elliptic curve, $\Y$ is a smooth curve of general type and $\F$ is a finite group of $\Autzero(\C)$ acting diagonally on $\C\times \Y$. We have shown that $E$ is included in the set of surfaces of Kodaira dimension $1$. Let $\X'$ be a surface of Kodaira dimension $1$ which is not in $E$, then $\Autzero(\X')$ is trivial by Proposition \ref{partialclassification} (6). If the minimal model $\X$ of $\X'$ is an element of $E$, then $\Autzero(\X')$ is not maximal since it is conjugated to the trivial subgroup of $\Autzero(\X)$. Else, $\Autzero(\X')$ is maximal.
	 	
	 	If $\X$ is an abelian surface, then $\Autzero(\X)\simeq \X$ is maximal. If $\X'$ is not an abelian surface but is birationally equivalent to an abelian surface $\X$, then $\Autzero(\X')$ is trivial by Proposition \ref{partialclassification} (6) (since we have also shown that $\X'$ does not correspond to a surface given by Proposition \ref{partialclassification} (5)), i.e.\ $\Autzero(\X')$ is conjugated to the trivial subgroup of $\Autzero(\X)$ and is not maximal. 
	 	
	 	Let $\X=(\C\times \Y)/\F$ be a bielliptic surface, with $\C,\Y$ elliptic curves, and $\F$ a finite group acting on $\C$ as a group of translations and acting also on $\Y$ not only by translations. Then $\Autzero(\X)\simeq \C$ \cite[section 3]{BennettMiranda} is maximal. Else assume that $\X'$ is not a bielliptic surface but is birational to a bielliptic surface $\X$, then $\Autzero(\X')$ is trivial by Proposition \ref{partialclassification} (6)  and is conjugated to the trivial subgroup of $\Autzero(\X)$. Thus $\Autzero(\X')$ is not maximal.
	 	
	 	From Proposition \ref{partialclassification} (6), if $\X$ is an Enriques surface, or a K3 surface, or a surface of general type, then $\Autzero(\X)$ is trivial. Moreover, it is maximal since $\Autzero(\X')$ is also trivial for any representative $\X'$ of the birational class of $\X$. 
\end{proof}

\begin{remark}\label{obstructionclassificationpositivecar}
	Let $\X$ be a surface and $\G$ be a maximal connected algebraic subgroup of $\Bir(\X)$. 
	\begin{enumerate}[wide]
		\item In positive characteristic, Proposition \ref{partialclassification} (1), (2), (3), (4), (6) still provides pairs $(\X,\Autzero(\X))$ where $\Autzero(\X)$ is maximal. 
		\item In Proposition \ref{partialclassification} (5) is given a list of candidates $\X$ which could satisfy $\Autzero(\X)\simeq \G$. To give the pairs $(\X,\Autzero(\X))$ such that $\Autzero(\X)$ is maximal, it remains to determine for which curve $\Y$ and finite group scheme $\F$ we have $\X\simeq (\G\times \Y)/\F$ and $\Autzero(\X)\simeq \G$. To the extent of the author's knowledge, it is not known if there exists an example of non-reduced curve $\Y$ and non-reduced finite subgroup scheme $\F$ such that the quotient $(\G\times \Y)/\F$ which occurs in Proposition \ref{nonruledcase} (2) and Proposition \ref{partialclassification} (5) is a smooth surface. If such surfaces do exist then the ones such that $\Autzero(\X)$ is an elliptic curve complete the classification of pairs $(\X,\Autzero(\X))$ with $\Autzero(\X)$ maximal in positive characteristic. 
	\end{enumerate}
\end{remark}

\bibliographystyle{alpha} 
\bibliography{bib} 	

\begin{thebibliography}{{Enr}93}

\bibitem[Ati57]{Atiyah}
M.~F. Atiyah.
\newblock Vector bundles over an elliptic curve.
\newblock {\em Proc. London Math. Soc. (3)}, 7:414--452, 1957.

\bibitem[Bea96]{Beauville}
Arnaud Beauville.
\newblock {\em Complex algebraic surfaces}, volume~34 of {\em London
  Mathematical Society Student Texts}.
\newblock Cambridge University Press, Cambridge, second edition, 1996.
\newblock Translated from the 1978 French original by R. Barlow, with
  assistance from N. I. Shepherd-Barron and M. Reid.

\bibitem[BFT19]{BFT}
J{\'e}r{\'e}my {Blanc}, Andrea {Fanelli}, and Ronan {Terpereau}.
\newblock {Connected algebraic groups acting on 3-dimensional Mori fibrations}.
\newblock {\em arXiv e-prints}, page arXiv:1912.11364, December 2019.

\bibitem[Bla09]{BlancCremona}
J\'{e}r\'{e}my Blanc.
\newblock Sous-groupes alg\'{e}briques du groupe de {C}remona.
\newblock {\em Transform. Groups}, 14(2):249--285, 2009.

\bibitem[BM90]{BennettMiranda}
Curtis Bennett and Rick Miranda.
\newblock The automorphism groups of the hyperelliptic surfaces.
\newblock {\em Rocky Mountain J. Math.}, 20(1):31--37, 1990.

\bibitem[Bri17]{Brion}
M.~Brion.
\newblock Algebraic group actions on normal varieties.
\newblock {\em Trans. Moscow Math. Soc.}, 78:85--107, 2017.

\bibitem[Bri19]{Brionnotes}
M.~Brion.
\newblock Notes on automorphism groups of projective varieties, 2019.
\newblock Text available on
  {\url{http://www-fourier.univ-grenoble-alpes.fr/~mbrion/autos_final.pdf}}.

\bibitem[BSU13]{BSU}
Michel Brion, Preena Samuel, and V.~Uma.
\newblock {\em Lectures on the structure of algebraic groups and geometric
  applications}, volume~1 of {\em CMI Lecture Series in Mathematics}.
\newblock Hindustan Book Agency, New Delhi; Chennai Mathematical Institute
  (CMI), Chennai, 2013.

\bibitem[{Enr}93]{Enriques}
F.~{Enriques}.
\newblock {Sui gruppi continui di trasformazioni cremoniane nel piano.}
\newblock {\em {Rom. Acc. L. Rend. (5)}}, 2(1):468--473, 1893.

\bibitem[Fuj20]{Fujino}
Osamu Fujino.
\newblock {\em Iitaka conjecture}.
\newblock SpringerBriefs in Mathematics. Springer, Singapore, [2020] \copyright
  2020.
\newblock An introduction.

\bibitem[Gro67]{Grothendieck}
A.~Grothendieck.
\newblock \'{E}l\'{e}ments de g\'{e}om\'{e}trie alg\'{e}brique. {IV}. \'{E}tude
  locale des sch\'{e}mas et des morphismes de sch\'{e}mas {IV}.
\newblock {\em Inst. Hautes \'{E}tudes Sci. Publ. Math.}, (32):361, 1967.

\bibitem[Har77]{Hartshorne}
Robin Hartshorne.
\newblock {\em Algebraic geometry}.
\newblock Springer-Verlag, New York-Heidelberg, 1977.
\newblock Graduate Texts in Mathematics, No. 52.

\bibitem[Jac85]{Jacobson}
Nathan Jacobson.
\newblock {\em Basic algebra. {I}}.
\newblock W. H. Freeman and Company, New York, second edition, 1985.

\bibitem[Kra18]{Kraft}
Hanspeter Kraft.
\newblock {Regularization of Rational Group Actions}.
\newblock {\em arXiv e-prints}, page arXiv:1808.08729, Aug 2018.

\bibitem[Lau20]{Laurent}
Bruno Laurent.
\newblock Almost homogeneous varieties of albanese codimension one, 2020.
\newblock Text available on {\url{https://arxiv.org/abs/2003.08825}}.

\bibitem[Lip78]{Lipman}
Joseph Lipman.
\newblock Desingularization of two-dimensional schemes.
\newblock {\em Ann. Math. (2)}, 107(1):151--207, 1978.

\bibitem[LMP09]{LorayMarin}
Frank Loray and David Mar\'{\i}n~P\'{e}rez.
\newblock Projective structures and projective bundles over compact {R}iemann
  surfaces.
\newblock {\em Ast\'{e}risque}, (323):223--252, 2009.

\bibitem[LU21]{LonjouUrech}
Anne Lonjou and Christian Urech.
\newblock Actions of cremona groups on cat(0) cube complexes, 2021.

\bibitem[Mar70]{Maruyama2}
Masaki Maruyama.
\newblock {\em On classification of ruled surfaces}, volume~3 of {\em Lectures
  in Mathematics, Department of Mathematics, Kyoto University}.
\newblock Kinokuniya Book-Store Co., Ltd., Tokyo, 1970.

\bibitem[Mar71]{Maruyama}
Masaki Maruyama.
\newblock On automorphism groups of ruled surfaces.
\newblock {\em J. Math. Kyoto Univ.}, 11:89--112, 1971.

\bibitem[MO67]{Matsumara_Oort}
Hideyuki Matsumura and Frans Oort.
\newblock Representability of group functors, and automorphisms of algebraic
  schemes.
\newblock {\em Invent. Math.}, 4:1--25, 1967.

\bibitem[Mum08]{Mumford}
David Mumford.
\newblock {\em Abelian varieties}, volume~5 of {\em Tata Institute of
  Fundamental Research Studies in Mathematics}.
\newblock Published for the Tata Institute of Fundamental Research, Bombay; by
  Hindustan Book Agency, New Delhi, 2008.
\newblock With appendices by C. P. Ramanujam and Yuri Manin, Corrected reprint
  of the second (1974) edition.

\bibitem[Ros56]{Rosenlicht}
Maxwell Rosenlicht.
\newblock Some basic theorems on algebraic groups.
\newblock {\em Amer. J. Math.}, 78:401--443, 1956.

\bibitem[RZ18]{RobayoZimmermann}
Maria~Fernanda Robayo and Susanna Zimmermann.
\newblock Infinite algebraic subgroups of the real {C}remona group.
\newblock {\em Osaka J. Math.}, 55(4):681--712, 2018.

\bibitem[Ser84]{SerreGA}
Jean-Pierre Serre.
\newblock {\em Groupes alg\'{e}briques et corps de classes}, volume~7 of {\em
  Publications de l'Institut Math\'{e}matique de l'Universit\'{e} de Nancago
  [Publications of the Mathematical Institute of the University of Nancago]}.
\newblock Hermann, Paris, second edition, 1984.
\newblock Actualit\'{e}s Scientifiques et Industrielles [Current Scientific and
  Industrial Topics], 1264.

\bibitem[Sil09]{Silverman}
Joseph~H. Silverman.
\newblock {\em The arithmetic of elliptic curves}, volume 106 of {\em Graduate
  Texts in Mathematics}.
\newblock Springer, Dordrecht, second edition, 2009.

\bibitem[{Sta}21]{stacks-project}
The {Stacks project authors}.
\newblock The stacks project.
\newblock \url{https://stacks.math.columbia.edu}, 2021.

\bibitem[Wei55]{Weil}
Andr\'{e} Weil.
\newblock On algebraic groups of transformations.
\newblock {\em Amer. J. Math.}, 77:355--391, 1955.

\bibitem[Zai95]{Zaitsev}
Dmitri Zaitsev.
\newblock Regularization of birational group operations in the sense of {W}eil.
\newblock {\em J. Lie Theory}, 5(2):207--224, 1995.

\end{thebibliography}

\end{document}